\newread\testin
\def\mathcenter#1{%
  \vcenter{\hbox{$#1$}}%
}
\def\mfigb#1{
        \mathcenter{\includegraphics[trim=-1 -1 -1 -1]{#1}}
}
\DeclareRobustCommand{\widebar}[1]{\overline{#1}{}}
\newcommand\mi@kern[1]{%
  \settowidth\@tempdima{$\mi@obj^{#1}$}
  \kern-\@tempdima
  #1
  \settowidth\@tempdima{$\mi@obj$}
  \kern\@tempdima
}
\newtoks\mi@toksp
\newtoks\mi@toksb
\DeclareRobustCommand{\manyindices}[5]{
  \def\mi@obj{#5}
  \mi@toksp\expandafter{\mi@kern{#2}}
  \mi@toksb\expandafter{\mi@kern{#1}}
  \@mathmeasure4\textstyle{#5_{#1}^{#2}}
  \@mathmeasure6\textstyle{#5_{#3}^{#4}}
  \dimen0-\wd6 \advance\dimen0\wd4
  \@mathmeasure8\textstyle{\hphantom{{}_{#1}^{#2}}#5^{\the\mi@toksp#4}_{\the\mi@toksb#3}}
  \hbox to \dimen0{}{\kern-\dimen0\box8}
}
\newcommand{\RR}{\mathbb R}
\newcommand{\CC}{\mathbb C}
\newcommand{\ZZ}{\mathbb Z}
\newcommand{\FF}{\mathbb F}
\newcommand{\NN}{\mathbb N}
\newcommand{\Image}{\mathrm{Im}}
\newcommand{\bD}{\mathbb{D}}
\newcommand{\co}{\nobreak\mskip2mu\mathpunct{}\nonscript
  \mkern-\thinmuskip{:}\penalty300\mskip6muplus1mu\relax}
\newcommand{\bdy}{\partial}
\newcommand{\lbracket}{[}
\newcommand{\rbracket}{]}
\newcommand{\Hyph}{\text{-}}
\def\hyph{-\penalty0\hskip0pt\relax} 
\DeclareMathOperator{\Sym}{Sym}
\DeclareMathOperator{\Aut}{Aut}
\DeclareMathOperator{\Hom}{Hom}
\DeclareMathOperator{\Ext}{Ext}
\DeclareMathOperator{\rank}{rank}
\DeclareMathOperator{\ind}{ind}
\DeclareMathOperator{\Inv}{Inv}
\DeclareMathOperator{\ev}{ev}
\DeclareMathOperator{\gr}{gr}
\numberwithin{figure}{chapter}
\theoremstyle{plain}
\numberwithin{equation}{chapter}
\newtheorem{theorem}[equation]{Theorem}
\newtheorem{proposition}[equation]{Proposition}
\newtheorem{lemma}[equation]{Lemma}
\newtheorem{corollary}[equation]{Corollary}
\newtheorem{definition}[equation]{Definition}
\newtheorem{construction}[equation]{Construction}
\theoremstyle{definition}
\newtheorem{exercise}{Exercise}
\numberwithin{exercise}{chapter}
\theoremstyle{remark}
\newtheorem{example}[equation]{Example}
\newtheorem{remark}[equation]{Remark}
\newcommand{\HF}{\mathit{HF}}
\newcommand{\HFa}{\widehat {\HF}}
\newcommand{\CFa}{\widehat {\mathit{CF}}}
\newcommand{\HFKa}{\widehat{\mathit{HFK}}}
\newcommand{\x}{\mathbf x}
\newcommand{\y}{\mathbf y}
\newcommand{\z}{\mathbf z}
\newcommand{\w}{\mathbf w}
\newcommand\HH{\mathit{HH}}
\newcommand\Hochschild\HH
\newcommand{\Ainf}{A_\infty}
\newcommand{\Alg}{\mathcal{A}}
\newcommand{\cB}{{\mathcal{B}}}
\newcommand{\alphas}{{\boldsymbol{\alpha}}}
\newcommand{\betas}{{\boldsymbol{\beta}}}
\newcommand{\rhos}{{\boldsymbol{\rho}}}
\newcommand{\cM}{\mathcal{M}}
\newcommand{\Mod}{\cM}
\newcommand{\ocM}{\widebar{\cM}{}} 
\newcommand{\tcM}{\widetilde{\mathcal{M}}}
\newcommand{\DD}{\textit{DD}}
\newcommand{\DA}{\textit{DA}}
\newcommand{\CFD}{\mathit{CFD}}
\newcommand{\CFDD}{\mathit{CFDD}}
\newcommand{\CFA}{\mathit{CFA}}
\newcommand{\CFDA}{\mathit{CFDA}}
\newcommand{\CFDAa}{\widehat{\CFDA}}
\newcommand{\CFAA}{\mathit{CFAA}}
\newcommand{\CFAAa}{\widehat{\CFAA}}
\newcommand{\CFDa}{\widehat{\CFD}}
\newcommand{\CFK}{\mathit{CFK}}
\newcommand{\CFKa}{\widehat{\CFK}}
\newcommand{\CFKm}{\CFK^-}
\newcommand{\CFDDa}{\widehat{\CFDD}}
\newcommand{\CFAa}{\widehat{\CFA}}
\newcommand{\cZ}{\mathcal{Z}}
\newcommand{\PtdMatchCirc}{\cZ}
\newcommand{\PMC}{\PtdMatchCirc}
\newcommand{\CircPts}{{\mathbf{a}}}
\newcommand{\dg}{\textit{dg} }
\newcommand\Id{\mathbb{I}}
\newcommand\Mas{\ind}
\newcommand\DTP{\mathop{\widetilde\otimes}\nolimits}
\newcommand\DT{\boxtimes}
\newcommand\Gen{\mathfrak{S}}
\newcommand\Zmod[1]{\mathbb{Z}/{#1}\mathbb{Z}}
\newcommand{\Field}{{\FF_2}}
\DeclareMathOperator{\nbd}{nbd}
\newcommand{\Heegaard}{\mathcal{H}}
\newcommand{\HD}{\Heegaard}
\DeclareMathOperator{\Fix}{Fix}
\DeclareMathOperator{\Mor}{Mor}
\newcommand\drHD{{\mathcal H}_{dr}}
\newcommand{\op}{\mathrm{op}}
\newcommand\PunctF{F^\circ}
\newcommand{\Chord}{\mathrm{Chord}}
\newcommand{\SetS}{\mathbf{s}}
\newcommand{\SetT}{\mathbf{t}}
\newcommand\honestalg[3]{\bigl\lbracket
\begin{smallmatrix} #1\@ifempty{#3}{}{&#3} \\ #2 \end{smallmatrix}
\bigr\rbracket}
\newcommand{\lab}[1]{$\scriptstyle #1$}
\newcommand{\sos}[3]{\mathbin{{}_{#1}\mathord#2_{#3}}}
\newcommand{\lsup}[2]{{}^{#1}\mskip-.6\thinmuskip#2}
\newcommand{\arcz}{\mathbf{z}}
\newcommand{\nilCox}{\mathcal{N}}
\newcommand{\gCFKm}{\mathit{gCFK}^-}
\newcommand{\gHFKm}{\mathit{gHFK}^-}
\newcommand\piBig{{\widetilde\pi}_2}
\newcommand{\tpsi}{{\widetilde{\psi}}}
\newcommand{\HB}{\mathsf{H}}
\newcommand{\support}[1]{[#1]}
\newcommand{\NearChord}{\mathrm{NChord}}
     \def\revddots{\mathinner{\mkern1mu\raise\p@
       \vbox{\kern7\p@\hbox{.}}\mkern2mu
       \raise4\p@\hbox{.}\mkern2mu\raise7\p@\hbox{.}\mkern1mu}}
\begin{document}
\title{Notes on bordered Floer homology}

\author{Robert Lipshitz}
\address{Department of Mathematics, Columbia University\\
  New York, NY 10027}
\email{lipshitz@math.columbia.edu}

\author[Ozsv\'ath]{Peter Ozsv\'ath}
\address {Department of Mathematics, Princeton University\\ New
  Jersey, 08544}
\email {petero@math.princeton.edu}

\author[Thurston]{Dylan~P.~Thurston}
\author[Thurston]{Dylan~P.~Thurston}
\address{Department of Mathematics,
        Indiana University\\
        Bloomington, IN 47405}
\email{dpthurst@indiana.edu}

\maketitle

\tableofcontents

\chapter*{Introduction}

Heegaard Floer homology is a kind of $(3+1)$-dimensional topological
field theory defined by the second author and Z.~Szab\'o. More
precisely, one variant of Heegaard Floer homology associates to each
connected, oriented $3$-manifold $Y$ an abelian group
$\HFa(Y)$~\cite{OS04:HolomorphicDisks} (see also~\cite{JT:Naturality}), and to each smooth, connected,
$4$-dimensional cobordism $W$ from $Y_1$ to $Y_2$ a group homomorphism
$\hat{F}\co \HFa(Y_1)\to\HFa(Y_2)$~\cite{OS06:HolDiskFour}. This
assignment is functorial: composition of cobordisms corresponds to
composition of maps. As the name suggests, the Heegaard Floer homology
groups are the homologies of chain complexes $\CFa(Y)$, defined via
Lagrangian-intersection Floer homology\footnote{Strictly speaking, in
  the original definition the manifolds were only totally-real, not
  Lagrangian. It was shown in~\cite{Perutz07:handleslides} that a
  K\"ahler form can be chosen making the relevant submanifolds
  Lagrangian.}.  The invariant is also multiplicative: the chain
complex $\CFa(Y_1\# Y_2)$ associated to the connected sum of $Y_1$ and
$Y_2$ is the tensor product $\CFa(Y_1)\otimes\CFa(Y_2)$ of the chain
complexes associated to $Y_1$ and $Y_2$. 
The other variants of Heegaard Floer homology---$\HF^+(Y)$, $\HF^-(Y)$
and $\HF^\infty(Y)$---are modules over $\ZZ[U]$, but otherwise behave
fairly similarly to $\HFa(Y)$ (but see point~(\ref{item:SW}) below).

Heegaard Floer homology has received widespread attention largely
because of its
striking topological
applications. Many of these applications
draw on the remarkable geometric content of the 
Heegaard Floer invariants:
\begin{enumerate}
\item The group $\HFa(Y)$ detects the Thurston norm of $Y$; similarly,
  the variant of Heegaard Floer homology $\HFKa(Y,K)$ associated to a
  nullhomologous knot $K$, called knot Floer
  homology~\cite{OS04:Knots,Rasmussen03:Knots}, detects the genus of
  $K$~\cite{OS04:ThurstonNorm}.  
\item The group $\HFa(Y)$ detects whether and how $Y$ fibers over $S^1$;
  similarly, $\HFKa(Y,K)$ detects whether $K$ is
  fibered~\cite{Ghiggini08:FiberedGenusOne,Ni09:FiberedMfld}.
\item The two previous properties are reminiscent of the Alexander
  polynomial, which gives partial information in each case. There is a
  precise relationship between $\HFKa$ and the Alexander
  polynomial. Specifically, if $K$ is a knot in $S^3$,
  then $\HFKa(K)$ is endowed with an integral bigrading 
  $\HFKa(K)=\bigoplus_{d,s\in\ZZ}
  \HFKa_{d}(K,s)$, and 
  \[
  \sum_{d}(-1)^d T^s\rank\HFKa_d(K,s)=\Delta_K(T)
  \]
  \cite{OS04:Knots,Rasmussen03:Knots}.
\item\label{item:SW} The Heegaard Floer homology groups of closed
  $3$-manifolds are now known to agree with the Seiberg-Witten Floer
  homology
  groups \cite{Taubes10:SW-ECH-I,Taubes10:SW-ECH-II,Taubes10:SW-ECH-III,Taubes10:SW-ECH-IV,Taubes10:SW-ECH-V,KutluhanLeeTaubes:HFHMI,KutluhanLeeTaubes:HFHMII,KutluhanLeeTaubes:HFHMIII,KutluhanLeeTaubes:HFHMIV,KutluhanLeeTaubes:HFHMV,ColinGhigginiHonda11:HF-ECH-1,ColinGhigginiHonda11:HF-ECH-2,ColinGhigginiHonda11:HF-ECH-3}. Moreover, one can use Heegaard Floer
  homology to define an invariant of smooth, closed
  $4$-manifolds \cite{OS06:HolDiskFour},
  with similar properties to the Seiberg-Witten
  invariant \cite{OS04:symplectic,Roberts08:blow-downs,JabukaMark08:product};
  it is expected that the two invariants agree.  Note, however, that
  to capture the analogue of the
  Seiberg-Witten invariant one needs to work with the $\HF^+$ and
  $\HF^-$ variants of Heegaard Floer homology.
\end{enumerate}

As mentioned above, Heegaard Floer homology is defined using
Lagrangian-intersection Floer homology, i.e., by counting holomorphic
curves. Consequently, it is in general hard to compute---though there
are now several algorithms for doing so; see
particularly~\cite{SarkarWang07:ComputingHFhat,MOS06:CombinatorialDescrip,MOST07:CombinatorialLink,MOT:grid,ManolescuOzsvath:surgery}. With
the goal of computing and better understanding Heegaard Floer homology in mind, we have been developing bordered Heegaard Floer
homology, a tool for understanding the behavior of the Heegaard Floer
homology group $\HFa(Y)$ under cutting and gluing of $Y$ along
surfaces. Roughly, bordered Floer homology is a $(2+1+1)$-dimensional
field theory. That is, roughly, it assigns to each connected, oriented
surface $F$ a differential graded algebra $\Alg(F)$ and to a cobordism
$Y$ from $F_1$ to $F_2$ an $(\Alg(F_1),\Alg(F_2))$-bimodule
$\CFDAa(Y)$. Composition of cobordisms corresponds to tensor product
of bimodules.

More precisely, like in Heegaard Floer homology, in bordered Floer
homology, the invariants are not associated directly to the
topological objects of interest---manifolds of dimensions $2$ through
$4$---but rather to certain combinatorial representations for these
objects, which we describe next.

The combinatorial representations of oriented surfaces which appear in
bordered Floer homology, the {\em pointed matched circles}, which we
denote by $\PMC$, consist essentially of a handle-decomposition of the
surface. (See Definition~\ref{def:PMC} below for a more precise formulation.) We
will let $F(\PMC)$ denote the surface underlying $\PMC$.
Bordered Floer homology associates to such a pointed matched
circle a differential-graded ($\dg$) algebra $\Alg(\PMC)$; the definition of $\Alg(\PMC)$ is
purely combinatorial.

The three-dimensional objects studied in the bordered theory are
cobordisms, i.e., three-manifolds with parameterized boundary.  More
precisely, a \emph{bordered $3$-manifold} consists of a compact,
oriented $3$-manifold-with-boundary $Y$ and a homeomorphism $\phi\co
F(\PMC)\to \bdy Y$, where $\PMC$ is some pointed matched circle.

Bordered Floer homology associates to a bordered $3$-manifold
$(Y,\phi\co F(\PMC)\to\bdy Y)$ a left $\dg$
$\Alg(-\PMC)$-module, which we denote $\CFDa(Y)$. (The minus sign in
front of $\PMC$ denotes a reversal of orientation.) Explicitly,
$\CFDa(Y)$ is a left module over the $\dg$ algebra
$\Alg(-\PMC)$; and $\CFDa(Y)$ is equipped with a differential which
satisfies the Leibniz rule \footnote{The ground ring for bordered Floer homology is
  $\Zmod{2}$; hence the signs usually appearing in the differential
  graded Leibniz rule become irrelevant.}
 with respect to the action by the algebra;
\[ \bdy_{\CFDa(Y)} (a\cdot x) = d_{\Alg(-\PMC)}(a)\cdot x +
a\cdot \bdy_{\CFDa(Y)}(x). \]

Like the algebras, the modules $\CFDa$ are also associated to
combinatorial representations of the underlying structure. In this
case, the combinatorial structure is called a {\em bordered
  Heegaard diagram} (Definition~\ref{def:bordered-HD} below). Unlike
the algebras, the definition of $\CFDa$ then depends on further analytic choices
(specifically, a family of complex structures on the underlying Heegaard
surface); but the quasi-isomorphism type of the module does not depend
on these further choices.

The modules $\CFDa$ can be used to reconstruct the Heegaard Floer
homology $\HFa$ via {\em pairing theorems}, which come in several
variants. For example, recall that if $M_1$ and $M_2$ are two $\dg$-modules
over some algebra $\Alg$, we can consider their chain complex of
morphisms
$\Mor_{\Alg}(M_1,M_2)$, which is to be thought of as the space of
$\Alg$-linear maps $\phi\co M_1\to M_2$, equipped with a
differential
\[d_{\Mor}(\phi)=d_{M_2}\circ \phi + \phi\circ d_{M_1}.\]

\begin{theorem}
  \label{thm:SimpleMorCFDaPairing}
  Let $Y_1$ and $Y_2$ be two $\PMC$-bordered three-manifolds.
  Then there is an isomorphism between the homology of the morphism
  space
  $\Mor_{\Alg(-\PMC)}(\CFDa(Y_1),\CFDa(Y_2))$ and the Heegaard Floer
  homology $\HFa(Y)$ of the three-manifold $Y=-Y_1\cup_{F(\PMC)}Y_2$
  obtained by gluing
  $-Y_1$ and $Y_2$ along their common boundary $F(\PMC)$ (according to the
  identifications specified by their borderings).
\end{theorem}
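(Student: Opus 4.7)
The plan is to reduce Theorem~\ref{thm:SimpleMorCFDaPairing} to the standard type~A/type~D pairing theorem, which asserts that for two $\PMC$-bordered three-manifolds with compatible boundary parametrizations there is a quasi-isomorphism
\[
\CFa(Y_1'\cup_{F(\PMC)}Y_2')\simeq \CFAa(Y_1')\DT\CFDa(Y_2').
\]
Taking this pairing theorem as given, the task is to rewrite the morphism complex on the left-hand side as such a box tensor product.

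First, I would prove the purely algebraic statement that, for any type $D$ module $N$ over $\Alg(-\PMC)$ and any bounded type $D$ module $M$ over $\Alg(-\PMC)$, there is a quasi-isomorphism
\[
\Mor_{\Alg(-\PMC)}(M,N)\simeq M^{\vee}\DT N,
\]
where $M^{\vee}$ is a right $\Ainf$-module over $\Alg(-\PMC)$ obtained by bar-dualizing $M$. This is a standard manipulation with the bar resolution: one expands $\Mor_{\Alg(-\PMC)}(M,N)$ as a cobar-type complex, uses the fact that $N$ is of type $D$ so that all of its structure is packaged by a single element of $\Alg(-\PMC)\otimes N$, and then reinterprets the resulting complex as a $\DT$-product.

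Second, I would identify $\CFDa(Y_1)^{\vee}$ with $\CFAa(-Y_1)$, up to quasi-isomorphism, as right $\Ainf$-modules over $\Alg(-\PMC)$. Concretely, starting from a bordered Heegaard diagram $\HD_1$ for $Y_1$, one obtains a bordered Heegaard diagram for $-Y_1$ by reversing the orientation of $\Sigma$ and adjusting the boundary parametrization; the generators match, and the holomorphic curves counted by $\CFAa(-Y_1)$ correspond under this reversal to the operations packaged in $\CFDa(Y_1)^\vee$. This step is essentially a Koszul/orientation-reversal duality at the level of bordered invariants. Combining the two steps gives
\[
\Mor_{\Alg(-\PMC)}(\CFDa(Y_1),\CFDa(Y_2))\simeq \CFAa(-Y_1)\DT\CFDa(Y_2)\simeq \CFa(-Y_1\cup_{F(\PMC)}Y_2),
\]
and passing to homology yields the claim. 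To handle the boundedness hypothesis needed for $\DT$ to be well defined, I would, if necessary, first replace $\CFDa(Y_1)$ or $\CFDa(Y_2)$ by a bounded model using an admissible Heegaard diagram.

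The main obstacle is the second step: matching $\CFAa(-Y_1)$ with the bar-dual of $\CFDa(Y_1)$ requires a careful comparison of holomorphic curve counts in orientation-reversed diagrams, together with bookkeeping of the algebra actions to ensure that the operations agree on the nose (or at least up to a controlled $\Ainf$-homotopy). The first, algebraic, step and the final appeal to the standard pairing theorem are more formal, but this geometric/homological-algebraic duality between type $A$ invariants of $-Y$ and type $D$ invariants of $Y$ is the substantive input.
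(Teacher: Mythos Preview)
Your proposal is correct and matches the paper's approach. The paper presents this result as Corollary~\ref{cor:mod-mod-hom}, deduced from the orientation-reversal duality (Theorem~\ref{thm:or-rev}, which is exactly your ``second step'' identifying $\CFAa(-Y_1)$ with the dual of $\CFDa(Y_1)$) together with the standard pairing theorem (Theorem~\ref{thm:pairing1}); your assessment that the substantive input is this geometric duality is also on target, and indeed the paper defers its proof to~\cite{LOTHomPair}.
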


(This was not the original formulation of the pairing theorem; rather
it is a re-formulation appearing first in~\cite{AurouxBordered}; see
also~\cite{LOTHomPair}.)

The discussion above naturally raises the following questions:
\begin{enumerate}
\item\label{item:Q-inv-surf} To what extent is the algebra of a
  pointed matched surface an invariant of the underlying surface?
\item\label{item:Q-depend-param} In what way does the bordered
  invariant $\CFDa(Y)$ depend on the parameterization of the boundary
  of $Y$?
\end{enumerate}
Perhaps not too surprisingly, the answers to both of these questions are
governed by certain bimodules.  

Given a homeomorphism $\psi\co F(-\PMC_1)\to F(-\PMC_2)$, there is
an $\Alg(\PMC_1)\Hyph\Alg(\PMC_2)$-bimodule $\CFDDa(\psi)$ which allows one to change the framing of a
bordered three-manifold. There is a mild technical point which
becomes important when discussing these bimodules: as we will see,
$F(\PMC)$ contains a distinguished disk, and 
the homeomorphism~$\psi$
is required to fix this disk pointwise.

We can now state the dependence of the modules on the parameterization
in terms of these bimodules.
To state the dependence, recall that if $\Alg_1$
and $\Alg_2$ are two $\dg$ algebras, $B$ is an
$\Alg_1\Hyph\Alg_2$-bimodule and $M$ is a $\dg$ $\Alg_1$-module, 
then the space
$\Mor_{\Alg_1}(B,M)$ is naturally a left \dg $\Alg_2$-module.

\begin{theorem}
  \label{thm:Bimodules}
  If $(Y,\phi\co F(-\PMC_2)\to \bdy Y)$ is a bordered
  three-manifold and $\psi\co F(-\PMC_1)\to F(-\PMC_2)$ is a
  homeomorphism then there is a quasi-isomorphism:
  \[
  \CFDa(Y,\phi\circ\psi)
  \simeq 
  \Mor_{\Alg(\PMC_1)}(\CFDDa(\psi),\CFDa\left(Y,\phi)\right).
  \]
\end{theorem}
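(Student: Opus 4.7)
The plan is to view $(Y,\phi\circ\psi)$ as a topological gluing of $(Y,\phi)$ with the mapping cylinder of $\psi$, and then to invoke the bimodule version of the pairing theorem, Theorem~\ref{thm:SimpleMorCFDaPairing}.

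First, I would set up the mapping cylinder $M_\psi$ associated to $\psi\co F(-\PMC_1)\to F(-\PMC_2)$. Since $\psi$ fixes the distinguished disks, $M_\psi$ is naturally a bordered $3$-manifold with two boundary components parameterized by $F(-\PMC_1)$ and (the orientation-reverse of) $F(-\PMC_2)$; by the construction immediately preceding the theorem statement, its bimodule invariant is $\CFDDa(\psi)$. The key topological observation is that gluing $M_\psi$ to $(Y,\phi)$ along the $F(-\PMC_2)$-parameterized boundary produces a $\PMC_1$-bordered $3$-manifold canonically identified with $(Y,\phi\circ\psi)$. This is essentially the defining property of a mapping cylinder: gluing on a cylinder over $\psi$ is the same as post-composing the boundary parameterization with $\psi$.

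Second, I would invoke the bimodule analogue of Theorem~\ref{thm:SimpleMorCFDaPairing}, which asserts that the bordered module of a $3$-manifold obtained by gluing a bimodule cobordism to a bordered $3$-manifold is quasi-isomorphic to the corresponding $\Mor$-complex. Applied to $M_\psi$ and $(Y,\phi)$ this gives exactly
\[
\CFDa(Y,\phi\circ\psi) \simeq \Mor_{\Alg(\PMC_1)}(\CFDDa(\psi),\CFDa(Y,\phi)),
\]
with one of the algebra structures on $\CFDDa(\psi)$ matched against $\CFDa(Y,\phi)$ to form the morphism complex and the other endowing the complex with the required left $\Alg(\PMC_1)$-module structure. (The precise labeling reflects the DD-bimodule convention in which both actions on $\CFDDa(\psi)$ appear symmetrically.)

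The main obstacle is this bimodule pairing theorem itself. Its proof proceeds by the same neck-stretching and broken-holomorphic-curve analysis used for the module-module version in Theorem~\ref{thm:SimpleMorCFDaPairing}: one constructs a bordered Heegaard diagram for the glued manifold by attaching a Heegaard diagram for $M_\psi$ to one for $(Y,\phi)$ along their shared parameterizing pointed matched circle, stretches the neck along that circle, and identifies the limits of holomorphic curves in the glued diagram with matched pairs of curves on the two sides. Checking that these matched configurations, together with their $\Alg(\PMC_1)$-equivariance, exactly encode the differential and the module structure on the $\Mor$-complex yields the asserted quasi-isomorphism. The remaining ingredients---the construction of $M_\psi$, the identification of the gluing with $(Y,\phi\circ\psi)$, and the bookkeeping matching the various algebraic structures---are essentially formal once the pairing theorem is in hand.
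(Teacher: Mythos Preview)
Your proposal is correct and follows essentially the same approach as the paper: identify $\CFDDa(\psi)$ with $\CFDDa(M_\psi)$ for the mapping cylinder, observe that gluing $M_\psi$ to $(Y,\phi)$ yields $(Y,\phi\circ\psi)$, and invoke the bimodule $\Mor$-pairing theorem (Theorem~\ref{thm:bimod-mod-hom}); this is exactly how the paper packages it in Example~\ref{ex:Reparameterize}. One small remark: the $\Mor$ version of the pairing theorem is not proved directly by neck-stretching in these notes---it is obtained in \cite{LOTHomPair} from the tensor-product pairing theorem (which \emph{is} proved by neck-stretching) together with the duality result Theorem~\ref{thm:or-rev}---so your sketch of its proof is slightly off, though this does not affect the deduction of Theorem~\ref{thm:Bimodules}.
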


Theorem~\ref{thm:Bimodules} can be thought of as a
kind of pairing theorem, as well. The bimodule $\CFDDa(\psi)$
appearing above is the invariant associated to a very simple
bordered three-manifold with two boundary components: the underlying
three-manifold here is the product of an interval with the surface
$F(\PMC_2)$.  It is best to think of this as the special case of a
more general construction, involving bordered three-manifolds with two
boundary components. It turns out that these three-manifolds need to be equipped
with some additional structure, giving the {\em arced cobordisms} of
Definition~\ref{def:ArcedCobordism}
below. Theorem~\ref{thm:Bimodules} then becomes a special case of a
pairing theorem for gluing bordered three-manifolds to arced
cobordisms (Theorem~\ref{thm:bimod-mod-hom}, below); see Example~\ref{ex:Reparameterize}.

Theorem~\ref{thm:Bimodules} answers
Question~(\ref{item:Q-depend-param}) above.  The bimodules associated
to mapping classes also answer Question~(\ref{item:Q-inv-surf}): while
$\Alg(\PMC)$ is not an invariant of $F(\PMC)$, the (equivalence class
of the) derived category of modules over $\Alg(\PMC)$ is an invariant of
(the homeomorphism type of) $F(\PMC)$. For more details,
see~\cite[Theorem~\ref*{LOT2:thm:AlgebraDependsOnSurface}]{LOT2}.

Arguably more excitingly, Theorems~\ref{thm:SimpleMorCFDaPairing}
and~\ref{thm:Bimodules} are an effective tool for computing Heegaard
Floer homology.  They can be used to give an algorithm for computing
$\HFa(Y)$ for an arbitrary closed, oriented three-manifold
$Y$~\cite{LOT4}; the map $\hat{F}_W$ associated to any smooth
cobordism $W$~\cite{LOTCobordisms}; and the spectral
sequence~\cite{BrDCov} from Khovanov homology to $\HFa$ of the
branched double cover~\cite{LOT:DCov1,LOT:DCov2}. (We sketch the
algorithm for computing $\HFa(Y)$ in Lecture~\ref{lec:compute-HFa}.)
In a different direction, the torus boundary case of bordered Floer
homology has been particularly useful for practical computations; see
Lecture~\ref{lec:torus}.

Bordered Floer homology also associates another kind of module,
denoted $\CFAa(Y)$, to a bordered $3$-manifold $(Y,\phi\co F(\PMC)\to
\bdy Y)$. The module $\CFAa(Y)$ is a right $\Ainf$-module over
$\Alg(\PMC)$. To avoid digressing into $\Ainf$-algebra, we have
suppressed $\CFAa(Y)$, and will continue to do so throughout these
notes to the extent possible. (Another drawback of $\CFAa(Y)$ is that
its definition requires counting more holomorphic curves than
$\CFDa(Y)$, making $\CFAa(Y)$ typically harder to compute.)  There is
one place that $\CFAa(Y)$ seems unavoidable: in the proof of the
pairing theorem, which we sketch in Section~\ref{sec:prove-pairing}.

These notes are organized into five lectures. The first of these
focuses primarily on the combinatorial representations for manifolds
(pointed matched circles and Heegaard diagrams for bordered and arced
three-manifolds) which are used in the definitions of the modules. 
After a sufficient amount of the background is laid out, we give
a second, more detailed overview of the theory during the middle of the first
lecture. Finally, Lecture~\ref{lec:1} concludes by defining the
algebra $\Alg(\PMC)$ associated to a pointed matched circle $\PMC$.

The second lecture is devoted to defining the module $\CFDa(Y)$
associated to a bordered $3$-manifold $Y$, as well as its
generalization $\CFDDa(Y)$ to an arced cobordism. That lecture starts
by reviewing both the original definition and the cylindrical
reformulation of the invariant $\HFa(Y)$ for a closed $3$-manifold. 
The lecture
then turns to $\CFDa(Y)$ and the moduli spaces used to define it,
proves the surgery exact triangle for $\HFa$ (originally proved
in~\cite{OS04:HolDiskProperties}) and concludes by briefly defining
the extension $\CFDDa(Y)$.

In the third lecture, we describe the analysis which
underpins the theory. This allows us to sketch the proof that
the differential on $\CFDa$ is, in fact, a differential. It also
allows us to sketch a proof of the pairing theorem; in the process,
the invariant $\CFAa(Y)$, elsewhere absent from these notes, arises naturally.

The last two lectures are computational. The fourth lecture is devoted
to the torus-boundary case. After recalling some terminology about
knot Floer homology, it explains how one can recover the knot Floer
homology group $\HFKa(Y,K)$ from the bordered Floer homology of
$Y\setminus K$; indeed, this process also allows one to obtain, with a little
more work, the knot Floer homology of any satellite of $K$. The
lecture then discusses the other direction: for a knot $K$ in $S^3$,
one can recover the bordered Floer homology $S^3\setminus K$ from the
knot Floer complex $\CFK^-(K)$. Combining these results, one obtains a
theorem about the behavior of knot Floer homology under taking
satellites.

Finally, the last lecture describes an algorithm coming from bordered
Floer homology for computing $\HFa(Y)$ for closed three-manifolds
$Y$. 

There are a number of important aspects of the theory which are
missing from these notes. These include:
\begin{itemize}
\item Any discussion of the grading on bordered Floer homology. The
  grading takes a somewhat complicated form---the algebras are graded by
  a non-commutative group $G(\PMC)$ and the modules by
  $G(\PMC)$-sets---and we refer the reader to \cite[Chapter 10]{LOT1} for this
  part of the story.
\item A more thorough treatment of $\CFAa$. This would involve a
  lengthy algebraic digression which might distract from the
  underlying geometry in the theory.  Again, we refer the reader
  to~\cite{LOT1} to fill in this omission.
\item A discussion of  the proof of invariance of the bordered modules
  (Theorem~\ref{thm:invariance}). Most of the ideas in the proof of
  invariance, however, are present in the proof that $\bdy^2=0$ on
  $\CFDa$ and the proof of invariance in the closed
  case~\cite{OS04:HolomorphicDisks}.
\item A proof of the $\Mor$ versions of the pairing theorem
  (Theorems~\ref{thm:or-rev} and~\ref{thm:bimod-mod-hom}). We refer
  the reader to~\cite{LOTHomPair} for these proofs.
\item The connection between bordered Floer homology and
Juh{\'a}sz's {\em sutured Floer homology}~\cite{Juhasz06:Sutured}. This
connection
is given by Zarev's {\em bordered sutured theory}~\cite{Zarev09:BorSut}.
\end{itemize}

There are two other expository articles on bordered Heegaard Floer
homology, with somewhat different focuses, in which the reader might
be interested:~\cite{LOT0,LOT11:tour}. The paper~\cite{LOT13:faith} is
also intended to be partly expository.

\addtocontents{toc}{\protect\setcounter{tocdepth}{0}}
\section*{Acknowledgments}
\addtocontents{toc}{\protect\setcounter{tocdepth}{1}}

These are notes from a series of lectures by the first author during
the CaST conference in Budapest in the summer of 2012. We
thank Jennifer Hom for many helpful comments on and corrections to
earlier drafts of these notes, and the participants in the CaST summer
school for many further corrections. We also thank the
R{\'e}nyi Institute for providing a stimulating environment without
which these notes would not have been written, and the Simons Center
for providing a peaceful one, without which these notes would not have
been revised.

RL was supported by NSF Grant number DMS-0905796 and a Sloan
Research Fellowship. PSO was supported by NSF grant number
DMS-0804121. DPT was supported by NSF grant number DMS-1008049.

\chapter[Formal structure]{Combinatorial \texorpdfstring{$3$}{3}-manifolds with boundary. Formal structure of
  bordered Floer homology. The algebra associated to a surface.}\label{lec:1}

Much of this lecture lays out in detail the combinatorial
representations of the topological objects used in the definition of
bordered Floer homology. We start with surfaces (encoded by pointed
matched circles), and then move on to bordered three-manifolds (encoded by
Heegaard diagrams). With this material in place, we give a more
detailed overview of the formal structure of bordered Floer homology.
The lecture concludes with the definition of the algebra associated to
a pointed matched circle.

\section{Arc Diagrams and Surfaces}

\begin{definition}
  \label{def:PMC}
  A \emph{pointed matched circle} consists of an oriented circle $Z$, a point
  $z\in Z$, a finite set of points $\CircPts\subset Z$ disjoint from
  $z$, and a fixed-point free involution $M\co \CircPts\to
  \CircPts$. The map $M$ matches the points $\CircPts$ in pairs; that
  is, we can view $\CircPts$ as a union of $S^0\!$'s. We require that
  the result $Z'$ of doing surgery on $(Z,\CircPts)$ according to $M$
  be connected.  See Figure~\ref{fig:arc-diag}.
\end{definition}

\begin{figure}
  \centering
  \includegraphics[scale=.85714]{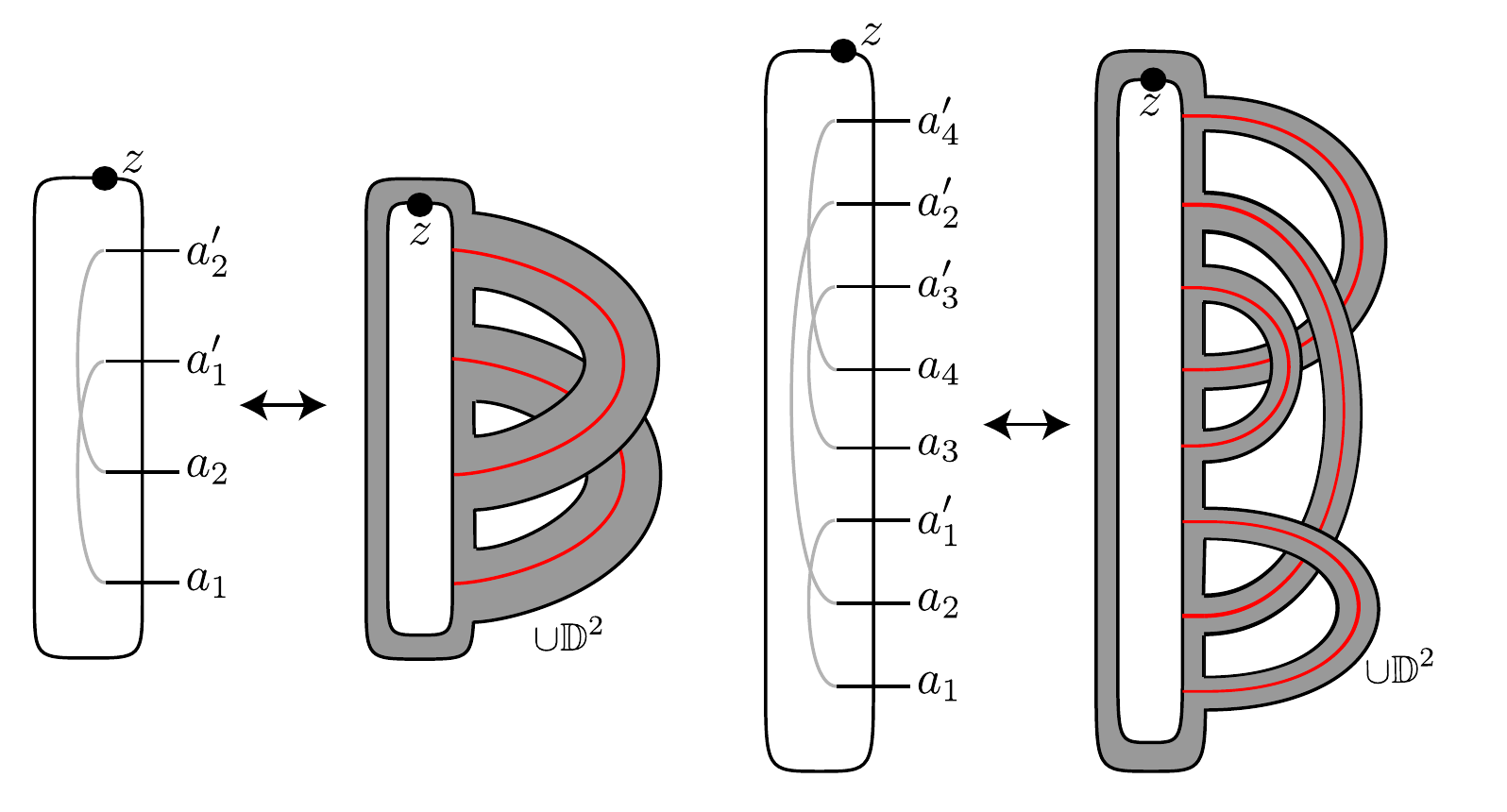}
  \caption{\textbf{Pointed matched circles and surfaces.} Left: a
    pointed matched circle specifying a once-punctured torus. Right: a
    pointed matched circle specifying a once-punctured genus $2$
    surface. In both cases, the involution $M$ exchanges $a_i$ and
    $a'_i$.}
  \label{fig:arc-diag}
\end{figure}

A pointed matched circle specifies a surface. There are a few essentially
equivalent constructions; here is one:
\begin{construction}
  Fix a pointed matched circle $\PMC=(Z,\CircPts,M,z)$. Build an
  oriented surface-with-boundary $\PunctF(\PMC)$ as follows. Start
  with $[0,1]\times Z$. Attach a strip ($2$-dimensional $1$-handle) to
  each pair of matched points in $\CircPts\times \{0\}$. The result
  has boundary $(Z\times \{1\})\amalg Z'$. Fill in $Z'$ with a copy of $\bD^2$. The result is $\PunctF(\PMC)$. Again,
  see Figure~\ref{fig:arc-diag}.
\end{construction}
As a slight variant, we could fill in the boundary of
$\PunctF(\PMC)$ with a disk. This gives a surface $F(\PMC)$ with a
distinguished disk in it---the disk $F(\PMC)\setminus
\PunctF(\PMC)$---and a distinguished basepoint on the boundary of this
disk. That is, $F(\PMC)$ is a \emph{strongly based surface}. 
(Papers in the subject  sometimes treat a pointed matched
circle as specifying a surface with boundary, and sometimes as
specifying a closed, strongly based surface; it makes no essential
difference.)

\begin{remark}
  Pointed matched circles are a special case of Zarev's \emph{arc
    diagrams}; any orientable surface with non-empty boundary can be
  represented by an arc diagram, and there is an associated algebra
  similar to the one we will describe in Section~\ref{sec:pmc}. Arc
  diagrams are, in turn, closely related to fat graphs and chord diagrams.
\end{remark}
\section{Bordered Heegaard diagrams for \texorpdfstring{$3$}{3}-manifolds}
We  start with $3$-manifolds with one boundary component:
\begin{definition}
  A \emph{bordered $3$-manifold} consists of a compact, oriented
  $3$-manifold-with-boundary $Y$ and a homeomorphism $\phi\co
  F(\PMC)\to \bdy Y$ for some pointed matched circle $\PMC$.

  Call two bordered $3$-manifolds $(Y_1,\phi_1\co F(\PMC)\to\bdy Y_1)$
  and $(Y_2,\phi_2\co F(\PMC)\to\bdy Y_2)$ \emph{equivalent} if there
  is a homeomorphism $\psi\co Y_1\to Y_2$ so that
  $\phi_2=\psi\circ\phi_1$, i.e., 
  \[
  \xymatrix{
    Y_1 \ar[rr]^\psi_\cong & &Y_2\\
    & F(\PMC)\ar[ul]^{\phi_1}\ar[ur]_{\phi_2}
  }
  \]
  commutes.
\end{definition}
We often drop the parametrization $\phi$ from the notation,
writing $Y$ to denote a bordered $3$-manifold, i.e., $Y=(Y,\phi)$.

We can represent bordered $3$-manifolds combinatorially, as follows:
\begin{definition}\label{def:bordered-HD}
  Let $\PMC$ be a pointed matched circle representing a surface of
  genus $k$.  A \emph{bordered Heegaard diagram with boundary $\PMC$}
  is a tuple
  \[
  \HD=(\Sigma_g,\overbrace{\overbrace{\alpha_1^a,\dots,\alpha_{2k}^a}^{\alphas^a},\overbrace{\alpha_1^c,\dots,\alpha_{g-k}^c}^{\alphas^c}}^{\alphas},\overbrace{\beta_1,\dots,\beta_g}^{\betas},z)
  \]
  where
  \begin{itemize}
  \item $\Sigma_g$ is a compact, oriented surface of genus $g$ with one boundary component.
  \item $\betas$ is a $g$-tuple of pairwise disjoint circles in the
    interior of ${\Sigma}$.
  \item $\alphas^c$ is a $(g-k)$-tuple of pairwise disjoint circles in the
    interior of ${\Sigma}$.
  \item $\alphas^a$ is a $(2k)$-tuple of pairwise disjoint arcs in
    $\Sigma$ with boundary in $\bdy\Sigma$.
  \item $z$ is a basepoint in $\bdy\Sigma\setminus\alphas^a$.
  \item $\alphas^a\cap\alphas^c=\emptyset$.
  \item $\Sigma\setminus(\alphas^c\cup\alphas^a)$ and $\Sigma\setminus\betas$ are both
    connected.
  \item $\PMC=(\bdy\Sigma, \alphas^a\cap\bdy\Sigma, M,z)$. Here, $M$
    matches (exchanges) the two endpoints of each $\alpha_i^a$.
  \end{itemize}
\end{definition}
Especially when we are considering holomorphic curves, we will abuse
notation and also use $\Sigma$ to denote $\Sigma\setminus\bdy\Sigma$;
and similarly for the $\alpha$-arcs.

\begin{construction}\label{const:HD-to-mfld}
  Let $\HD=(\Sigma,\alphas,\betas,z)$ be a bordered Heegaard
  diagram with boundary $\PMC$. There is a corresponding bordered $3$-manifold $Y(\HD)$
  constructed as follows.
  \begin{enumerate}
  \item Thicken ${\Sigma}$ to ${\Sigma}\times[0,1]$.
  \item Attach three-dimensional two-handles along the
    $\alpha$-circles in ${\Sigma}\times\{0\}$.
  \item Attach three-dimensional two-handles along the $\beta$-circles
    in ${\Sigma}\times\{1\}$.
  \end{enumerate}
  \begin{figure}
    \centering
    \includegraphics{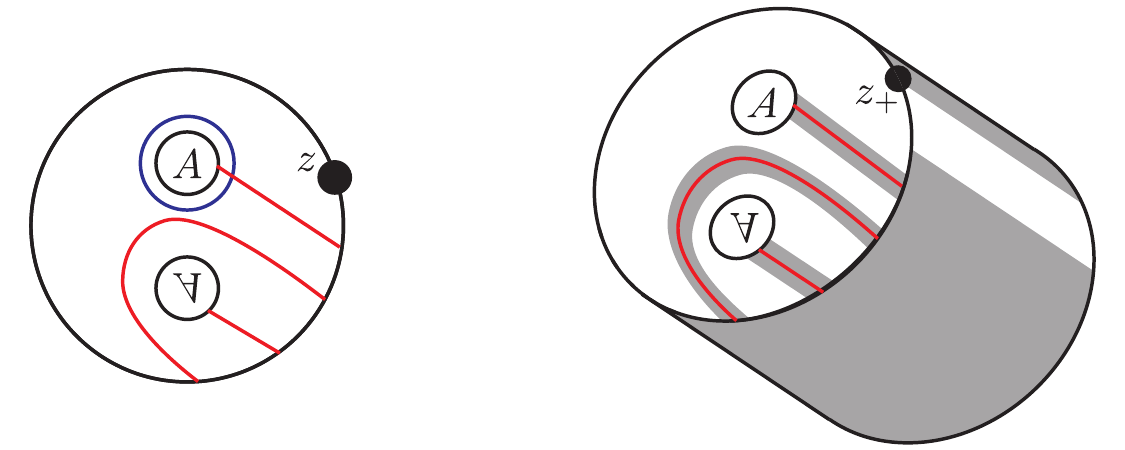}
    \caption{\textbf{A bordered Heegaard diagram and the
        associated $3$-manifold.} The picture on the left is a
      Heegaard diagram for the bordered solid torus shown on the
      right. (The labels $A$ indicate a handle between the
      corresponding circles.) The shaded part of the boundary is
      $\PunctF(\PMC)$. This figure is adapted from~\cite[Figure~\ref*{LOT2:fig:PBHD-3-mfld}]{LOT2}.}
    \label{fig:PBHD-3-mfld}
  \end{figure}
  A parameterization of the boundary is specified as follows.
  Consider the graph
  $$\left({\alphas}^a\cup \left(\bdy{\Sigma}\setminus\nbd(z)\right)\right)\times \{0\}\subset {\Sigma}\times\{0\},$$
  thought of as a subset of $\bdy Y$. The closure $\PunctF$ of a
  neighborhood of this graph is naturally identified with
  $\PunctF(\PMC)$. 
  The complement of $\PunctF$ in $\bdy Y$ is a disk, and is
  identified with $F(\PMC)\setminus\PunctF(\PMC)$. See
  Figure~\ref{fig:PBHD-3-mfld}.
\end{construction}
The orientations in Construction~\ref{const:HD-to-mfld} are confusing;
see~\cite[Construction~\ref*{LOT2:construct:bordered-HD-to-bordered-Y-one-bdy}]{LOT2} for a discussion of this point.

\begin{example}
  Figure~\ref{fig:PBHD-3-mfld} shows a Heegaard diagram for a solid
  torus. This is one of many Heegaard diagrams for bordered solid
  tori; see Section~\ref{sec:surgery} for more Heegaard diagrams for
  solid tori.
\end{example}

\begin{example}
  Figure~\ref{fig:GenusTwoBorderedDiagram}
  (page~\pageref{fig:GenusTwoBorderedDiagram}) shows a Heegaard diagram for a
  genus $2$ handlebody. Again, this is one among many.
\end{example}

\begin{example}\label{eg:bord-knot-compl}
  Fix an oriented surface $\Sigma$, equipped with a $g$-tuple of
  pairwise disjoint, homologically independent curves $\betas$ and a
  $(g-1)$-tuple of pairwise disjoint, homologically independent curves
  $\alphas^c=\{\alpha_1^c,\dots,\alpha_1^{g-1}\}$. Then
  $(\Sigma,\alphas^c,\betas)$ is a Heegaard diagram for a
  three-manifold with torus boundary, and indeed any such
  three-manifold~$Y$ can be described by a Heegaard diagram of this
  type. To turn such a diagram into a bordered Heegaard diagram, we
  proceed as follows.  Fix an additional pair of circles $\gamma_1$
  and $\gamma_2$ in $\Sigma$ so that:
  \begin{itemize}
  \item $\gamma_1$ and $\gamma_2$ are disjoint from
    $\alpha_1^c,\dots,\alpha_{g-1}^c$,
  \item $\gamma_1$ and $\gamma_2$ intersect, transversally, in a
    single point~$p$ and
  \item both of the homology classes
    $[\gamma_1]$ and $[\gamma_2]$ are homologically independent
    from $[\alpha_1^c],\dots,[\alpha_{g-1}^c]$.
  \end{itemize}
  Let $D$ be a disk around~$p$ which is disjoint from all the above
  curves, except for $\gamma_1$ and~$\gamma_2$, each of which it meets
  in a single arc. Then, the complement of $D$ specifies a bordered
  Heegaard diagram for $Y$, for some parametrization of $\bdy Y$.
  A bordered Heegaard diagram for the trefoil complement is
  illustrated in Figure~\ref{fig:TrefoilComplement}.
  \begin{figure}
    \centering
    \begin{overpic}[tics=10, scale=.85714]{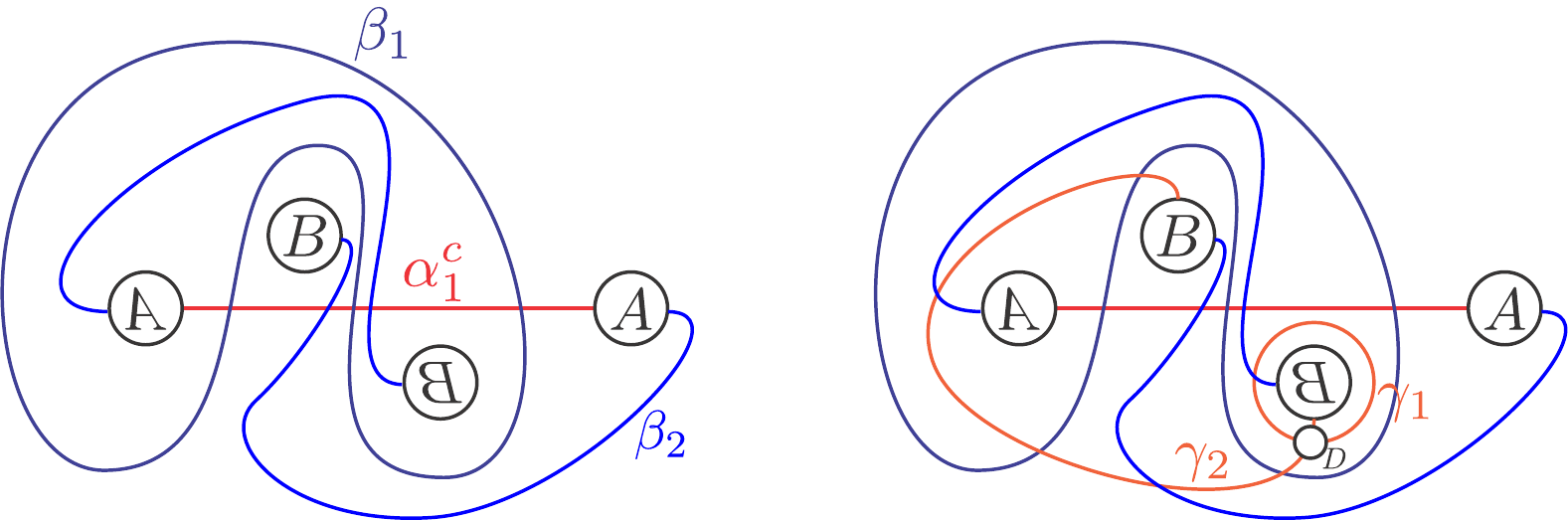}
      \put(94,2){\textcolor{blue}{$\beta_2$}}
    \end{overpic}
    \caption{\textbf{A bordered Heegaard diagram for the trefoil
        complement.} Left: a Heegaard diagram for the complement of
      the trefoil. The circles labeled $A$ (respectively $B$) denote a
      handle attached to the plane. Right: a bordered Heegaard
      diagram, obtained by adding the curves $\gamma_1$ and $\gamma_2$
      and deleting a disk. It may be instructive to compare this
      diagram with Figure~\ref{fig:trefoil}}
    \label{fig:TrefoilComplement}
  \end{figure}
  (This example is drawn
  from~\cite[Section~\ref*{LOT1:sec:Bestiary}]{LOT1}; see also the
  discussion around~\cite[Figure~\ref*{LOT1:fig:Twisting}]{LOT1}.)
\end{example}

We also consider $3$-dimensional cobordisms:
\begin{definition}\label{def:ArcedCobordism}
  Fix pointed matched circles 
  \begin{align*}
  \PMC_L&=(Z_L,\CircPts_L,M_L,z_L)\\
  \PMC_R&=(Z_R,\CircPts_R,M_R,z_R).
  \end{align*}
  An \emph{arced cobordism from $\PMC_L$ to $\PMC_R$} consists of:
  \begin{itemize}
  \item A compact, oriented $3$-manifold-with-boundary $Y$,
  \item an injection $\phi\co (-\PunctF(\PMC_L))\amalg
    \PunctF(\PMC_R)\to \bdy Y$ (where $-$ denotes orientation
    reversal) and
  \item a path $\gamma$ in $\bdy Y\setminus \Image(\phi)$
  \end{itemize}
  such that $Y\setminus (\Image(\phi)\cup \nbd(\gamma))$ is a disk.

  There is a natural notion of equivalence for arced cobordisms,
  similar to the notion of equivalence for bordered $3$-manifolds; we
  leave it as an exercise.
\end{definition}
As for bordered $3$-manifolds, we will typically denote all of the
data of an arced cobordism simply by $Y$. Also as with bordered
$3$-manifolds, there are several other essentially equivalent ways to
formulate the notion of an arced cobordism; see for
instance~\cite[Section~\ref*{LOT2:sec:Diagrams}]{LOT2}
and~\cite[Section~\ref*{HomPair:sec:ab}]{LOTHomPair}.

Again, a combinatorial representation of arced cobordisms will be
important to us:
\begin{definition}
  An \emph{arced Heegaard diagram} is a tuple
  \[
  \HD=(\Sigma_g,\overbrace{\overbrace{\alpha_1^{a,L},\dots,\alpha_{2k_L}^{a,L}}^{\alphas^{a,L}},\overbrace{\alpha_1^{a,R},\dots,\alpha_{2k_R}^{a,R}}^{\alphas^{a,R}},\overbrace{\alpha_1^c,\dots,\alpha_{g-k_L-k_R}^c}^{\alphas^c}}^{\alphas},\overbrace{\beta_1,\dots,\beta_g}^{\betas},\arcz)
  \]
  where
  \begin{itemize}
  \item ${\Sigma}_g$ is a
    compact, oriented surface of genus $g$ with two boundary components,
    $\bdy_L\Sigma$ and $\bdy_R\Sigma$;
  \item $\betas$ is a $g$-tuple of pairwise disjoint curves in the
    interior of ${\Sigma}$;
  \item $\alphas^{a,L}$ is a collection of pairwise-disjoint embedded
    arcs with boundary on $\bdy_L{\Sigma}$;
  \item $\alphas^{a,R}$ is a collection of pairwise-disjoint embedded arcs with
    boundary on $\bdy_R{\Sigma}$;
  \item $\alphas^c$ is a collection of pairwise-disjoint circles in
    the interior of ${\Sigma}$; and
  \item $\arcz$ is a path in
    ${\Sigma}\setminus(\alphas^{a,L}\cup\alphas^{a,R}\cup\alphas^c\cup\betas)$
    between $\bdy_L{\Sigma}$ and $\bdy_R{\Sigma}$.
  \end{itemize}
  These are required to satisfy:
  \begin{itemize}
  \item $\alphas^{a,L}$, $\alphas^{a,R}$ and $\alphas$ are all disjoint,
  \item ${\Sigma}\setminus{\alphas}$ and
    ${\Sigma}\setminus\betas$ are connected and
  \item ${\alphas}$ intersects $\betas$ transversely.
  \end{itemize}
  (Compare~\cite[Definition~\ref*{LOT2:def:ArcedBordered}]{LOT2}.)

  Observe that each boundary component of an arced Heegaard diagram is
  a pointed matched circle.
\end{definition}

\begin{figure}
  \centering
  \includegraphics[scale=.85714]{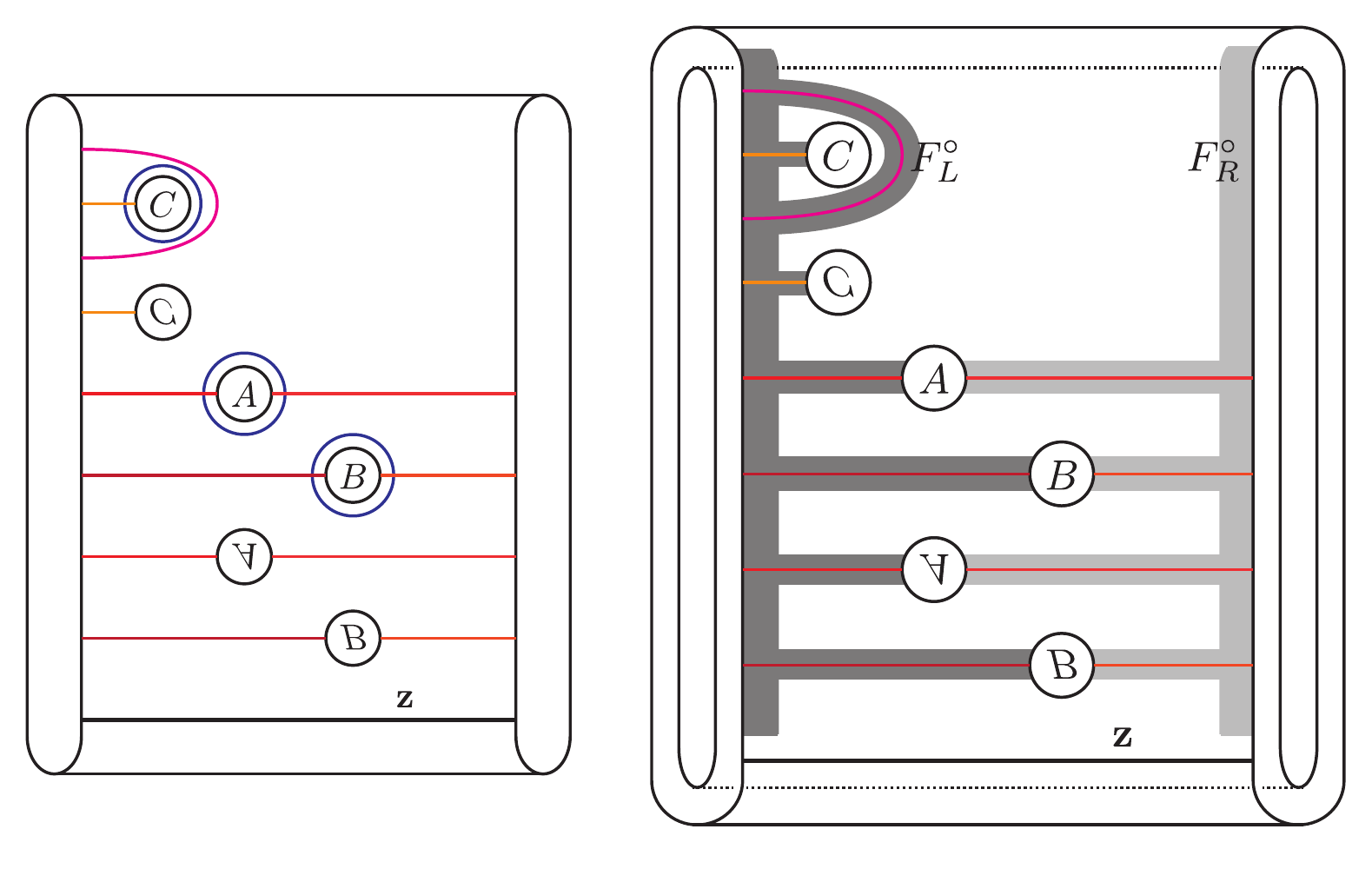}
  \caption{\textbf{Constructing a bordered 3-manifold with two
      boundary components from an arced bordered Heegaard diagram.}
    The Heegaard diagram on the left represents an elementary
    cobordism from the genus two surface to the genus one surface. On
    the right is a (somewhat schematic) depiction of the resulting
    $3$-manifold. The inside part of the boundary, which corresponds
    to $\Sigma\times\{1\}$, is a cylinder, since
    the
    $\beta$-circles caused the handles to be filled in. The outside
    part of the boundary, corresponding to $\Sigma\times\{0\}$, is a surface of genus $3$ with two boundary
    components. The region $F^\circ_L$ (respectively $F^\circ_R$) is
    darkly (respectively lightly) shaded.}
  \label{fig:2bdy-construct}
\end{figure}

\begin{construction}\label{const:arced-to-mfld}
  Fix an arced Heegaard diagram
  $\HD=(\Sigma_g,\alphas^{a,L},\alphas^{a,R},\alphas^c,\betas,\arcz)$
  with boundary $\PMC_L\amalg \PMC_R$. Build a $3$-manifold-with-boundary 
  $Y$ as follows:
  \begin{enumerate}
  \item Thicken ${\Sigma}$ to ${\Sigma}\times[0,1]$.
  \item Attach three-dimensional two-handles along the
    $\alpha$-circles in ${\Sigma}\times\{0\}$.
  \item Attach three-dimensional two-handles along the $\beta$-circles
    in ${\Sigma}\times\{1\}$.
  \end{enumerate}

  Consider the graphs
  \begin{align*}
    \Gamma_L&=\left({\alphas}^{a,L}\cup
      \left(\bdy_L{\Sigma}\setminus\nbd(\arcz)\right)\right)\times
    \{0\}\subset {\Sigma}\times\{0\}\\
    \Gamma_R&=\left({\alphas}^{a,R}\cup
      \left(\bdy_R{\Sigma}\setminus\nbd(\arcz)\right)\right)\times
    \{0\}\subset {\Sigma}\times\{0\}
  \end{align*}
  thought of as subsets of $\bdy Y$. The closure $\PunctF_L$
  (respectively $\PunctF_R$) of a neighborhood of $\Gamma_L$
  (respectively $\Gamma_R$) is naturally identified with
  $\PunctF(\PMC_L)$ (respectively $\PunctF(\PMC_R)$). Let $\phi$
  denote this identification $\PunctF(\PMC_L)\amalg \PunctF(\PMC_R)\to
  \PunctF_L\amalg \PunctF_R$. The arc $\gamma_\arcz=\arcz\times \{0\}$
  connects $\PunctF_L$ and $\PunctF_R$, and $\bdy Y\setminus
  (\PunctF_L\cup \PunctF_R\cup\nbd(\gamma_\arcz))$ is a disk. The data
  $(Y,\phi,\gamma_\arcz)$ is an arced cobordism; we call this
  cobordism the \emph{arced cobordism associated to $\HD$} and denote
  it by $Y(\HD)$. See Figure~\ref{fig:2bdy-construct}.
\end{construction}

\begin{example}\label{eg:mapping-cyl} Let $\psi\co F(\PMC_L)\to F(\PMC_R)$ be a
  homeomorphism taking the preferred disk to the preferred disk and
  the basepoint to the basepoint; that is, $\psi$ is a \emph{strongly
    based homeomorphism}. The \emph{mapping cylinder of $\psi$}, denoted
  $M_\psi$, is the arced cobordism from $\PMC_L$ to $\PMC_R$ given as
  follows. The underlying $3$-manifold is $[0,1]\times
  \PunctF(\PMC_R)$. The map $\phi\co -\PunctF(\PMC_L)\amalg
  \PunctF(\PMC_R)\to \bdy M_\psi$ is given by the identity map $\Id\co
  \PunctF(\PMC_R)\to \{1\}\times \PunctF(\PMC_R)$ and the map $\psi
  \co \PunctF(\PMC_L)\to \{0\}\times \PunctF(\PMC_R)$. The arc
  $\gamma$ is $[0,1]\times\{z\}$.

  Some examples of Heegaard diagrams for mapping cylinders are shown
  in Figure~\ref{fig:DehnTwistsGenusOne}.

  Gluing the mapping cylinder for $\psi$ to a bordered $3$-manifold
  $(Y,\phi)$ in the sense of Exercise~\ref{ex:glue-bord-to-cob} gives $(Y,\phi\circ\psi)$.
\end{example}

\begin{figure}
  \centering
  \includegraphics{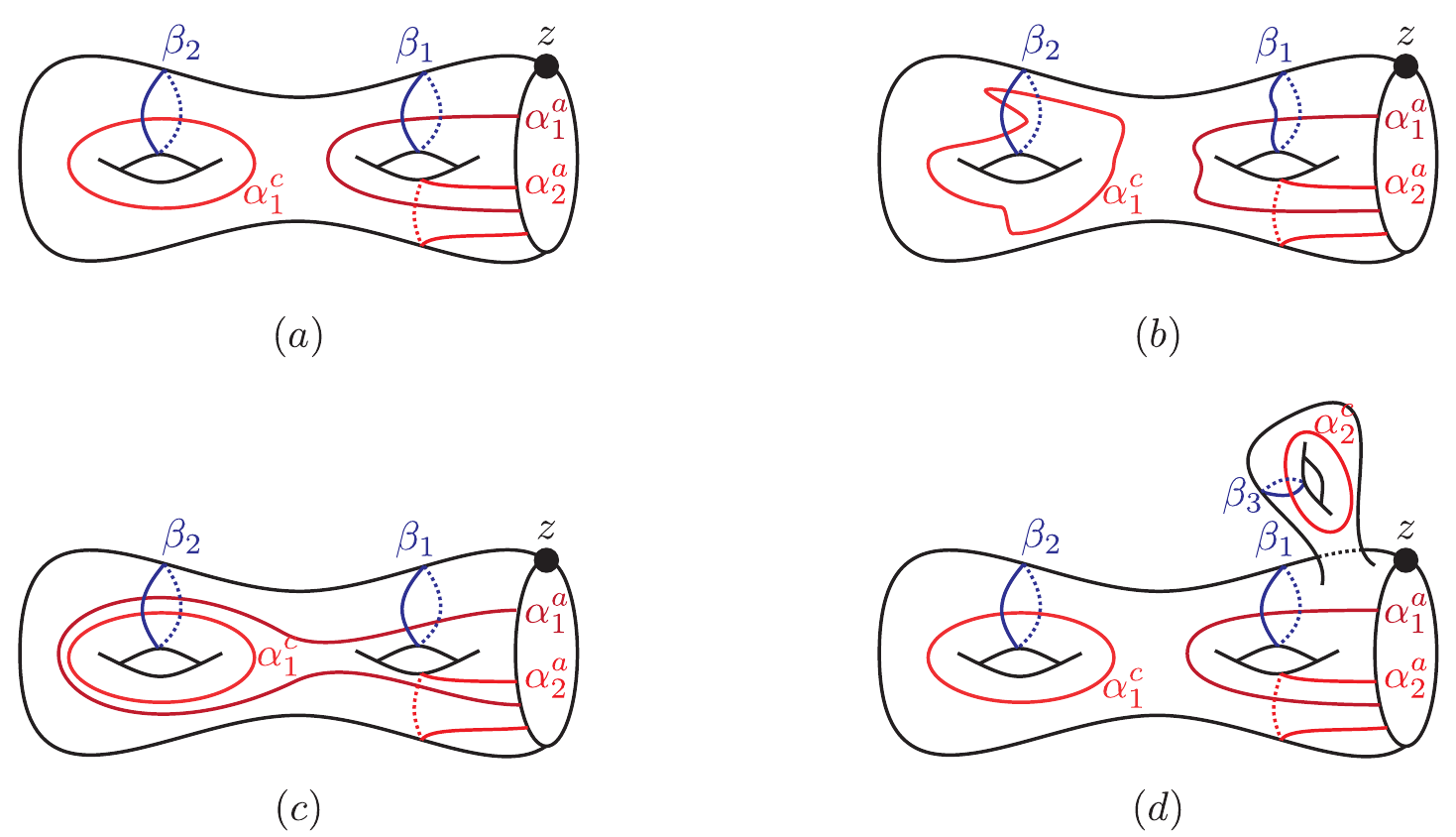}
  \caption{\textbf{Heegaard moves.} (a) A genus $2$ bordered Heegaard diagram for a solid
  torus. (b) The result of applying some isotopies to the $\alpha$-
  and $\beta$-curves. (c) The result of a handleslide of $\alpha_1^a$
  over $\alpha_1^c$. (d) The result of a stabilization.}
  \label{fig:heegaard-moves}
\end{figure}

As in the closed case, the key properties of bordered Heegaard
diagrams are that every bordered $3$-manifold can be represented by a
bordered Heegaard diagram, and any two such diagrams can be related by
certain elementary moves:

\begin{theorem}
  Let $(Y, \phi\co F(\PMC)\to \bdy Y)$ be a bordered
  $3$-manifold. Then $Y$ is represented by some bordered Heegaard
  diagram $\HD$. Similarly, let $(Y, \phi\co \PunctF(\PMC_L)\amalg
  \PunctF(\PMC_R)\to \bdy Y, \gamma)$ be an arced cobordisms. Then $Y$
  is represented by some arced Heegaard diagram $\HD$.
\end{theorem}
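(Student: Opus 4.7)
The plan is to prove this by extending the Morse-theoretic proof that every closed $3$-manifold admits a Heegaard splitting, adapting the construction to match the boundary data prescribed by the pointed matched circle (or, in the arced case, by a pair of pointed matched circles together with a joining arc).

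For the bordered case $(Y, \phi\co F(\PMC)\to \bdy Y)$, first fix a self-indexing Morse function $f_\partial$ on $F(\PMC)$ realizing the handle decomposition from $\PMC$: one index-$0$ critical point in the preferred disk, $2k$ index-$1$ critical points at the cores of the $2k$ strips attached according to $M$, and one index-$2$ critical point in the disk capping off $Z'$. Using $\phi$, transplant $f_\partial$ to $\bdy Y$ and extend it over a collar neighborhood of $\bdy Y$ as $f_\partial + t$, where $t$ is the inward normal coordinate. Then extend further to a self-indexing Morse function $f\co Y\to[0,3]$ on all of $Y$, and by standard handle-cancellation in the interior arrange that $f$ has exactly one interior index-$0$ and one interior index-$3$ critical point.

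Take $\Sigma = f^{-1}(3/2)$, a compact surface whose single boundary component is the level circle of $f_\partial$ on $\bdy Y$; this level circle passes through exactly the $2k$ points $\phi(\CircPts)$ paired by $M$, with $\phi(z)$ designated as the basepoint. The $\alpha$-arcs $\alphas^a$ arise as intersections of $\Sigma$ with the ascending half-disks of the $2k$ boundary index-$1$ critical points, so they have endpoints on $\phi(\CircPts)$. The $\alpha$-circles $\alphas^c$ and $\beta$-circles $\betas$ are intersections of $\Sigma$ with the ascending manifolds of the interior index-$1$ critical points and the descending manifolds of the interior index-$2$ critical points, respectively. Both $\Sigma\setminus\alphas$ and $\Sigma\setminus\betas$ are connected, since cutting $\Sigma$ along all $\alpha$-curves (resp.\ $\beta$-curves) recovers the punctured boundary of the compression body from below (resp.\ above) the level set; each of these is connected because the sublevel set $f^{-1}([0,3/2])$ (resp.\ $f^{-1}([3/2,3])$) has a unique interior index-$0$ (resp.\ index-$3$) critical point.

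The arced cobordism case proceeds analogously. The boundary data now prescribes Morse functions on $-\PunctF(\PMC_L)$ and $\PunctF(\PMC_R)$ realizing their respective handle structures, together with the joining path $\gamma$ across the complementary disk of $\bdy Y$, which we fill with a single index-$2$ critical point. The Heegaard surface $\Sigma = f^{-1}(3/2)$ then has two boundary components, corresponding to $\PMC_L$ and $\PMC_R$, and the arc $\arcz$ is any path in $\Sigma$ between these two boundary components, chosen disjoint from the $\alpha$- and $\beta$-curves and with image in $\bdy Y$ isotopic rel endpoints to $\gamma$ (such an arc exists since $\bdy Y \setminus \Image(\phi)$ is a disk, so any two paths between given boundary points are isotopic rel endpoints). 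The principal technical obstacle in both cases is ensuring that the Morse data on the collar of $\bdy Y$ matches the prescribed parametrization $\phi$ on the nose; this is handled by standard invocations of the isotopy extension theorem and the existence of self-indexing Morse functions on manifolds with boundary.
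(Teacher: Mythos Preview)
The paper does not actually prove this statement here; it simply cites the references. Your Morse-theoretic strategy is the same one used in those proofs, but your execution contains a genuine error.

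With your $f_\partial$ self-indexing on $F(\PMC)$ (values in $[0,2]$), index-$0$ in the preferred disk and index-$2$ in the disk capping $Z'$, the circle $Z=\partial(\text{preferred disk})$ lies at some level strictly between $0$ and $1$, \emph{not} at level $3/2$. The level set $f_\partial^{-1}(3/2)$ sits between the saddles and the maximum, hence is a small circle around the maximum, isotopic to $Z'$ rather than $Z$. Your $\alpha$-arc endpoints therefore land where the \emph{cocores} of the $1$-handles meet $Z'$, not at $\phi(\CircPts)\subset Z$. The pointed matched circle you obtain on $\partial\HD$ is the dual of $\PMC$, which for genus $\geq 2$ is typically a different pointed matched circle, so the diagram does not represent the given bordered structure. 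The fix is to swap the extrema of $f_\partial$: put the minimum in the $Z'$-disk and the maximum in the preferred disk, so that $f_\partial^{-1}(3/2)\simeq Z$ and the ascending arcs of the saddles meet it precisely at $\CircPts$.

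A second, smaller issue: the collar extension $f_\partial+t$ makes the gradient \emph{transverse} to $\partial Y$, so the boundary saddles are not critical points of $f$ at all, and ``ascending half-disks of the boundary index-$1$ critical points'' is not a standard object. The clean setup (and the one used in the cited proofs) takes the gradient-like vector field tangent to $\partial Y$; then the boundary critical points of $f|_{\partial Y}$ become honest critical points of $f$, with index controlled by the sign of the normal Hessian. In that picture the boundary minimum and maximum serve as the \emph{unique} index-$0$ and index-$3$ critical points of $f$ on all of $Y$, so you should drop the interior index-$0$ and index-$3$ points rather than add them alongside---otherwise the sublevel sets are disconnected and the claimed connectivity of $\Sigma\setminus\alphas$ and $\Sigma\setminus\betas$ fails.
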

The case of bordered Heegaard diagrams
is~\cite[Lemma~\ref*{LOT1:lem:3mfld-heegaard}]{LOT1} while
the arced Heegaard diagram case is~\cite[Proposition~\ref*{LOT2:prop:diagrams-exist}]{LOT2}.

\begin{theorem} 
  Suppose that $\HD$ and $\HD'$ are bordered Heegaard diagrams
  representing equivalent bordered $3$-manifolds $Y(\HD)\cong
  Y(\HD')$. Then $\HD$ and $\HD'$ can be made diffeomorphic by a
  sequence of the following moves:
  \begin{itemize}
  \item Isotopies of the $\alpha$- and/or $\beta$-curves.
  \item Handleslides or $\alpha$-circles over $\alpha$-circles,
    $\alpha$-arcs over $\alpha$-circles, and $\beta$-circles over
    $\beta$-circles.
  \item Stabilizations and destabilizations of the diagram, i.e.,
    taking connected sums with the standard Heegaard diagram for $S^3$.
  \end{itemize}
  (See Figure~\ref{fig:heegaard-moves}.)

  An exactly analogous statement holds for arced Heegaard diagrams and
  arced cobordisms.
\end{theorem}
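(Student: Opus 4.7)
The plan is to reduce the bordered statement to the classical Reidemeister--Singer theorem for closed Heegaard diagrams by a controlled capping construction. For a fixed pointed matched circle $\PMC$, I would first choose a \emph{standard cap}: a bordered Heegaard diagram $\HD_0$ with boundary $-\PMC$ representing the handlebody $H(\PMC)$ naturally bounded by $F(-\PMC)$, equipped with its identity parameterization. Gluing $\HD_0$ to any bordered Heegaard diagram $\HD$ with boundary $\PMC$ along their common boundary circle produces a closed Heegaard diagram $\HD^c := \HD \cup_\bdy \HD_0$ whose associated closed $3$-manifold is $Y(\HD)\cup_{F(\PMC)} H(\PMC)$. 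If $\HD$ and $\HD'$ represent equivalent bordered $3$-manifolds, then $\HD^c$ and $(\HD')^c$ represent homeomorphic closed $3$-manifolds, and the classical Reidemeister--Singer theorem (proved by Cerf-theoretic or handle-theoretic means) produces a sequence of isotopies, handleslides, and (de)stabilizations connecting them.

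The core of the argument is to upgrade this closed sequence of moves into one that respects the decomposition $\HD^c = \HD\cup\HD_0$, so that it restricts to a sequence of legal bordered moves on the $\HD$ side. The key observation is that each of the classical moves can be localized: by general position one may assume an isotopy, a handleslide, or a stabilization takes place in an arbitrarily small ball, and after a preliminary isotopy this ball can be arranged to lie either entirely inside $\HD$ or entirely inside $\HD_0$. Moves inside $\HD_0$ change only the cap, not $\HD$, and can be inverted at the end of the sequence at the cost of extra moves inside $\HD$. A move that would straddle the boundary circle can be replaced, after an intermediate stabilization near the boundary, by a move that lies on one side. The principal delicate point is that sliding an $\alpha$-arc over another $\alpha$-arc is not a permitted move, since it would alter the matching~$M$ in $\PMC$; but such a slide in $\HD^c$ would, after capping, amount to an automorphism of $\PMC$ fixing the underlying surface, and one checks directly that the combinatorial data of a pointed matched circle admits no nontrivial such automorphisms, so these slides do not arise.

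The main obstacle is precisely this boundary bookkeeping: verifying that the localized moves can be chosen to preserve the cap diagram $\HD_0$ and to avoid $\alpha$-arc over $\alpha$-arc handleslides, and showing that any isotopy pushing an endpoint of an $\alpha$-arc past $z$ can be undone by a compensating isotopy supported away from $z$. These are handled using the connectivity of $\Sigma\setminus\alphas$ and $\Sigma\setminus\betas$, which allow one to find sliding paths avoiding the basepoint and the boundary arcs as needed.

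For the arced case, the strategy is essentially the same: cap off both boundary components $\PMC_L$ and $\PMC_R$ with standard caps to obtain a closed Heegaard diagram, apply Reidemeister--Singer, and localize the resulting moves. Here the arc $\arcz$ plays the role that the basepoint $z$ played in the bordered case, and the additional constraint is that moves must not cross $\arcz$; this is again ensured by performing preliminary isotopies using the connectivity hypotheses. The statement then follows by the same localization and inversion argument, concluding the proof.
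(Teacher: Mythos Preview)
The paper does not prove this theorem; it simply cites \cite[Proposition~4.10]{LOT1} and the analogous result in \cite{LOT2}. The proof there proceeds by relative Morse theory: a bordered Heegaard diagram arises from a self-indexing Morse function on $Y$ with prescribed behavior in a collar of $\bdy Y$ (dictated by the pointed matched circle), and Cerf theory relative to that boundary data connects any two such functions through a generic path whose singularities are exactly the listed moves. The boundary constraints---in particular, that $\alpha$-arcs may only slide over $\alpha$-circles---are built into the setup from the start, since the descending manifolds that produce the $\alpha$-arcs are pinned near $\bdy Y$.

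Your capping approach is a natural idea, but the localization step has a genuine gap. After gluing the cap $\HD_0$, each $\alpha$-arc of $\HD$ is completed to a closed $\alpha$-circle by an arc living in $\HD_0$. Reidemeister--Singer for the closed diagram will in general produce handleslides involving these ``mixed'' circles---for instance, sliding an $\alpha$-circle of the cap over a mixed circle, or sliding two mixed circles over one another along a path that crosses the gluing circle. None of these has a direct interpretation as a bordered move on $\HD$, and your assertion that any straddling move can be pushed to one side after an intermediate stabilization is exactly the hard content of the theorem, not a general-position triviality. The claim that moves ``inside $\HD_0$ can be inverted at the end'' also fails as stated: moves in $\HD_0$ change how the $\alpha$-arcs are completed to circles, so they do not commute past moves involving the mixed circles.

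Separately, your argument that $\alpha$-arc over $\alpha$-arc slides ``do not arise'' because pointed matched circles admit no nontrivial automorphisms is incorrect: many pointed matched circles have nontrivial automorphisms (the split genus-$2$ matching, for example, has one exchanging the two summands). More to the point, in the capped diagram there are no $\alpha$-arcs at all, so the question is not whether arc-over-arc slides arise but whether the closed moves involving mixed circles can be traded for legal bordered moves---and that is what remains unproved.
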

The case of bordered Heegaard diagrams is~\cite[Proposition~\ref*{LOT1:prop:heegaard-moves}]{LOT1} while
the arced Heegaard diagram case is~\cite[Proposition~\ref*{LOT2:prop:heegaard-moves}]{LOT2}.

\section{The structure of bordered Floer homology}
\subsection{The connected boundary case}
For simplicity, we begin with the connected boundary case. Bordered
Floer homology assigns:
\begin{center}
\begin{tabular}{l|l}
Pointed matched circle $\PMC$ & \dg algebra $\Alg(\PMC)$\\[3pt]
\hline
Bordered $3$-manifold $(Y,\phi\co F(\PMC)\to \bdy Y)$ & Right $\Ainf$
$\Alg(\PMC)$-module $\CFAa(Y)$\\
& Left \dg $\Alg(-\PMC)$-module $\CFDa(Y)$.
\end{tabular}
\end{center}

Actually, the modules $\CFAa(Y)$ and $\CFDa(Y)$ depend on a choice of
bordered Heegaard diagram $\HD$ for $Y$, as well as another auxiliary
choice---an almost-complex structure. However:
\begin{theorem}\label{thm:invariance}\cite[Theorems~\ref*{LOT1:intro:D-invariance} and~\ref*{LOT1:intro:A-invariance}]{LOT1}
  The quasi-isomorphism types of the modules $\CFAa(Y)$ and $\CFDa(Y)$
  depend only on the equivalence class of bordered $3$-manifold $Y$.
\end{theorem}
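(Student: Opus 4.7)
The plan is to follow the same overall strategy as the invariance proof in the closed case \cite{OS04:HolomorphicDisks}, adapted to the bordered setting, and to reduce the theorem to invariance under a short list of elementary moves between bordered Heegaard diagrams. By the preceding theorem, any two diagrams representing equivalent bordered $3$-manifolds are connected by a finite sequence of (i) isotopies of the $\alpha$- and $\beta$-curves, (ii) handleslides among $\alpha$-circles, $\alpha$-arcs over $\alpha$-circles, and $\beta$-circles, and (iii) stabilizations/destabilizations. In addition, the definition of $\CFDa$ and $\CFAa$ depends on a choice of admissible almost-complex structure on the symmetric product (or on $\Sigma\times[0,1]\times\RR$ in the cylindrical formulation), so one must also prove independence of that choice. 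Thus the task reduces to producing, for each elementary change of data, an explicit $\Ainf$-quasi-isomorphism between the corresponding modules.

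First I would handle independence of almost-complex structure. Given two admissible paths $J_0,J_1$, I would interpolate by a generic path $J_t$ and count holomorphic curves with respect to this path; the resulting $\Ainf$-morphism (or type~$D$ morphism, on the $\CFDa$ side) gives a chain map. A one-parameter family of paths yields a chain homotopy, and concatenating with a reverse continuation gives the identity up to homotopy. In the bordered setting one must check that the usual compactness and gluing arguments carry over; the analytic input needed for this is exactly what will be set up in Lecture~3 for proving $\bdy^2=0$, plus a codimension-one analysis of the ends of $1$-dimensional moduli spaces with varying $J_t$. Isotopy invariance is essentially the same argument, with the moving Lagrangians playing the role of the moving almost-complex structure, and admissibility ensuring compactness.

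Next comes handleslide invariance, which I expect to be the main obstacle. The standard trick is to pass from pairs $(\alphas,\betas)$ to triples by introducing a third set of curves $\alphas'$ (respectively $\betas'$) which is a small Hamiltonian push-off of $\alphas$ (respectively $\betas$) modified to implement one handleslide, and then to define a map by counting holomorphic triangles whose three edges lie on $\alphas$, $\alphas'$ and $\betas$. In the bordered case one must be careful because handleslides can involve $\alpha$-arcs running out to $\bdy\Sigma$, so the triangle-counting map has to respect the algebra action: it must record Reeb chord asymptotics at $\bdy\Sigma\times[0,1]\times\RR$ and use a distinguished generator $\Theta\in \CFAa(\Sigma,\alphas,\alphas')$ that represents the top class of the Heegaard diagram for $\#^k(S^1\times S^2)$ with some $1$-handles along the boundary. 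The associativity of the triangle maps, plus vanishing of the appropriate $\Theta\otimes\Theta$ square, gives that the composition with the reverse handleslide map is homotopic to the identity. The analytic step here is a codimension-one analysis of the ends of moduli spaces of holomorphic rectangles with boundary on $\alphas,\alphas',\alphas'',\betas$, together with the bordered-specific issue of degenerations at the east infinity of $\Sigma\times[0,1]\times\RR$ producing extra terms that are absorbed by the algebra action; this is where most of the work lies.

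Finally, for stabilization, I would show that stabilizing by the standard genus-one Heegaard diagram for $S^3$ (with its two transversely intersecting curves in a ball disjoint from $\bdy\Sigma$) produces a module quasi-isomorphic to the original. Concretely, the generators of the stabilized complex are of the form $\x\otimes\{p\}$ where $p$ is the unique intersection of the new pair, and the almost-complex structure can be stretched along the neck of the connected sum so that holomorphic curves passing through $\{p\}$ split off trivial disks near $p$; the resulting bijection on moduli spaces yields the required identification. Putting the three kinds of moves together, one gets an invariant module up to quasi-isomorphism. The construction is functorial enough that an equivalence of bordered $3$-manifolds $\psi\co Y_1\to Y_2$ induces a diffeomorphism of associated Heegaard diagrams, which in turn induces a tautological identification of the modules, completing the proof.
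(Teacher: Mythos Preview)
Your outline is correct and matches the approach the paper points to: the paper does not actually prove Theorem~\ref{thm:invariance} in these notes but cites \cite{LOT1} and remarks that ``most of the ideas in the proof of invariance\dots are present in the proof that $\bdy^2=0$ on $\CFDa$ and the proof of invariance in the closed case~\cite{OS04:HolomorphicDisks},'' which is exactly the strategy you sketch (independence of $J$, isotopy, handleslides via triangle maps, and stabilization). Your identification of the handleslide step---particularly handleslides of $\alpha$-arcs over $\alpha$-circles, where the triangle maps must respect the Reeb-chord asymptotics and hence the algebra action---as the place where the genuinely new bordered analysis enters is accurate.
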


The utility of $\CFAa$ and $\CFDa$ comes from the fact that they
can be used to reconstruct the Heegaard Floer homology groups of
closed three-manifolds
$\HFa(Y)$, via what we call a \emph{pairing theorem}. Recall that
$\HFa(Y)$ is the homology of a chain complex $\CFa(Y)$.
\begin{theorem}\label{thm:pairing1}\cite[Theorem~\ref*{LOT1:thm:TensorPairing}]{LOT1} Suppose
  that $(Y_1,\phi_1\co F(\PMC)\to \bdy Y)$ and $(Y_2,\phi_2\co
  {-F(\PMC)}\to \bdy Y)$ are bordered $3$-manifolds with boundaries
  parameterized by $\PMC$ and $-\PMC$, respectively. Write
  $Y_1\cup_\bdy Y_2$ to mean $\bigl(Y_1\amalg Y_2)/(\phi_1(x)\sim
  \phi_2(x))$. Then
  \[
  \CFa(Y)\simeq \CFAa(Y_1)\DTP_{\Alg(F)}\CFDa(Y_2).
  \]
\end{theorem}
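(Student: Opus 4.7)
The plan is to prove the theorem by a \emph{neck-stretching} argument applied to a glued Heegaard diagram. First, choose bordered Heegaard diagrams $\HD_1$ and $\HD_2$ for $Y_1$ and $Y_2$ whose boundary pointed matched circles are identified as $\PMC$ and $-\PMC$ via the parameterizations $\phi_1$ and $\phi_2$. Gluing $\HD_1$ and $\HD_2$ along this common boundary circle $Z$ yields a closed Heegaard diagram $\HD$ for $Y=Y_1\cup_\bdy Y_2$. At the level of generators, the $\alpha$-arcs on the two sides match across $Z$ in pairs governed by the pointed matched circle, so a generator of $\CFa(\HD)$---a $g$-tuple of intersection points of $\alpha$- and $\beta$-curves---decomposes uniquely as a pair $(\x_1,\x_2)$ of bordered generators with complementary occupied $\alpha$-arcs. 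This is exactly the description of the nonzero summands of $\CFAa(\HD_1)\DTP_{\Alg(\PMC)}\CFDa(\HD_2)$, so the underlying $\Field$-vector spaces agree.

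To compare differentials, I would use the cylindrical reformulation of $\CFa$ (counting holomorphic curves in $\Sigma\times[0,1]\times\RR$) and work on the closed diagram with a one-parameter family of almost-complex structures $J_T$ that stretches the neck $Z\times[0,1]\times\RR$ to length $T$. For every $T$ the resulting complex still computes $\HFa(Y)$. As $T\to\infty$, symplectic-field-theory compactness produces, from each $J_T$-holomorphic curve, a limiting broken configuration consisting of a curve $u_1$ in $\Sigma_1\times[0,1]\times\RR$, a curve $u_2$ in $\Sigma_2\times[0,1]\times\RR$, and possibly trivial-cylinder pieces over $Z$. The curves $u_1$ and $u_2$ acquire east/west asymptotics at a sequence of Reeb chords on $Z$, ordered by their $\RR$-coordinate, and the two sequences of chords must coincide.

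The hard part will be the combined compactness and gluing analysis needed to match such broken curves bijectively (in counts modulo $2$) with the pairs of curves enumerated by the tensor product. On the $\HD_1$ side, curves with $k$ ordered Reeb-chord asymptotics at east infinity are precisely what the $\Ainf$-operation $m_{k+1}$ on $\CFAa(Y_1)$ counts; on the $\HD_2$ side, the \dg-differential on $\CFDa(Y_2)$ records single algebra-element outputs whose iterates build up the same sequences of chords. The compatibility condition ``east asymptotics of $u_1$ equal west asymptotics of $u_2$'' is precisely the defining relation of the box tensor product $\DTP_{\Alg(\PMC)}$. Thus, for $T$ sufficiently large, counting $J_T$-holomorphic curves gives a chain isomorphism $\CFa(\HD)\cong \CFAa(\HD_1)\DTP_{\Alg(\PMC)}\CFDa(\HD_2)$, and passage to homology together with Theorem~\ref{thm:invariance} yields the claimed quasi-isomorphism independent of the choices made.
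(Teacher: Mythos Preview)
Your outline captures the first half of the argument correctly: glue the bordered diagrams, identify generators via complementary idempotents, and stretch the neck along $Z$.  The gap is in the second half.  After neck-stretching alone, the limiting objects are \emph{matched holomorphic curves}: pairs $(u_1,u_2)$ whose sequences of Reeb chords agree \emph{and whose $\RR$-coordinates agree}, i.e., elements of a fiber product
\[
\cM^{B_1}(\x_1,\y_1;\rho_1,\dots,\rho_n)\sos{\ev}{\times}{\ev}\cM^{B_2}(\x_2,\y_2;\rho_1,\dots,\rho_n).
\]
This is \emph{not} what the box tensor product counts.  In $\CFAa(\HD_1)\DT\CFDa(\HD_2)$, a term in the differential pairs a single $\Ainf$-operation $m_{k+1}$ on the $\HD_1$ side (a rigid curve asymptotic to $k$ \emph{sets} of chords) with a $k$-story holomorphic building on the $\HD_2$ side (each story rigid, contributing one algebra element).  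In your matched-curve picture, by contrast, each of $u_1$ and $u_2$ is a single curve, the individual moduli spaces are typically $(n-1)$-dimensional rather than rigid, and the height-matching condition couples the two sides so that neither can be computed from $\HD_i$ alone.

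The paper resolves this with a second, independent deformation: replace the matching condition $\ev(u_1)=\ev(u_2)$ by $T\cdot\ev(u_1)=\ev(u_2)$ and send $T\to\infty$.  In that limit the height differences on the $\HD_2$ side blow up, forcing $u_2$ to break into a multi-story building with one chord per story; simultaneously the height differences on the $\HD_1$ side collapse, so $u_1$ becomes a rigid curve with chords grouped into sets at common heights.  Only after this ``deformation of the diagonal'' do the two sides decouple into exactly the moduli spaces defining $m_{k+1}$ and the iterated $\CFDa$ differential.  Your sentence ``curves with $k$ ordered Reeb-chord asymptotics at east infinity are precisely what $m_{k+1}$ counts'' is therefore not correct for the curves produced by neck-stretching alone; you need this additional limit to make it true.
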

Here, $\DTP$ denotes the appropriate notion of tensor product given
that $\CFAa$ may be an $\Ainf$-module. In the case that $\CFAa$ is an
ordinary module, this reduces to the derived tensor product---which is
good, since $\CFAa$ is only well-defined up to quasi-isomorphism. But
this distinction is not so important: the module $\CFDa$ is
projective, so the derived and ordinary tensor products agree.

The modules $\CFAa(Y)$ and $\CFDa(Y)$ are defined using holomorphic
curves (though for certain kinds of diagrams the techniques
of~\cite{SarkarWang07:ComputingHFhat} can be used to compute them
combinatorially). By contrast, the algebras $\Alg(\PMC)$ are defined
combinatorially. A few further properties of the algebras:
\begin{itemize}
\item Each $\Alg(\PMC)$ is a finite-dimensional algebra over $\Field$.
\item The algebra $\Alg(\PMC)$ decomposes as a direct sum of subalgebras
  \[
  \Alg(\PMC)=\bigoplus_{i=-k}^k \Alg(\PMC,i).
  \]
  Here, $k$ is the genus of $F(\PMC)$.  The action of $\Alg(\PMC,i)$
  on $\CFAa(Y)$ and $\CFDa(Y)$ is trivial for $i\ne 0$, but the other
  summands come up for the cobordism invariants below.
\item The algebra $\Alg(\PMC,-k)$ is isomorphic to $\Field$ (with
  trivial differential). In particular, if $\PMC$ is the (unique)
  pointed matched circle for $S^2$ then $\Alg(\PMC)=\Field$. The
  algebra $\Alg(\PMC,k)$ is quasi-isomorphic to $\Field$.
\item If $\PMC$ is the unique pointed matched circle for the torus
  then $\Alg(\PMC,0)$ has no differential; in terms of generators and
  relations, $\Alg(\PMC,0)$ is given by
  \begin{equation}\label{eq:torus-alg}
  \xymatrix{
   \iota_0\bullet\ar@/^1pc/[r]^{\rho_1}\ar@/_1pc/[r]_{\rho_3} & \bullet\iota_1\ar[l]_{\rho_2}
  }/(\rho_2\rho_1=\rho_3\rho_2=0).
  \end{equation}
  This algebra is $8$-dimensional over $\Field$. It will appear
  frequently, so we name the rest of the elements in its standard
  basis: let $\rho_{12}=\rho_1\rho_2$, $\rho_{23}=\rho_2\rho_3$ and
  $\rho_{123}=\rho_1\rho_2\rho_3$.

  (Our notation for path algebras might be somewhat non-standard. The
  vertices $\iota_0$ and $\iota_1$ are, of course, idempotents. The
  arrow $\rho_1$ indicates that $\iota_0\rho_1\iota_1=\rho_1$.)
\end{itemize}

\subsection{Invariants of arced cobordisms}
To get a useful theory, we need to generalize to three-manifolds with
two boundary components. In fact, the invariants which come up in this
two-boundary-component case are associated to
three-manifolds equipped with some extra structure:
the arced cobordisms of Definition~\ref{def:ArcedCobordism}.

Suppose $Y$ is an arced cobordism from $\PMC_1$ to $\PMC_2$. Then
there are several kinds of bimodules we can associate to $Y$: we can
treat each boundary component of $Y$ in either a ``type $D$'' way or a
``type $A$'' way. (What this means will be clearer after
Lectures~\ref{lec:CFD} and~\ref{lec:analysis}.) This gives invariants
$\CFDDa(Y)$ (both boundaries viewed in a type $D$ way), $\CFDAa(Y)$
(one boundary, say $\PMC_1$, viewed in a type $D$ way and the other in
a type $A$ way), and $\CFAAa(Y)$ (both boundaries viewed in a type $A$
way).  The bimodule
$\CFDDa(Y)$ is an ordinary---indeed, bi-projective---\dg bimodule;
both of $\CFDAa(Y)$ and $\CFAAa(Y)$ are typically $\Ainf$-bimodules.

As with the modules associated to bordered $3$-manifolds, the bimodules
$\CFDDa(Y)$, $\CFDAa(Y)$ and $\CFAAa(Y)$ depend on the choices of
Heegaard diagrams and almost-complex structures. Again, up to
quasi-isomorphism they are invariants:
\begin{theorem}
  \label{thm:bimod-invariance}\cite[Theorem~\ref*{LOT2:thm:InvarianceOfBimodules}]{LOT2}
  The quasi-isomorphism types of the bimodules $\CFDDa(Y)$, $\CFDAa(Y)$ and
  $\CFAAa(Y)$ depend only on the equivalence class of arced cobordism
  $Y$.
\end{theorem}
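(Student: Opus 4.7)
The plan is to imitate the proof of Theorem \ref{thm:invariance} (the one-boundary invariance theorem), adapted to the two-boundary, arced setting. First I would observe that by the Heegaard-move theorem for arced diagrams (the analog of the proposition cited just before this statement), any two arced Heegaard diagrams $\HD,\HD'$ presenting equivalent arced cobordisms can be connected by a finite sequence of (i) isotopies of $\alpha$- and $\beta$-curves supported in the interior of $\Sigma$, (ii) handleslides among $\alpha$-circles, of $\alpha$-arcs over $\alpha$-circles (on either the $L$ or $R$ side), and among $\beta$-circles, and (iii) stabilizations. Also I must allow a change of admissible (cylindrical, boundary-respecting) almost-complex structure $J$ on $\Sigma\times[0,1]\times\mathbb{R}$. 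It therefore suffices to prove that for each elementary move, the bimodules $\CFDDa$, $\CFDAa$, $\CFAAa$ are quasi-isomorphic before and after.

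Next I would handle each move in turn. For a change of $J$, a generic one-parameter family $J_t$ yields continuation maps via parametrized moduli spaces that respect both Reeb-chord sequences on $\bdy_L\Sigma$ and on $\bdy_R\Sigma$; a two-parameter family produces the homotopies showing these maps are quasi-inverses, exactly as in the proof that $\bdy^2=0$ will be sketched in Lecture~\ref{lec:analysis}. Isotopy invariance is then a corollary (or handled directly by a Hamiltonian continuation, being careful that endpoints of $\alpha$-arcs slide only along the prescribed boundary, preserving the pointed matched circle). For handleslides I would set up a triangle/pentagon diagram: one inserts a third set of arcs/circles (either $\alphas'$ or $\betas'$) obtained from the original by a small Hamiltonian isotopy, forms the relevant arced Heegaard triple, and uses counts of holomorphic triangles (with asymptotics compatible with both pointed matched circles) to build bimodule chain maps, whose compositions are shown to be chain-homotopic to the identity by holomorphic quadrilateral counts. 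Stabilization invariance is proved by a neck-stretching/area argument: connect-summing on the standard genus-one diagram for $S^3$ (in the interior, away from $\bdy\Sigma$) forces each holomorphic curve to contain a unique small triangle on the new handle, giving a tautological identification of moduli spaces and hence of bimodule structures.

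The main obstacle will be the handleslide step for an $\alpha$-arc slid over an $\alpha$-circle, since the arc meets the boundary and its endpoints carry idempotent/Reeb-chord data in $\Alg(\PMC_L)$ or $\Alg(\PMC_R)$. One must verify that the triangle counts assemble into maps that are genuinely $\Alg(-\PMC_L)$- or $\Alg(\PMC_R)$-equivariant (and $\Ainf$-homomorphisms in the $\CFDAa$ and $\CFAAa$ cases), which amounts to analyzing the boundary degenerations of one-dimensional moduli spaces of triangles with mixed Reeb-chord asymptotics on both boundary components. This is a genuine extension of the single-boundary bookkeeping, but no new analytic input is required beyond what goes into $\CFDa$: the transversality, compactness, and gluing results carry over verbatim to arced Heegaard triples and quadruples once one keeps track of both east and west punctures. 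With these verifications in place, composing the elementary quasi-isomorphisms along a sequence of moves relating any two choices proves the theorem; the full details are carried out in \cite{LOT2}.
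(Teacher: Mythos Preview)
The paper does not actually prove this theorem: it is stated with a citation to \cite{LOT2} and no argument is given in these notes. (Indeed, the introduction explicitly lists the proof of invariance among the ``aspects of the theory which are missing from these notes.'') So there is nothing in the paper to compare your proposal against directly.

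That said, your sketch is the standard approach and is essentially what is carried out in \cite{LOT2}: reduce to elementary Heegaard moves via the arced Heegaard-move theorem, then handle change of $J$ by continuation maps, isotopies similarly, handleslides by holomorphic triangle counts (with quadrilaterals for the homotopies), and stabilization by a local neck-stretching argument. Your identification of the $\alpha$-arc handleslide as the place requiring the most care is accurate.

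One remark: for $\CFDDa$ specifically, the paper's definition in Section~\ref{sec:def-CFDD} via drilling and induction along $p\co \Alg(\PMC_L\#\PMC_R)\to\Alg(\PMC_L)\otimes\Alg(\PMC_R)$ offers a shortcut---invariance of $\CFDDa(\HD)$ can be deduced from invariance of $\CFDa(\drHD)$ (Theorem~\ref{thm:invariance}) together with the observation that Heegaard moves on $\HD$ correspond to Heegaard moves on $\drHD$. This does not help for $\CFDAa$ or $\CFAAa$, however, so your direct approach is still needed for those.
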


By convention, we view $\CFDDa(Y)$ as having commuting left actions
by $\Alg(-\PMC_1)$ and $\Alg(-\PMC_2)$; $\CFDAa(Y)$ as having a left
action by $\Alg(-\PMC_1)$ and a right action by $\Alg(\PMC_2)$; and
$\CFAAa(Y)$ as having right actions by $\Alg(\PMC_1)$ and
$\Alg(\PMC_2)$. However, $\Alg(-\PMC)$ is the opposite algebra to
$\Alg(\PMC)$ (Exercise~\ref{ex:opposite}) so we can move actions from
one side to the other at the cost of introducing / deleting minus signs.
In the literature, we often find it convenient to decorate the
invariants with the algebras they are over, writing
\[
\lsup{\Alg(-\PMC_1),\Alg(-\PMC_2)}\CFDDa(Y) \qquad\qquad
\lsup{\Alg(-\PMC_1)}\CFDAa(Y)_{\Alg(\PMC_2)} \qquad\qquad \CFAAa(Y)_{\Alg(\PMC_1),\Alg(\PMC_1)}.
\]
The superscripts indicate that the module structure is projective, and
subscripts indicate the module structure may be $\Ainf$. This notation
leads to a kind of Einstein summation behavior for tensor products in
the pairing theorems:
\begin{theorem}\label{thm:bimod-mod-pair}\cite[Theorem~\ref*{LOT2:thm:GenReparameterization}]{LOT2}
  Let $Y_1$ be a bordered $3$-manifold with boundary $\PMC_1$ and
  $Y_2$ be an arced cobordism from $\PMC_1$ to $\PMC_2$. Let
  $Y_1\cup_{F(\PMC)} Y_2$ be the bordered $3$-manifold obtained by
  gluing $Y_1$ to $Y_2$ (Exercise~\ref{ex:glue-bord-to-cob}). Then
  there are quasi-isomorphisms
  \begin{align*}
    \CFAa(Y_1)\DTP_{\Alg(\PMC_1)}  \CFDAa(Y_{2}) & \simeq \CFAa(Y_{1}\cup_{F(\PMC_1)} Y_{2})\\
    \CFAAa(Y_{2}) \DTP_{\Alg(-\PMC_1)} \CFDa(Y_1)& \simeq \CFAa(Y_{1}\cup_{F(\PMC_1)} Y_{2}) \\
    \CFAa(Y_1)\DTP_{\Alg(\PMC_1)}  \CFDDa(Y_{2}) & \simeq \CFDa(Y_{1}\cup_{F(\PMC_1)} Y_{2}) \\
    \CFDAa(Y_{2}) \DTP_{\Alg(-\PMC_1)}  \CFDa(Y_1) & \simeq \CFDa(Y_{1}\cup_{F(\PMC_1)} Y_{2}).
  \end{align*}
\end{theorem}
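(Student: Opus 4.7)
The plan is to extend the proof of the single-boundary pairing theorem (Theorem~\ref{thm:pairing1}) to the setting where one of the two pieces has an extra, ``free'' boundary component. Given an arced cobordism $Y_2$ from $\PMC_1$ to $\PMC_2$ represented by an arced Heegaard diagram $\HD_2$, and a bordered $3$-manifold $Y_1$ with boundary $\PMC_1$ represented by a bordered Heegaard diagram $\HD_1$, one can glue these diagrams along their shared boundary circle to obtain a bordered Heegaard diagram $\HD_1\cup_{\PMC_1}\HD_2$ for $Y_1\cup_{F(\PMC_1)}Y_2$, with remaining boundary $\PMC_2$. The invariant of the glued manifold is then computed from holomorphic curves in this diagram, and the goal is to match these with tensor products of curves on the two sides.

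First I would choose a family of almost complex structures that degenerates the neck along the gluing circle. As the neck length $R\to\infty$, a standard SFT-style degeneration argument should show that holomorphic curves in $\HD_1\cup_{\PMC_1}\HD_2$ converge to pairs consisting of a holomorphic curve in $\HD_1$ with some asymptotics along $\bdy\HD_1=\PMC_1$ and a curve in $\HD_2$ with matching asymptotics along its left boundary, together with curves in $\HD_2$ possibly carrying additional asymptotics at $\bdy_R\HD_2=\PMC_2$. A gluing lemma in the opposite direction identifies each such matched pair as the limit of a unique family of curves in the glued diagram for large $R$. This correspondence is exactly what the tensor product over $\Alg(\PMC_1)$ records: the bimodule structure maps on $\CFDAa(Y_2)$ (respectively $\CFDDa(Y_2)$, $\CFAAa(Y_2)$) encode the Reeb chord asymptotics on the $Y_2$ side, the module structure maps on $\CFAa(Y_1)$ (or the type-$D$ differential on $\CFDa(Y_1)$) encode them on the $Y_1$ side, and the pairing over $\Alg(\PMC_1)$ matches them; the asymptotics on the $\PMC_2$ side survive unchanged and become the module structure of the glued object.

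All four statements in the theorem are instances of the same geometric picture; the distinction reflects only how the data of Reeb-chord asymptotics on each side is packaged. In the $\CFAa\DTP\CFDAa$ version, asymptotics at $\PMC_1$ are recorded as $\Ainf$ inputs on the left and as type-$D$ coefficients on the right, with the $\DTP$ tensor product summing over all ways to distribute these asymptotics; in the $\CFDAa\DTP\CFDa$ version the roles are reversed; the $\CFAa\DTP\CFDDa$ and $\CFAAa\DTP\CFDa$ cases are analogous, but now the free boundary $\PMC_2$ inherits type-$D$ (respectively type-$A$) structure from $Y_2$. Invariance (Theorem~\ref{thm:bimod-invariance}) and the projectivity of the type-$D$ sides ensure that the $\DTP$ tensor product is homotopically well-behaved and independent of diagram choices.

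The main obstacle will be the analytic input to the neck-stretching argument: establishing compactness so that every limit of broken curves is of the expected matched-pair form (with no curves or disks bubbling off into the neck itself), and proving the corresponding gluing lemma so that every matched pair arises uniquely. Special care is needed because $\HD_2$ has \emph{two} boundary components, so curves there may simultaneously carry asymptotics at both $\PMC_1$ and $\PMC_2$; one must control these together, showing that the $\PMC_2$ asymptotics are preserved under the degeneration while the $\PMC_1$ asymptotics factor as in a one-sided pairing. Once the analysis is in place, the identification of the tensor-product chain complex with the glued curve count reduces to bookkeeping of idempotents, algebra elements, and the combinatorics of how partitions of Reeb chords on the two sides recombine; this is largely formal given the analytic foundation.
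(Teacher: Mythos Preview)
Your overall strategy is correct and matches the approach the paper (and \cite{LOT2}) takes: the bimodule pairing theorems are proved by repeating the neck-stretching argument from the single-boundary case (Theorem~\ref{thm:pairing1}), with the extra boundary $\PMC_2$ of $\HD_2$ simply carried along as a spectator.

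However, your sketch skips a step that the paper singles out as essential. After stretching the neck, the limiting objects are \emph{matched holomorphic curves}: pairs $(u_1,u_2)$ lying in a fiber product
\[
\cM^{B_1}(\x_1,\y_1;\rho_1,\dots,\rho_n)\sos{\ev}{\times}{\ev}\cM^{B_2}(\x_2,\y_2;\rho_1,\dots,\rho_n)
\]
over the evaluation maps recording the $\RR$-coordinates of the Reeb chords. This fiber product is \emph{not} the tensor product $\DTP$; the contributing moduli spaces are typically positive-dimensional on each side, and which curves in $\HD_1$ matter still depends on $\HD_2$. Your sentence ``This correspondence is exactly what the tensor product over $\Alg(\PMC_1)$ records'' elides this. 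The missing step is the time-dilation (``deforming the diagonal'') argument of Section~\ref{sec:prove-pairing}: replace the matching condition $\ev(u_1)=\ev(u_2)$ by $T\cdot\ev(u_1)=\ev(u_2)$ and send $T\to\infty$. In that limit the $\HD_2$ side breaks into a multistory building with one story per Reeb chord (giving the iterated type-$D$ coefficients), while on the $\HD_1$ side the chord heights collide into the sets $\rhos_j$ that define the $\Ainf$ operations $m_{i+1}$. Only after this second degeneration does the count become the box tensor product $\CFAa\DT\CFDa$ (and its bimodule analogues). So the ``bookkeeping'' you mention in the last paragraph is not purely formal; it rests on this additional analytic deformation, which you should state explicitly.
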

\begin{theorem}\label{thm:bimod-bimod-pair}\cite[Theorem~\ref*{LOT2:thm:GenComposition}]{LOT2}
  Let $Y_{1}$ be an arced cobordism from $\PMC_1$ to $\PMC_2$ and
  $Y_{2}$ an arced cobordism from $\PMC_2$ and $\PMC_3$. Let
  $Y_1\cup_{F(\PMC_2)} Y_2$ be the result of gluing $Y_1$ to $Y_2$
  along $F(\PMC_2)$ (Exercise~\ref{ex:glue-bord-to-cob}). Then there
  are quasi-isomorphisms of bimodules:
  \begin{align*}
    \CFDAa(Y_{1})\DTP_{\Alg(\PMC_2)} \CFDAa(Y_{2}) & \simeq 
    \CFDAa(Y_{1}\cup_{F(\PMC_2)} Y_{2}) \\
    \CFAAa(Y_{1})\DTP_{\Alg(\PMC_2)} \CFDAa(Y_{2}) & \simeq 
    \CFAAa(Y_{1}\cup_{F(\PMC_2)} Y_{2}) \\
    \CFDAa(Y_{1})\DTP_{\Alg(\PMC_2)} \CFDDa(Y_{2}) & \simeq 
    \CFDDa(Y_{1}\cup_{F(\PMC_2)}
    Y_{2}) \\
    \CFAAa(Y_{1})\DTP_{\Alg(\PMC_2)} \CFDDa(Y_{2}) & \simeq 
    \CFDAa(Y_{1}\cup_{F(\PMC_2)} Y_{2})
  \end{align*}  
\end{theorem}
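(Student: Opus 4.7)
The plan is to extend the holomorphic curves proof of the mod-mod pairing theorem (Theorem~\ref{thm:pairing1}, sketched in Section~\ref{sec:prove-pairing}) to arced Heegaard diagrams with two boundary components per side. First I would choose arced Heegaard diagrams $\HD_1$ for $Y_1$ and $\HD_2$ for $Y_2$ whose $\PMC_2$-boundary components agree, and glue them along this shared component to produce an arced Heegaard diagram $\HD = \HD_1 \cup_{\PMC_2} \HD_2$ representing $Y_1 \cup_{F(\PMC_2)} Y_2$. Generators of the appropriate bimodule for $\HD$ are in bijection with pairs $(\x_1,\x_2)$ of generators in $\HD_1$ and $\HD_2$ whose idempotents on the $\PMC_2$-side are complementary, which is precisely the underlying set of the $\DT$-tensor product.

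Second, I would implement neck-stretching along an annular neighborhood of the gluing curves in the Heegaard surface. As the neck length tends to infinity, holomorphic curves in $\HD$ should, by an SFT-type compactness argument, degenerate into configurations consisting of a curve in $\HD_1$ and a curve in $\HD_2$ whose asymptotic Reeb chord sequences at the $\PMC_2$-ends match. Conversely, standard gluing results assemble such matched pairs back into honest curves in $\HD$ for sufficiently stretched almost-complex structures. The matching condition on the asymptotic data is exactly what appears in the algebraic definition of $\DT$ over $\Alg(\PMC_2)$. All four lines of the theorem are proved in parallel by making different choices of which side of the gluing circle is ``read off'' as algebra output (giving a $D$-structure) versus which side counts Reeb chord sequences as an $\Ainf$-input (giving an $A$-structure); the behavior on the remaining boundary components of $Y_1$ and $Y_2$ is independent of this gluing and is inherited from the stated invariants on each side.

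The main technical obstacle is the analytic work at the neck-stretching step: proving the requisite compactness statement (ruling out degenerations that do not fit into the matched-pair picture, such as bubbling off of closed components or of components living entirely in the neck region) and the corresponding gluing result for curves with $\Ainf$-asymptotics. One must also verify that the resulting bijection intertwines not just the differentials but also all higher $\Ainf$-operations in the $\CFAAa$ and $\CFDAa$ cases; this is handled by an induction on the number of Reeb chords escaping at each end, organized exactly as in the proof of the bordered-arced pairing (Theorem~\ref{thm:bimod-mod-pair}).
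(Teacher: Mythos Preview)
Your overall strategy---glue arced Heegaard diagrams, stretch the neck along the $\PMC_2$ circle, and appeal to SFT compactness and gluing---is indeed the approach taken in~\cite{LOT2} (these notes do not prove the theorem here, but the mod--mod analogue is sketched in Section~\ref{sec:prove-pairing}). However, there is a genuine gap in your outline: you write that after neck-stretching ``the matching condition on the asymptotic data is exactly what appears in the algebraic definition of $\DT$ over $\Alg(\PMC_2)$.'' This is not correct. Neck-stretching alone produces \emph{matched holomorphic curves}---pairs $(u_1,u_2)$ with $\ev(u_1)=\ev(u_2)$, i.e., a fiber product over $\RR^{n-1}$ (Formula~\eqref{eq:matched-curve}). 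This fiber product involves high-dimensional moduli spaces on both sides and is symmetric in the two factors; it is \emph{not} the box tensor product, and as the paper emphasizes (the two ``drawbacks'' following Example~\ref{eg:split-HD-1}) it does not by itself express the answer in terms of independent invariants of $\HD_1$ and $\HD_2$.

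The missing step is the ``deforming the diagonal'' or time-dilation argument: one replaces the matching condition $\ev(u_1)=\ev(u_2)$ by the $T$-matched condition $T\cdot\ev(u_1)=\ev(u_2)$ and sends $T\to\infty$. In this limit the chord heights on the $\HD_1$ side collapse together (producing \emph{sets} of Reeb chords at common heights---the $\Ainf$ inputs defining $\CFAa$) while the heights on the $\HD_2$ side spread apart into multi-story buildings (producing the iterated single-chord outputs defining $\CFDa$). It is this asymmetric limit, not the neck-stretching, that yields the $\DT$ structure and distinguishes which side is type $A$ and which is type $D$. Your sentence about ``making different choices of which side \dots\ is read off as algebra output'' gestures at this, but without the time-dilation deformation there is no mechanism producing that asymmetry. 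Once you insert this step (and check that it interacts correctly with the additional boundary components, which is routine), the argument goes through as in~\cite{LOT2}.
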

The compact way of stating Theorems~\ref{thm:bimod-mod-pair}
and~\ref{thm:bimod-bimod-pair} is that if you tensor type $A$
boundaries with type $D$ boundaries then you get what you expect.

\subsection{Pairing theorems without \texorpdfstring{$A$}{A} modules}
To avoid a long detour into $\Ainf$ formalism, in most of these
lectures we will avoid $\CFAa$. (The exception will be the discussion
of the pairing theorem in 
Lecture~\ref{lec:analysis}.) So, it will be useful to have versions of
the pairing
theorems---Theorems~\ref{thm:pairing1},~\ref{thm:bimod-mod-pair}
and~\ref{thm:bimod-bimod-pair}---making use only of type $D$
modules. We can accomplish this using certain dualities of bordered
Floer invariants:

\begin{theorem}\cite[Theorem~\ref*{HomPair:thm:or-rev}]{LOTHomPair}\label{thm:or-rev}
  Let $Y$ be a bordered $3$-manifold with boundary $F(\PMC)$. Let $-Y$
  denote $Y$ with its orientation reversed, which has boundary
  $F(-\PMC)$. Then there are quasi-isomorphisms:
  \begin{align}
  \Mor_{\Alg(-\PMC)}(\CFDa(Y),\Alg(-\PMC))&\simeq \CFAa(-Y)
  \label{eq:ReverseTypeD}\\
  \Mor_{\Alg(\PMC)}(\CFAa(Y),\Alg(\PMC))&\simeq \CFDa(-Y).
  \label{eq:ReverseTypeA}
  \end{align}
\end{theorem}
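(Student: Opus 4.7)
\medskip

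The plan is to prove the first quasi-isomorphism~\eqref{eq:ReverseTypeD}; the second~\eqref{eq:ReverseTypeA} follows by applying the first to $-Y$ in place of $Y$ (since $-(-\PMC) = \PMC$), together with the fact that $\Mor$-duality of projective modules is involutive up to quasi-isomorphism. So the main work is to exhibit a chain-level quasi-isomorphism
\[
\Phi \co \Mor_{\Alg(-\PMC)}\bigl(\CFDa(Y),\Alg(-\PMC)\bigr) \to \CFAa(-Y)
\]
of right $\Ainf$-modules over $\Alg(-\PMC)$.

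First, I would relate Heegaard diagrams for $Y$ and $-Y$. Given a bordered Heegaard diagram $\HD=(\Sigma,\alphas,\betas,z)$ for $Y$, the diagram obtained by reversing the orientation of $\Sigma$ (and swapping the roles of $\alpha$- and $\beta$-curves, in the appropriate convention) is a bordered Heegaard diagram for $-Y$ with boundary~$-\PMC$. Under this involution the generators (intersection points $\x\in\Tat\cap\Tbt$) and their idempotents are in canonical bijection, and so as vector spaces over the idempotent ring one has
\[
\Mor_{\Alg(-\PMC)}(\CFDa(Y),\Alg(-\PMC)) \;\cong\; \bigoplus_\x I(\x)\cdot \Alg(-\PMC) \;\cong\; \CFAa(-Y),
\]
since $\CFDa(Y)$ is left-projective on its generators. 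The map $\Phi$ is defined by this identification at the level of generators.

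The main step is to match the operations. I would analyze how the moduli spaces of embedded holomorphic curves used to define $\CFDa(Y;\HD)$ relate to those used to define $\CFAa(-Y;-\HD)$. The underlying curves are identified by complex conjugation (with a conjugated family of almost-complex structures), but east- and west-infinity asymptotics switch roles: a curve that contributes a term $a\otimes\y$ to $\delta^1(\x)$ in $\CFDa(Y)$, where $a\in\Alg(-\PMC)$ encodes a sequence of Reeb chords $(\rho_1,\dots,\rho_n)$, becomes a curve that contributes $\y$ to $m_{n+1}(\x,\rho_1,\dots,\rho_n)$ in $\CFAa(-Y)$, where the $\rho_i$ are now read with the reversed orientation of~$\PMC$. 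Combining this dictionary with the explicit formula for the differential on the $\Mor$ complex,
\[
d_{\Mor}(\phi) = d_{\Alg(-\PMC)}\circ\phi + \phi\circ d_{\CFDa(Y)} + (\phi\otimes\id)\circ\delta^1,
\]
and for the right $\Alg(-\PMC)$-action on $\Mor$ by post-composition with multiplication, I would check term-by-term that $\Phi$ intertwines these with the $\Ainf$ operations~$m_j$ on $\CFAa(-Y)$. This is essentially bookkeeping once the moduli space identification is in place, and the standard invariance/homotopy arguments upgrade the chain-level equivalence to a quasi-isomorphism independent of the auxiliary choices.

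The hard part will be the moduli space identification: one must verify that the holomorphic curve counts that define the type~$D$ action (where sequences of Reeb chords are recorded into a single algebra element via concatenation in a prescribed order) agree with those that define the $\Ainf$ action (where the same sequence is distributed across successive module inputs), after the orientation reversal reverses the natural ordering of chords on $\bdy\Sigma$. The algebraic payoff is the passage from ``concatenate into one $a\in\Alg(-\PMC)$'' to ``apply the dual functional to each sub-concatenation and multiply in $\Alg(-\PMC)$,'' which is precisely the content of the $\Mor$-to-$A$ identification; making this precise, with proper care given to the idempotent decomposition and to provincial (interior) curve contributions, is where the analytic content lies.
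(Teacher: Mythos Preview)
The paper does not actually prove this theorem: it is explicitly listed among the omissions in the introduction, with the reader referred to~\cite{LOTHomPair}. So there is no proof in the paper to compare against. That said, the approach taken in~\cite{LOTHomPair} is \emph{not} a direct moduli-space identification of the kind you sketch. Instead, one introduces an auxiliary arced bordered Heegaard diagram $\mathsf{AZ}(\PMC)$ (the ``Auroux--Zarev piece'') whose type~$AA$ bimodule is quasi-isomorphic to $\Alg(\PMC)$ itself; gluing $\mathsf{AZ}(\PMC)$ to a diagram for $Y$ yields a diagram for $-Y$, and the pairing theorem (Theorem~\ref{thm:pairing1} and its bimodule analogues) then gives the quasi-isomorphisms~\eqref{eq:ReverseTypeD} and~\eqref{eq:ReverseTypeA} essentially for free. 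No direct comparison of curve counts on $\HD$ versus $-\HD$ is needed.

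Your direct approach runs into genuine obstacles that you have not addressed. First, the idempotents do not match: the generator $\x^*$ dual to $\x$ in $\Mor_{\Alg(-\PMC)}(\CFDa(Y),\Alg(-\PMC))$ satisfies $\x^*\cdot I_D(\x)=\x^*$, where $I_D(\x)$ records the $\alpha$-arcs \emph{not} occupied by $\x$; by contrast the generator $\x$ in $\CFAa(-Y)$ satisfies $\x\cdot I_A(\x)=\x$ with $I_A(\x)$ the \emph{occupied} arcs. These are complementary idempotents, so your ``canonical'' vector-space identification is already wrong at the level of module structure over the idempotent ring. Second, the holomorphic curves are genuinely different: the moduli spaces defining $\CFDa$ have a single Reeb chord at each height, while those defining the $\Ainf$ operations on $\CFAa$ allow \emph{sets} of chords at each height (this is precisely what the $a(\rhos)$ of Exercise~\ref{ex:a-of-rhos} encodes). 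Your dictionary ``a curve contributing $a\otimes\y$ to $\delta^1$ becomes a curve contributing to $m_{n+1}$'' does not account for the type~$A$ curves with $|\rhos_i|>1$, and the differential on $\Alg$ does not straightforwardly mediate between the two. Finally, your reduction of~\eqref{eq:ReverseTypeA} to~\eqref{eq:ReverseTypeD}: applying~\eqref{eq:ReverseTypeD} to $-Y$ gives $\Mor(\CFDa(-Y),\Alg(\PMC))\simeq\CFAa(Y)$, not~\eqref{eq:ReverseTypeA}; getting from there to~\eqref{eq:ReverseTypeA} requires a double-duality statement for $\Ainf$-modules that you have not justified (and your formula for $d_{\Mor}$ mixes the dg-module and type-$D$ conventions).
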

In Formula~\eqref{eq:ReverseTypeD}, $\Mor$ denotes the chain complex
of module homomorphisms from $\CFDa(Y)$ to $\Alg(-\PMC)$, with
differential given by 
\[
\bdy(f) = f\circ \bdy_{\CFDa(Y)}+d_{\Alg(-\PMC)}\circ f.
\]
So, for instance, the cycles in the $\Mor$ complex are the \dg module
homomorphisms, i.e., chain maps which respect the module structure.
In Formula~\eqref{eq:ReverseTypeA}, $\Mor$ denotes the chain complex
of $\Ainf$-morphisms.

\tolerance=400
\begin{corollary}\label{cor:mod-mod-hom}\cite[Theorem~\ref*{HomPair:thm:hom-pair}]{LOTHomPair}
  Suppose that $Y_1$ and $Y_2$ are bordered $3$-manifolds with
  boundary $F(\PMC)$. Then 
  \begin{align*}
    \CFa(-Y_1\cup_{F(\PMC)}Y_2) &\simeq
    \Mor_{\Alg(-\PMC)}(\CFDa(Y_1),\CFDa(Y_2))\\
    &\simeq
    \Mor_{\Alg(\PMC)}(\CFAa(Y_1),\CFAa(Y_2))\\
    \shortintertext{so} \HFa(-Y_1\cup_{F(\PMC)}Y_2) &\simeq
    \Ext_{\Alg(-\PMC)}(\CFDa(Y_1),\CFDa(Y_2))\\
    &\simeq
    \Ext_{\Alg(\PMC)}(\CFAa(Y_1),\CFAa(Y_2)).
  \end{align*}
\end{corollary}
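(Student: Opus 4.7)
The strategy is to combine the tensor-product pairing theorem (Theorem~\ref{thm:pairing1}) with the orientation-reversal dualities of Theorem~\ref{thm:or-rev}, and then to invoke the standard algebraic identity $\Mor_A(M,A)\otimes_A N\simeq \Mor_A(M,N)$, which is valid when $M$ is projective.

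For the first quasi-isomorphism, I would view $-Y_1$ as a bordered $3$-manifold with boundary parameterized by $F(-\PMC)$ and $Y_2$ as one whose $F(\PMC)$-boundary is identified with $-F(-\PMC)$. Applying Theorem~\ref{thm:pairing1} over $\Alg(-\PMC)$ gives
\[
\CFa(-Y_1\cup_{F(\PMC)}Y_2)\simeq \CFAa(-Y_1)\DTP_{\Alg(-\PMC)}\CFDa(Y_2).
\]
Substituting equation~\eqref{eq:ReverseTypeD} of Theorem~\ref{thm:or-rev}, namely $\CFAa(-Y_1)\simeq \Mor_{\Alg(-\PMC)}(\CFDa(Y_1),\Alg(-\PMC))$, and then applying the natural quasi-isomorphism
\[
\Mor_{\Alg(-\PMC)}(\CFDa(Y_1),\Alg(-\PMC))\DTP_{\Alg(-\PMC)}\CFDa(Y_2)\simeq \Mor_{\Alg(-\PMC)}(\CFDa(Y_1),\CFDa(Y_2))
\]
produces the first identification. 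The last step is legitimate because, as noted in the remarks following Theorem~\ref{thm:pairing1}, the type $D$ invariant $\CFDa(Y_1)$ is projective.

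For the second quasi-isomorphism, I would run the symmetric argument on the opposite side of the gluing. Applying Theorem~\ref{thm:pairing1} over $\Alg(\PMC)$ to the decomposition $Y_2\cup(-Y_1)$ yields
\[
\CFa(-Y_1\cup Y_2)\simeq \CFAa(Y_2)\DTP_{\Alg(\PMC)}\CFDa(-Y_1),
\]
and equation~\eqref{eq:ReverseTypeA} of Theorem~\ref{thm:or-rev} rewrites $\CFDa(-Y_1)\simeq \Mor_{\Alg(\PMC)}(\CFAa(Y_1),\Alg(\PMC))$. Combining these and applying the $\Ainf$-version of the $\Mor$--tensor identity gives the second isomorphism, and passing to homology converts the $\Mor$ complexes into $\Ext$ groups.

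The principal obstacle is the last algebraic step, particularly in its $\Ainf$ incarnation needed for the $\CFAa$ identification. The \dg version of $\Mor_A(M,A)\DTP_A N\simeq \Mor_A(M,N)$ is classical and applies directly to $\CFDa(Y_1)$ by projectivity, but the $\Ainf$ analogue requires the $\Ainf$-module formalism developed alongside the bordered invariants (cf.\ \cite{LOTHomPair}). Once that identity is in hand, the rest of the argument is a routine chain of substitutions.
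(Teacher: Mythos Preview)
Your argument is correct and is exactly the deduction the paper intends: the corollary is placed immediately after Theorem~\ref{thm:or-rev} precisely because it follows by combining that duality with Theorem~\ref{thm:pairing1} and the Hom--tensor identity for projectives, just as you do. The paper does not spell out the proof (it cites~\cite{LOTHomPair}), but your derivation is the standard one.

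One small simplification for the $\CFAa$ case: rather than invoking the $\Ainf$ Hom--tensor identity via equation~\eqref{eq:ReverseTypeA}, you can instead use that $\CFDa(-Y_1)$ is itself projective, so $\CFAa(Y_2)\DTP\CFDa(-Y_1)\simeq\Mor\bigl(\Mor(\CFDa(-Y_1),\Alg(\PMC)),\CFAa(Y_2)\bigr)$ holds on the nose, and then apply equation~\eqref{eq:ReverseTypeD} to $-Y_1$ to identify the inner dual with $\CFAa(Y_1)$. This sidesteps the $\Ainf$ subtlety you flagged.
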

\tolerance=200

For bimodules the situation is somewhat more subtle: there are a few
natural notions of ``dual'', and some versions introduce boundary Dehn
twists in the bimodules. The following result will be more than
sufficient for these lectures:
\begin{theorem}\label{thm:bimod-mod-hom}\cite[Corollary~\ref*{HomPair:cor:bimod-mod-hom-pair}]{LOTHomPair}
  If $Y_1$ is a bordered $3$-manifold with boundary $F(\PMC_1)$ and
  $Y_2$ is an arced cobordism from $-\PMC_1$ to $-\PMC_2$ then
  \begin{equation}
    \label{eq:MorDDtoD}
    \begin{split}
      \CFAa(Y_1\cup_{F(\PMC_1)}(-Y_2))&\simeq \Mor_{\Alg(-\PMC_1)}(\CFDDa(Y_2),\CFDa(Y_1))\\
      \CFDa(-Y_1\cup_{F(\PMC_1)}Y_2)&\simeq
      \Mor_{\Alg(-\PMC_1)}(\CFDa(Y_1),\CFDDa(Y_2)).
    \end{split}
  \end{equation}
\end{theorem}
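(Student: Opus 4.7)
The plan is to derive both quasi-isomorphisms as formal consequences of three earlier ingredients: the pairing theorem (Theorem~\ref{thm:bimod-mod-pair}), the orientation-reversal duality (Theorem~\ref{thm:or-rev}, together with its bimodule analogue), and the standard algebraic identity
\[
\Mor_{\Alg}(M,\Alg)\DTP_{\Alg} N \simeq \Mor_{\Alg}(M,N),
\]
which holds whenever $M$ is (homotopically) projective---as is the case for $\CFDa$ and, bi-projectively, for $\CFDDa$.

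For the first quasi-isomorphism, I first apply the appropriate line of Theorem~\ref{thm:bimod-mod-pair} to the gluing $Y_1\cup_{F(\PMC_1)}(-Y_2)$, viewing $-Y_2$ as an arced cobordism whose $\Alg(-\PMC_1)$-boundary is treated in a type $A$ way. This rewrites the left-hand side as a tensor product
\[
\CFAa(Y_1\cup_{F(\PMC_1)}(-Y_2)) \;\simeq\; \CFAAa(-Y_2) \DTP_{\Alg(-\PMC_1)} \CFDa(Y_1),
\]
with a residual $\Alg(-\PMC_2)$-action. Next, I invoke the bimodule extension of Theorem~\ref{thm:or-rev} (the arced-cobordism analogue of equation~\eqref{eq:ReverseTypeD}) to identify
\[
\CFAAa(-Y_2) \;\simeq\; \Mor_{\Alg(-\PMC_1)}(\CFDDa(Y_2),\Alg(-\PMC_1)),
\]
with the second algebra action passing to the $\Mor$-complex in the natural way. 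Finally, the projective identity above collapses the tensor product into $\Mor_{\Alg(-\PMC_1)}(\CFDDa(Y_2),\CFDa(Y_1))$, giving the desired quasi-isomorphism.

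The second quasi-isomorphism in~\eqref{eq:MorDDtoD} is obtained symmetrically: apply the pairing theorem to express $\CFDa(-Y_1\cup_{F(\PMC_1)} Y_2)$ as $\CFAa(-Y_1)\DTP_{\Alg(\PMC_1)}\CFDDa(Y_2)$, then use equation~\eqref{eq:ReverseTypeD} of Theorem~\ref{thm:or-rev} to rewrite $\CFAa(-Y_1)\simeq \Mor_{\Alg(-\PMC_1)}(\CFDa(Y_1),\Alg(-\PMC_1))$ (after converting left/right actions via the fact that $\Alg(-\PMC)=\Alg(\PMC)^{\op}$), and then collapse via the projective identity to land on $\Mor_{\Alg(-\PMC_1)}(\CFDa(Y_1),\CFDDa(Y_2))$.

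The main obstacles I anticipate are bookkeeping rather than conceptual. First, one must justify the bimodule analogue of Theorem~\ref{thm:or-rev}, which is not explicitly stated in the excerpt; the same proof scheme (dualizing type $D$ structures against the diagonal bimodule) should apply, but writing it down cleanly for the two-boundary-component case and its interaction with the arc $\gamma$ requires care. Second, one must keep straight the interplay between orientation reversal of surfaces, the opposite-algebra identification $\Alg(-\PMC)\cong \Alg(\PMC)^{\op}$, and the convention of writing actions on the left versus the right; it is here that sign/side mismatches are most likely to appear. Once both are in place, the actual chain of quasi-isomorphisms is mechanical.
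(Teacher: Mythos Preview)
The paper does not actually prove this theorem: it explicitly lists ``a proof of the $\Mor$ versions of the pairing theorem (Theorems~\ref{thm:or-rev} and~\ref{thm:bimod-mod-hom})'' among the omissions and defers to~\cite{LOTHomPair}. So there is no in-paper argument to compare against directly.

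That said, your strategy is the right one and is essentially how the result is obtained in~\cite{LOTHomPair}: combine the tensor-product pairing theorems with the orientation-reversal dualities, then use the hom-tensor identity for projective modules. Your derivation of the second quasi-isomorphism goes through cleanly, using only the module-level statement~\eqref{eq:ReverseTypeD}.

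For the first quasi-isomorphism, however, you invoke a bimodule analogue of Theorem~\ref{thm:or-rev}, and here your characterization of the obstacle as ``bookkeeping rather than conceptual'' undersells the difficulty. The paper itself warns, immediately before stating this theorem, that for bimodules ``the situation is somewhat more subtle: there are a few natural notions of `dual', and some versions introduce boundary Dehn twists in the bimodules.'' The interaction between the arc $\gamma$ in the arced cobordism and the dualization is exactly where these boundary Dehn twists can appear; establishing the correct bimodule duality statement---one with no Dehn twist, so that the formula comes out as stated---is genuine content in~\cite{LOTHomPair}, not just notation management. Your outline is correct, but the step you flag is a real theorem, not a lemma one writes down in passing.
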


\begin{example}
  \label{ex:Reparameterize}
  The bimodules $\CFDDa(\psi)$ discussed in the introduction are
  defined to be $\CFDDa(M_\psi)$ associated to the mapping cylinder of
  $\psi$ (Example~\ref{eg:mapping-cyl}). So,
  Theorem~\ref{thm:Bimodules} from the introduction is a special case
  of Theorem~\ref{thm:bimod-mod-hom}.
\end{example}

For further results like these, including some involving boundary Dehn
twists, see the introduction to~\cite{LOTHomPair}.

\section{The algebra associated to a pointed matched circle}\label{sec:pmc-alg}
We will define the algebras associated to pointed matched circles in
three steps. We start with a warm-up in Section~\ref{sec:perm}, discussing 
the group ring of the symmetric group $S_n$ and a deformation of it called
the nilCoxeter algebra. In Section~\ref{sec:Ank} we define a family of
algebras $\Alg(n,k)$ ($n,k\in\NN$), which are a kind of directed,
distributed version of the nilCoxeter algebra. The algebra
$\Alg(\PMC)$ associated to a pointed matched circle for a surface of
genus $k$ is defined as a subalgebra of
$\bigoplus_{i=0}^{2k}\Alg(2k,i)$; the definition is given in
Section~\ref{sec:pmc}. (It is also possible to give a more direct
definition of $\Alg(\PMC)$; see, for instance,~\cite[Section~\ref*{HFa:subsec:AlgPMC}]{LOT4}.)
\subsection{A graphical representation of permutations}\label{sec:perm}
Consider the symmetric group $S_n$ on $\underline{n}=\{1,\dots,n\}$. We can
represent elements of $S_n$ graphically as homotopy classes of maps 
\[\textstyle
\bigl(\coprod_{i=1}^n [0,1], \coprod_{i=1}^n\{0\}, \coprod_{i=1}^n\{1\} \bigr)\stackrel{\phi}{\longrightarrow}
\bigl([0,1]\times [0,n], \coprod_{i=1}^n\{0\}\times\underline{n},
\coprod_{i=1}^n\{1\}\times \underline{n}\bigr)
\]
such that the restrictions $\phi|_{\coprod_{i=1}^n\{0\}}$ and
$\phi|_{\coprod_{i=1}^n\{1\}}$ are injective. For example, the
permutation $
\left(\begin{smallmatrix}
  1 & 2 & 3 & 4 & 5\\
  3 & 1 & 2 & 5 & 4
\end{smallmatrix}\right)\in S_5$ is represented by the diagram
\begin{equation}\label{eq:perm-graph}
\mathcenter{\includegraphics[height=.75in]{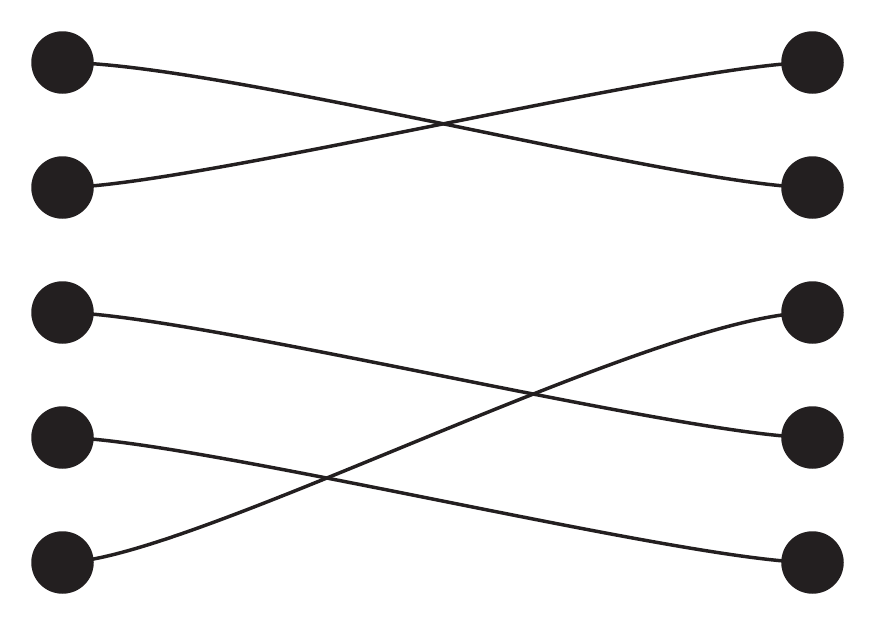}}
\end{equation}

In the graphical notation, multiplication corresponds to
juxtaposition. So, the group ring $\ZZ[S_n]$ of $S_n$ is given by formal linear
combinations of diagrams as in \eqref{eq:perm-graph}, with
product given by juxtaposition.  Moreover, notice that essential
crossings in diagrams like Formula~\eqref{eq:perm-graph} correspond to
\emph{inversions}, i.e., pairs $i,j\in\{1,\dots,n\}$ such that $i<j$
but $\sigma(j)<\sigma(i)$.

In $\ZZ[S_n]$, double-crossings can be undone via Reidemeister II-like
moves:
\begin{equation}\label{eq:double-cross-1}
\mathcenter{\includegraphics[height=.35in]{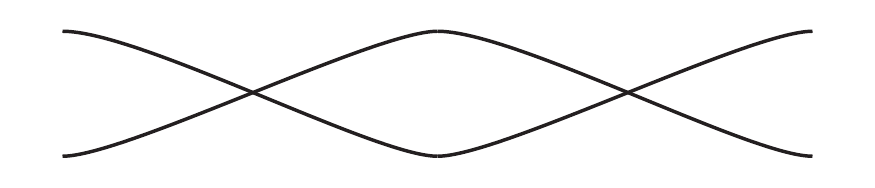}}=
\mathcenter{\includegraphics[height=.35in]{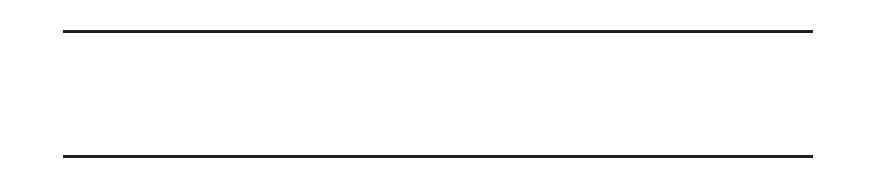}}
\end{equation}
If we replace this relation by the relation that double-crossings are $0$,
\begin{equation}\label{eq:double-cross-0}
\mathcenter{\includegraphics[height=.35in]{DoubleCross1}}=0
\end{equation}
we arrive at another algebra, the \emph{nilCoxeter algebra} $\nilCox_n$;
see, for instance~\cite{Khovanov01:Nilcoxeter}. Note that even though
$\nilCox_n\not\cong \ZZ[S_n]$, $S_n$ still gives a basis for
$\nilCox_n$. Let
$\Inv(\sigma)$ denote the set of inversions of $\sigma$.
An equivalent formulation is that we define 
\[
\sigma\cdot_{\nilCox}
\tau=
\begin{cases}
  \tau\circ\sigma & \text{if
  }\#\Inv(\tau\circ\sigma)=\#\Inv(\sigma)+\#\Inv(\tau)\\
0 & \text{else.}
\end{cases}
\]

If we work over $\Field$, as is our tendency, we can define a
differential on $\nilCox_n$ by declaring that $d(\sigma)$ is the sum of
all ways of smoothing a crossing in $\sigma$. More formally, let
$\tau_{i,j}$ denote the transposition exchanging $i$ and $j$. Then
define
\begin{equation}\label{eq:d-on-nilCox}
d(\sigma)=\sum_{\substack{(i,j)\in\Inv(\sigma)\\ \#\Inv(\tau_{i,j}\sigma)=\#\Inv(\sigma)-1}}\tau_{i,j}\circ\sigma.
\end{equation}
It is straightforward to verify that this makes $\nilCox_n$ into a
differential algebra.  (If we want to define this differential with
signs, we need an odd version of the nilCoxeter algebra;
see~\cite{Khovanov10:gl12}.)

\subsection{The algebra \texorpdfstring{$\Alg(n,k)$}{A(n,k)}}\label{sec:Ank}
Now, instead of permutations of $\{1,\dots,n\}$, consider
\emph{partial permutations}, i.e., triples $(S,T,\sigma)$ where
$S,T\subset\underline{n}$ and $\sigma\co S\to T$ is a bijection.  Call
a partial permutation $(S,T,\sigma)$ \emph{upward-veering} if
$\sigma(i)\geq i$ for all $i\in S$. Let $\Alg(n)$ denote the
$\Field$-vector space generated by all upward-veering partial
permutations. Define a product on $\Alg(n)$ by 
\begin{equation}\label{eq:mul-on-Ank}
(S,T,\phi)\cdot(U,V,\psi)=
\begin{cases}
  0 & \text{if }T\neq U\\
  0 & \text{if } \#\Inv(\psi\circ\phi)\neq \#\Inv(\psi)+\#\Inv(\phi)\\
  (S,V,\psi\circ\phi) & \text{otherwise.}
\end{cases}
\end{equation}

Define a differential on $\Alg(n)$ by setting
\[
d(S,T,\phi)=\sum_{\substack{(i,j)\in\Inv(\phi)\\
    \#\Inv(\tau_{i,j}\circ \phi)=\#\Inv(\phi)-1}}(S,T,\tau_{i,j}\circ\phi).
\]

Graphically, we can still represent generators of $\Alg(n)$ as strand
diagrams; for example, in $n=5$, we draw the partial permutation
$(\{1,2,3\}, \{3,4,5\}, (1\mapsto 5, 2\mapsto 4, 3\mapsto 3))$ as
\[
\begin{overpic}[tics=10,height=1in]{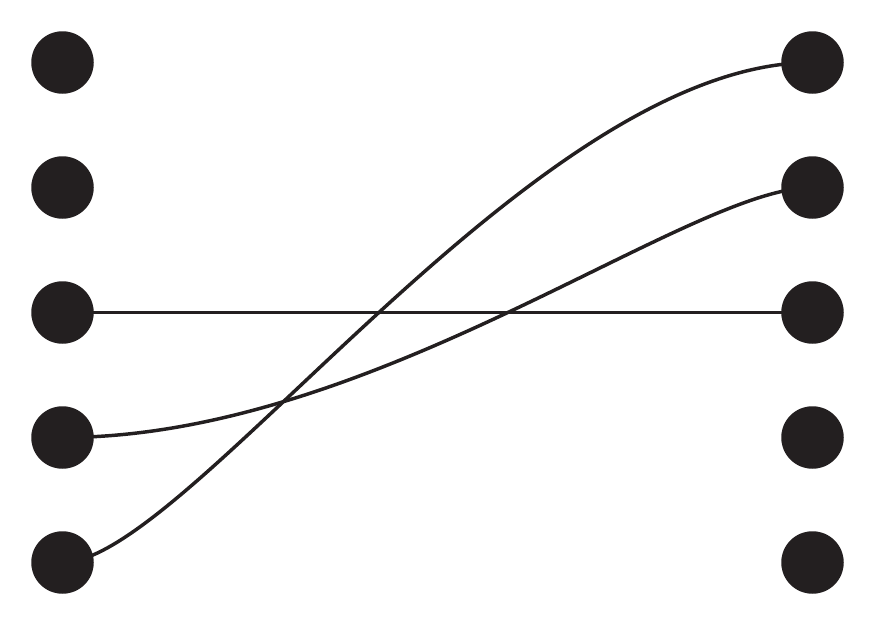}
\put(-5,4){1}
\put(-5,18){2}
\put(-5,32){3}
\put(-5,46){4}
\put(-5,60){5}
\put(100,4){1}
\put(100,18){2}
\put(100,32){3}
\put(100,46){4}
\put(100,60){5}
\end{overpic}
\]
Multiplication is $0$ if the endpoints do not match up (the first
condition in Equation~\eqref{eq:mul-on-Ank}) or if the concatenation
contains a double crossing (the second condition in
Equation~\eqref{eq:mul-on-Ank}); otherwise, the product is just the
concatenation. The differential is gotten by summing over all ways of
smoothing one crossing, and then throwing away any diagrams involving
double crossings.

\begin{proposition}\label{prop:Alg-n-is-dg-alg}\cite[Lemma~\ref*{LOT1:lem:Ank-is-dga}]{LOT1}
  These operations make $\Alg(n)$ into a differential algebra.
\end{proposition}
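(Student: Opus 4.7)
The plan is to verify the three things required for $\Alg(n)$ to be a differential algebra: associativity of the product, the Leibniz rule, and $d^2 = 0$. Throughout I will use the graphical strand-diagram interpretation. The crucial underlying fact is that for two generators $(S,T,\phi)$ and $(T,V,\psi)$, the condition $\#\Inv(\psi\circ\phi) = \#\Inv(\phi) + \#\Inv(\psi)$ is equivalent to saying that in the concatenated strand diagram no two strands cross twice. This is a global, symmetric condition on the picture, which is what makes everything work.

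For associativity, I would check that given three composable generators $(S,T,\phi)$, $(T,V,\psi)$, $(V,X,\chi)$, both bracketings $(\phi\cdot\psi)\cdot\chi$ and $\phi\cdot(\psi\cdot\chi)$ evaluate to $(S,X,\chi\circ\psi\circ\phi)$ provided the triple concatenated diagram has no double crossings, and are both zero otherwise. The only subtlety is to show that a double crossing appearing in the two-step concatenation (say inside $\phi\cdot\psi$) forces a double crossing in the full triple diagram, and conversely—so the vanishing conditions agree under either grouping. This follows because each pair of strands traces a fixed path through the triple diagram regardless of how it is bracketed.

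For the Leibniz rule $d(\phi\psi) = d(\phi)\psi + \phi\, d(\psi)$, assume $\phi\psi \neq 0$. Every crossing in the strand diagram for the concatenation lies either strictly in the left half ($\phi$) or the right half ($\psi$)—there are no "mixed" crossings to worry about. Smoothing a crossing in the left half contributes to $d(\phi)\psi$; smoothing one in the right half contributes to $\phi\,d(\psi)$. I would need to check the compatibility of the vanishing conditions: that a crossing in $\phi$ whose smoothing fails the inversion-count condition (i.e.\ creates a double crossing) is precisely one whose smoothed product $d(\phi)\psi$ also vanishes by the multiplication rule, and vice versa. This is a local check near the smoothed crossing.

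For $d^2 = 0$, working over $\Field$, I need to show that each term arising in $d^2(\phi)$ appears an even number of times. A term in $d^2(\phi)$ corresponds to an ordered pair of crossings $((i_1,j_1),(i_2,j_2))$ in $\phi$ that can be successively smoothed with each smoothing reducing the inversion count by one. Swapping the order of the two smoothings yields the same resulting partial permutation. The main obstacle is checking that if the pair is admissible in one order, it is admissible in the other: that is, if $(i_1,j_1)$ can be smoothed first and then $(i_2,j_2)$ (each reducing inversions by one), then the reverse order is also admissible. This is a careful but essentially combinatorial case analysis of how two inversions in a partial permutation interact—one must rule out the pathological case where smoothing $(i_1,j_1)$ "liberates" a new reducible inversion $(i_2,j_2)$ that would not have been reducible in the original $\phi$. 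Once this interchange property is established, terms pair up and cancel modulo~$2$.
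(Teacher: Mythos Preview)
The paper does not actually prove this proposition; it says the result is ``not especially difficult, though keeping track of the double-crossing condition adds some complication'' and leaves it as an exercise. Your outline for associativity and the Leibniz rule is along the right lines (though for Leibniz you should also treat the case $\phi\cdot\psi=0$, which does require an argument).

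However, your argument for $d^2=0$ has a genuine gap. You claim that a term in $d^2(\phi)$ corresponds to an ordered pair of inversions $((i_1,j_1),(i_2,j_2))$, and that swapping the order gives the same partial permutation. This is false when the two inversions share an index: $\phi\circ\tau_{i_1,j_1}\circ\tau_{i_2,j_2}\neq \phi\circ\tau_{i_2,j_2}\circ\tau_{i_1,j_1}$ unless the transpositions commute. Concretely, take $\phi=(1\mapsto 6,\,2\mapsto 5,\,3\mapsto 4)$. One admissible two-step path to $(5,4,6)$ is: smooth $(1,2)$ to get $(5,6,4)$, then smooth $(1,3)$. The reverse order---smooth $(1,3)$ first---drops the inversion count by $3$, so is inadmissible. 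The term $(5,4,6)$ is actually cancelled by a \emph{different} path: smooth $(2,3)$ to get $(6,4,5)$, then smooth $(1,3)$. So the ``pathological case'' you want to rule out is not pathological at all; it is exactly what happens, and the pairing must account for it.

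The correct statement is that for each $\psi$ with $\#\Inv(\psi)=\#\Inv(\phi)-2$, the number of admissible intermediate $\psi'$ is always even (in fact exactly $0$ or $2$). When the two transpositions have disjoint support this reduces to your swap argument, but when they overlap---so that $\phi^{-1}\psi$ restricted to the relevant indices is a $3$-cycle---there are three candidate intermediates and one must check that exactly one of them fails the admissibility condition. This is the ``diamond property'' of the strong Bruhat order, and is the real combinatorial content of the proof.
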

Proposition~\ref{prop:Alg-n-is-dg-alg} is not especially difficult,
though keeping track of the double-crossing condition adds some
complication. The reader is invited to prove it as an extra exercise.

Notice that $\Alg(n)$ decomposes as a direct sum
\begin{equation}\label{eq:Alg-n-decomp}
\Alg(n)=\bigoplus_{l=0}^{n}\Alg(n,i)
\end{equation}
where $\Alg(n,i)$ is generated
by partial permutations $(S,T,\phi)$ with $|S|=|T|=i$.

The algebra $\Alg(n)$ has an obvious grading by the number of
crossings. This grading does not, however, descend in a nice way to
the subalgebras associated to pointed matched circles.

\subsection{The algebra associated to a pointed matched circle}\label{sec:pmc}
Fix a pointed matched circle $\PMC=(Z,\CircPts,M,z)$ for a surface of
genus $k$, so $|\CircPts|=4k$. The basepoint $z$ and orientation of
$Z$ identify $\CircPts$ with $\underline{4k}=\{1,\dots,4k\}$. The
algebra $\Alg(\PMC)$ is a subalgebra of $\Alg(4k)$.

Call a generator $(S,T,\phi)$ of $\Alg(4k)$ \emph{$M$-admissible} if
$S\cap M(S)=T\cap M(T)=\emptyset$. (This terminology is not used
elsewhere in the literature.) Write $\Fix(\phi)=\{i\in S\mid
\phi(i)=i\}$. Suppose that $\phi$ is $M$-admissible. Then, given
$U\subset\Fix(\phi)$ we can define a new element $(S\setminus U\cup M(U),
T\setminus U\cup M(U), \phi_U) \in\Alg(n)$ by replacing the horizontal
strands at $U$ by horizontal strands at $M(U)$. That is, $\phi_U$ is
characterized by $\phi_U|_{S\setminus U}=\phi|_{S\setminus U}$ and
$\phi_U|_{M(U)}=\Id$. Given an $M$-admissible $(S,T,\phi)$ define
\[
a(S,T,\phi)=\sum_{U\subset \Fix(\phi)}(S\setminus U\cup M(U),
T\setminus U\cup M(U), \phi_U).
\]
For example,
\begin{center}
  \begin{overpic}[tics=10,height=2in]{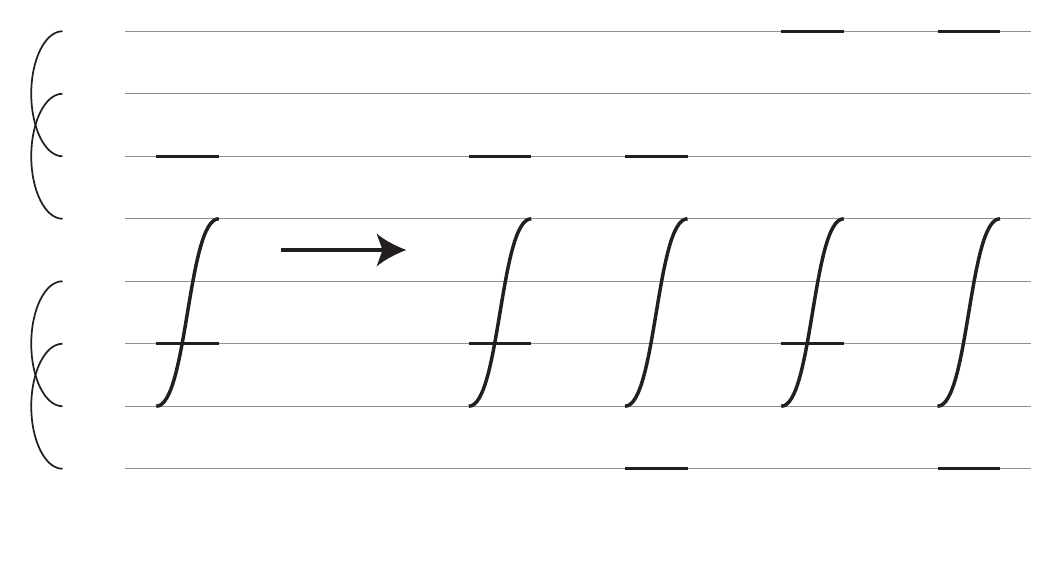}
\put(8,8){1}
\put(8,14){2}
\put(8,19){3}
\put(8,25){4}
\put(8,31){5}
\put(8,37){6}
\put(8,42){7}
\put(8,49){8}
\put(10,3){$(S,T,\phi)$}
\put(30,31){$a$}
\put(35 ,3){$U=$}
\put(45,3){$\emptyset$}
\put(52,29){$+$}
\put(58,3){$\{3\}$}
\put(68,29){$+$}
\put(73,3){$\{6\}$}
\put(82,29){$+$}
\put(86,3){$\{3,6\}$}
\end{overpic}
\end{center}

Now, $\Alg(\PMC)$ is defined to be the subalgebra of $\Alg(4k)$
generated by $a(S,T,\phi)$ for $M$-admissible generators $(S,T,\phi)$.

The decomposition of $\Alg(n)$ from Formula~\eqref{eq:Alg-n-decomp}
gives a decomposition of $\Alg(\PMC)$. It is convenient to change the
indexing slightly: let $\Alg(\PMC,i)=\Alg(\PMC)\cap \Alg(4k,k+i)$, so
$\Alg(\PMC)=\bigoplus_{i=-k}^k \Alg(\PMC,i)$.

\section{Exercises}

\begin{exercise}
  Let $Y$ be a closed $3$-manifold. How do you go from a pointed
  Heegaard diagram for $Y$ to a bordered Heegaard diagram for
  $Y\setminus \bD^3$? Vice-versa? (Hint: both directions are easy.)
\end{exercise}

\begin{exercise}\label{ex:glue-bord-to-cob}
  Let $Y_1$ be a bordered $3$-manifold with boundary $\PMC_1$ and
  $Y_2$ an arced cobordism from $\PMC_1$ to $\PMC_2$. There is a
  natural way to glue $Y_1$ and $Y_2$ to get a bordered $3$-manifold
  with boundary $\PMC_2$; how? 

  Similarly, if $Y_1$ is an arced cobordism from $\PMC_1$ to $\PMC_2$
  and $Y_2$ is an arced cobordism from $\PMC_2$ to $\PMC_3$ then there
  is a natural way to glue $Y_1$ to $Y_2$ to obtain an arced cobordism
  from $\PMC_1$ to $\PMC_3$; how?

  (Both parts are a little awkward with our
  definition of arced cobordism; the definitions in~\cite{LOT2}
  and~\cite{LOTHomPair} make them more obvious.)
\end{exercise}
\begin{exercise}
        Let $\HD$ be a bordered Heegaard diagram with no $\alpha$ circles.
        What is the underlying three-manifold $Y({\HD})$? 
\end{exercise}
\begin{exercise}
  Formulate precisely the notion of equivalence for arced cobordisms.
\end{exercise}
\begin{exercise}
  The bordered Heegaard diagram in Figure~\ref{fig:TrefoilComplement}
  represents the trefoil complement with some particular
  framing. Which one (as an element of $\ZZ$)?
\end{exercise}
\begin{exercise}
        Draw a bordered Heegaard diagram for the $0$-framed complement of the figure eight knot.
\end{exercise}
\begin{exercise}
  Verify that the differential given in Formula~\eqref{eq:d-on-nilCox}
  makes the nilCoxeter algebra into a differential algebra, i.e.,
  that it satisfies $d^2=0$ and the Leibniz rule.
\end{exercise}
\begin{exercise}
  Give an example of an element $(S,T,\phi)\in\Alg(n)$ and a pair
  $(i,j) \in \Inv(\phi)$ so that $(S,T,\tau_{i,j}\circ \phi)$ is not
  in $d(S,T,\phi)$.
\end{exercise}
\begin{exercise}\label{ex:path-for-T2}
  Verify the path algebra description in Equation~\ref{eq:torus-alg}
  for the algebra $\Alg(T^2,0)$.
\end{exercise}
\begin{exercise}\label{ex:idempotents}
  Prove: There is a one-to-one correspondence between indecomposable
  idempotents in $\Alg(\PMC)$ and subsets of the set of matched pairs
  of $\PMC$, i.e., subsets of $\CircPts/M$. (An idempotent $I$ is
  called \emph{indecomposable} if for any idempotent $J$, either $I\cdot
  J=I$ or $I\cdot (1-J)=I$.) (Hint: this should be easy.)
\end{exercise}

\begin{figure}
  \centering
  \includegraphics{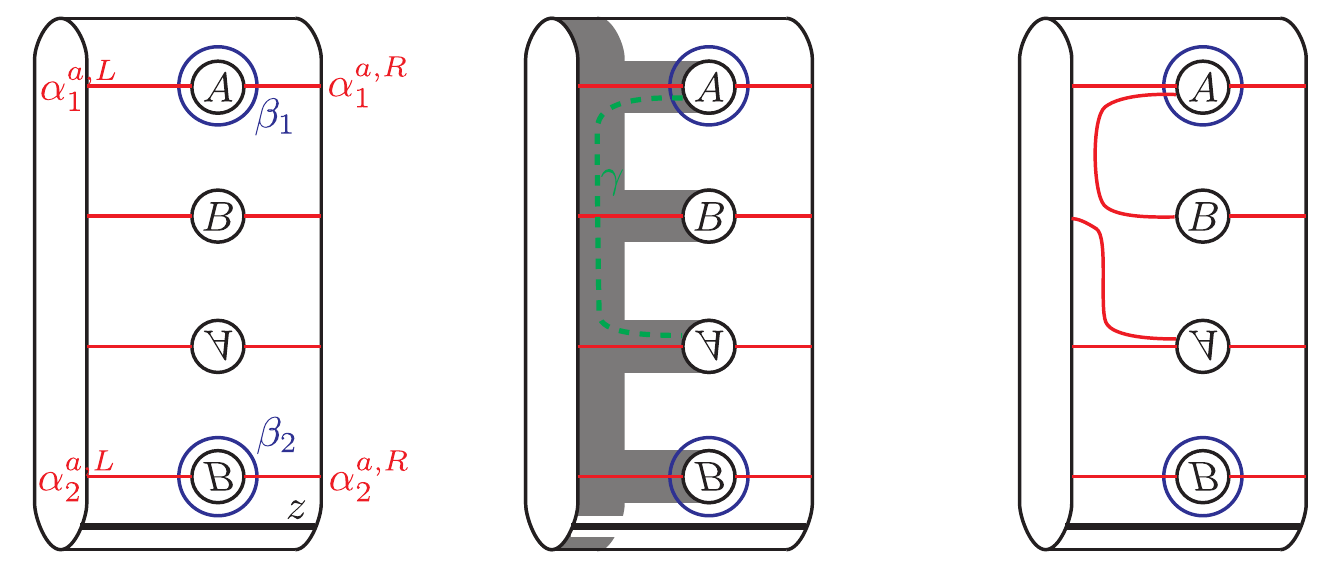}
  \caption{\textbf{Building Heegaard diagrams for mapping cylinders.}
    Left: a Heegaard diagram for the identity map of the
    torus. Center: the sub-surface $\PunctF(\PMC_L)$ and a dashed
    curve $\gamma$ on $\PunctF(\PMC_L)$. Right: a Heegaard diagram for
    a Dehn twist around $\gamma$. This figure is adapted
    from~\cite[Figure~\ref*{LOT2:fig:Build-HD}]{LOT2}.}
  \label{fig:HD-for-cyl}
\end{figure}

\begin{exercise}\label{ex:build-cyl-hd}
  In this exercise we explain how to produce arced Heegaard diagrams
  for mapping cylinders.  This algorithm is explained in somewhat more
  detail in~\cite[Section~\ref*{LOT2:sec:DiagramsForAutomorphisms}]{LOT2}.
  \begin{enumerate}
  \item Show that the arced Heegaard diagram on the left of
    Figure~\ref{fig:HD-for-cyl} represents the mapping cylinder of the
    identity map (of the pointed matched circle for a
    torus). Generalize this to give a diagram for the identity map of
    any pointed matched circle. (See Figure~\ref{fig:Genus2Identity}
    for the standard arced Heegaard diagram for the identity map of
    another pointed matched circle.)
  \item Let $\phi\co F(\PMC_L)\to F(\PMC_R)$ be a strongly based
    homeomorphism. Recall from Construction~\ref{const:arced-to-mfld}
    that a neighborhood $\PunctF_L$ of the graph $\Gamma_L$ is
    identified with $\PunctF(\PMC_L)$. Start with the identity
    Heegaard diagram for $F(\PMC_L)$, and apply the homeomorphism
    $\phi$ to the $\alpha_i^{a,L}\subset \PunctF_L$. (See
    Figure~\ref{fig:HD-for-cyl} for an example.) Prove: the result is
    an arced Heegaard diagram for $\phi$.
  \end{enumerate}
\end{exercise}

\begin{exercise}
  There is a unique pointed matched circle representing the
  once-punctured torus.
  \begin{enumerate}
  \item List several different pointed matched circles representing
    the once-punctured genus $2$ surface.
  \item Show that the set of matched circles representing the
    once-punctured genus $k$ surface is in bijection with the set of gluing
    patterns for the $4k$-gon giving the genus $k$ surface.
  \end{enumerate}
\end{exercise}

\begin{exercise}\label{ex:opposite}
  Prove that
  $\Alg(-\PMC)$ is the opposite algebra to $\Alg(\PMC)$.
\end{exercise}

\begin{exercise}
  Let $\PMC$ be the \emph{split pointed matched circle} for a surface
  of genus $k$, as illustrated in Figure~\ref{fig:split-matching}
  (page~\pageref{fig:split-matching}). Give a path algebra description
  of $\Alg(\PMC,-k+1)$, similar to Formula~\eqref{eq:torus-alg}.

  Similarly, let $\PMC$ be the \emph{antipodal pointed matched circle}
  for a surface of genus $k$, i.e., the pointed matched circle in
  which $a_i$ is matched to $a_{i+2k}$ ($i=1,\dots,2k$). Give a path
  algebra description of $\Alg(\PMC,-k+1)$, similar to
  Formula~\eqref{eq:torus-alg}. (For a solution to this part,
  see~\cite[Example~\ref*{faith:eg:antipodal}]{LOT13:faith}.)
\end{exercise}

\chapter{Modules associated to bordered \texorpdfstring{$3$}{3}-manifolds}\label{lec:CFD}
\section{Brief review of the cylindrical setting for Heegaard Floer
  homology}
\subsection{A quick review of the original formulation of Heegaard Floer homology}
We start by recalling the definition of Heegaard Floer homology in
the closed setting~\cite{OS04:HolomorphicDisks}, as well as a
``cylindrical'' reformulation of the
definition~\cite{Lipshitz06:CylindricalHF}; this reformulation will be
useful for defining the bordered Floer invariants.

Fix a pointed Heegaard diagram $\HD=(\Sigma,\alphas,\betas,z)$ (in the
sense of~\cite{OS04:HolomorphicDisks}) for a closed $3$-manifold
$Y$. Associated to $\HD$ are various Heegaard Floer homology groups;
as noted in the previous lecture, bordered Floer homology (so far) relates to the
technically simplest of these, $\HFa(Y)$. The group $\HFa(Y)$ is
defined as follows. Suppose $\Sigma$ has genus $g$. Choosing a complex
structure $j_\Sigma$ on $\Sigma$ makes the symmetric product 
\[
\Sym^g(\Sigma)=\overbrace{\Sigma\times\dots\times \Sigma}^{g\text{ copies}}/S_g
\]
into a smooth---in fact, K\"ahler---manifold.
(This is not obvious.)
Writing
$\alphas=\{\alpha_1,\dots,\alpha_g\}$ and
$\betas=\{\beta_1,\dots,\beta_g\}$, the tori
$\alpha_1\times\dots\times
\alpha_g,\beta_1\times\dots\times\beta_g\subset \Sigma^{\times g}$
project to embedded tori $T_\alpha$ and $T_\beta$ in
$\Sym^g(\Sigma)$. Each of $T_\alpha$ and $T_\beta$ is totally real; in
fact, it was shown in~\cite{Perutz07:handleslides} that for an
appropriate choice of K\"ahler form the tori $T_\alpha$ and $T_\beta$
are Lagrangian. Then, $\HFa(Y)$ is the Lagrangian Floer homology of
$(T_\alpha,T_\beta)$ inside $\Sym^g(\Sigma\setminus\{z\})$.

In a little more detail,  $\HFa(Y)$ is the homology of
a chain complex $(\CFa(Y),\bdy)$. $\CFa(Y)$ is the free
$\Field$-vector space generated by $T_\alpha\cap T_\beta$. The
differential $\bdy \co \CFa(Y)\to \CFa(Y)$ is defined by counting
holomorphic disks of the following kind. Given $\x,\y\in T_\alpha\cap T_\beta$ we consider
the space of maps $\bD^2\to \Sym^g(\Sigma\setminus\{z\})$ such that:
\begin{itemize}
\item $-i$ maps to $\x$.
\item $+i$ maps to $\y$.
\item $\{p\in \bdy\bD^2\mid \Re(p)>0\}$ maps to $T_\alpha$.
\item $\{p\in \bdy\bD^2\mid \Re(p)<0\}$ maps to $T_\beta$.
\end{itemize}
\begin{figure}
  \centering
  \[
\begin{overpic}[tics=10,height=2in]{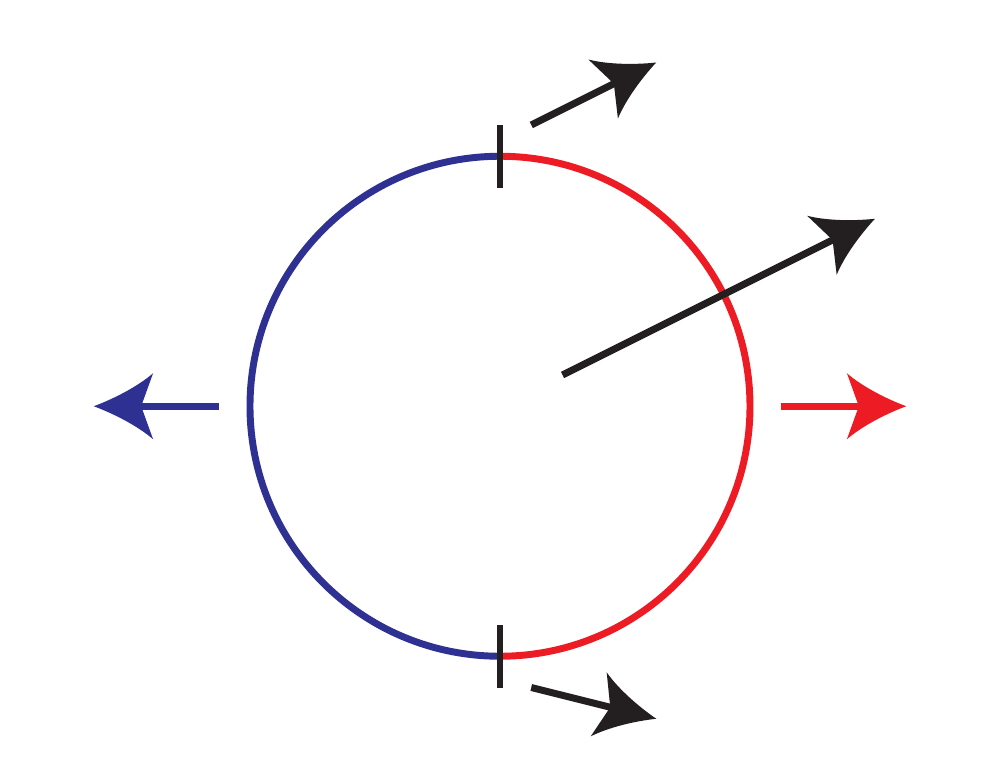}
\put(66,5){$\x$}
\put(66,70){$\y$}
\put(91,36){$\textcolor{red}{T_\alpha}$}
\put(1,36){$\textcolor{blue}{T_\beta}$}
\put(89,54){$\Sym^g(\Sigma)$}
\end{overpic}
\]
  \caption{\textbf{Boundary conditions for Whitney disks.}}
  \label{fig:Whitney}
\end{figure}
See Figure~\ref{fig:Whitney}. Such disks are called \emph{Whitney disks}.
Let $\cB(\x,\y)$ denote the space of Whitney disks from $\x$ to
$\y$. Further:
\begin{itemize}
\item Let $\pi_2(\x,\y)$ denote the set of homotopy
  classes of Whitney disks, i.e., the set of path components in $\cB(\x,\y)$.
\item Let $\tcM(\x,\y)\subset \cB(\x,\y)$ denote the space of
  holomorphic Whitney disks.
\end{itemize}
The space $\tcM(\x,\y)$ decomposes according to elements of
$\pi_2(\x,\y)$:
\[
\tcM(\x,\y)=\coprod_{B\in\pi_2(\x,\y)}\tcM^B(\x,\y)
\]
If $\tcM(\x,\y)$ is transversally cut-out, each space
$\tcM^B(\x,\y)$ is a smooth manifold whose dimension is given
by a number $\mu(B)$ called the \emph{Maslov index} of $B$. There is an
$\RR$-action on both $\cB(\x,\y)$ and $\tcM(\x,\y)$ by translation in
the source (thought of as an infinite strip). Let $\cM^B(\x,\y)=\tcM^B(\x,\y)/\RR$. Finally, the
differential on $\CFa(Y)$ is given by
\begin{equation}\label{eq:CF-d}
\bdy(\x)=
\sum_{\y\in T_\alpha\cap T_\beta}\sum_{\substack{B\in\pi_2(\x,\y)\\\mu(B)=1}}\bigl(\#\cM^B(\x,\y)\bigr) \y.
\end{equation}
(Here, $\#$ denotes the modulo-2 count of points.) Under certain
assumptions on $\HD$, called \emph{admissibility}, this count is
guaranteed to be finite, so $\bdy$ is well-defined. Moreover:
\begin{theorem}
  \cite{OS04:HolomorphicDisks} For any suitably generic choice of almost-complex structure, the map $\bdy$ satisfies
  $\bdy^2=0$. Moreover, the homology $\HFa(Y)=H_*(\CFa(Y),\bdy)$ is an
  invariant of $Y$.
\end{theorem}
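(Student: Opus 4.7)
The plan is to prove $\partial^2 = 0$ by the standard Floer-theoretic argument: for $\x, \z \in T_\alpha \cap T_\beta$, the coefficient of $\z$ in $\partial^2(\x)$ is a count of broken flowlines, which are the ends of a one-dimensional moduli space, and hence must vanish modulo $2$. Concretely, I would fix $B \in \pi_2(\x,\z)$ with $\mu(B) = 2$ and study the moduli space $\cM^B(\x,\z)$. The first step is to establish transversality: for a suitably generic path of almost-complex structures on $\Sym^g(\Sigma\setminus\{z\})$ tamed by the K\"ahler form, every $\tcM^B(\x,\y)$ with $\mu(B) \le 2$ is cut out transversely and is a smooth manifold of dimension $\mu(B)$. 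The second step is a Gromov-type compactness statement for $\cM^B(\x,\z)$: its closure is obtained by adding broken flowlines, i.e., pairs $(u_1, u_2) \in \cM^{B_1}(\x,\y) \times \cM^{B_2}(\y,\z)$ with $B_1 * B_2 = B$ and $\mu(B_i) = 1$. Summing over such pairs gives exactly the coefficient of $\z$ in $\partial^2 \x$.

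The hard part is ruling out all the other potential ends and bubbling phenomena in the compactification. The dangerous possibilities are (i) sphere bubbles in $\Sym^g(\Sigma)$; (ii) boundary disk bubbles on $T_\alpha$ or $T_\beta$; and (iii) loss of compactness due to the curves escaping to infinity in a non-compact target. For (i), the key point is that the basepoint $z$ forces us to work in $\Sym^g(\Sigma \setminus \{z\})$; any holomorphic sphere in $\Sym^g(\Sigma)$ must have nontrivial intersection number with the divisor $\{z\} \times \Sym^{g-1}(\Sigma)$, and positivity of intersections together with the basepoint condition rules out bubbling. For (ii), one checks that $T_\alpha$ and $T_\beta$ are each tori whose relevant relative homotopy groups, combined with the positivity-of-domains argument, prevent nonconstant holomorphic disks from bubbling off. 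For (iii), admissibility of $\HD$ guarantees that only finitely many homotopy classes $B$ have nonnegative multiplicities everywhere, which ensures the count in \eqref{eq:CF-d} is finite and gives the necessary a priori energy bound.

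For invariance of $\HFa(Y)$, the approach is to show that the quasi-isomorphism type of $(\CFa(Y), \partial)$ is unchanged under all the choices entering the construction. The list of choices is: the complex structure $j_\Sigma$ and the path of almost-complex structures, and the Heegaard diagram itself. For variation of almost-complex structure (and of isotopies of the $\alpha$- and $\beta$-curves through admissible configurations), one constructs a continuation chain map by counting index-zero holomorphic disks for a time-dependent almost-complex structure, and proves it is a chain homotopy equivalence by the usual parametrized moduli space argument. For handleslides, one introduces a third set of curves $\gammas$ obtained by handlesliding $\betas$ and counts holomorphic triangles in the triple diagram $(\Sigma, \alphas, \betas, \gammas)$; associativity of the triangle map (proved by counting holomorphic rectangles) and a model computation on the handleslide region show the resulting map is a quasi-isomorphism. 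For stabilization, one exhibits an explicit isomorphism with the stabilized complex by identifying the new generators with the old ones and checking that the new holomorphic disks near the genus-one stabilization summand contribute trivially.

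The main obstacle throughout is analytic: establishing transversality, compactness, and gluing in the symmetric product $\Sym^g(\Sigma \setminus \{z\})$, and in particular verifying that sphere and disk bubbling do not occur. Once these analytic foundations are in place, the algebraic structure (boundary of $1$-manifolds equals zero; continuation maps give homotopy equivalences; triangle maps are associative) is formal. A secondary technical point is admissibility: one must check that any two admissible Heegaard diagrams for $Y$ can be connected by a sequence of Heegaard moves through admissible diagrams, so that the finiteness needed to define $\partial$ is preserved at every stage.
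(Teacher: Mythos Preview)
Your outline is correct and follows the original Ozsv\'ath--Szab\'o argument essentially verbatim: transversality for generic paths of almost-complex structures, Gromov compactness with broken trajectories as the only ends of index-$2$ moduli spaces (after ruling out sphere and disk bubbles via the basepoint and positivity), and invariance via continuation maps, holomorphic triangles for handleslides, and an explicit stabilization argument.

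Note, however, that the paper does not actually prove this theorem: it is stated with a citation to~\cite{OS04:HolomorphicDisks} as part of the background review, and no proof is given here. The closest the paper comes is later, in Section~\ref{sec:broken-cyl}, where the $\bdy^2=0$ part is redone as a warm-up in the cylindrical setting (Corollary~\ref{cor:closed-cyl-d-squared}), deduced from the compactness/gluing statement Theorem~\ref{thm:closed-cyl-cpct-glue}. That argument is formally the same ``ends of a $1$-manifold'' computation you describe, but the analytic packaging is different: rather than working in $\Sym^g(\Sigma)$ and worrying about sphere bubbles there, one works with embedded curves in $\Sigma\times[0,1]\times\RR$ and invokes the SFT-style compactness of~\cite{BEHWZ03:CompactnessInSFT}. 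The invariance portion is not reproved anywhere in these notes.
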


\subsection{The cylindrical reformulation}\label{sec:cylindrical}
Before proceeding to bordered Floer homology, it will be helpful to
have a mild reformulation of the definition of $\HFa$. It is based on
the \emph{tautological correspondence} between maps from $\bD^2$ to
$\Sym^g(\Sigma)$ and multi-valued functions from $\bD^2$ to $\Sigma$:
\vspace{1em}
\begin{center}
\begin{tabular}{lcl}
Holomorphic maps $\bD^2\to \Sym^g(\Sigma)$ &
\quad$\longleftrightarrow$\quad \! &
Diagrams 
$\vcenter{\xymatrix{
  S\ar[r]^{u_\Sigma}\ar[d]_{u_\bD} & \Sigma\\
  \bD^2
}}$  \\
& &
with $u_\Sigma$, $u_\bD$ holomorphic, \\
& & $u_\bD$ a $g$-fold branched cover.
\end{tabular}
\end{center}
\vspace{1em}
One direction is easy: given a diagram as on the right, consider the
map $\bD^2\to \Sym^g(\Sigma)$ given by mapping $p$ to the $g$-tuple
$u_\Sigma(u_\bD^{-1}(p))$.  The other direction is not hard, either; see, for
instance,~\cite[Section 13]{Lipshitz06:CylindricalHF}.

In light of the tautological correspondence, we can reformulate $\HFa$
in terms of maps to $\Sigma\times\bD^2$. It will be convenient later
to view $\bD^2\setminus \{\pm i\}$ as a strip $[0,1]\times\RR$. Then:
\begin{itemize}
\item Generators of $\CFa(Y)$ correspond to $g$-tuples of points
  $\x=\{x_i\}_{i=1}^g$ with $x_i\in \alpha_i\cap \beta_{\sigma(i)}$
  for some $\sigma\in S_g$. These generators can be thought of as
  $g$-tuples of chords $\x\times [0,1]\subset \Sigma\times[0,1]$,
  connecting $\alphas\times \{1\}$ and $\betas\times\{0\}$.
\item The differential counts embedded holomorphic maps 
  \begin{equation}\label{eq:cyl-map}
  u\co (S,\bdy S)\to
  \bigl((\Sigma\setminus\{z\})\times[0,1]\times\RR,(\alphas\times\{1\}\times\RR)\cup(\betas\times\{0\}\times\RR) \bigr).
  \end{equation}
  modulo translation in $\RR$. Here, $S$ is a Riemann surface with
  boundary and punctures on its boundary. The punctures are divided
  into $+$ punctures and $-$ punctures. Near the $-$ punctures, $u$ is
  asymptotic to $\x\times[0,1]\times\{-\infty\}$ and near the $+$ punctures $u$ is
  asymptotic to $\y\times[0,1]\times\{+\infty\}$.
\end{itemize}
In the cylindrical setting, the set of homotopy classes $\pi_2(\x,\y)$
of Whitney disks becomes the set of homology classes (in a suitable
sense) of maps as in Formula~\ref{eq:cyl-map}.  (Philosophically, this
is related to the Dold-Thom theorem that $\pi_k(\Sym^\infty(X)) \cong
H_k(X)$.)

We have been suppressing almost-complex structures. In order to
achieve transversality, one typically perturbs the complex
structure $j_\Sigma\times j_\bD$ on $\Sigma\times[0,1]\times\RR$ to a
more generic almost-complex structure $J$. In this cylindrical
setting, it is important to ensure that translation in $\RR$ remains
$J$-holomorphic. Some other conditions which are necessary or
convenient are given in~\cite[Section 1]{Lipshitz06:CylindricalHF}.

\begin{remark}
  It would have been more consistent with conventions in contact
  homology to consider $\RR\times[0,1]\times\Sigma$ rather that
  $\Sigma\times[0,1]\times\RR$.
\end{remark}

\section{Holomorphic curves and Reeb chords}
Now consider a bordered Heegaard diagram
$\HD=(\Sigma,\alphas^a,\alphas^c,\betas,z)$. Rather than viewing
$\Sigma$ as a compact surface-with-boundary, attach a cylindrical end
$\RR\times S^1$ to $\bdy \Sigma$; and extend the $\alpha$-arcs
$\alphas^a$ in a translation-invariant way to $\RR\times
S^1$. (Topologically, this is the same as simply deleting
$\bdy\Sigma$; but if one is paying attention to the symplectic form
and almost-complex structure then there is a difference.) We 
abuse notation, using the same notation $\Sigma$ and $\alphas^a$ for 
the versions with cylindrical ends.
We will still consider holomorphic maps as in
Formula~\eqref{eq:cyl-map}; but now there is a third source of
non-compactness, $\bdy\Sigma$, and these maps can have asymptotics
there as well.

We start with the asymptotics at $\pm\infty$. A term for the
asymptotics at $\pm\infty$:
\begin{definition}
  By a \emph{generator} we mean a $g$-tuple $\x\subset
  \alphas\cap\betas$ which has one point on each $\alpha$-circle, one point
  on each $\beta$-circle, and at most one point on each $\alpha$-arc.
\end{definition}

We consider holomorphic curves disjoint from a neighborhood of $z$. It
follows from this and the fact that only the $\alpha$-arcs touch
$\bdy\Sigma$ that the asymptotics at $\bdy\Sigma$ are of the form
$\rho_i\times (1,t_i)$, where $\rho_i$ is a chord in
$\bdy\Sigma\setminus\{z\}$ with boundary on $\alphas^a$. We collect
these curves into moduli spaces. Let $\tcM(\x,\y;\rho_1,\dots,\rho_n)$
denote the moduli space of embedded holomorphic maps as in
Formula~\eqref{eq:cyl-map} where:
\begin{itemize}
\item $S$ is a surface with boundary and punctures on its
  boundary. Of these punctures, $g$ are labeled $-$, $g$ are labeled
  $+$, and the rest are labeled $e$.
\item $\x$ and $\y$ are generators.
\item at the punctures labeled $-$, $u$ is asymptotic to
  $\x\times[0,1]\times\{-\infty\}$. 
\item at the punctures labeled $+$, $u$ is asymptotic to
  $\y\times[0,1]\times\{+\infty\}$.
\item at the punctures labeled $e$, $u$ is asymptotic to the chords
  $\rho_i\times (1,t_i)\in \bdy\Sigma\times\{1\}\times\RR$. Moreover,
  we require that $t_1< t_2 < \cdots < t_n$.
\end{itemize}
There is an $\RR$-action on $\tcM(\x,\y;\rho_1,\dots,\rho_n)$ by translation in the target; let 
\[
\cM(\x,\y;\rho_1,\dots,\rho_n)=\tcM(\x,\y;\rho_1,\dots,\rho_n)/\RR.
\]

We call the chords $\rho$ \emph{Reeb chords}; they are
Reeb chords for the contact structure on $S^1=\bdy\Sigma$. This comes
from thinking of the setup as related to a Morse-Bott case of
(relative) symplectic field theory. The asymptotic boundary is then
$(\bdy\Sigma\times[0,1]\times\RR,\bdy\alphas^a\times \{1\}\times
\RR)$, and we are in the Levi-flat case of,
e.g.,~\cite{BEHWZ03:CompactnessInSFT}.

As in the closed case, the space of maps of the form just described
naturally decomposes into homology classes; see~\cite[Section~\ref*{LOT1:sec:homology-classes-generators}]{LOT1}. To keep notation consistent with the closed case, we let
$\pi_2(\x,\y)$ denote the set of homology classes of maps connecting
$\x$ to $\y$; note that we do not specify the Reeb chords here. Then
\[
\cM(\x,\y;\rho_1,\dots,\rho_n)=\coprod_{B\in
  \pi_2(\x,\y)}\cM^B(\x,\y;\rho_1,\dots,\rho_n).
\]
As in the closed case, we have been suppressing the almost-complex
structure $J$ from the discussion; the interested reader is referred
to~\cite[Section~\ref*{LOT1:sec:curves-in-sigma}]{LOT1}.  For a generic choice of $J$, each of the
spaces $\cM^B(\x,\y;\rho_1,\dots,\rho_n)$ is a manifold whose
dimension is given by a number $\ind(B;\rho_1,\dots,\rho_n)-1$. The
notation $\ind$ stands for index: as is usual for holomorphic curves,
the dimension is given by the index of the linearized
$\overline{\bdy}$-operator. One can give an explicit formula for
$\ind(B;\rho_1,\dots,\rho_n)$; see~\cite[Section~\ref*{LOT1:sec:expected-dimensions}]{LOT1}.

The next natural thing to talk about, from an analytic perspective, is
what the compactifications of $\cM^B(\x,\y;\rho_1,\dots,\rho_n)$ look
like. We defer this discussion to Lecture~\ref{lec:analysis}, and
instead turn to the definition of the bordered invariant $\CFDa(Y)$.

\section{The definition of \texorpdfstring{$\CFDa$}{CFD}}
\label{sec:defCFDa}
\subsection{Reeb chords and algebra elements}
Before defining $\CFDa(\HD)$ we need one more piece of notation. Let
$\PMC=(Z,\CircPts,M,z)$ be a pointed matched circle and $\rho$ a chord
in $Z\setminus\{z\}$ with boundary in $\CircPts$. Orienting $\rho$
according to the orientation of $Z$ and identifying
$\CircPts=\{1,\dots,4k\}$, the chord $\rho$ has an initial point $i$
and a terminal point $j$. Write
\begin{equation}\label{eq:a-of-rho}
a(\rho)=\sum_{\substack{S\subset \underline{4k}\\ i\in S}}(S, S\setminus
\{i\}\cup\{j\}, \phi_S)
\end{equation}
where $\phi_S(i)=j$ and $\phi_S|_{S\setminus i}=\Id$, and the sum is
only over $S$'s so that $S$ and $S\setminus\{i\}\cup\{j\}$ are
$M$-admissible. That is,
$a(\rho)$ is the union of a strand from $i$ to $j$ and any admissible
set of horizontal strands. A somewhat trivial example is given by
Exercise~\ref{ex:path-for-T2}.

\subsection{The definition of \texorpdfstring{$\CFDa$}{CFD}}\label{sec:def-CFD}
Fix a bordered Heegaard diagram
$\HD=(\Sigma,\alphas^a,\alphas^c,\betas,z)$ with boundary $\PMC$. We
will define a left \dg module $\CFDa(\HD)$ over $\Alg(-\PMC)$ (where,
as usual, $-$ denotes orientation reversal). The module $\CFDa(\HD)$ will
lie over $\Alg(-\PMC,0)$, in the sense that the other summands
$\Alg(-\PMC,i)$, $i\neq 0$, of $\Alg(-\PMC)$ act trivially on
$\CFDa(\HD)$.

Let $\Gen(\HD)$ denote the set of generators for $\HD$. Given a
generator $\x\in\Gen(\HD)$, let $I(S)$ denote the set of $\alpha$-arcs
which are disjoint from $\x$.%
\footnote{This $I(S)$ was denoted $I_D(S)$ in \cite{LOT1}, where
  $I(S)$ was used for $I_A(S)$ introduced in Section~\ref{sec:prove-pairing}.}
Then $I(S)$ corresponds to a set of
matched pairs in $-\PMC$, and hence, by Exercise~\ref{ex:idempotents},
to an indecomposable idempotent of $\Alg(-\PMC)$. As a (left) module, define
\[
\CFDa(\HD)=\bigoplus_{\x\in\Gen(\HD)}\Alg(-\PMC)\cdot I(S).
\]
It remains to define the differential on $\CFDa(\HD)$. For
$\x\in\Gen(\HD)$ define 
\begin{equation}\label{eq:CFD-d}
\bdy(\x)=\sum_{\y\in\Gen(\HD)}\sum_{n\geq
  0}\sum_{(\rho_1,\dots,\rho_n)}\sum_{B\mid
  \ind(B,\rho_1,\dots,\rho_n)=1}\bigl(\#\cM^B(\x,\y;\rho_1,\dots,\rho_n)\bigr)a(-\rho_1)\cdots a(-\rho_n)\y.
\end{equation}
Here, the minus signs are included because $\CFDa$ is a module over
$\Alg(-\PMC)$ rather than $\Alg(\PMC)$; $-\rho$ is the chord $\rho$
but viewed as running in the opposite direction (i.e., as a chord in
$-\PMC$).

Extend the differential to the rest of $\CFDa(Y)$ by the Leibniz rule.
This completes the definition of $\CFDa(Y)$.

\begin{figure}
  \centering
  \includegraphics[height=1.5in]{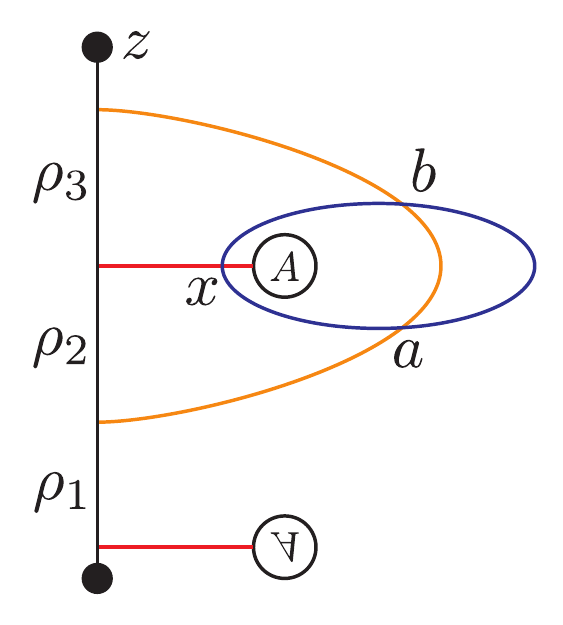}\includegraphics[height=1.5in]{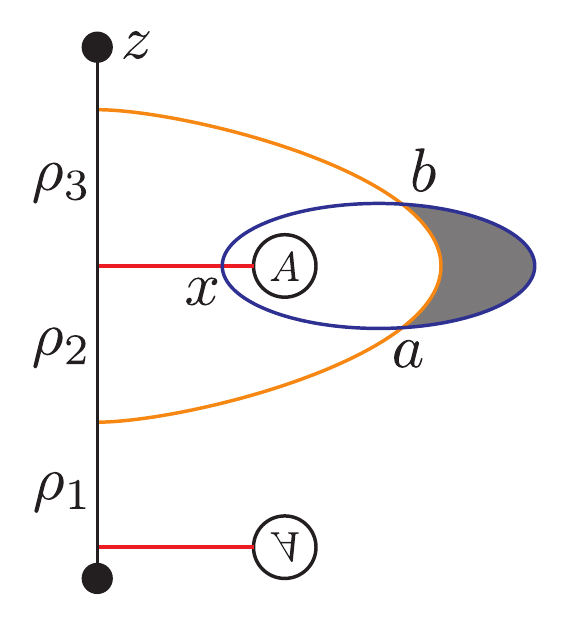}\includegraphics[height=1.5in]{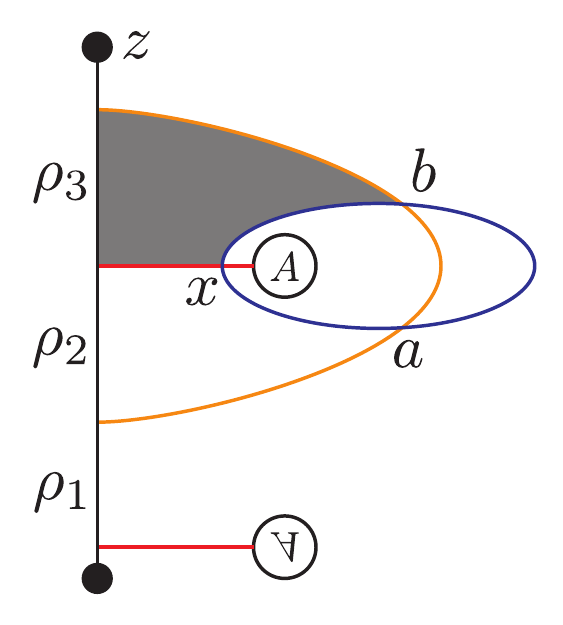}\includegraphics[height=1.5in]{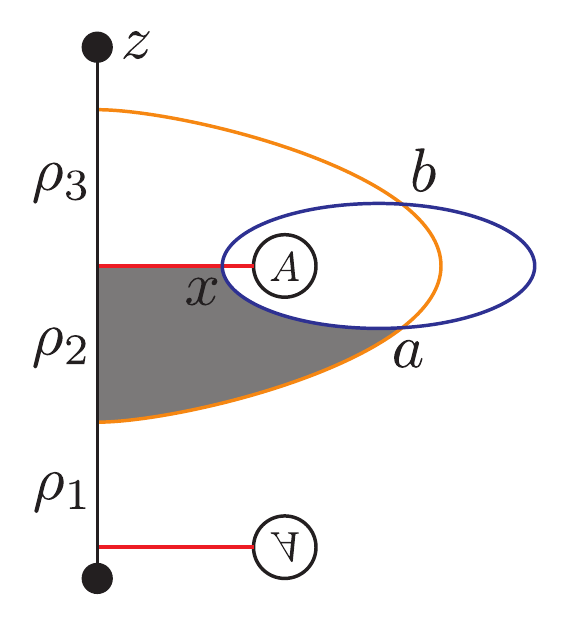}
  \caption{\textbf{A Heegaard diagram for a solid torus, and some holomorphic
      curves in it.} The circles labeled $A$ indicate a handle. The
    shaded regions in the second through fourth figures indicate the
    domains giving $a\in \bdy (b)$, $\rho_3x\in\bdy b$, and
    $\rho_2a\in\bdy x$, respectively.}
  \label{fig:HD-for-torus}
\end{figure}

\begin{example}
  Consider the bordered Heegaard diagram in
  Figure~\ref{fig:HD-for-torus}. We have labeled the three length-1
  Reeb chords; notice that we have ordered them in the opposite of the
  order induced by the orientation of $\bdy\HD$, because we are
  thinking of the algebra $\Alg(-\bdy\HD)$. The module $\CFDa(\HD)$
  has three generators, $x$, $a$ and $b$. With notation as in
  Formula~\ref{eq:torus-alg}, the idempotents are given by
  \[
  I(x)=\iota_1  \qquad\qquad I(a)=\iota_0 \qquad\qquad I(b)=\iota_0 
  \]
  The differentials are given by
  \begin{align*}
    \bdy(b)&=a+\rho_3x\\
    \bdy(x)&=\rho_2a\\
    \bdy(a)&=0.
  \end{align*}
  Each of these differentials comes from a disk mapped to
  $\Sigma\times[0,1]\times\RR$; the projections of these disks to
  $\Sigma$ (their \emph{domains}---see
  Definition~\ref{def:domain-admis}) are indicated in the figure.
  Since $\rho_3\rho_2=0$, $\bdy^2=0$.
\end{example}

\subsection{Finiteness conditions}
As in the closed case, the definition of $\CFDa$
(Formula~\eqref{eq:CFD-d}) only makes sense if the sums involved are
finite. To ensure finiteness, we add assumptions on the Heegaard
diagram $\HD$, analogous to admissibility in the closed case:
\begin{definition}\label{def:domain-admis}
  Given a homology class $B\in \pi_2(\x,\y)$, the projection of $B$ to
  $\Sigma$ defines a cellular $2$-chain with respect to the
  cellulation of $\Sigma$ given by $\alphas\cup\betas$. This $2$-chain
  is called the \emph{domain of $B$}, and determines $B$. A non-trivial class
  $B$ is called \emph{positive} if its local multiplicities are all non-negative. The domains of homology classes $B\in\pi_2(\x,\x)$
  are called \emph{periodic domains}. The set of periodic domains does
  not depend on $\x$.

  The Heegaard diagram $\HD$ is called \emph{provincially admissible}
  if it has no positive periodic domains which have multiplicity
  $0$ everywhere along $\bdy\Sigma$.

  The Heegaard diagram $\HD$ is called \emph{admissible} if it has no
  positive periodic domains.
\end{definition}

\begin{lemma}\label{lem:admis-good}\cite[Lemma~\ref*{LOT1:lem:finite-typeD}]{LOT1}
  If $\HD$ is provincially admissible then the sums in
  Formula~\eqref{eq:CFD-d} are finite. Moreover, if $\HD$ is
  admissible then the operator $\bdy$ is nilpotent in the following
  sense. Consider sequences of generators $(\x_1,\x_2,\dots, \x_n)$
  such that $\x_{i+1}$ occurs in $\bdy \x_i$ with nonzero
  coefficient. If $\HD$ is admissible then there is a universal bound
  on the length of such sequences.
\end{lemma}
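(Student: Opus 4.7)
The plan is to handle finiteness and nilpotency separately, both relying on an analysis of positive homology classes. For the first assertion, I would fix generators $\x, \y$ and show that only finitely many triples $(B, n, (\rho_1,\dots,\rho_n))$ with $B \in \pi_2(\x,\y)$ positive, compatible with $(\rho_1,\dots,\rho_n)$ along $\bdy\Sigma$, and $\ind(B;\rho_1,\dots,\rho_n) = 1$ contribute to the sum defining $\bdy(\x)$.  This decomposes into three steps. First, for any fixed positive $B$, the multiplicities of $B$ along $\bdy\Sigma$ are determined, and since $\CircPts$ is finite, only finitely many Reeb chord tuples (and orderings of them) are compatible with these boundary multiplicities. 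Second, via the index formula from \cite{LOT1}, which expresses $\ind(B;\rho_1,\dots,\rho_n)$ in terms of local contributions (Euler measure, corner multiplicities, and a boundary combinatorial term), the constraint $\ind = 1$ bounds the ``interior size'' of $B$ once its boundary behavior is fixed. Third, provincial admissibility controls the boundary behavior: if infinitely many distinct positive classes $B_m \in \pi_2(\x,\y)$ existed, their pairwise differences would be periodic classes in $\pi_2(\x,\x)$; extracting from the (finitely generated) lattice of periodic classes with matching boundary multiplicities and normalizing by total mass, a compactness argument on the $\QQ$-vector span would produce a nonzero positive element of the cone of periodic classes with zero multiplicity along $\bdy\Sigma$, contradicting provincial admissibility.

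For nilpotency, I would argue directly via pigeonhole. Consider a chain $(\x_1, \dots, \x_n)$ with each $\x_{i+1}$ appearing in $\bdy(\x_i)$ with nonzero coefficient. Each step is realized by a nontrivial positive class $B_i \in \pi_2(\x_i, \x_{i+1})$; nontriviality follows because $\ind(B_i;\rho_1^{(i)},\dots,\rho_{n_i}^{(i)}) = 1$ forces the domain of $B_i$ to be nonzero. The set $\Gen(\HD)$ is finite since it is determined by the finite intersection data of $\alphas \cap \betas$, so if $n > |\Gen(\HD)|$ then pigeonhole supplies indices $i < j$ with $\x_i = \x_j$. The concatenation $B_i * B_{i+1} * \cdots * B_{j-1} \in \pi_2(\x_i, \x_i)$ is then a sum of nonzero positive classes, hence a nontrivial positive periodic domain, contradicting (full) admissibility. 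Therefore the chain length is bounded by $|\Gen(\HD)|$.

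The main obstacle is the third step in the finiteness argument: carefully executing the compactness/rescaling argument on the cone of positive provincial periodic classes, and dovetailing it with the index constraint so that both the boundary multiplicities of $B$ (controlled by admissibility) and the interior multiplicities (controlled by the index formula) are simultaneously bounded. Everything else is a combination of the index formula and elementary combinatorics of Reeb chord tuples.
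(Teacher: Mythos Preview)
Your nilpotency argument is correct and is the standard one. The finiteness argument, however, has a genuine gap: you have the roles of provincial admissibility and of your other tools reversed. Provincial admissibility does \emph{not} bound the boundary multiplicities of $B$; rather, once the boundary multiplicities are fixed, the candidate domains form a single coset of the lattice of provincial periodic domains, and it is here that the closed-case area-form argument from \cite[Lemma~4.14]{OS04:HolomorphicDisks} applies to show the positive elements of that coset are finite. Your compactness sketch in step~3 cannot manufacture a \emph{provincial} positive periodic class from an infinite family $\{B_m\}$ with unbounded boundary data: the differences $B_m - B_0$ are periodic but need not be provincial, and a normalized limit of the $B_m$ has no reason to vanish along $\bdy\Sigma$. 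Nor does the index constraint in step~2 bound the interior for fixed boundary, since there can be nonzero provincial periodic domains on which the Euler-measure-plus-corner contribution vanishes, and adding one leaves both the boundary data and the index unchanged.

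The missing ingredient is that the boundary multiplicities are bounded by the algebra. A term contributes to Formula~\eqref{eq:CFD-d} only if $a(-\rho_1)\cdots a(-\rho_n)\neq 0$ in $\Alg(-\PMC)$; that algebra is finite-dimensional and graded by support in $H_1(Z\setminus\{z\},\CircPts)$, each $a(-\rho_i)$ has strictly positive support, and supports add under multiplication. So nonvanishing of the product bounds $n$ and the total support $\sum\support{\rho_i}$, hence the boundary multiplicities of any contributing $B$. With the boundary confined to finitely many profiles, provincial admissibility then finishes exactly as in the closed case---this is the adaptation the paper alludes to.
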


The proof of Lemma~\ref{lem:admis-good} is not hard; it is an
adaptation of the proof of the corresponding fact from the closed
case~\cite[Lemma 4.14]{OS04:HolomorphicDisks}.  The nilpotency
condition in Lemma~\ref{lem:admis-good} guarantees that $\CFDa(\HD)$
is projective (or rather, $\mathcal{K}$-projective in the sense of,
e.g.,~\cite{BernsteinLunts94:EquivariantSheaves}). It is not
particularly relevant until we start taking tensor products, e.g.
in the statement of Theorem~\ref{thm:pairing1}.

\begin{theorem}\label{CFD-d-squared}\cite[Proposition~\ref*{LOT1:prop:typeD-d2}]{LOT1}
  Let $\HD$ be a provincially admissible Heegaard diagram. Then
  $\CFDa(\HD)$ is a differential module.
\end{theorem}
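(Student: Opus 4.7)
The strategy is the standard one in Floer homology: identify each term of $\bdy^2(\x)$ with the count of points in the boundary of a $1$-dimensional moduli space, and observe that the total boundary of a compact $1$-manifold vanishes mod $2$. The relevant $1$-manifolds are the moduli spaces $\cM^B(\x,\z;\rho_1,\dots,\rho_n)$ with $\ind(B;\rho_1,\dots,\rho_n)=2$.

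First I would expand $\bdy^2(\x)$ via the Leibniz rule and Formula~\eqref{eq:CFD-d}. Two types of contributions arise: (i) double-composition terms, where two index-$1$ curves are concatenated through an intermediate generator $\y$, with algebra coefficient the juxtaposed product $a(-\rho_1)\cdots a(-\rho_n)\cdot a(-\sigma_1)\cdots a(-\sigma_m)$; and (ii) algebra-differential terms of the form $d_{\Alg(-\PMC)}(a(-\rho_1)\cdots a(-\rho_n))\cdot\z$ coming from the Leibniz action of $\bdy$ on the algebra coefficients. The provincial admissibility hypothesis, via Lemma~\ref{lem:admis-good}, guarantees that these sums are finite, so the question reduces to matching each term against a boundary configuration.

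Next I would classify the codimension-$1$ ends of $\cM^B(\x,\z;\rho_1,\dots,\rho_n)/\RR$ using a Morse-Bott relative SFT compactness argument in the spirit of~\cite{BEHWZ03:CompactnessInSFT} and the cylindrical formalism of~\cite{Lipshitz06:CylindricalHF}. The expected ends are: (a) two-story breakings in the $\RR$-direction, which match precisely the contributions in (i); (b) Reeb-chord height collisions $t_i\to t_{i+1}$, splitting into the case of abutting chords that glue into a single longer chord, and the case of chords that remain disjoint at equal height; and (c) boundary degenerations along $\bdy\Sigma$. Provincial admissibility is used to rule out (c): a nontrivial positive boundary degeneration would have as its domain a positive provincial periodic domain, contradicting the hypothesis. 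Other potentially pathological ends (bubbling of disks or spheres) are excluded by standard index and positivity considerations.

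The main obstacle is showing that the chord-collision ends in (b) match precisely the type (ii) algebra-differential terms, and that this is where the combinatorial definition of $\Alg(-\PMC)$ from Section~\ref{sec:pmc-alg} is essential. The join-type collisions should correspond bijectively to smoothings of a resolvable crossing in the strand diagram of $a(-\rho_1)\cdots a(-\rho_n)$, which is exactly how $d_{\Alg(-\PMC)}$ is defined, while non-joining equal-height collisions must be shown either to cancel in pairs under a reflection-type symmetry swapping $\rho_i$ with $\rho_{i+1}$, or to contribute zero because the associated algebra element contains an unresolvable double crossing (forcing the corresponding product in $\Alg$ to vanish by Formula~\eqref{eq:mul-on-Ank}). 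Once this combinatorial matching and the exclusion of exotic ends are in hand, $\bdy^2(\x)=0$ follows from the vanishing of the total boundary of the $1$-dimensional moduli spaces.
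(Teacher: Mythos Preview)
Your overall strategy—expanding $\bdy^2$ and matching terms against ends of index-$2$ moduli spaces—is exactly right, and your handling of the two-story ends and of the disjoint-chord collisions (cancelling against the swapped order) agrees with the paper. But the end classification in your steps (b)--(c) has a genuine gap.

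The error is your treatment of what happens at $\bdy\Sigma$ (east infinity). Provincial admissibility is used only to guarantee finiteness of the sums (Lemma~\ref{lem:admis-good}); it does \emph{not} rule out degenerations along the cylindrical end of $\Sigma$, and the domain of such a degeneration is in no sense a provincial periodic domain. In fact such east-infinity degenerations do occur in codimension~$1$ and come in two flavors (Theorem~\ref{thm:codim-1-degens}): \emph{split curves}, where a single Reeb chord of the main curve is split by an east-infinity disk into two consecutive chords $\rho_i,\rho_{i+1}$; and \emph{join curves}, where two equal-height chords of the main curve are merged by an east-infinity disk into a single chord $\rho_i$. These are essential and cannot be discarded.

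As a result your bookkeeping for the algebra-differential terms is off. The terms $a(-\rho_1)\cdots d(a(-\rho_i))\cdots a(-\rho_n)$ are accounted for by the \emph{split curve} ends (type~(\ref{item:degen:split})), not by height collisions: the split-curve end of $\cM^B(\w,\y;\rho_1,\dots,\rho_n)$ is canonically identified with the rigid moduli space $\cM^B(\w,\y;\rho_1,\dots,\rho_i\cup\rho_{i+1},\dots,\rho_n)$, and resolving the crossing in $a(-(\rho_i\cup\rho_{i+1}))$ gives precisely the factored coefficient. The abutting height collisions you call ``join-type'' (case~\ref{item:collapse-2}) instead cancel against the \emph{join curve} ends of the moduli space for the merged-chord sequence; both sides carry the same algebra coefficient since $a(-\rho_i)a(-\rho_{i+1})=a(-(\rho_i\cup\rho_{i+1}))$ when the chords abut in this sense. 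So without the east-infinity curves your accounting of $\bdy^2$ neither closes up nor matches the algebra differential.
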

The only nontrivial thing to check is that $\bdy^2=0$. The proof
involves studying the boundaries of $1$-dimensional moduli spaces; we
will sketch it in the next lecture.

\section[The surgery exact triangle]{The surgery exact triangle\texorpdfstring{\footnote{The discussion in this section is taken
  from~\cite[Section~\ref*{LOT1:sec:surg-exact-triangle}]{LOT1}.}}{}}\label{sec:surgery}

Recall that Heegaard Floer homology admits a surgery exact
triangle~\cite{OS04:HolDiskProperties}.  Specifically, for a pair
$(M,K)$ of a 3-manifold~$M$ and a framed knot~$K$ in $M$, there
is an exact triangle
\begin{equation}
  \label{eq:surgery-exact-triangle}
\mathcenter{\begin{tikzpicture}[x=2.3cm,y=48pt]
  \node at (0,0) (m0) {$\HFa(M_{-1})$} ;
  \node at (2,0) (m1) {$\HFa(M_0)$} ;
  \node at (1,-1) (minf) {$\HFa(M_\infty)$} ;
  \draw[->] (m0) to (m1) ;
  \draw[->] (m1) to (minf) ;
  \draw[->] (minf) to node[auto] {} (m0) ;
\end{tikzpicture}}
\end{equation}
where $M_{-1}$, $M_0$, and $M_\infty$ are $-1$, $0$, and
$\infty$ surgery on $K$,  respectively.  As a simple application of
bordered Floer theory, we reprove this result.

Consider the three diagrams
\begin{equation}
  \label{eq:torus-diagrams}
  \HD_\infty:\mfigb{torus-30}\qquad
  \HD_{-1}:\mfigb{torus-20} \qquad
  \HD_0:\mfigb{torus-10}
\end{equation}
(Opposite edges are identified, to give $T^2\setminus\bD^2$. Each
diagram has two $\alpha$-arcs and one $\beta$-circle. The numbers
indicate which chord, in the notation of Formula~\eqref{eq:torus-alg},
corresponds to which arc in $\bdy\HD_\bullet$.  Note again that the chords
are numbered in the opposite of the order induced
by the orientation of $\bdy\HD_\bullet$.)  A generator for
$\CFDa(\HD_\bullet)$ consists of a single intersection point between
the $\beta$-circle in $\HD_\bullet$ and an $\alpha$-arc. These
intersections are labeled above.

The boundary operators on the $\CFDa(\HD_\bullet)$ (and the relevant domains) are given by
\newcommand{\tdiag}[2]{\underset{\includegraphics[scale=0.5]{torus-#2}}{\strut #1}}
\begin{equation*}
  \bdy r = \tdiag{\rho_{23}r}{31}
  \qquad\qquad
  \bdy a = \tdiag{\rho_3b}{21} + \tdiag{\rho_1b}{22}
  \qquad\qquad
  \bdy b = \tdiag{0}{255}
  \qquad\qquad
  \bdy n = \tdiag{\rho_{12}n}{11}
\end{equation*}

There is a short exact sequence
\[
0 \longrightarrow \CFDa(\HD_\infty)\overset{\varphi}{\longrightarrow} \CFDa(\HD_{-1})
  \overset{\psi}{\longrightarrow} \CFDa(\HD_0) \longrightarrow 0
\]
where the maps $\phi$ and $\psi$ are given by
\begin{equation*}
  \varphi(r) = b + \rho_{2}a \qquad\qquad
    \psi(a) = n\qquad\qquad
    \psi(b)= \rho_{2}n.
\end{equation*}

Now, the surgery exact triangle follows immediately from the pairing theorem
and properties of the derived tensor product.

\section{The definition of
  \texorpdfstring{$\CFDDa$}{CFDD}}\label{sec:def-CFDD}
Suppose $\PMC_L$ and $\PMC_R$ are pointed matched circles. We can form
their connected sum $\PMC_L\#\PMC_R$. There are two natural choice of
where to put a basepoint in $\PMC_L\#\PMC_R$; let $z$ be a point in
one of these places and $w$ a point in the other. Thinking of $z$ as
the basepoint, there is an associated algebra
$\Alg(\PMC_L\#\PMC_R)$. Moreover, there is an algebra homomorphism
\[
p\co \Alg(\PMC_L\#\PMC_R)\to \Alg(\PMC_L)\otimes_\Field\Alg(\PMC_R)
\]
given by setting to zero any algebra element crossing the extra
basepoint $w$.

Now, suppose that $\HD$ is an arced Heegaard diagram. Performing
surgery on $\HD$ along the arc $\arcz$ gives a bordered Heegaard
diagram $\drHD$. (Again, there are two choices of where to put the
basepoint in $\drHD$; choose either.) If the boundary of $\HD$ was
$\PMC_L\amalg \PMC_R$ then the boundary of $\drHD$ is
$\PMC_L\#\PMC_R$.

Associated to $\drHD$ is a bordered module $\CFDa(\drHD)$ over
$\Alg(-(\PMC_L\#\PMC_R))$.
\begin{definition}
  With notation as above, let 
  \[
  \CFDDa(\HD)=\bigl((-\Alg(\PMC_L))\otimes_\Field (-\Alg(\PMC_R))\bigr)\otimes_{\Alg(-(\PMC_L\#\PMC_R))}\CFDa(\drHD),
  \]
  be the image of the bordered bimodule $\CFDa(\drHD)$ under the
  induction functor associated to the homomorphism $p$. Via the
  correspondence between left-left bimodules over
  $\Alg(-\PMC_L)$ and $\Alg(-\PMC_R)$ and left modules over
  $\bigl((-\Alg(\PMC_L))\otimes_\Field (-\Alg(\PMC_R))\bigr)$,
  we view $\CFDDa(\HD)$ as a left-left bimodule over
  $\Alg(-\PMC_L)$ and $\Alg(-\PMC_R)$.
\end{definition}

Of course, this definition can be unpacked to define $\CFDDa(\HD)$
directly in terms of intersection points and holomorphic curves; doing
so is Exercise~\ref{ex:define-CFDD}.

\section{Exercises}

\begin{exercise}
  There is a unique almost-complex structure $\Sym^g(j_\Sigma)$ on
  $\Sym^g(\Sigma)$ so that the projection map $(\Sigma^{\times g},
  j_\Sigma^{\times g})\to (\Sym^g(\Sigma), \Sym^g(j_\Sigma))$ is
  holomorphic.  In the tautological correspondence of
  Section~\ref{sec:cylindrical}, show that if $u_\Sigma$ and $u_\bD$
  are holomorphic then the map $\bD^2\to \Sym^g(\Sigma)$, $p\mapsto
  u_\Sigma(u_\bD^{-1}(p))$ is holomorphic with respect to
  $\Sym^g(j_\Sigma)$.
\end{exercise}

\begin{exercise}\label{ex:solid-tori}
  Consider the Heegaard diagrams of Section~\ref{sec:surgery}. Replacing the blue ($\beta$) curve in the diagrams $\HD_\bullet$ by
  a circle of slope $p/q$ gives a bordered Heegaard diagram
  $\HD_{p/q}$ for a $p/q$-framed solid torus. It is fairly easy to
  compute the invariants $\CFDa(\HD_{p/q})$ for these diagrams;
  compute some. 

  For any triple of rational numbers $(p_1/q_1,p_2/q_2,p_3/q_3)$ (with
  $p_i,q_i$ relatively prime) such that $p_1+p_2+p_3=q_1+q_2+q_3=0$
  there is a corresponding surgery triangle; check this for some other
  examples.
\end{exercise}

\begin{figure}
  \input{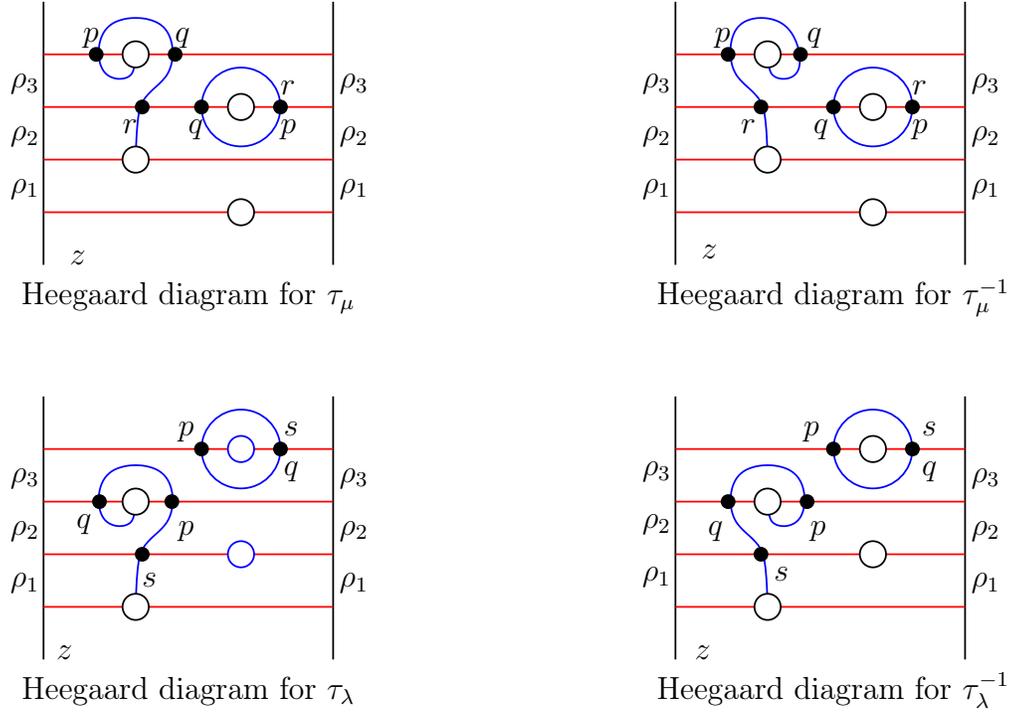}
  \caption[Heegaard diagrams for Dehn twists of the
  torus.]{\label{fig:DehnTwistsGenusOne} {\bf{Heegaard diagrams for
        mapping class group elements.}}  Genus $2$ diagrams for
    $\tau_\mu$, $\tau_\mu^{-1}$, $\tau_\lambda$ and
    $\tau_\lambda^{-1}$ are shown. In each of the four diagrams, there
    are three generators in the $i=0$ summand. (This figure is drawn
    from~\cite[Figure A.2]{LOT0}.)}
\end{figure}

\begin{exercise}
  Compute $\Mor(\CFDa(\HD_{p/q}),\CFDa(\HD_{r/s}))$ for a few choices of
  $p,q,r,s$. For example, $\Mor(\CFDa(\HD_\infty),\CFDa(\HD_{-1}))$
  has generators $(r\mapsto b)$, $(r\mapsto \rho_{23} b)$ and
  $(r\mapsto \rho_2a)$. The differentials are given by
  \begin{align*}
    \bdy (r\mapsto b)&= (r\mapsto \rho_{23} b)\\
    \bdy(r\mapsto \rho_2a)&=(r\mapsto \rho_{23}b).
  \end{align*}
  In particular, the homology of this $\Mor$ complex is
  $1$-dimensional.

  Recall that $\HFa(L(p,q))\cong(\mathbb{F}_2)^p$ , and $\HFa(S^2\times
  S^1)\cong (\mathbb{F}_2)^2$; check that your answers are consistent with this.
\end{exercise}

\begin{exercise}\label{ex:DD-id-torus}
  We explain the type \DD\ bimodule $\CFDDa(\Id,0)$ associated to the
  mapping cylinder for the identity map of $F(\PMC)$. The notation is
  somewhat cumbersome, as $\CFDDa(\Id,0)$ has two commuting left
  actions by $\Alg(T^2,0)$. We write one of these copies of
  $\Alg(T^2,0)$ in the notation of Formula~\eqref{eq:torus-alg}, and
  the other in the same way but with $\sigma$'s in place of $\rho$'s
  and $\eta$'s in place of $\iota$'s.  Then, the bimodule $\CFDDa(\Id,0)$
  has two generators, $x$ and $y$, with
  \[
  \iota_{0} x = \eta_{0} x = x \qquad\qquad \iota_{1} y = \eta_{1} y = y
  \]
  and differential given by 
  \begin{equation}\label{eq:torus-DD-id}
    \begin{split}
      \bdy x &= (\rho_1\sigma_3+ \rho_3\sigma_1 + \rho_{123}\sigma_{123}) \otimes y \\
      \bdy y &= (\rho_2\sigma_2)\otimes x.
    \end{split}
  \end{equation}
  (Compare~\cite[Section~\ref*{LOT1:sec:dd-of-id}]{LOT1}.)

    Verify that for the modules $\CFDa(\HD_\bullet)$ of
    Section~\ref{sec:surgery}, $\Mor(\CFDDa(\Id,0),\cdot)$ acts as the
    identity. That is, check that
    \[
    \Mor_{\Alg(T^2,0)}(\CFDDa(\Id,0),\CFDa(\HD_0))\simeq \CFDa(\HD_0),
    \]
    and similarly for $\HD_{-1}$, $\HD_\infty$. (You will have to use
    the equivalence of categories between left $\Alg(T^2,0)$-modules
    and right $\Alg(T^2,0)$-modules coming from the fact that
    $\Alg(T^2,0)\cong \Alg(T^2,0)^\op$. Note that this isomorphism
    exchanges $\rho_1$ and $\rho_3$.)
\end{exercise}

\begin{remark}
  There are two non-equivalent notions of the $\Mor$ complex above,
  depending on how one treats the other algebra action on
  $\CFDDa(\Id,0)$. The exercise will be true with either
  notion. See~\cite[Theorems 5 and 6]{LOTHomPair} for an example where
  this distinction matters.
\end{remark}

\begin{remark}\label{rem:DD-notation}
  It is sometimes convenient to encode the operations in
  Formula~\eqref{eq:torus-DD-id} by:
  \[
  \begin{tikzpicture}[y=54pt,x=1in]
    \node at (0,0) (x) {$x$} ; \node at (2,0) (y) {$y$} ; \draw[->,
    bend left=20] (x) to node[above]
    {${\rho_1\sigma_3+\rho_3\sigma_1+\rho_{123}\sigma_{123}}$} (y) ;
    \draw[->, bend left=20] (y) to node[below] {${\rho_2\sigma_2}$}
    (x) ;
  \end{tikzpicture}.
  \]
  This way of encoding operations on \DD\ bimodules will be used in
  Exercise~\ref{ex:torus-twists}.
\end{remark}

\begin{exercise}
  Note that the identity for $\Mor$ is the $\Alg$-bimodule $\Alg$. In
  spite of the computations in Exercise~\ref{ex:DD-id-torus},
  $\CFDDa(\Id)\not\simeq \Alg(T^2,0)$. Check this
  two ways:
  \begin{itemize}
  \item Directly. (Think about the rank of the homologies.)
  \item By finding a module $M$ over $\Alg(T^2,0)$ so that
    $\CFDDa(\Id)\otimes_{\Alg(T^2,0)} M\not\simeq M$. (Or, you can use
    $\Mor(\CFDDa(\Id),M)$ if you prefer.)
  \end{itemize}
\end{exercise}

\begin{exercise}\label{ex:torus-twists}
  Let $\tau_\mu$ and $\tau_\lambda$ denote the Dehn twists of the
  torus along a meridian and a longitude, respectively. Heegaard
  diagrams for the mapping cylinders of $\tau_\mu$ and $\tau_\lambda$
  are shown in Figure~\ref{fig:DehnTwistsGenusOne}. With notation as
  in Remark~\ref{rem:DD-notation}, the type \DD\ bimodules associated
  to these Dehn twists and their inverses are given by
\begin{center}
  \begin{tikzpicture}[y=54pt,x=1in]
    \node at (0,2) (p) {${\mathbf p}$} ;
    \node at (2,2) (q) {${\mathbf q}$} ;
    \node at (1,0) (r) {${\mathbf r}$} ;
    \node at (1,1.25) (label) {$\boldsymbol{\tau_\mu}$};
    \draw[->] (p) to node[above,sloped] {\lab{\rho_1\sigma_3+\rho_{123}\sigma_{123}}}  (q)  ;
    \draw[->] (p) [bend left=15] to node[above,sloped] {\lab{
\rho_3\sigma_{12}}} (r) ;
    \draw[->] (q) [bend left=15] to node[below,sloped] {\lab{\rho_{23}\sigma_{2}}} (r) ;
    \draw[->] (r) [bend left=15] to node[below,sloped] {\lab{\rho_2}} (p) ;
    \draw[->] (r) [bend left=15] to node[below,sloped] {\lab{\sigma_1}} (q) ;
  \end{tikzpicture}
  \qquad
  \begin{tikzpicture}[y=54pt,x=1in]
    \node at (0,2) (p) {${\mathbf p}$} ;
    \node at (2,2) (q) {${\mathbf q}$} ;
    \node at (1,0) (r) {${\mathbf r}$} ;
    \node at (1,1.25) (label) {$\boldsymbol{\tau_\mu^{-1}}$};
    \draw[->] (p) to node[above,sloped] {\lab{\rho_1 \sigma_3+\rho_{123}\sigma_{123}}}  (q)  ;
    \draw[->] (p) [bend left=15] to node[above,sloped]  {\lab{\rho_3}} (r) ;
    \draw[->] (q) [bend left=15] to node[below,sloped] {\lab{\sigma_2}} 
                  (r) ;
    \draw[->] (r) [bend left=15] to node[below,sloped] {\lab{\rho_2 \sigma_{12}}} (p) ;
    \draw[->] (r) [bend left=15] to node[below,sloped] {\lab{\rho_{23} \sigma_1}} (q) ;
  \end{tikzpicture} \\
  \begin{tikzpicture}[y=54pt,x=1in]
    \node at (2,2) (p) {${\mathbf q}$} ;
    \node at (0,2) (q) {${\mathbf p}$} ;
    \node at (1,0) (r) {${\mathbf s}$} ;
    \node at (1,1.25) (label) {$\boldsymbol{\tau_\lambda}$};
    \draw[->] (p) [bend left=15] to node[below,sloped] {\lab{\rho_2 \sigma_{23}}} (r) ;
    \draw[->] (q) [bend left=15] to node[above,sloped] {\lab{\rho_{12}\sigma_3}} (r) ;
    \draw[->] (q) to node[above,sloped] {\lab{\rho_3\sigma_1+\rho_{123}\sigma_{123}}} (p) ;
    \draw[->] (r) [bend left=15] to node[above,sloped] {\lab{\rho_1}} (p) ;
    \draw[->] (r) [bend left=15] to node[below,sloped] {\lab{\sigma_2}} (q) ;
  \end{tikzpicture}
  \begin{tikzpicture}[y=54pt,x=1in]
    \node at (2,2) (p) {${\mathbf q}$} ;
    \node at (0,2) (q) {${\mathbf p}$} ;
    \node at (1,0) (r) {${\mathbf s}$} ;
    \node at (1,1.25) (label) {$\boldsymbol{\tau_\lambda^{-1}}$};
    \draw[->] (p) [bend left=15] to node[below,sloped] {\lab{\rho_2}}  (r)  ;
    \draw[->] (q) [bend left=15] to node[above,sloped]  {\lab{\sigma_3}} (r) ;
    \draw[->] (q)  to node[above,sloped] {\lab{\rho_3\sigma_1+\rho_{123}\sigma_{123}}} (p) ;
    \draw[->] (r) [bend left=15] to node[above,sloped] {\lab{\rho_1 \sigma_{23}}} (p) ;
    \draw[->] (r) [bend left=15] to node[below,sloped] {\lab{\rho_{12}\sigma_2}} (q) ;
  \end{tikzpicture}
\end{center}
Convince yourself that these bimodules satisfy $\bdy^2=0$. 
  Compute $\Mor(\CFDDa(\tau_\mu),\HD_0)$ and
  $\Mor(\CFDDa(\tau_\lambda),\HD_0)$. Compare the results with the
  answers you computed in Exercise~\ref{ex:solid-tori}.
\end{exercise}

\begin{exercise}
  Up to Heegaard moves, there are some symmetries relating the
  diagrams in Figure~\ref{fig:DehnTwistsGenusOne}. How are these
  symmetries reflected in the bimodules in Exercise~\ref{ex:torus-twists}?
\end{exercise}

\begin{exercise}\label{ex:define-CFDD}
  Unpack the definition of $\CFDDa$ from Section~\ref{sec:def-CFDD} to
  give a direct definition, avoiding the induction functor.
\end{exercise}

\chapter[Analysis underlying the invariants]{Analysis underlying the invariants and the pairing theorem}\label{lec:analysis}
\section{Broken flows in the cylindrical setting}\label{sec:broken-cyl}
As a warm-up, we begin this lecture by discussing the proof that $\bdy^2=0$ for the
cylindrical picture for Heegaard Floer homology. We start with an
example.  Consider the Heegaard diagram for $S^3$ shown in
Figure~\ref{fig:S3-moduli}. There are five generators, labeled $a$,
$b$, $c$, $d$ and $e$. The differentials are given by
\[
\bdy(a) = b+c \qquad\qquad \bdy(b)=\bdy(c)=d \qquad\qquad \bdy(d)=0\qquad\qquad\bdy(e)=b+c.
\]
(Remember that we are working with $\Field$-coefficients.)

Consider the moduli space $\cM(a,d)$ of curves connecting $a$ to
$d$. This moduli space consists of holomorphic maps
\[
u\co (\bD^2\setminus \{\pm i\})\to \Sigma\times[0,1]\times\RR.
\]
Suppose we are working with the almost-complex structure
$j_\Sigma\times j_\bD$. Then there are projection maps $\pi_\Sigma\co
\Sigma\times[0,1]\times\RR\to \Sigma$ and $\pi_\bD\co
\Sigma\times[0,1]\times\RR\to [0,1]\times\RR$, and $u$ being
holomorphic is equivalent to $\pi_\Sigma\circ u$ and $\pi_\bD\circ u$
being holomorphic. 

The map $\pi_\bD\circ u$ is a $1$-fold branched cover, i.e., an
isomorphism; up to translation, there is a unique such isomorphism.

A short argument using the Riemann mapping theorem shows that the map
$\pi_\Sigma\circ u$ is determined by the image of
$\bdy\bD^2$. Figure~\ref{fig:S3-moduli} shows two possibilities for
$\pi_\Sigma(u(\bdy\bD^2))$. Note the branch point on $\alpha_1$ or
$\beta_1$. The whole moduli space is determined by where the branch
point lies; so, $\cM(a,d)$ is an (open) interval. The ends of
$\cM(a,d)$ occur when the branch point approaches $b$ or $c$.

\begin{figure}
  \centering
  \begin{overpic}[tics=10,width=.8\textwidth]{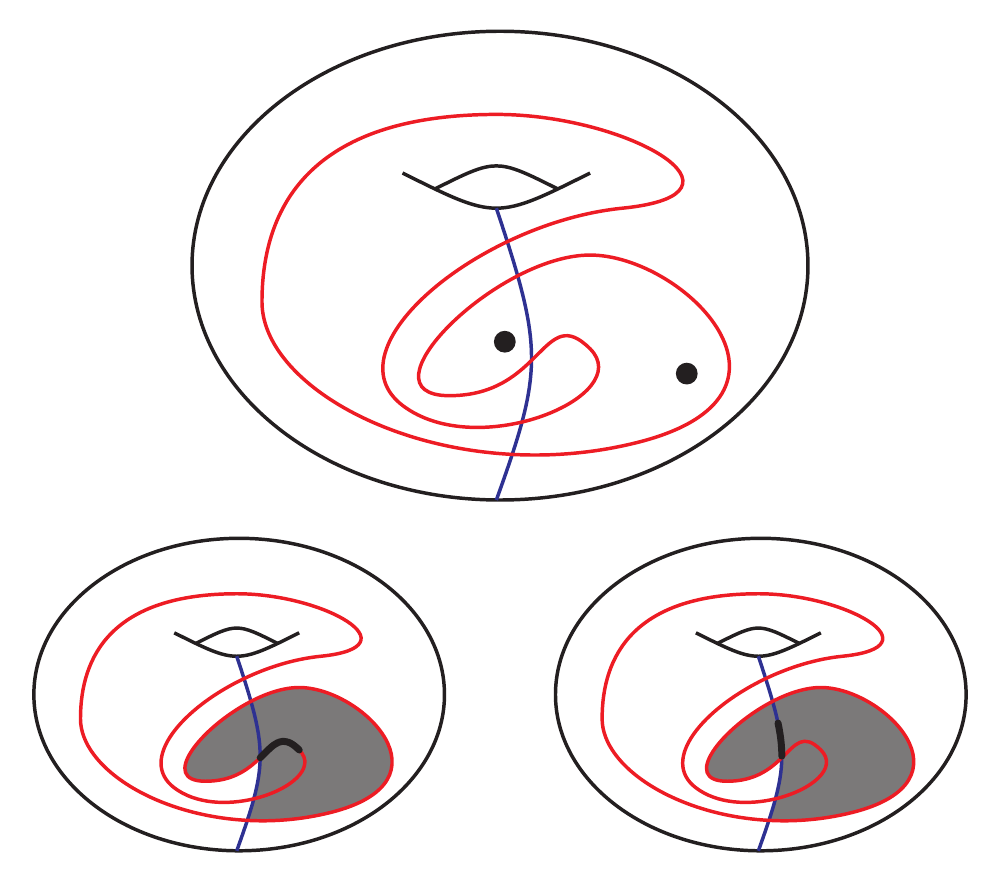}
    \put(51,40){$a$}
    \put(53,43){$b$}
    \put(54,50){$d$}
    \put(53,58){$c$}
    \put(51,64.5){$e$}
    \put(48,34){\textcolor{blue}{$\beta$}}
    \put(23,57){\textcolor{red}{$\alpha$}}
    \put(50,80){$z$}
    \put(46,53){$p_2$}
    \put(64,50){$p_1$}
  \end{overpic}
  \caption{\textbf{An unnecessarily complicated diagram for $S^3$.}
    In the two pictures on the bottom we have indicated the image
    $\pi_\Sigma(u(\bdy\bD^2))$ for two typical elements of
    $\cM(a,d)$. The thick black segments indicate cuts.}
  \label{fig:S3-moduli}
\end{figure}

We want to describe the limiting objects. In the ordinary setting for
Morse theory, these would be broken flows. In this setting, they are
multi-story holomorphic buildings. We see this as follows.

Consider a sequence of curves $u_i$ approaching the end of $\cM(a,d)$
where the
branch point approaches $c$. Notice the points $p_1,p_2\in\Sigma$
shown in Figure~\ref{fig:S3-moduli}. Consider the points
$q_1=(\pi_\Sigma\circ u_i)^{-1}(p_1)$ and $q_2=(\pi_\Sigma\circ
u_i)^{-1}(p_2)$ in $\bD^2$. The points $(\pi_\bD\circ u_i)(q_1)$ and
$(\pi_\bD\circ u_i)(q_2)$ in $[0,1]\times\RR$ are getting farther and
farther apart. Indeed, from the point of view of $q_1$, half of the
holomorphic curve is heading towards
$\Sigma\times[0,1]\times\{+\infty\}$, while from the point of view of
$q_2$, half of the holomorphic curve is heading towards
$\Sigma\times[0,1]\times\{-\infty\}$. So, the limiting object has two
``stories'': the part of the limit containing $q_1$ and the part of
the limit containing $q_2$. More formally:
\begin{definition}
  An \emph{$\ell$-story holomorphic building} connecting $\x$ to $\y$
  consists of a sequence of holomorphic curves $u_i\in
  \cM(\x_i,\x_{i+1})$, $i=1,\dots,\ell$, with $\x_1=\x$ and
  $\x_{\ell+1}=\y$.

  Each holomorphic building carries a homology class in
  $\pi_2(\x,\y)$, by adding up (concatenating) the homology classes of
  its stories.
\end{definition}

We should now give a topology on the space of holomorphic buildings,
to say precisely what it means for a sequence of one-story buildings,
i.e., elements of $\cM(a,d)$, to converge to a multi-story
building. Instead, however, we refer the reader
to~\cite{BEHWZ03:CompactnessInSFT}.

The main structural result is:
\begin{theorem}\label{thm:closed-cyl-cpct-glue}
  Suppose that $B\in \pi_2(\x,\y)$ has $\mu(B)=2$. Let $\ocM^B(\x,\y)$
  denote the space of $1$- or $2$-story holomorphic buildings
  connecting $\x$ to $\y$ in the homology class $B$. Then for a
  generic choice of almost-complex structure, $\ocM^B(\x,\y)$ is a
  compact $1$-dimensional manifold-with-boundary. The boundary of
  $\ocM^B(\x,\y)$ consists exactly of the $2$-story holomorphic
  buildings connecting $\x$ to $\y$ in the homology class $B$.
\end{theorem}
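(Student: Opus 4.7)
The proof has the three standard ingredients of any Floer-theoretic compactness/gluing argument: transversality, compactness, and gluing. The plan is to run each in turn, using the almost-complex structure $J$ as the perturbation parameter.

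First I would establish transversality. Applying a Sard--Smale argument to the universal moduli space cut out by the linearized $\dbar$-operator, one shows that for a dense set of $J$ in the space of admissible almost-complex structures on $\Sigma\times[0,1]\times\RR$, every moduli space $\cM^{B'}(\x',\y')$ with $\mu(B')\leq 2$ is a smooth manifold of dimension $\mu(B')-1$. I would also need to rule out pathological configurations that raise codimension by less than one: closed $J$-holomorphic spheres cannot exist in $\Sigma\times[0,1]\times\RR$ because the target is exact, and boundary bubbling of $\alpha$- or $\beta$-disks is excluded since any such bubble must cross the basepoint $z$ (which is forbidden by our moduli space convention). Branched covers of $\RR$-invariant cylinders, which are genuine features of the cylindrical setting, are handled by the standard fact that for generic $J$ they have index matching the expected formula and can therefore only appear in codimension $\geq 1$ for $\mu(B)=2$.

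Second, compactness. I would appeal to the Gromov--SFT-type compactness theorem of \cite{BEHWZ03:CompactnessInSFT}, specialized to the cylindrical Lagrangian boundary conditions, as worked out in the Heegaard Floer setting by the first author. Any sequence $u_i\in\cM^B(\x,\y)$ has a subsequence converging, in the sense of holomorphic buildings, to some building of total homology class $B$. Since each story carries a nonempty moduli space, transversality forces each story to have Maslov index at least $1$, and $\mu$ is additive under concatenation; because $\mu(B)=2$, the limiting building has at most $2$ stories. Thus the set of limits lies inside $\ocM^B(\x,\y)$ as defined.

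Third, gluing. For each $2$-story building $(v_1,v_2)\in\cM^{B_1}(\x,\z)\times\cM^{B_2}(\z,\y)$ with $B_1+B_2=B$ and $\mu(B_1)=\mu(B_2)=1$, a standard pregluing/Newton-iteration argument produces a smooth $1$-parameter family in $\cM^B(\x,\y)$ (parameterized by the gluing length $T\gg 0$) converging to $(v_1,v_2)$. Combining transversality, compactness, and gluing yields that $\ocM^B(\x,\y)$ is a compact topological $1$-manifold with boundary, with the interior $\cM^B(\x,\y)$ a smooth $1$-manifold and boundary points precisely the $2$-story buildings. The main obstacle is the gluing analysis: one must verify that the pregluing is uniformly close to being holomorphic and that the linearized operator is uniformly invertible on a suitable Banach space of sections, with bounds independent of the gluing parameter. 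This is technical but follows the template established in Lagrangian intersection Floer theory, adapted to the cylindrical setup with Reeb-type asymptotics at $\pm\infty$.
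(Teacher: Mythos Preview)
Your sketch is correct and matches the standard approach. The paper itself does not give a proof of this theorem; it simply cites \cite[Corollary 7.2]{Lipshitz06:CylindricalHF} for the cylindrical formulation (and \cite{OS04:HolomorphicDisks} for the original setting), remarking that both are ``relatively modest adaptations of standard holomorphic curve techniques.'' Your three-step outline (transversality via Sard--Smale, SFT compactness from \cite{BEHWZ03:CompactnessInSFT} with index additivity to bound the number of stories, and Newton-iteration gluing) is exactly the template those references execute, so there is nothing to add.
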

In the cylindrical formulation, this is~\cite[Corollary
7.2]{Lipshitz06:CylindricalHF}; the analogous result for Heegaard
Floer homology in the non-cylindrical setting was proved
in~\cite{OS04:HolomorphicDisks}. (Both proofs are relatively modest
adaptations of standard holomorphic curve techniques.)

To conclude the warm-up, we recall that $\bdy^2=0$ follows from
Theorem~\ref{thm:closed-cyl-cpct-glue} by a standard argument:
\begin{corollary}\label{cor:closed-cyl-d-squared}
  Let $\HD$ be an admissible Heegaard diagram for a closed
  $3$-manifold. Then the differential $\bdy$ on $\CFa(\HD)$ satisfies
  $\bdy^2=0$.
\end{corollary}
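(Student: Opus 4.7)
The plan is to run the standard ``boundary of a $1$-manifold is even'' argument, with the moduli spaces $\cM^B(\x,\y)$ playing the role of the moduli spaces of gradient flowlines in finite-dimensional Morse theory, and with Theorem~\ref{thm:closed-cyl-cpct-glue} providing the compactness/gluing input.

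First, I would expand
\[
\bdy^2(\x)=\sum_{\y}\sum_{\z}\sum_{\substack{B_1\in\pi_2(\x,\y)\\ \mu(B_1)=1}}\sum_{\substack{B_2\in\pi_2(\y,\z)\\ \mu(B_2)=1}}\bigl(\#\cM^{B_1}(\x,\y)\bigr)\bigl(\#\cM^{B_2}(\y,\z)\bigr)\,\z,
\]
and regroup this sum by the total homology class $B=B_1*B_2\in\pi_2(\x,\z)$, which has Maslov index $\mu(B)=\mu(B_1)+\mu(B_2)=2$. For each fixed $\z$ and each $B\in\pi_2(\x,\z)$ with $\mu(B)=2$, the contribution to the coefficient of $\z$ is exactly the number of $2$-story holomorphic buildings in the class $B$, i.e., the number of boundary points of the compactified moduli space $\ocM^B(\x,\z)$ described in Theorem~\ref{thm:closed-cyl-cpct-glue}.

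Next I would invoke Theorem~\ref{thm:closed-cyl-cpct-glue} directly: for generic $J$, the space $\ocM^B(\x,\z)$ is a compact $1$-manifold-with-boundary, so its boundary $\bdy\ocM^B(\x,\z)$ is a compact $0$-manifold with an even number of points. Working over $\Field=\FF_2$, this means that the contribution of each class $B$ to the coefficient of $\z$ in $\bdy^2(\x)$ vanishes, and hence the coefficient itself vanishes. Summing over $\z$ gives $\bdy^2(\x)=0$.

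The one remaining issue is to justify interchanging the sum with the partitioning into classes $B$, which requires that only finitely many terms appear. Here admissibility of $\HD$ enters: since $\HD$ is admissible it has no positive periodic domains, and a standard argument (as in~\cite[Lemma 4.14]{OS04:HolomorphicDisks}, adapted to the cylindrical setting) shows that for each pair $\x,\z$ there are only finitely many positive classes in $\pi_2(\x,\z)$ of bounded Maslov index. Since holomorphic representatives only exist in positive classes (and $\mu(B)=2$ is fixed), both the double sum over $(B_1,B_2)$ and the regrouped sum over $B$ are finite, so no convergence issue arises. The main obstacle in the argument is of course already packaged into Theorem~\ref{thm:closed-cyl-cpct-glue}: namely the compactness (à la~\cite{BEHWZ03:CompactnessInSFT}) and the gluing statement that every $2$-story building in $B$ actually arises as a limit of an honest end of $\cM^B(\x,\z)$, and nothing else does; granting that, the deduction of $\bdy^2=0$ is purely formal.
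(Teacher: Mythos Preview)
Your proposal is correct and follows essentially the same argument as the paper: expand $\bdy^2(\x)$, regroup by the concatenated class $B\in\pi_2(\x,\z)$ with $\mu(B)=2$, identify the coefficient of $\z$ with the count of $2$-story buildings, invoke Theorem~\ref{thm:closed-cyl-cpct-glue} to recognize this as $\#\bdy\ocM^B(\x,\z)$, and conclude over $\Field$ since a compact $1$-manifold has an even number of boundary points; the paper likewise notes that admissibility is used only to guarantee finiteness of the sums.
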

\begin{proof}
  The proof involves the usual looking at ends of one-dimensional
  moduli spaces, as is familiar in Floer homology:
  \begin{align*}
    \bdy(\x)&=\sum_{\y\in\Gen(\HD)}\sum_{\substack{B_1\in\pi_2(\x,\y)\\
        \mu(B_1)=1}}\bigl(\#\cM^{B_1}(\x,\y)\bigr)\y\\
    \bdy^2(x)&=\sum_{\y\in\Gen(\HD)}\sum_{\substack{B_1\in\pi_2(\x,\y)\\
        \mu(B_1)=1}}\bigl(\#\cM^{B_1}(\x,\y)\bigr)\bdy(\y)\\
    &=\sum_{\y,\z\in\Gen(\HD)}\sum_{\substack{B_1\in\pi_2(\x,\y)\\
        \mu(B_1)=1}}\sum_{\substack{B_2\in\pi_2(\y,\z)\\
        \mu(B_2)=1}}\bigl(\#\cM^{B_1}(\x,\y)\bigr)\bigl(\#\cM^{B_2}(\y,\z)\bigr)\z\\
    &=\sum_{\z\in\Gen(\HD)}\sum_{\substack{B\in\pi_2(\x,\z)\\
        \mu(B)=2}}\bigl(\#\bdy
    \cM^{B}(\x,\z)\bigr)\z\\
    &=0.
  \end{align*}
  Most of this is just manipulation of symbols; the key point is the
  fourth equality, which uses
  Theorem~\ref{thm:closed-cyl-cpct-glue}. The last equality follows
  from the fact that a $1$-dimensional manifold-with-boundary has an
  even number of ends. (The assumption about admissibility is used to
  ensure that the sums involved at each stage are finite.)
\end{proof}

\section{The codimension-one boundary: statement}\label{sec:codim-1-bdy}
To prove that $\bdy^2=0$ for $\CFDa$ we need to investigate the
boundary of the $1$-dimensional moduli spaces, analogously to
Theorem~\ref{thm:closed-cyl-cpct-glue}. So, fix a bordered Heegaard
diagram $\HD=(\Sigma,\alphas^c,\alphas^a,\betas)$. As above, we can
have breaking at $\pm\infty$, giving multi-story holomorphic
buildings; but now there are two other sources of non-compactness:
\begin{enumerate}
\item\label{item:e-infty-degen} The manifold $\Sigma$ has a
  cylindrical end, giving another direction in which curves in
  $\Sigma\times[0,1]\times\RR$ can break.
\item\label{item:collapse-degen} In the moduli space
  $\cM^B(\x,\y;\rho_1,\dots,\rho_n)$ we had Reeb chords $\rho_i\times
  (1,t_i)$ where $t_1<t_2<\dots<t_n$. This can degenerate when
  $t_{i+1}-t_i\to 0$.
\end{enumerate}
(There is overlap between the two cases.)

Degenerations of type~(\ref{item:e-infty-degen}) lead to the analogue of
$2$-story holomorphic buildings, but in the ``horizontal'', i.e.,
$\Sigma$, direction. In principle, one can have degenerations in both
the vertical ($\RR$) and horizontal ($\Sigma$) directions at once. We
called the resulting objects \emph{holomorphic combs}~\cite[Definition~\ref*{LOT1:def:comb}]{LOT1}. In codimension
$1$, the kinds of combs that can appear are quite limited, so rather
than giving the general story we will simply explain these cases.

By \emph{east $\infty$} we mean
$\RR\times(\bdy\Sigma)\times[0,1]\times\RR$; this is the symplectic
manifold that one sees at the (``horizontal'') end of
$\Sigma$. Note that there are projection maps 
\begin{align*}
  \pi_\Sigma&\co\RR\times(\bdy\Sigma)\times[0,1]\times\RR\to\RR\times(\bdy\Sigma)\\
  \pi_\bD&\co\RR\times(\bdy\Sigma)\times[0,1]\times\RR\to
  [0,1]\times\RR\\
  t&\co \RR\times(\bdy\Sigma)\times[0,1]\times\RR\to\RR,
\end{align*}
where $t$ is projection onto the second (last) $\RR$-factor.
Degenerations of type~(\ref{item:e-infty-degen}) lead to pairs
$(u,v)$ where $u$ is a curve in $\Sigma\times[0,1]\times\RR$ of the
kind we have been considering and $v$ is a curve at east $\infty$,
i.e., a holomorphic map
\[
v\co (S,\bdy S)\to (\RR\times (\bdy\Sigma)\times[0,1]\times\RR, \RR\times(\alphas\cap\bdy\Sigma)\times\{1\}\times\RR).
\] 
Here, $S$ is a surface with boundary and punctures on the
boundary. Each puncture is labeled either $e$ or $w$. Near each $e$
puncture, $v$ is asymptotic to some
$\{\infty\}\times\rho_i\times(1,t_i)$ where $\rho_i$ is a chord in
$\bdy\Sigma$ and $t_i\in\RR$. Similarly, near each $w$ puncture, $v$
is asymptotic to some $\{-\infty\}\times\rho_i\times(1,t_i)$.

\begin{figure}
\includegraphics{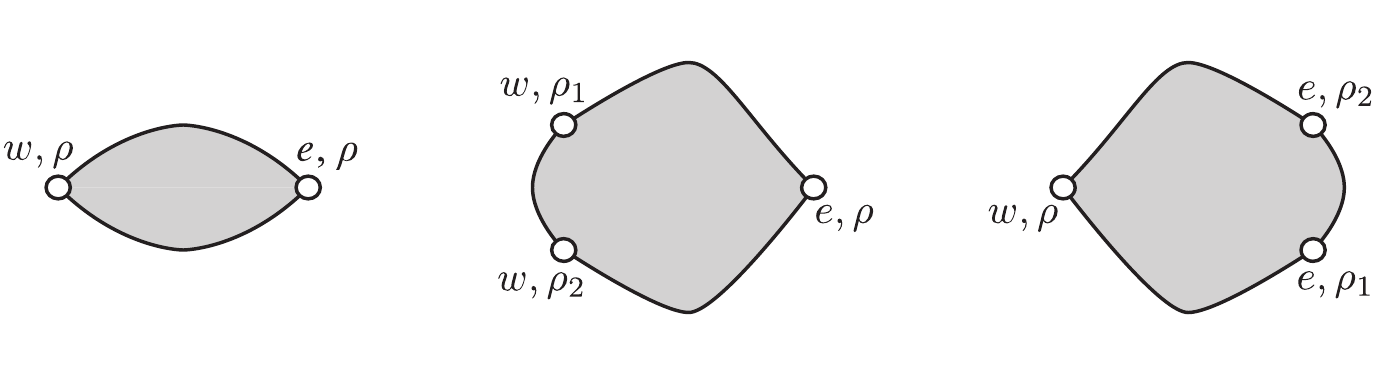}
\caption[Sources of curves at east $\infty$.]{\textbf{Sources of curves at east $\infty$.} Left: a trivial component. Center: a join component. Right: a
  split component. This is~\cite[Figure~\ref*{LOT1:fig:trivial_split_join_domain}]{LOT1}.}\label{fig:trivial_split_join_domain}
\end{figure}

It follows from the boundary conditions and asymptotics that for each
component of $v$, the map $\pi_\bD\circ v$ is, in fact, constant. This
makes describing holomorphic curves at east $\infty$ relatively
straightforward. Three kinds of curves will play special
roles in studying $\CFDa$:
\begin{itemize}
\item A \emph{trivial component} is a disk in
  $\RR\times(\bdy\Sigma)\times[0,1]\times\RR$ which is invariant under
  translation in the first $\RR$-factor. It follows that a trivial
  component has one $w$ punctures and one $e$ puncture, and is
  asymptotic to the same chord $\rho$ at both punctures.
\item A \emph{join component} is a disk in
  $\RR\times(\bdy\Sigma)\times[0,1]\times\RR$ with two $w$ punctures and
  one $e$ puncture. At the two $w$ punctures the curve is asymptotic
  to chords $\rho_1$ and $\rho_2$ and at the $e$ puncture the curve is
  asymptotic to a chord $\rho$. With respect to the cyclic ordering of
  the punctures $(\rho,\rho_1,\rho_2)$ around the boundary of the disk
  (see
  Figure~\ref{fig:trivial_split_join_domain}), the terminal endpoint
  of $\rho_2$ is the initial endpoint of $\rho_1$; and
  $\rho=\rho_2\cup\rho_1$.

  A \emph{join curve} is the disjoint union of one join component and
  finitely many trivial components.
\item Roughly, a \emph{split component} is the mirror of a join
  component. In more detail, a split component is a disk in
  $\RR\times(\bdy\Sigma)\times[0,1]\times\RR$ with one $w$ punctures and
  two $e$ puncture. At the two $e$ punctures the curve is asymptotic
  to chords $\rho_1$ and $\rho_2$ and at the $w$ puncture the curve is
  asymptotic to a chord $\rho$. With respect to the cyclic ordering of
  the punctures $(\rho,\rho_1,\rho_2)$ around the boundary of the disk
   (see
  Figure~\ref{fig:trivial_split_join_domain}), the terminal endpoint
  of $\rho_1$ is the initial endpoint of $\rho_2$; and
  $\rho=\rho_1\cup\rho_2$.

  For our purposes, a \emph{split curve} is the disjoint union of one
  split component and finitely many trivial components. (If we were also
  interested in $\CFAa$, we would have to allow more than one split
  component in a split curve.)
\end{itemize}

\begin{figure}
\includegraphics[scale=.857143]{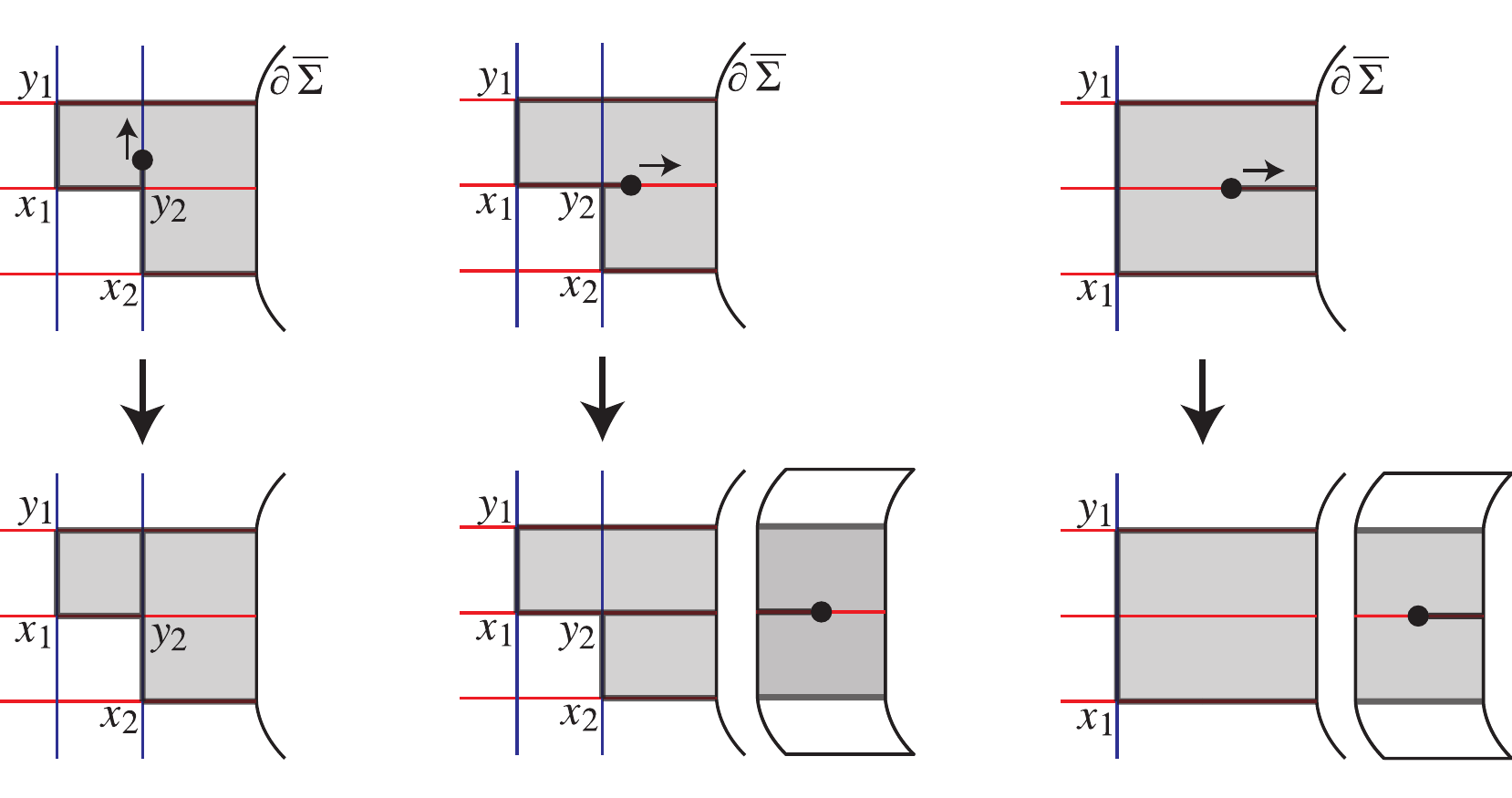}
\caption[Examples of three kinds of codimension $1$
  degenerations.]{\textbf{Examples of three kinds of codimension $1$
  degenerations.} The large black dot represents a boundary branch
  point of $\pi_\Sigma\circ u$. Left: degenerating into a two-story
  building. Center: degenerating a join curve. Right: degenerating a
  split curve. The diagrams show the projection of the curve in
  $\Sigma \times [0,1] \times \RR$ to $\Sigma$. This figure is adapted
  from~\cite[Figure~\ref*{LOT1:fig:degen_examples}]{LOT1}.}\label{fig:degen_examples}
\end{figure}

Figure~\ref{fig:degen_examples} gives examples of degenerating a join
curve and a split curve at east $\infty$, as well as breaking into a
two-story holomorphic building.

\begin{remark}
  In studying $\CFAa$, a third kind of curve at east $\infty$, called
  a \emph{shuffle curve}, is also important. See~\cite[Section~\ref*{LOT1:sec:curves_at_east_infinity}]{LOT1} for a discussion of shuffle curves.
\end{remark}

\begin{theorem}\label{thm:codim-1-degens}
  Suppose that $\ind(B;\rho_1,\dots,\rho_n)=2$. Then the ends of the
  moduli space $\cM^B(\w,\y;\rho_1,\dots,\rho_n)$ consist exactly of
  the following configurations:
  \begin{enumerate}
  \item\label{item:degen:2-story} Two-story holomorphic buildings, i.e., 
    \[
    \bigcup_{i=0}^n\,\bigcup_{\x\in\Gen(\HD)}\,\bigcup_{\substack{B_1\in\pi_2(\w,\x)\\B_2\in\pi_2(\x,\y)\\B_1*B_2=B}}\cM^{B_1}(\w,\x;\rho_1,\dots,\rho_i)\times\cM^{B_2}(\x,\y;\rho_{i+1},\dots,\rho_n).
    \]
  \item\label{item:degen:collapse} Collapses of levels, i.e., curves $u$ as in the definition of
    $\cM^B(\w,\y;\rho_1,\dots,\rho_n)$ except that the $t$-coordinates
    of $\rho_i$ and $\rho_{i+1}$ are equal. Moreover, either:
    \begin{enumerate}[label=(\ref*{item:degen:collapse}\alph*)]
    \item\label{item:collapse-1} the set of (one or two) $\alpha$-arcs
      containing $\bdy \rho_i$ must be disjoint from the set of (one
      or two) $\alpha$-arcs containing $\bdy \rho_{i+1}$, or
    \item\label{item:collapse-2} the initial endpoint of $\rho_i$ is
      the same as the final endpoint of $\rho_{i+1}$.
    \end{enumerate}
  \item\label{item:degen:join} Join curve degenerations, i.e., pairs
    $(u,v)$ where $u$ is a curve like those in
    \[
    \cM^B(\w,\y;\rho_1,\dots,\rho'_i,\rho''_i,\rho_{i+1},\dots,\rho_n)
    \]
    except that the $t$-coordinates of $\rho'_i$ and $\rho''_i$ are
    equal; and $v$ is a join curve with $w$ asymptotics $\rho_1,\dots,
    \rho'_i, \rho''_i,\dots , \rho_n$ and $e$ asymptotics
    $\rho_1,\dots, \rho_i ,\dots , \rho_n$. In particular,
    $\rho_i=\rho''_i\cup\rho'_i$. Moreover:
    \begin{itemize}
    \item The $\alpha$-arc containing the terminal end of $\rho''_i$
      is distinct from the $\alpha$-arcs containing the initial and
      terminal ends of $\rho_i$.
    \item The $t$-coordinates of the $w$ asymptotics of $v$ agree with
      the $t$-coordinates of the $e$ asymptotics of $u$.
    \end{itemize}
  \item\label{item:degen:split} Split curve degenerations, i.e., pairs $(u,v)$ where
    \[
    u\in\cM^B(\w,\y;\rho_1,\dots,\rho_i\cup\rho_{i+1},\dots,\rho_n)
    \]
    and $v$ is a split curve with $w$ asymptotics
    $\rho_1,\dots, (\rho_i\cup\rho_{i+1}) ,\dots ,
    \rho_n$ and $e$ asymptotics $\rho_1,\dots, \rho_i
    ,\rho_{i+1}, \dots , \rho_n$. Moreover, the
    $t$-coordinates of the $w$ asymptotics of $v$ agree with the
    $t$-coordinates of the $e$ asymptotics of $u$.

    In particular, the space of such pairs $(u,v)$ can be canonically
    identified with
    $\cM^B(\w,\y;\rho_1,\dots,\rho_i\cup\rho_{i+1},\dots,\rho_n)$.
  \end{enumerate}
\end{theorem}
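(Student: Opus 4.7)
The plan is to prove Theorem~\ref{thm:codim-1-degens} in two matching halves: a \emph{compactness} half, showing that any sequence in $\cM^B(\w,\y;\rho_1,\dots,\rho_n)$ escaping every compact set limits to one of the four listed configurations (and nothing else), and a \emph{gluing} half, showing that each listed configuration genuinely occurs as such a limit and lies on the boundary of a $1$-manifold-with-boundary chart.

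For the compactness direction, I would invoke the SFT compactness theorem of Bourgeois--Eliashberg--Hofer--Wysocki--Zehnder in the Morse-Bott, Levi-flat setting indicated in the text. A sequence $u_i\in\cM^B(\w,\y;\rho_1,\dots,\rho_n)$ has a subsequence converging to a holomorphic building with some vertical levels (in the $\RR$ direction of $\Sigma\times[0,1]\times\RR$) and some horizontal levels (sitting at east $\infty$, i.e., in $\RR\times\bdy\Sigma\times[0,1]\times\RR$). The next step is to analyze the combinatorial/index type of such a building. The expected dimension of each piece is computable from the index formula for $\ind$, and additivity of the index under concatenation/gluing (proved analogously to~\cite[Section~\ref*{LOT1:sec:expected-dimensions}]{LOT1}) lets me assign a codimension to each level-breaking or east-$\infty$ component. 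Since we are starting in a $1$-dimensional moduli space, only strata of codimension exactly one can appear generically; higher-codimension strata occur in the closure but not as honest boundary.

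Then I would classify the codimension-one strata piece by piece. Breaking in the $\RR$ direction into two levels, with the Reeb chords distributed between them in an order-preserving way, contributes exactly the configurations of type~\eqref{item:degen:2-story}. A collision of $t$-coordinates $t_{i+1}-t_i\to 0$ reduces the dimension by one, but it only gives a genuine degeneration (rather than a curve still in $\cM^B$) when the two chords interact either by sharing an endpoint (case~\ref{item:collapse-2}) or by being carried on disjoint $\alpha$-arcs (case~\ref{item:collapse-1}); in the remaining subcases the limit is not a stable building, and one must argue by bubbling at east $\infty$ that a join component~\eqref{item:degen:join} or a split component~\eqref{item:degen:split} must separate. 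To match the list exactly, I would enumerate the holomorphic curves at east $\infty$ (using the fact that $\pi_\bD\circ v$ is constant on each component, so the east-$\infty$ story is essentially combinatorial), compute their expected dimensions, and verify that in codimension one only trivial components together with a single join or split component---or equivalently, a single collapse---can appear. Shuffle curves, which would otherwise contribute, are ruled out for $\CFDa$ because the curve on the $\Sigma$-side is required to be embedded with generic asymptotics; these only arise for $\CFAa$.

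For the gluing direction I would carry out a standard pre-gluing and Newton-iteration argument at each type of stratum, cf.\ the closed case cited in Theorem~\ref{thm:closed-cyl-cpct-glue}. Gluing two stories is the familiar construction, transversality for the resulting $1$-parameter family following from the assumption that the almost-complex structure is suitably generic. Gluing a join or split component to a main-component curve uses that the asymptotic evaluation maps at the relevant Reeb chords are transverse---a condition provided by genericity---so the fiber product producing $(u,v)$ is cut out cleanly and yields a half-open arc of smooth curves in $\cM^B(\w,\y;\rho_1,\dots,\rho_n)$ limiting to $(u,v)$. The case of collapsing two chords with matched endpoints (case~\ref{item:collapse-2}) is handled by the same gluing as for the join/split, since algebraically these degenerations correspond to multiplication in $\Alg(\PMC)$; the disjoint case (case~\ref{item:collapse-1}) is gluing in a product neighborhood, which is even easier.

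The main obstacle, and the technically heaviest part, will be the transversality/gluing analysis at east $\infty$: verifying that the relevant linearized operators are surjective, that the pre-glued approximately holomorphic curves can be perturbed uniquely to honest solutions, and that no unexpected higher-codimension east-$\infty$ strata hide in the codimension-one boundary. The combinatorial enumeration of east-$\infty$ components, and the verification that only the trivial/join/split pieces enumerated above are dimension-compatible with $\ind(B;\rho_1,\dots,\rho_n)=2$, will also require care; this is where the constraints in~\ref{item:collapse-1},~\ref{item:collapse-2} and the $\alpha$-arc disjointness hypothesis in~\eqref{item:degen:join} arise naturally, and getting each such constraint stated exactly right is where errors are most likely to creep in.
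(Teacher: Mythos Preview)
Your overall architecture---compactness to holomorphic combs, a codimension-one classification via the index, and gluing to realize each stratum as a genuine boundary---matches the paper's outline exactly (the paper does not give a self-contained proof here, but lists precisely these ingredients, citing~\cite{LOT1}).

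There is, however, one point where your proposal is actually wrong rather than merely sketchy. You write that gluing a join or split component ``uses that the asymptotic evaluation maps at the relevant Reeb chords are transverse---a condition provided by genericity.'' The paper explicitly flags this as \emph{false}: because one cannot perturb the almost-complex structure at east $\infty$, one \emph{cannot} in general arrange that the evaluation maps at the punctures are transverse to the diagonal. This is why the gluing statements are described as ``somewhat intricate'' and deferred to a dedicated section of~\cite{LOT1}; a standard Newton-iteration argument does not go through cleanly, and one needs more careful arguments to handle the non-transverse fiber products. Your plan as written would get stuck here.

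Two smaller points. First, you omit the ingredient that sequences of \emph{embedded} curves converge to embedded curves; this is not automatic and requires an index computation related to the adjunction formula. Without it you cannot rule out multiply-covered or singular limits in codimension one. Second, your explanation for why shuffle curves do not appear is off: it is not embeddedness on the $\Sigma$-side that excludes them, but rather that in the $\CFDa$ setting the asymptotics are sequences of single Reeb chords (not sets of chords at the same height), so the combinatorics that produce shuffle components simply do not arise.
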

This is a combination of \cite[Theorem~\ref*{LOT1:thm:master_equation}]{LOT1} and \cite[Lemma~\ref*{LOT1:lemma:collision-is-composable}]{LOT1}.

As in most of holomorphic curve theory, the key ingredients in the
proof of Theorem~\ref{thm:codim-1-degens} are:
\begin{itemize}
\item A transversality statement: for generic almost-complex
  structures, the relevant moduli spaces are transversally cut
  out. For curves in $\Sigma\times[0,1]\times\RR$ this
  is~\cite[Proposition~\ref*{LOT1:prop:transversality}]{LOT1}; for curves at east $\infty$, it
  is~\cite[Proposition~\ref*{LOT1:prop:east_transversality}]{LOT1}. Because we are not able to perturb the
  complex structure at east $\infty$, less transversality holds for
  curves at east $\infty$ than one might like. (Specifically, we can not always
  ensure that the evaluation maps at the punctures are transverse to
  the diagonal.)
\item A compactness statement: sequences of holomorphic curves in
  $\Sigma\times[0,1]\times\RR$ converge to holomorphic combs. This
  is~\cite[Proposition~\ref*{LOT1:prop:compactness}]{LOT1}.
\item Various gluing statements. Because of the Morse-Bott nature of
  the asymptotics at east $\infty$ and transversality issues for curves
  at east $\infty$, these statements become somewhat
  intricate. See~\cite[Section~\ref*{LOT1:sec:combs-gluing}]{LOT1}.
\item An analysis of which of the possible degenerations can occur in
  codimension-$1$. See~\cite[Sections~\ref*{LOT1:sec:degenerations-holomorphic-curves}
  and~\ref*{LOT1:sec:embedded-degen}]{LOT1}.
\end{itemize}
There is one more ingredient, because we are working with embedded
curves:
\begin{itemize}
\item A computation of the index of the $\overline{\bdy}$ operator
  shows that sequences of embedded curves converge to embedded
  curves. Philosophically, this is related to the adjunction
  formula. See~\cite[Section~\ref*{LOT1:sec:expected-dimensions}]{LOT1} for further discussion.
\end{itemize}

\begin{remark}
  The fact that $\pi_\bD$ is constant on each component of a curve at
  east $\infty$ suggests that we have lost some information in our
  formulation of the limiting objects. One could recover this
  information by rescaling while taking the limit. Specifically,
  suppose a sequence of holomorphic curves $u_i$ converges to a pair
  $(u,v)$, where $v\co T\to \RR\times(\bdy\Sigma)\times[0,1]\times\RR$
  is a curve at east $\infty$. Fix a marked point $p_i$ on each $u_i$
  converging to a marked point $p$ on $u$. In taking the limit,
  rescale the map $\pi_\bD\circ u_i$ on a neighborhood of $p_i$ so
  that $d_{p_i}(\pi_\bD\circ u_i)$ has norm $1$. With some work, one
  thus obtains a rescaled version of $\pi_\bD\circ v$ in the form of a map
  $T\to \{x+iy\in \CC\mid x\leq 1\}$. 

  The moduli spaces at east $\infty$ are sufficiently simple that this
  refined limiting procedure turns out not to be necessary to
  construct the bordered invariants; but it seems more relevant to
  constructing a bordered version of $\HF^\pm$.
\end{remark}

\section{\texorpdfstring{$\bdy^2=0$}{d-squared equals zero} on \texorpdfstring{$\CFDa$}{CFD}}
With the codimension-$1$ boundary in hand, we are now ready to prove
that $\CFDa$ is a \dg module. 

\begin{theorem}\cite[Proposition~\ref*{LOT1:prop:typeD-d2}]{LOT1}\label{thm:CFD-sq-0}
  Fix a provincially admissible bordered Heegaard diagram $\HD$. Then
  for a generic choice of almost-complex structure,
  the differential $\bdy$ on $\CFDa(\HD)$ satisfies $\bdy^2=0$.
\end{theorem}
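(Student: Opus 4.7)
The plan is to apply the standard Floer-theoretic template: identify every term in the Leibniz expansion of $\bdy^2(\x)$ with a codimension-$1$ end of some $2$-dimensional moduli space $\cM^B(\x,\y;\rho_1,\dots,\rho_n)$ of index $2$, classified by Theorem~\ref{thm:codim-1-degens}. Then $\bdy^2=0$ follows since a compact $1$-manifold-with-boundary has an even number of boundary points. Provincial admissibility (Lemma~\ref{lem:admis-good}) ensures that all sums involved are finite, so this counting argument is well-posed; in particular, each fixed algebra element appearing in the expansion corresponds to only finitely many homology classes.

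Expanding $\bdy^2(\x)$ via the Leibniz rule splits it into two kinds of contributions. The first is a ``composition'' part of the form $\sum (\#\cM^{B_1})(\#\cM^{B_2}) \cdot a(-\vec\rho)\, a(-\vec\sigma) \cdot \z$, which matches exactly with two-story building ends (item~\ref{item:degen:2-story} of Theorem~\ref{thm:codim-1-degens}). The second is the ``algebra'' part $\sum (\#\cM^B) \cdot d\bigl(a(-\rho_1)\cdots a(-\rho_n)\bigr) \cdot \y$, which must be identified with the remaining three families of ends. Concretely: a split curve degeneration (\ref{item:degen:split}) corresponds to smoothing a crossing between consecutive factors $a(-\rho_i)\cdot a(-\rho_{i+1})$ so as to produce the chord $a(-\rho_i\cup -\rho_{i+1})$; a join curve degeneration (\ref{item:degen:join}) corresponds to a term of $d(a(-\rho_i))$ in which a crossing is smoothed between the moving strand of some $a(-\rho_i)$ and a horizontal strand introduced by the idempotent sum in Equation~\eqref{eq:a-of-rho}; and a level collision (\ref{item:degen:collapse}) encodes either the composability condition for the product $a(-\rho_i)\cdot a(-\rho_{i+1})$ (case \ref{item:collapse-2}) or the pairwise cancellation of the two orderings of $(\rho_i,\rho_{i+1})$ when their $\alpha$-supports are disjoint (case \ref{item:collapse-1}).

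The main obstacle I expect is the combinatorial bookkeeping in this matching, particularly in ensuring that idempotent decorations align correctly: the idempotent $I(\x)$, which reads off which $\alpha$-arcs $\x$ avoids, must interact consistently with the idempotent sums built into each $a(\rho)$ via Equation~\eqref{eq:a-of-rho}, so that the $M$-admissibility constraints and the double-crossing relation in $\Alg(-\PMC)$ together produce precisely the algebra terms that appear on the holomorphic side---no more and no fewer. Case~\ref{item:collapse-1} is additionally delicate, since the two orderings of a collision give rise to two distinct $\ind=2$ moduli spaces; one must verify that their collision ends pair up with each other and so contribute nothing to $\bdy^2$, rather than being double-counted as a new boundary contribution.
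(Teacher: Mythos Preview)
Your overall framework matches the paper's, and you correctly handle two-story ends~(\ref{item:degen:2-story}) and case~\ref{item:collapse-1} collisions. However, your assignment of the remaining three end-types is inverted. The Leibniz terms $a(-\rho_1)\cdots d(a(-\sigma))\cdots a(-\rho_n)$ correspond to \emph{split} curve ends~(\ref{item:degen:split}), not join curves: resolving the crossing in $a(-\sigma)$ between its moving strand and a horizontal at an interior point produces precisely the two-strand element $a(-\rho_i)\,a(-\rho_{i+1})$, which is the weight of the index-$2$ moduli space $\cM^B(\w,\y;\dots,\rho_i,\rho_{i+1},\dots)$ whose split-curve limit is the index-$1$ curve carrying the merged chord~$\sigma=\rho_i\cup\rho_{i+1}$. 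Your description of split ends as ``smoothing a crossing between consecutive factors $a(-\rho_i)\cdot a(-\rho_{i+1})$ so as to produce $a(-(\rho_i\cup\rho_{i+1}))$'' cannot be made to work: in the split-curve situation that product has \emph{no} crossing between its two moving strands (one goes $c\to a$ and the other $a\to b$ with $c<a<b$, which is not an inversion), and in any case the differential~$d$ only breaks strands apart---it never merges two chords into a longer one.

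The correct role of join curve ends~(\ref{item:degen:join}) and case~\ref{item:collapse-2} collisions is that they cancel against \emph{each other}, rather than contributing to $\bdy^2$. The same limiting curve appears both as a case~\ref{item:collapse-2} end of $\cM^B(\w,\y;\dots,\rho_i,\rho_{i+1},\dots)$ and as a join end of $\cM^B(\w,\y;\dots,\rho_i\cup\rho_{i+1},\dots)$, and their weights agree because under the case~\ref{item:collapse-2} hypothesis the product $a(-\rho_i)a(-\rho_{i+1})$ concatenates to the single chord $a(-(\rho_i\cup\rho_{i+1}))$---this is presumably the ``composability'' you had in mind, but it is a cancellation mechanism, not a source of $d$-terms. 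In summary: end-types~(\ref{item:degen:2-story}) and~(\ref{item:degen:split}) account for the composition and algebra-differential halves of $\bdy^2$, respectively, while types~(\ref{item:degen:collapse}) and~(\ref{item:degen:join}) cancel in pairs among themselves.
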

\begin{proof}[Sketch of proof.]
  It suffices to show that for each generator $\w\in\Gen(\HD)$,
  $\bdy^2(\w)=0$. We have
  \begin{align*}
    \bdy^2(\w)&=\bdy\Bigl(
    \sum_{\substack{\y\in\Gen(\HD)\\(\rho_1,\dots,\rho_n)\\B\in\pi_2(\w,\y)}}\bigl(\#\cM^B(\w,\y;\rho_1,\dots,\rho_n)\bigr)a(-\rho_1)\cdots
    a(-\rho_n)\y \Bigr)\nonumber\\
    &=\sum_{\substack{\x\in\Gen(\HD)\\(\rho_1,\dots,\rho_i)\\B_1\in\pi_2(\w,\x)}}\sum_{\substack{\y\in\Gen(\HD)\\(\rho_{i+1},\dots,\rho_n)\\B_2\in\pi_2(\w,\x)}}
    \bigl(\#\cM^{B_1}(\w,\x;\rho_1,\dots,\rho_i)\bigr)\bigl(\#\cM^{B_2}(\w,\x;\rho_{i+1},\dots,\rho_n)\bigr)\\[-.4in]
    &\hspace{3in}
    \cdot a(-\rho_1)\cdots
    a(-\rho_i) a(-\rho_{i+1})\cdots a(-\rho_n)\y\nonumber\\[\baselineskip]
    &\qquad+
    \sum_{\substack{\x\in\Gen(\HD)\\(\rho_1,\dots,\rho_n)\\B\in\pi_2(\w,\x)}}\bigl(\#\cM^B(\w,\x;\rho_1,\dots,\rho_n)\bigr)a(-\rho_1)\cdots
    d(a_i)\cdots a(-\rho_n)\x.
  \end{align*}
  (There is some possibly confusing re-indexing: in the second line we have replaced $n\to i$, $\y\to
  \x$, and $B\to B_1$. In the last line we use the same notation as
  in the first line, however.)

  The sum in the second line corresponds exactly to the $2$-story
  holomorphic buildings, degeneration~(\ref{item:degen:2-story}) in
  Theorem~\ref{thm:codim-1-degens}.  The sum in the last line
  corresponds to the split curve degenerations,
  degeneration~(\ref{item:degen:split}) in
  Theorem~\ref{thm:codim-1-degens}. 

  It remains to see that the other ends of the $1$-dimensional
  moduli spaces cancel in pairs. Indeed, it is easy to see that
  Case~\ref{item:collapse-1} ends of
  $\cM^B(\w,\y;\rho_1,\dots,\rho_n)$ correspond to 
  Case~\ref{item:collapse-1} ends of
  $\cM^B(\w,\y;\rho_1,\dots,\rho_{i+1},\rho_i,\dots,\rho_n)$; and
  Case~\ref{item:collapse-2} ends of
  $\cM^B(\w,\y;\rho_1,\dots,\rho_n)$ correspond to join curve ends of
  $\cM^B(\w,\y;\rho_1,\dots,\rho_i\cup\rho_{i+1},\dots,\rho_n)$. 
  This completes the proof.
\end{proof}

\section{Deforming the diagonal, \texorpdfstring{$\CFAa$}{CFA} and the pairing theorem}\label{sec:prove-pairing}
Our goals for the rest of the lecture are two-fold:
\begin{enumerate}
\item Define the invariant $\CFAa(Y)$ associated to a bordered $3$-manifold.
\item Prove the pairing theorem, 
Theorem~\ref{thm:pairing1}.
\end{enumerate}
We will do this in the opposite order: we will start proving
Theorem~\ref{thm:pairing1}, and $\CFAa$ will appear naturally. The
material in this section is drawn from~\cite[Chapter~\ref*{LOT1:chap:tensor-prod}]{LOT1}, to which
we refer the reader for further details.

\begin{figure}
  \begin{overpic}[tics=10]{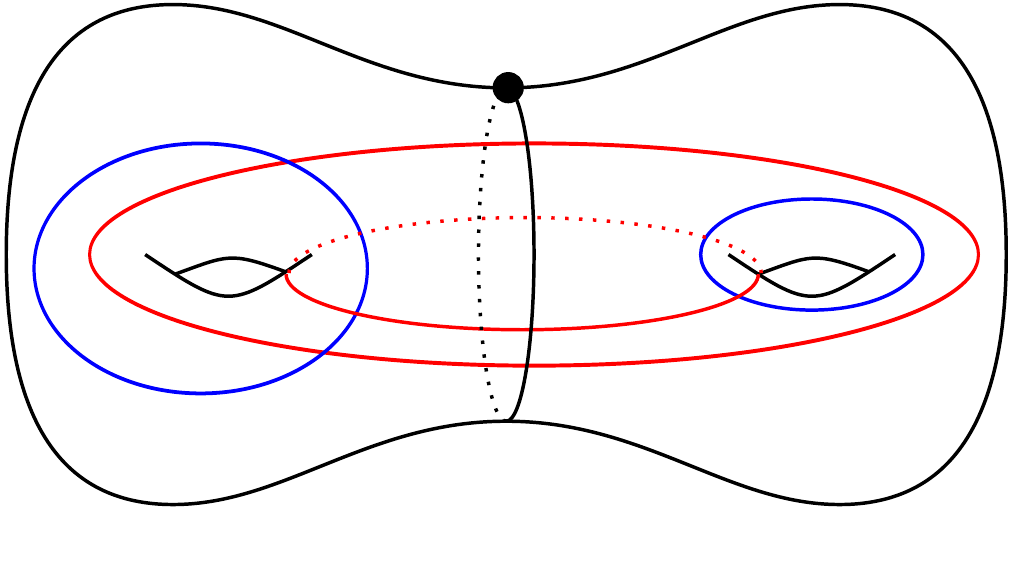}
    \put(48,10){$Z$}
    \put(20,2){$\HD_1$}
    \put(78,2){$\HD_2$}
    \put(48,0){$\HD$}
    \put(31,18){$a$}
    \put(29,41){$b$}
    \put(72,23){$s$}
    \put(49,49){$z$}
  \end{overpic}
\caption{\textbf{Splitting a closed Heegaard diagram.} The bordered Heegaard diagrams $\HD_1$ and $\HD_2$ are glued along the circle $Z\subset \HD$.\label{fig:split-HD}}
\end{figure}

So, fix bordered Heegaard diagrams $\HD_1$, $\HD_2$ with $\bdy\HD_1=\PMC=-\bdy\HD_2$ and let $\HD=\HD_1\cup_\bdy\HD_2$. (See Figure~\ref{fig:split-HD}.) We want to understand $\CFa(\HD)$ in terms of invariants of $\HD_1$ and $\HD_2$. 

On the level of generators, this is trivial: a generator
$\x\in\CFa(\HD)$ corresponds to a pair of generators $(\x_1,\x_2)$ for
$\HD_1$ and $\HD_2$ so that the $\alpha$-arcs occupied by $\x_1$ are
complementary to the $\alpha$-arcs occupied by $\x_2$. So, if we
define $I_A(\x_1)$ to be the idempotent in $\Alg(\PMC)$ corresponding
to the $\alpha$-arcs occupied by $\x_1$---this is the opposite of
$I(\x)$ as defined in Section~\ref{sec:def-CFD}---and let
\[
\CFAa(\HD_1)=\Field\langle \Gen(\HD_1)\rangle,
\]
with $I_A(\x_1)\x_1=\x_1$, so other indecomposable idempotents
kill $\x_1$, then we have
\begin{equation}\label{eq:CFA-tensor-CFD}
\CFAa(\HD_1)\otimes_{\Alg(\PMC)}\CFDa(\HD_2)
\end{equation}
as $\Field$-vector spaces.
Note that we have \emph{not} defined an $\Alg(\PMC_1)$-module structure on $\CFAa(\HD_1)$ yet: Equation~\ref{eq:CFA-tensor-CFD} uses only the action of the idempotents and the fact that $\CFDa(\HD_2)$ is a sum of elementary projective modules.

Holomorphic curves are more complicated.

Let $Z\subset \HD$ denote the circle $\bdy\HD_1$. Recall that to define $\CFDa(\HD_2)$ we attached a cylindrical end to $-Z=\bdy\Sigma_2$. Correspondingly, to prove the pairing theorem, we consider inserting a long neck into $\Sigma$ along $Z$. That is, fix a complex structure $j_\Sigma$ on $\Sigma$ and choose a neighborhood $U$ of $Z$ which is biholomorphic to $[-\epsilon,\epsilon]\times S^1$ for some $\epsilon>0$. Let $j_\Sigma^R$ denote the result of replacing $U$ by $[-R,R]\times S^1$.

Let $R_i\in\RR$ be a sequence with $R_i\to\infty$, and suppose
$u_i\in\cM^B(\x,\y)$ is a sequence of holomorphic curves with respect
to $j_\Sigma^{R_i}\times j_\bD$. We are interested in the limit of the
sequence $\{u_i\}$. Modulo some technicalities, this is the kind of
limit studied in symplectic field theory; the limiting objects have
the following form:
\begin{definition}
A \emph{matched holomorphic curve} is a pair of curves
\[
(u_1,u_2)\in
\cM^{B_1}(\x_1,\y_1;\rho_1,\dots,\rho_n)\times\cM^{B_2}(\x_2,\y_2;\rho_1,\dots,\rho_n)
\]
so that for each $i=1,\dots,n$, the $t$-coordinate at which $u_1$ is
asymptotic to $\rho_i$ is equal to the $t$-coordinate at which $u_2$
is asymptotic to $\rho_i$.

Equivalently, there is an evaluation map
\[
\ev\co \cM^{B_i}(\x_i,\y_i;\rho_1,\dots,\rho_n)\to \RR^{n-1}
\]
which takes a curve asymptotic to
$\rho_1\times(1,t_1),\dots,\rho_n\times(1,t_n)$ to
$(t_2-t_1,t_3-t_2,\dots,t_n-t_{n-1})$. Then a matched holomorphic
curve is a pair $(u_1,u_2)$ such that $\ev(u_1)=\ev(u_2)$.

Let $\cM^B(\x,\y;\infty)$ denote the moduli space of matched
holomorphic curves in the homology class $B$. That is,
\begin{equation}\label{eq:matched-curve}
\cM^B(\x,\y;\infty)=
  \bigcup_{(\rho_1,\dots,\rho_n)}\cM^{B_1}(\x_1,\y_1;\rho_1,\dots,\rho_n)\sos{\ev}{\times}{\ev}\cM^{B_2}(\x_2,\y_2;\rho_1,\dots,\rho_n).
\end{equation}
Here, $\x$ (respectively $\y$) corresponds to the pair of generators
$(\x_1,\x_2)$ (respectively $(\y_1,\y_2)$) and $B_i$ is the
intersection of $B$ with $\HD_i$.
\end{definition}

\begin{proposition}
 Let $\cM^B(\x,\y;R)$ denote the moduli space of holomorphic curves
 (in $\Sigma\times[0,1]\times\RR$, in the homology class $B$)
 with respect to an appropriate perturbation\footnote{As usual, we
   will suppress the fact that one needs to perturb the almost-complex
   structure in order to achieve transversality from the discussion.}
 of the almost-complex
 structure $j_\Sigma^R\times
 j_\bD$. Suppose that $\mu(B)=1$. Then $\bigcup_{R>0}\cM^B(\x,\y;R)$
 is a $1$-manifold whose ends as $R\to\infty$ are identified with
 $\cM^B(\x,\y;\infty)$. More precisely, let
\[
\cM^B(\x,\y;\geq\! R_0)=
\cM^B(\x,\y;\infty)\cup\bigcup_{R\geq R_0}\cM^B(\x,\y;R)
\]
Then there is a there is a topology on $\cM^B(\x,\y;\geq R_0)$ and an $R_0$ so that $\cM^B(\x,\y;\geq R_0)$
is a compact $1$-manifold with boundary exactly
\[
\cM^B(\x,\y;\infty)\amalg\cM^B(\x,\y;R_0).
\]
\end{proposition}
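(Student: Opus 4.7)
The plan is a neck-stretching argument: as $R \to \infty$ the neighborhood of $Z$ in $(\Sigma, j_\Sigma^R)$ becomes an arbitrarily long cylinder, and a symplectic-field-theory-style compactness statement identifies the possible limits. I would declare the topology on $\cM^B(\x,\y;\geq R_0)$ so that a sequence $u_i \in \cM^B(\x,\y;R_i)$ with $R_i \to \infty$ converges to $(u_1,u_2) \in \cM^B(\x,\y;\infty)$ precisely when the restrictions of $u_i$ to the preimages of $\HD_1$ and $\HD_2$ in $(\Sigma, j_\Sigma^{R_i})$ converge, in the sense of Theorem~\ref{thm:codim-1-degens}, to $u_1$ and $u_2$.

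First I would prove compactness: every such sequence admits a subsequence converging in this sense to a matched curve. Gromov compactness applied on each side of the neck yields holomorphic combs $u_1$ and $u_2$, each with east $\infty$ asymptotics along some tuple of Reeb chords $(\rho_1,\dots,\rho_n)$, which must agree on the two sides. The chord heights on the two sides must also coincide in the limit, giving $\ev(u_1) = \ev(u_2)$. The hypothesis $\mu(B)=1$ then rules out any of the codimension-$1$ degenerations listed in Theorem~\ref{thm:codim-1-degens} occurring at the limit, since together with the $1$-parameter family in $R$ such a degeneration would force the limit to live in a stratum of negative expected dimension. Hence $u_1$ and $u_2$ are honest holomorphic curves and the limit lies in $\cM^B(\x,\y;\infty)$.

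The gluing half is the converse. Starting from a matched curve $(u_1,u_2)$, splicing along a neck of length $R$ using cut-off functions and the matched chord heights produces a preglued map $u_R^{\mathrm{pre}}$ on $(\Sigma, j_\Sigma^R)$ whose $\dbar$-error is exponentially small in $R$. A Newton iteration, seeded by surjectivity of the linearized $\dbar$ operator at $(u_1,u_2)$ subject to the matching constraint, corrects $u_R^{\mathrm{pre}}$ to a genuine $u_R \in \cM^B(\x,\y;R)$; local uniqueness of the Newton iterate identifies a half-open neighborhood of $(u_1,u_2)$ in $\cM^B(\x,\y;\geq R_0)$ with an interval in the $R$-parameter. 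Choosing $R_0$ large enough that this gluing applies at every point of the (sequentially compact) space $\cM^B(\x,\y;\infty)$, and that ordinary transversality holds on $\cM^B(\x,\y;R)$ for all $R \geq R_0$, yields the compact $1$-manifold with the claimed boundary.

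The principal obstacle is the Morse-Bott nature of the asymptotics at east $\infty$: as discussed after Theorem~\ref{thm:codim-1-degens}, one cannot perturb the almost-complex structure there to make the two evaluation maps $\ev$ generic, and without this the linearized gluing operator need not be surjective. The workaround is to show directly that the fiber product~\eqref{eq:matched-curve} is transversally cut out along $\cM^B(\x,\y;\infty)$, using the index formula for the matched problem together with the constraints imposed by the Levi-flat geometry at east $\infty$. This is the same technical issue that had to be confronted in proving Theorem~\ref{thm:codim-1-degens}, and the careful Morse-Bott gluing statements underlying that theorem can be adapted to the neck-stretching setting here.
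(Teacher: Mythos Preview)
Your proposal is correct and matches the paper's approach: the paper's entire proof is the sentence ``This follows from compactness and gluing arguments, in a fairly standard way,'' and you have simply unpacked what that sentence means. One small quibble: you invoke Theorem~\ref{thm:codim-1-degens} to rule out extra degenerations in the neck-stretching limit, but that theorem classifies boundaries of moduli spaces with \emph{fixed} east-$\infty$ asymptotics rather than neck-stretching limits; the relevant input is really SFT compactness (as in \cite{BEHWZ03:CompactnessInSFT}) together with the same dimension count, though the conclusion is the same.
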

This follows from compactness and gluing arguments, in a fairly
standard way.

\begin{corollary}\label{cor:splitting-1}
  Define $\bdy_1\co \CFa(\HD)\to \CFa(\HD)$ by 
  \begin{equation}\label{eq:bdy-prime}
\bdy_1(\x)=\sum_{\y\in T_\alpha\cap T_\beta}\sum_{\substack{B\in\pi_2(\x,\y)\\\mu(B)=1}}\#\cM^B(\x,\y;\infty) \y
  \end{equation}
  (cf.~Formula~\eqref{eq:CF-d}).
  Then $H_*(\CFa(\HD),\bdy_1)\cong \HFa(Y)$.
\end{corollary}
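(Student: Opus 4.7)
The plan is to deduce the corollary almost immediately from the preceding proposition by the standard ``count the boundary points of a compact $1$-manifold'' argument. By the proposition, for each $B$ with $\mu(B)=1$ and for all sufficiently large $R_0$, the space $\cM^B(\x,\y;\geq R_0)$ is a compact $1$-manifold whose boundary is the disjoint union $\cM^B(\x,\y;\infty)\amalg \cM^B(\x,\y;R_0)$. Since a compact $1$-manifold has an even number of boundary points, this gives the mod~$2$ identity
\[
\#\cM^B(\x,\y;\infty)\equiv \#\cM^B(\x,\y;R_0)\pmod{2}
\]
for every pair of generators $\x,\y$ and every homology class $B\in\pi_2(\x,\y)$ with $\mu(B)=1$.

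Comparing the definition~\eqref{eq:bdy-prime} of $\bdy_1$ with the original formula~\eqref{eq:CF-d} for the Heegaard Floer differential, I conclude that $\bdy_1$ coincides, term by term, with the ordinary differential on $\CFa(\HD)$ computed using the almost-complex structure $j_\Sigma^{R_0}\times j_\bD$. In particular $\bdy_1^2=0$ automatically, and $(\CFa(\HD),\bdy_1)$ \emph{is} the Heegaard Floer chain complex of $Y$ for that choice of complex structure. Its homology is therefore $\HFa(Y)$ by the invariance of $\HFa$ under the choice of almost-complex structure.

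Two subtleties I would need to verify to be careful. First, one needs to know that the sum defining $\bdy_1(\x)$ is finite, so that $\bdy_1$ is a well-defined operator on $\CFa(\HD)$; this follows from admissibility of the closed diagram $\HD$ by the standard energy/area argument used for the usual differential in the closed case. Second, the value of $R_0$ given by the proposition could a priori depend on $B$, so to identify $\bdy_1$ with the single operator $\bdy$ for one fixed complex structure I would choose a single $R_0$ that works simultaneously for the finitely many relevant classes $B$ contributing to $\bdy_1(\x)$ (finiteness again by admissibility), together with a coherent family of almost-complex perturbations. Neither of these is a serious obstacle: all of the analytic content---neck-stretching compactness and gluing in the spirit of symplectic field theory---has already been packaged into the proposition, and the corollary itself is then a purely formal consequence.
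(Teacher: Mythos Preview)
Your argument is correct and is precisely the intended one: the paper states this as a corollary without proof, and the deduction you give---counting boundary points of the compact $1$-manifold $\cM^B(\x,\y;\geq R_0)$ to get $\#\cM^B(\x,\y;\infty)\equiv\#\cM^B(\x,\y;R_0)\pmod 2$, then invoking invariance of $\HFa$---is exactly how it follows from the proposition. The two subtleties you flag (finiteness from admissibility, and choosing $R_0$ uniformly over the finitely many relevant $B$) are the right ones to check and you dispatch them correctly.
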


\begin{example}\label{eg:split-HD-1} 
  Consider the splitting in
  Figure~\ref{fig:split-HD}. The complex $\CFa(\HD)$ has two
  generators, $\x=\{a,s\}$ and $\y=\{b,s\}$; in the notation above,
  $\x_1=\{a\}$, $\x_2=\{s\}$, $\y_1=\{b\}$ and $\y_2=\{s\}$. The
  generator $\y$ occurs twice in $\bdy(\x)$: once from the small bigon
  region near the left of the diagram and once from the annular region
  crossing through the circle $Z$. We focus on the second of
  these contributions, the domain of which is shown in
  Figure~\ref{fig:split-domain}. (It takes a little work to show that
  this domain has a holomorphic representative; see
  Exercise~\ref{ex:annulus}.)
  
  Now, consider the result of stretching the neck along $Z$. There are
  two cases, depending on whether the cut goes through $Z$ or not
  (which in turn depends on the complex structure on~$\HD$). If the
  cut does not go through $z$, the resulting matched curve $(u_1,u_2)$
  has $u_1$ a disk with one Reeb chord and $u_2$ an annulus with one
  Reeb chord. (In fact, this case does not occur in the limit; see
  Exercise~\ref{ex:cut-bad-case}.)

  The more interesting case---and the one which actually occurs---is
  when the cut does pass through $Z$. Then both $u_1$ and $u_2$ are
  disks with two Reeb chords on each of their boundaries. The disk
  $u_2$ is rigid, but the disk $u_1$ comes in a $1$-parameter family,
  depending on the length of the cut. There is algebraically one
  length of cut for which the height difference of the two Reeb chords
  in $u_1$ agrees with the height difference of the Reeb chords in
  $u_2$ (Exercise~\ref{ex:algebraically-one}).
\end{example}

\begin{figure}
  \centering
  \begin{overpic}[tics=10, scale=1.5]{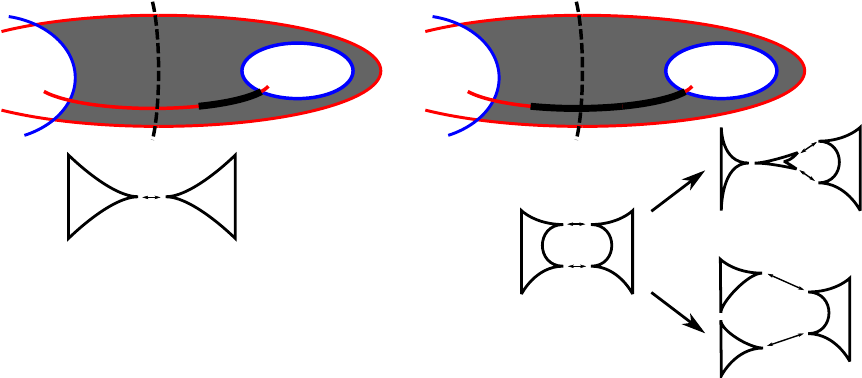}
    \put(67,22){$T\to\infty$}
    \put(68,5){$T\to0$}
  \end{overpic}
  \caption{\textbf{Splitting an interesting domain.} Depending on the
    complex structure, there are two possible phenomena after
    splitting: either the cut stays entirely on the right side of the
    diagram, as in the left picture, or the cut runs through the
    collapsed circle $Z$, as in the right picture. We have drawn
    schematic illustrations of the matched holomorphic curves below
    the two pictures.}
  \label{fig:split-domain}
\end{figure}

Corollary~\ref{cor:splitting-1} is a step in the direction
of a pairing theorem: it
gives a definition of $\HFa$ in terms of holomorphic curves in
$\Sigma_1\times[0,1]\times\RR$ and $\Sigma_2\times[0,1]\times\RR$. But
as we saw in Example~\ref{eg:split-HD-1}, the corollary still has two
(related) drawbacks:
\begin{enumerate}
\item The moduli spaces we are considering in for $\HD_1$ and $\HD_2$
  are typically high\hyph{}dimensional. Indeed, in
  Formula~(\ref{eq:bdy-prime}), we have
  \[
  \dim\cM^{B_1}(\x_1,\y_1;\rho_1,\dots,\rho_n)+
  \dim\cM^{B_2}(\x_2,\y_2;\rho_1,\dots,\rho_n)=n-1.
  \]
\item\label{item:not-independent} Since we are taking a fiber product
  of moduli spaces, which curves we want to consider in $\HD_1$
  depends on $\HD_2$. So, it is not yet obvious how to define
  independent invariants of $\HD_1$ and $\HD_2$ containing the
  information needed to compute $\bdy_1$.
\end{enumerate}
To address complaint~(\ref{item:not-independent}) we could try to
formulate an algebra which remembers the chain
$\ev_*[\cM^{B_1}(\x_1,\y_1;\rho_1,\dots,\rho_n)]\in
C_*(\RR^{n-1})$. 
This is a natural way to try to define
a bordered Heegaard Floer invariant, and with enough effort it could
probably be made to work. This approach would be far from
combinatorial, and is also unnecessarily complicated, as we will now
show.

The next step is to deform the fiber product in
Formula~\eqref{eq:matched-curve}:
\begin{definition}
  A \emph{$T$-matched holomorphic curve} is a pair 
  \[
  (u_1,u_2)\in
  \cM^{B_1}(\x_1,\y_1;\rho_1,\dots,\rho_n){\times}\cM^{B_2}(\x_2,\y_2;\rho_1,\dots,\rho_n)
  \]
  such that $T\cdot \ev(u_1)=\ev(u_2)$. Let $\cM^B_T(\x,\y;\infty)$
  denote the moduli space of $T$-matched holomorphic curves, i.e., 
  \[
  \cM^B_T(\x,\y;\infty)=
  \bigcup_{(\rho_1,\dots,\rho_n)}
\cM^{B_1}(\x_1,\y_1;\rho_1,\dots,\rho_n)\sos{T\cdot\ev}{\times}{\ev}\cM^{B_2}(\x_2,\y_2;\rho_1,\dots,\rho_n).
  \]
\end{definition}
So, in particular, a $1$-matched holomorphic curve is just a matched
holomorphic curve.

A standard continuation-map argument shows:
\begin{proposition}
  Let $\bdy_T$ denote the map defined analogously to
  Formula~\ref{eq:CF-d} (or Formula~\ref{eq:bdy-prime}) but using the
  moduli spaces $\cM^B_T(\x,\y;\infty)$. Then
  $H_*(\CFa(\HD),\bdy_T)\cong \HFa(Y)$.
\end{proposition}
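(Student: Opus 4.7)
The plan is to prove this via a continuation-map argument, reducing to Corollary~\ref{cor:splitting-1}. Since the map $\bdy_1$ in that corollary agrees with $\bdy_T$ at $T=1$, and Corollary~\ref{cor:splitting-1} identifies $H_*(\CFa(\HD),\bdy_1)\cong\HFa(Y)$, it suffices to produce, for every $T>0$, a quasi-isomorphism $(\CFa(\HD),\bdy_T)\simeq(\CFa(\HD),\bdy_1)$.

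The first step is to verify that $\bdy_T^2=0$ for each fixed $T>0$. This is an analogue of Theorem~\ref{thm:CFD-sq-0}, done by analyzing the codimension-$1$ ends of the $1$-dimensional pieces of $\cM^B_T(\x,\y;\infty)$. Rescaling the evaluation map on one side by a constant $T$ does not affect transversality of the fiber product, and the codimension-$1$ boundary of each factor is controlled by Theorem~\ref{thm:codim-1-degens}. Two-story building ends on either side contribute the terms in $\bdy_T^2$, while the remaining degenerations cancel in pairs across the two sides: a join-curve end of $\cM^{B_1}$ at a chord $\rho_i\cup\rho_{i+1}$ cancels with a split-curve end of $\cM^{B_2}$ at the same chord (the two boundary components $Z$ and $-Z$ carry opposite orientations, so a join on one side is mirrored by a split on the other), and collisions $t_{i+1}-t_i\to 0$ on one side cancel with the matching collisions on the other.

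Next I would define a continuation map. For a smooth path $\gamma\co[0,1]\to(0,\infty)$ with $\gamma(0)=T$ and $\gamma(1)=1$, let $\Phi_\gamma\co\CFa(\HD)\to\CFa(\HD)$ be defined by counting isolated triples $(u_1,u_2,s)$ with $s\in[0,1]$ such that
\[
\gamma(s)\cdot\ev(u_1)=\ev(u_2),
\]
weighted so that the parametrized moduli space has dimension $0$. Looking at the codimension-$1$ ends of the corresponding $1$-dimensional parametrized moduli spaces: the endpoints $s=0$ and $s=1$ contribute $\bdy_T\circ\Phi_\gamma$ and $\Phi_\gamma\circ\bdy_1$, while the interior degenerations (two-story buildings, collisions, joins and splits) cancel exactly as in the $\bdy_T^2=0$ argument. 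Hence $\Phi_\gamma$ is a chain map. A second parametrized argument, interpolating between $\Phi_{\bar\gamma}\circ\Phi_\gamma$ and the identity via a family of paths, gives a chain homotopy $\Phi_{\bar\gamma}\Phi_\gamma\sim\id$, and similarly on the other side. Therefore $\Phi_\gamma$ is a quasi-isomorphism.

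The main obstacle will be transversality and compactness for the parametrized moduli spaces, particularly ensuring that the evaluation maps $\ev$ on each side remain transverse to the rescaled diagonal uniformly in $s$, and that no additional degenerations appear in the interior of $[0,1]$. These are standard continuation-map issues, but they are more delicate here because of the Morse-Bott asymptotics at east $\infty$ and the lack of full transversality for curves at east $\infty$ noted after Theorem~\ref{thm:codim-1-degens}; one must choose the path $\gamma$ and the almost-complex structures on each side generically so that the parametrized fiber products remain cut out transversely. Once this is set up, the rest of the argument is formal.
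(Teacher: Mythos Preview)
Your overall strategy matches the paper's, which simply says ``a standard continuation-map argument shows'' and gives no further details. However, the specific continuation map you set up is not correct.

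The issue is in how you parametrize. You take $s\in[0,1]$ as an \emph{external} parameter and define $\Phi_\gamma$ by counting isolated triples $(u_1,u_2,s)$. In that setup, when you pass to the $1$-dimensional parametrized spaces (i.e., $\mu(B)=1$), the boundary at $s=0$ consists of the $0$-dimensional $T$-matched curves themselves --- that is, the curves contributing to $\bdy_T(\x)$ --- and similarly the boundary at $s=1$ gives $\bdy_1(\x)$. These are \emph{not} $\bdy_T\circ\Phi_\gamma$ and $\Phi_\gamma\circ\bdy_1$. Worse, the interior two-story breakings occur at intermediate values $s_0$ and pair a $\mu=0$ curve (counted by $\Phi_\gamma$) with a $\mu=1$ curve matched at parameter $\gamma(s_0)$, which is neither $\bdy_T$ nor $\bdy_1$. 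So the relation you obtain is not the chain-map relation for $\Phi_\gamma$.

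The fix is to make the interpolation happen along the $\RR$-direction of the target rather than as an external parameter: choose a smooth function $T(t)$ with $T(t)\to T$ as $t\to-\infty$ and $T(t)\to 1$ as $t\to+\infty$, and impose the $t$-dependent matching condition that the $i^{\text{th}}$ chord of $u_2$ sits at height $T(t_i)\cdot t_i$ whenever the $i^{\text{th}}$ chord of $u_1$ sits at height $t_i$ (equivalently, rescale the almost-complex structure on the $\Sigma_2$ side in a $t$-dependent way). There is no longer an $\RR$-translation symmetry, so $\Phi$ counts $\mu(B)=0$ curves directly (the leading term is the identity, from constant curves). The ends of the $\mu(B)=1$ spaces are now honest breakings at $t=\pm\infty$, contributing exactly $\bdy_1\circ\Phi$ and $\Phi\circ\bdy_T$; the remaining east-$\infty$ degenerations cancel in pairs as you described. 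The homotopy-inverse and homotopy arguments then go through in the usual way.

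A slicker way to say the same thing: $T$-matching is the same as $1$-matching after rescaling the almost-complex structure on one side by $T$ in the $\RR$-direction, so $\bdy_T$ and $\bdy_1$ are the differentials for two choices of $J$, and the standard $J$-continuation argument applies verbatim.
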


Now, of course, we send $T\to \infty$. Consider a sequence of
$T_i$-matched curves $(u_1^i,u_2^i)$ with $T_i\to\infty$. Suppose that
$u_1^i\in\cM^{B_1}(\x_1,\y_1;(\rho_1,\dots,\rho_n))$.  Let $s_j^i$ be
the $\RR$--coordinate at which $u_1^i$ is asymptotic to $\rho_j$ and
let $t_j^i$ be the $\RR$--coordinate at which $u_2^i$ is asymptotic to
$\rho_j$. Then, after passing to a subsequence, for each $\rho_j$,
either:
\begin{itemize}
\item $(s_{j+1}^i-s_j^i)\in(0,\infty)$ stays bounded away from $0$ and
  $(t_{j+1}^i-t_j^i)\to\infty$ as $i\to\infty$; or
\item $(s_{j+1}^i-s_j^i)\to 0$ and $(t_{j+1}^i-t_j^i)$ stays bounded
  as $i\to\infty$.
\end{itemize}
So, in the limit:
\begin{itemize}
\item On the right we have an $\ell$-story holomorphic building (for
  some $\ell$) $U_2^\infty=(v_1,\dots,v_\ell)$, where
  $v_j\in\cM(\x_{1,j},\x_{1,j+1};\rho_{n_j},\dots,\rho_{n_{j+1}})$,
  $\x_{1,j}=\x_1$, $\x_{1,\ell+1}=\y_1$, $1=n_1\leq n_2\leq\dots\leq n_{\ell+1}=n$.
\item On the left we have a curve $u_1^\infty$ asymptotic to
  some sets of Reeb chords $\rhos_1,\dots,\rhos_\ell$ at
  $t$-coordinates $t_1<\dots<t_\ell\in\RR$. Let 
  \[
  \cM^{B_2}(\x_2,\y_2;\rhos_1,\dots,\rhos_\ell)
  \]
  denote the moduli space of such curves.
\end{itemize}
Importantly, there is no longer a matching condition between the
curves $u_1^\infty$ and $U_2^\infty$.

\begin{example}
  Continuing with Example~\ref{eg:split-HD-1} in the case that the cut
  goes through the neck, as on the right of
  Figure~\ref{fig:split-domain}, as $T\to\infty$ the $\RR$-coordinates
  of the two Reeb chords in $u_1$ come together. (This results in
  degenerating a split curve at $\bdy\Sigma$; we elided this point in
  the rest of this section.) This is indicated schematically in Figure~\ref{fig:split-domain}.
  
  Now, suppose we turned the diagram $180^\circ$. To avoid re-drawing
  the figure, we can think of this as sending $T\to 0$ instead of
  $T\to\infty$. In this case, the two chords in
  Figure~\ref{fig:split-domain} are pushed farther and farther apart;
  in the limit, the cut goes all the way through to the $\beta$-curve,
  giving a $2$-story holomorphic building. Again, this is indicated schematically in Figure~\ref{fig:split-domain}.
  
  Observe that in both cases, the relevant curves are completely
  determined, i.e., belong to rigid moduli spaces: there is no ``cut''
  left.
\end{example}

Now, associated to a \emph{set} of Reeb chords $\rhos$ is an algebra
element $a(\rhos)$, defined analogously to
Equation~\eqref{eq:a-of-rho}; see Exercise~\ref{ex:a-of-rhos}
or~\cite[Definition~\ref*{LOT1:def:arhos}]{LOT1}. Define maps
\begin{align*}
  m_{i+1}\co \CFAa(\HD_1)\otimes\Alg(\PMC)^{\otimes i}&\to
  \CFAa(\HD_1)\\
  m_{i+1}(\x;a(\rhos_1),\dots,a(\rhos_i)) &= \sum_{\y\in\Gen(\HD_1)}\sum_{\substack{B\in\pi_2(\x,\y)\\\ind(B,\rhos_1,\dots,\rhos_i)=1}}\bigl(\#\cM^B(\x,\y;\rhos_1,\dots,\rhos_i)\bigr)\y.
\end{align*}

An argument similar to but in some ways easier than the proof of
Theorem~\ref{thm:CFD-sq-0} proves:
\begin{theorem}
  For $\HD_1$ a provincially-admissible Heegaard diagram and $J$ a
  generic almost-complex structure, the operations $m_{i+1}$ make
  $\CFAa(\HD_1)$ into an $\Ainf$-module.
\end{theorem}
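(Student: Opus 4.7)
My plan is to mimic the proof of Theorem~\ref{thm:CFD-sq-0}, but with the moduli spaces now indexed by sequences of \emph{sets} of Reeb chords $\rhos_1,\dots,\rhos_n$ (chords sharing a common $t$-coordinate get grouped into one set) rather than by sequences of individual chords. The $A_\infty$ relation to verify, after setting $d_{\CFAa}=m_1$, is
\[
\sum_{k+\ell=n+1}m_{k}\bigl(m_{\ell}(\x;a(\rhos_1),\dots,a(\rhos_{\ell-1})); a(\rhos_{\ell}),\dots,a(\rhos_{n-1})\bigr)
+\sum_{i}m_{n-1}\bigl(\x;\dots,a(\rhos_i)\cdot a(\rhos_{i+1}),\dots\bigr)
+\sum_{i}m_{n}\bigl(\x;\dots,d(a(\rhos_i)),\dots\bigr)=0,
\]
and the strategy is the usual one: exhibit each summand as a count of codimension-$1$ ends of a $1$-dimensional moduli space, and conclude from $\#\bdy \ocM = 0$.

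First I would handle well-definedness: provincial admissibility guarantees that each $m_{i+1}$ is a finite sum, by the same argument as in Lemma~\ref{lem:admis-good}, since only finitely many positive classes $B$ with prescribed provincial multiplicities can carry a holomorphic representative. Transversality for a generic $J$ gives that the index-$1$ spaces $\cM^B(\x,\y;\rhos_1,\dots,\rhos_i)$ are zero-dimensional and compact, so the counts make sense mod~$2$.

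Next, fix $B$ and $(\rhos_1,\dots,\rhos_n)$ with $\ind(B;\rhos_1,\dots,\rhos_n)=2$, and enumerate the codimension-$1$ ends of $\cM^B(\x,\y;\rhos_1,\dots,\rhos_n)$ using the $\CFAa$ analogue of Theorem~\ref{thm:codim-1-degens}. I expect four kinds of ends. Two-story buildings split the sequence of labels at some index $\ell$ and give exactly the composition terms $m_{k}\circ m_{\ell}$. Collisions of two consecutive levels $\rhos_i,\rhos_{i+1}$ come in two flavors: when the merger $\rhos_i\cup \rhos_{i+1}$ represents a nonzero product $a(\rhos_i)\cdot a(\rhos_{i+1})$ in $\Alg(\PMC)$ they yield the middle sum above; when it represents $0$ (incompatible idempotents or a forbidden double crossing) they must cancel in pairs with split-curve degenerations at the same point, which is exactly the computation behind the multiplication rule on $\Alg(\PMC)$. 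Finally, join and shuffle curve degenerations at east~$\infty$ insert precisely the smoothings of crossings, matched pairs, and shuffle moves that enter the definition of $d$ on $\Alg(\PMC)$ (via the definition of $a(\rhos)$ in Equation~\eqref{eq:a-of-rho} extended to sets, cf.\ Exercise~\ref{ex:a-of-rhos}); summed, they reproduce $m_n(\x;\dots,d(a(\rhos_i)),\dots)$.

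The main obstacle, and the part requiring genuine care, is the third paragraph: the combinatorial identification of join/split/shuffle degenerations with $d\bigl(a(\rhos_i)\bigr)$, and the cancellation between split-curve ends and ``bad'' level collisions. The key input is that the definition of $a(\rhos)$ and the product and differential on $\Alg(\PMC)$ were designed precisely so that these holomorphic-curve ends assemble, term by term, into algebra operations---this is why the algebra is defined with the $M$-admissibility condition, with products vanishing on double crossings, and with $d$ summing over resolutions. Once this correspondence is set up, the $A_\infty$ relation is simply the statement that a compact $1$-manifold has an even number of boundary points, applied to $\ocM^B(\x,\y;\rhos_1,\dots,\rhos_n)$ and summed over $\y$, $B$, and the label sequence.
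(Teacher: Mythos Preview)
Your overall strategy is correct and matches the paper's own indication, which is literally the single sentence ``an argument similar to but in some ways easier than the proof of Theorem~\ref{thm:CFD-sq-0}.'' Since the paper gives no proof beyond that, your sketch actually goes further than the paper does. The identification of two-story ends with the composition terms, and the plan to match the remaining codimension-one ends against the multiplication and differential terms, is exactly right.

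However, your assignment of the role of split curves is off. You claim split-curve degenerations cancel level collisions when $a(\rhos_i)\cdot a(\rhos_{i+1})=0$. But a split component has two $e$-asymptotics that abut and sit at the same height; abutting chords cannot lie in a single valid set $\rhos_j$, and chords in distinct $\rhos_j$'s sit at distinct heights. So a split component can only appear as \emph{part of} a level collision in which a chord of $\rhos_i$ abuts one of $\rhos_{i+1}$; there the split piece is precisely what concatenates them, and the resulting end counts $\cM^B(\x,\y;\dots,\rhos_i\uplus\rhos_{i+1},\dots)$, contributing to the \emph{nonzero} product $a(\rhos_i)\cdot a(\rhos_{i+1})$, not to a vanishing one.

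The cancellation you need when the product vanishes (interleaved chords producing a double crossing in the concatenation) comes instead from shuffle curves---this is exactly what the paper's Remark about shuffle curves being ``important'' for $\CFAa$ is flagging. One flavor of shuffle end contributes to $d(a(\rhos_i))$ (smoothing a crossing between two moving strands), consistent with your last paragraph; but a second flavor supplies the internal cancellation against the bad level collisions. Disentangling these two roles of shuffle curves, and working out which collisions give $a(\rhos_i)\cdot a(\rhos_{i+1})$ versus which cancel, is the genuine content beyond the $\CFDa$ argument. Your sketch would go through once this is sorted out, but as written the split/shuffle bookkeeping is misattributed.
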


The argument above is a sketch of the pairing theorem,
Theorem~\ref{thm:pairing1}. Specifically, it follows from the sketch
above that
\[
\CFa(\HD)\simeq \CFAa(\HD_1)\DT\CFDa(\HD_2),
\]
where $\DT$ is the model for the tensor product of an $\Ainf$-module
with a type $D$ structure described in~\cite[Section~\ref*{LOT1:sec:DT}]{LOT1}.

\section{Exercises}

\begin{exercise}\label{ex:Riemann}
  In the setting of Section~\ref{sec:broken-cyl}, use the Riemann
  mapping theorem to show that the map $\pi_\Sigma\circ u$ is
  determined by the position of the branch point (as claimed), and that
  there are no other elements of $\cM(a,d)$.
\end{exercise}

\begin{exercise}
  Suppose that $v\co S\to \RR\times(\bdy\Sigma)\times[0,1]\times\RR$ is
  a holomorphic curve at east $\infty$, as discussed in
  Section~\ref{sec:codim-1-bdy}. Show that the restriction of
  $\pi_\bD\circ v$ to each component of $S$ is constant.
\end{exercise}

\begin{exercise}
  Prove: If $\x$ is a generator for $\CFDa(Y)$, where $\bdy Y=F(\PMC)$
  then $I(\x)\in \Alg(\PMC,0)\subset\Alg(\PMC)$. (Hint: this is easy.)
  What is the corresponding statement for the bimodules $\CFDDa$
  associated to arced cobordisms?
\end{exercise}

\begin{exercise}
  The differential on the algebra $\Alg(T^2,0)$ associated to the
  torus is trivial. This means that one of the cases in the proof of
  Theorem~\ref{thm:CFD-sq-0} does not arise if the boundary is a
  torus. Which one? Why?
\end{exercise}

\begin{exercise}\label{ex:annulus}
  Show that the annular region in Figure~\ref{fig:split-domain}
  is the domain of a holomorphic map $S\to\Sigma\times[0,1]\times\RR$,
  in two ways:
  \begin{enumerate}
  \item By adapting the argument from~\cite[Lemma
    9.3]{OS04:HolomorphicDisks}.
  \item By using handleslide invariance of Heegaard Floer homology.
    (After performing the right handleslide on
    Figure~\ref{fig:split-HD}, it is easy to compute $\HFa$.)
  \end{enumerate}
\end{exercise}

\begin{exercise}\label{ex:cut-bad-case}
  Show that when one stretches the neck in Figure~\ref{fig:split-HD},
  as in Example~\ref{eg:split-HD-1}, the domain in
  Figure~\ref{fig:split-domain} must have a cut passing through the
  neck.
\end{exercise}

\begin{exercise}\label{ex:algebraically-one}
  In Example~\ref{eg:split-HD-1} we claimed there is algebraically one
  length of cut so that the height difference of the two Reeb chords
  in $u_1$ agrees with the height difference of the two Reeb chords in
  $u_2$. (Since we are working with $\Field$-coefficients, probably we
  really meant that there are an odd number of such cut lengths.)
  Prove this. (Hint: what is the height difference in $u_1$ when the
  cut has length $0$? When the cut goes all the way to the
  $\beta$-circle?)
\end{exercise}

\begin{figure}[t]
\centerline{\input{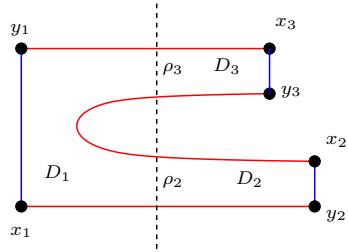}}
\caption[Degenerating the hexagon.]{\label{fig:Hexagon}
  {\bf{Degenerating the hexagon.}} 
  A hexagon in the Heegaard diagram 
  (giving a flow from $\x=\{x_1,x_2,x_3\}$ to $\y=\{y_1,y_2,y_3\}$)
  is divided into three pieces $D_1$, $D_2$, and $D_3$,
  grouped as $D_1$ and $D_2\cup D_3$. This is~\cite[Figure~\ref*{LOT1:fig:Hexagon}]{LOT1}.
  }
\end{figure}

\begin{exercise}
  Figure~\ref{fig:Hexagon} shows a hexagonal domain connecting
  $\x=\{x_1,x_2,x_3\}$ to $\y=\{y_1,y_2,y_3\}$. Note that this domain
  always contributes a term of $\y$ in $\bdy(\x)$. Consider the result
  of degenerating this domain along the dashed line, and then
  deforming the diagonal as in Section~\ref{sec:prove-pairing}.  (In
  the notation of Section~\ref{sec:prove-pairing}, consider both the
  case of sending $T\to \infty$ and the case of sending $T\to 0$.)
  What happens to the holomorphic representative for this domain in
  the process? How is this encapsulated algebraically?
  (See~\cite[Section~\ref*{LOT1:sec:tensor-prod-eg}]{LOT1} for a detailed discussion of this example.)
\end{exercise}

\begin{exercise}\label{ex:a-of-rhos}
  Define $a(\rhos)\in\Alg(\PMC)$ when $\rhos$ is a set of chords in
  $\PMC$, no two of which start (respectively end) at points from the
  same matched pair. (This is a generalization of
  Formula~\eqref{eq:a-of-rho}, and should be
  straightforward. See~\cite[Definition~\ref*{LOT1:def:arhos}]{LOT1} for a solution.)
\end{exercise}

\chapter[Knot complements]{Computing with bordered Floer homology I: knot complements}\label{lec:torus}
In this section we will discuss how the torus boundary case of
bordered Floer homology can be used to do certain kinds of
computations. The main goal is a technique for studying satellite
knots, from~\cite[Chapter~\ref*{LOT1:chap:TorusBoundary}]{LOT1}. This technique and extensions of
it have been used in~\cite{Levine12:slicingBing,Levine12:doubling,Petkova:cableofthin,Hom:bord-tau}.

We start with a review of knot Floer
homology~\cite{OS04:Knots,Rasmussen03:Knots}, mainly to fix notation
(Section~\ref{sec:knot-Floer}). We then discuss how the knot Floer
homology of a knot $K$ in $S^3$ determines the bordered Floer homology of
$S^3\setminus K$ (Section~\ref{sec:CFK-to-CFD}). Finally, we turn
this around to use our understanding of bordered Floer homology to
study the knot Floer homology of satellites (Section~\ref{sec:satellites}).

\section{Review of knot Floer homology}\label{sec:knot-Floer}

Let $K$ be a knot in $S^3$, and let $\HD=(\Sigma,\alphas,\betas,z,w)$
be a \emph{doubly pointed Heegaard diagram} for $K$, in the sense
of~\cite{OS04:Knots}. (For example, a doubly pointed Heegaard diagram
for the trefoil is shown in Figure~\ref{fig:trefoil}.) Associated to
$\HD$ are various knot Floer homology groups. The most general of
these is $\CFK^-(K)$, which is a filtered chain complex over
$\Field[U]$. The complex $\CFK^-(K)$ is freely generated (over
$\Field[U]$) by $T_\alpha\cap T_\beta$, the same generators as
$\CFa(\Sigma,\alphas,\betas)$. The differential is given by
\[
\bdy^-(\x)=\sum_{\y}\,
  \sum_{\substack{B\in\piBig(\x,\y)\\ \mu(B)=1}}
\#\bigl(\Mod^B(\x,\y)\bigr) U^{n_{w}(B)}\cdot \y.
\]
Here, unlike the discussion above, we allow disks to cross the
basepoint $z$; we have used the notation $\piBig(\x,\y)$ rather than
$\pi_2(\x,\y)$ to indicate this. 

The complex $\CFK^-(K)$ has an integral grading, called the {\em Maslov
  grading}, which is decreased by one by the differential. We will
make no particular reference to this additional structure in the
present notes; but it will be convenient (for the purposes of taking
Euler characteristic, cf. Equations~\eqref{eq:EulerCFa}
and~\eqref{eq:EulerCFm} below) to have its parity, as
encoded in $(-1)^{M(\x)}$. This parity is given as the local
intersection number of $T_\alpha$ and $T_\beta$ at $\x$. (As defined,
we have specified a function $\Gen(\HD)\to\{\pm 1\}$ which is
well-defined up to overall sign.) Now, the fact that $\bdy^-$
respects this parity is equivalent to the the statement that if
$B\in\piBig(\x,\y)$ has $\Mas(B)=1$, then the local intersection
numbers of $T_\alpha$ and $T_{\beta}$ at $\x$ and $\y$ are
opposite.

The complex $\CFK^-(K)$ has an \emph{Alexander filtration} which is
uniquely determined
up to translation by
\[
\begin{aligned}
  A(\y)-A(\x)&=n_{w}(B)-n_{z}(B)\\
  A(U\cdot\y) &= A(\y) - 1
\end{aligned}
\]
where $B\in \pi_2(\x,\y)$. In other words, a term of the form
$U^{n_w(B)}\y$ in $\bdy^-(x)$ has $A(U^{n_w(B)}\y)=A(\x)-n_z(B)$. 

Let $\gCFKm(K)$ denote the associated graded complex to
$(\CFK^-(K),A)$. Explicitly, the differential on $\gCFKm(K)$ is
defined in the same way as the differential on $\CFK^-(K)$ except that
we no longer allow holomorphic curves to cross the $z$ basepoint.
Thus, the chain complex $\gCFKm$ splits as a direct sum of complexes,
determined by the Alexander grading:
\[ \gCFKm(K)=\bigoplus_{s\in\ZZ}\gCFKm(K,s).\]

Finally, there is the complex $\CFKa(K)$ obtained from $\gCFKm(K)$ by
setting $U=0$. In other words, $\CFKa(K)$ is generated over $\Field$
by $T_\alpha\cap T_\beta$, and the differential counts holomorphic
curves which do not cross $z$ or $w$.  Like $\gCFKm$, $\CFKa$ has a
direct sum splitting induced by the Alexander grading.

A key property of knot Floer homology is that its graded Euler
characteristic is the Alexander polynomial:
\begin{equation}
  \label{eq:EulerCFa}
 \Delta_K(T)=\sum_{s\in\ZZ}\chi(\CFKa(K,s)) T^s; 
\end{equation}
and similarly,
\begin{equation}
  \label{eq:EulerCFm}
 \Delta_K(T)/(1-T)=\sum_{s\in\ZZ}\chi(\CFKm(K,s)) T^s. 
\end{equation}
(Note that the parity of the Maslov grading is used to compute the Euler
characteristic. Also, both sides of Formula~(\ref{eq:EulerCFm}) are
formal power series.)

The translation indeterminacy in the Alexander grading can then be removed
by requiring the graded Euler characteristic of $\CFKa$ to be the
Conway normalized Alexander polynomial (or equivalently
$\chi(\CFKa(K,s))=\chi(\CFKa(K,-s))$ for all $s\in\ZZ$); this
normalization can also be used to
remove the overall indeterminacy in the parity of the Maslov grading.

There is a numerical invariant for knots derived from knot Floer
homology, $\tau(K)$, which will appear in Theorem~\ref{thm:CFK-to-CFD}
below.  This is defined with the help of the following observation.
There are $U$-non-torsion elements in $H_*(\gCFKm(K,s))$, i.e.,  elements
$h\in H_*(\gCFKm(K,s))$ with the property that for all positive
integers $m$, $U^m h$ is homologically non-trivial.  We can consider the maximal $s$
for which $H_*(\gCFKm(K,s))$ contains $U$-non-torsion
elements. Multiplying this $s$ by $-1$ gives the invariant $\tau(K)$.

\begin{figure}
  \centering
  \begin{overpic}[tics=10, width=.8\textwidth]{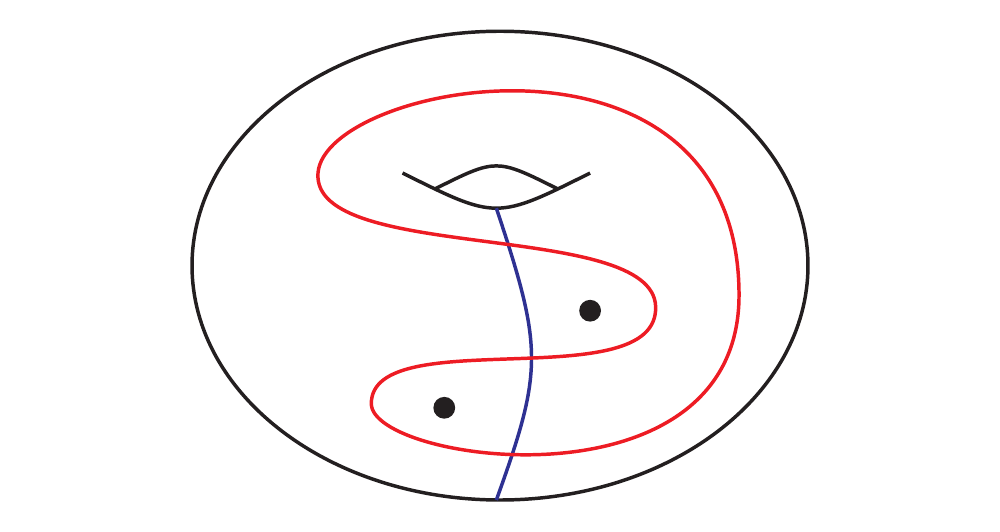}
    \put(52,5){$a$}
    \put(54,14.5){$b$}
    \put(51.5,29.5){$c$}
    \put(46,12){$z$}
    \put(55.5,22){$w$}    
    \put(48,0){\textcolor{blue}{$\beta$}}
    \put(29,35){\textcolor{red}{$\alpha$}}
  \end{overpic}
  \caption{\textbf{Doubly pointed Heegaard diagram for the trefoil.}}
  \label{fig:trefoil}
\end{figure}

\begin{example}\label{eg:trefoil-1}
  Figure~\ref{fig:trefoil} shows a doubly-pointed Heegaard diagram for
  the trefoil knot. The chain complex $\CFK^-(\HD)$ is given by
  $\Field[U]\langle a,b,c\rangle$. The differential on $\CFK^-(\HD)$
  is given by
  \begin{align*}
    \bdy^-(a)&=b &
    \bdy^-(b)&=0 &
    \bdy^-(c)&=U b.
  \end{align*}
  The Alexander filtration is given by $A(a)=1$, $A(b)=0$, $A(c)=-1$.

  The differential on $\gCFKm(\HD)$ is given by
  \begin{align*}
    \bdy^-_{g}(a)&=0 &
    \bdy^-_g(b)&=0 &
    \bdy^-_g(c)&=U b.
  \end{align*}

  The complex $\CFKa(\HD)$ is $\Field\langle a,b,c\rangle$, with
  trivial differential.
\end{example}

\begin{figure}
  \centering
  \begin{overpic}[tics=10, width=.8\textwidth]{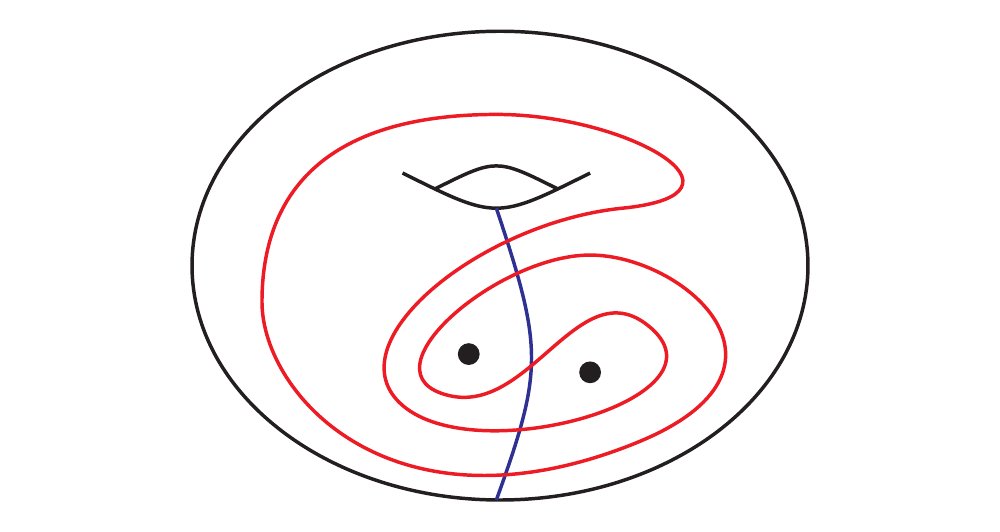}
    \put(49,6.5){$a$}
    \put(53,11){$b$}
    \put(51,17){$d$}
    \put(52.5,24.5){$c$}
    \put(48.5,29.5){$e$}
    \put(46,19.5){$w$}
    \put(60,17){$z$} 
    \put(48,0){\textcolor{blue}{$\beta$}}
    \put(23,25){\textcolor{red}{$\alpha$}}
  \end{overpic}
  \caption{\textbf{Doubly pointed Heegaard diagram for the figure
      eight knot}}
  \label{fig:fig8}
\end{figure}

Another concrete example is furnished by the Figure 8 knot.

\begin{example}\label{eg:fig8}
  Figure~\ref{fig:fig8} shows a doubly-pointed Heegaard diagram for
  the figure eight knot. The chain complex $\CFK^-(\HD)$ is given by
  $\Field[U]\langle a,b,c,d,e\rangle$. The differential on $\CFK^-(\HD)$
  is given by
  \begin{align*}
    \bdy^-(a)&=U b + c&
    \bdy^-(b)&=d &
    \bdy^-(c) &= U d &
    \bdy^-(d) &= 0 &
    \bdy^-(e)&=U b+c.
  \end{align*}
  The Alexander filtration is given by $A(a)=A(d)=A(e)=0$, $A(b)=1$, $A(c)=-1$.

  The differential on $\gCFKm(\HD)$ is given by
  \begin{align*}
    \bdy_g^-(a)&=U b&
    \bdy_g^-(b)&=0 &
    \bdy_g^-(c) &= U d &
    \bdy_g^-(d) &= 0 &
    \bdy_g^-(e)&=U b.
  \end{align*}

  The complex $\CFKa(\HD)$ is $\Field\langle a,b,c,d,e\rangle$, with
  trivial differential.
\end{example}

We represent the chain complex $\CFK^-(\HD)$ graphically by choosing a
basis $\{\xi_i\}$ for $\CFK^-(\HD)$ over $\Field[U]$---for instance,
the standard basis whose elements are points in $T_\alpha\cap T_\beta$---and
placing a generator of the form $U^{-x} \cdot \xi_i$ with Alexander
depth~$y$ on the plane at the position $(x,y)$. Then the differential
of a generator at $(x,y)$ can be represented graphically by arrows
connecting the point at $(x,y)$ with the coordinates of other
generators.  These arrows necessarily point (non-strictly) to the left
and down.

Up to filtered homotopy equivalence, we can always ensure that the
differentials in the chain complex $\CFK^-(\HD)$ change the Alexander grading or the $U$
power, or both; we call a chain complex \emph{reduced} if it has this
property. Equivalently, $\CFK^-(\HD)$ is reduced if every arrow
changes the $x$-coordinate or the $y$-coordinate or both. A reduced
complex has two distinct kinds of lowest-order terms: horizontal
arrows and vertical arrows. We call the basis $\{\xi_i\}$
\emph{horizontally simplified} (respectively \emph{vertically
  simplified}) if every element $U^j\xi_i$ is the tail of at most one
horizontal (respectively vertical) arrow and the head of at most one
horizontal (respectively vertical) arrow. It is reasonably
straightforward to verify that a horizontally simplified basis
(respectively a vertically simplified basis) always exists;
see~\cite[Proposition~\ref*{LOT1:prop:SimplifyComplex}]{LOT1}.

Abusing notation, we will say there is a \emph{length $\ell$ horizontal
  arrow from $\xi_i$ to $\xi_j$} if there is a horizontal arrow from
$\xi_i$ to $U^\ell\xi_j$.

We can invert $U$, giving a complex
$U^{-1}\CFK^-(K)=\Field[U,U^{-1}]\otimes_{\Field[U]}\CFK^-(K)$. (This
complex is also denoted $\CFK^\infty(K)$ in the literature.) It still
makes sense to talk about horizontal and vertical arrows on
$U^{-1}\CFK^-(K)$.  The homology of $U^{-1}\CFK^-(K)$ with respect to
the horizontal (respectively vertical) differentials on
$U^{-1}\CFK^-(K)$ is $\Field[U,U^{-1}]$. If the basis $\{\xi_i\}$ is
horizontally (respectively vertically) simplified then this means
there is a single generator $\eta_0$ (respectively $\xi_0$) over $\Field[U,U^{-1}]$ with no
horizontal (vertical) arrows into or out of it (in $U^{-1}\CFK^-(K)$).

\begin{example}
  \label{ex:gCFKmTrefoil}
  Continuing with Example~\ref{eg:trefoil-1}, we draw the complex
  $\gCFKm(\HD)$ as
  \[
  \begin{tikzpicture}[x=1.5cm,y=32pt]
    \node at (0,0) (b) {$b$};
    \node at (0,1) (a) {$a$};
    \node at (0,-1) (c) {$c$};
    \node at (-1,-1) (Ub) {$Ub$};
    \node at (-1,0) (Ua) {$Ua$};
    \node at (-1,-2) (Uc) {$Uc$};
    \node at (-2,-2) (U2b) {$U^2b$};
    \node at (-2,-1) (U2a) {$U^2a$};
    \node at (-2,-3) (cdots) {$\revddots$};
    \node at (-3,-3) (bdots) {$\revddots$};
    \node at (-3,-2) (adots) {$\revddots$};
    \draw[->] (a) to (b);
    \draw[->] (c) to (Ub);
    \draw[->] (Ua) to (Ub);
    \draw[->] (Uc) to (U2b);
    \draw[->] (U2a) to (U2b);
  \end{tikzpicture}
  \]
  In particular:
  \begin{itemize}
  \item This basis is reduced and both horizontally and vertically simplified.
  \item There is a length $1$ horizontal arrow from $c$ to $b$ and a
    length $1$ vertical arrow from $a$ to $b$.
  \item The element $\eta_0$ is $a$. The element $\xi_0$ is $c$.
  \end{itemize}
\end{example}

Knot Floer homology has been computed extensively. It is determined by
the Alexander polynomial for torus knots~\cite{OS05:surgeries}; it is
determined by the Alexander polynomial and the signature for
alternating knots~\cite{AltKnots}; and it has an efficient combinatorial
description for knots whose doubly-pointed Heegaard diagram can be
drawn on the torus (so that the relevant holomorphic disks are in the
torus, rather than some higher symmetric product)~\cite{OneOne}.
Finally, it admits a purely combinatorial description using grid
diagrams~\cite{MOS06:CombinatorialDescrip,MOST07:CombinatorialLink}, which is amenable to
computations by computer~\cite{BaldwinGillam}, and via a cube of resolutions~\cite{OSS09:singular,OS09:cube}.

\section{From \texorpdfstring{$\CFKa$}{CFK} to \texorpdfstring{$\CFDa$}{CFD}: statement and
  example}\label{sec:CFK-to-CFD} For convenience, we recall our
notation for the torus algebra, from Formula~\ref{eq:torus-alg}. It is
given by:
\[
\Alg(T^2,0)= \xymatrix{
   \iota_0\bullet\ar@/^1pc/[r]^{\rho_1}\ar@/_1pc/[r]_{\rho_3} & \bullet\iota_1\ar[l]_{\rho_2}
  }/(\rho_2\rho_1=\rho_3\rho_2=0).
\]
We have named $\rho_{12}=\rho_1\rho_2$,
$\rho_{23}=\rho_2\rho_3$ and $\rho_{123}=\rho_1\rho_2\rho_3$, so
$\{\iota_0,\iota_1,\rho_1,\rho_2,\rho_3,\rho_{12},\rho_{23},\rho_{123}\}$
is an $\Field$-basis for $\Alg(T^2,0)$.

\begin{theorem}\label{thm:CFK-to-CFD}\cite[Theorem~\ref*{LOT1:thm:HFKtoHFDframed}]{LOT1}
  Let $K\subset S^3$ be a knot and let $\CFK^-(K)$ be a reduced model
  for the knot Floer complex of $K$. Suppose $\CFK^-(K)$ has a
  basis $\{\xi_i\}$ which is both horizontally and vertically
  simplified. 
  
  Fix an integer $n$, and let $Y=S^3\setminus\nbd(K)$ with framing
  $n$. We will describe $\CFDa(Y)$.

  The submodule $\iota_0\CFDa(Y)$ has one generator for each basis
  element $\xi_i$. The submodule $\iota_1\CFDa(Y)$ has basis elements
  coming from the horizontal and vertical arrows in $\CFK^-(K)$.
  Specifically, for each length $\ell$ vertical arrow from $\xi_i$
  to $\xi_j$ here are $\ell$ basis elements
  $\kappa^{ij}_1,\dots,\kappa^{ij}_\ell$ for $\iota_1\CFDa(Y)$; and
  for each length $\ell$ horizontal arrow from $\xi_i$ to $\xi_j$ there
  are $\ell$ basis elements $\lambda^{ij}_1,\dots,\lambda^{ij}_\ell$
  for $\iota_1\CFDa(Y)$. Finally, there are $m=|2\tau(K)-n|$ more basis
  elements $\mu_1,\dots,\mu_{m}$ for $\iota_1\CFDa(Y)$.

  The differential on $\CFDa(Y)$ is given as follows. From the
  vertical arrows we get differentials
  \[
  \xi_i 
  \stackrel{\rho_{1}}\longrightarrow
  \kappa^{ij}_1
  \stackrel{\rho_{23}}\longleftarrow
  \cdots
  \stackrel{\rho_{23}}\longleftarrow
  \kappa^{ij}_{k}
  \stackrel{\rho_{23}}\longleftarrow
  \kappa^{ij}_{k+1}
  \stackrel{\rho_{23}}\longleftarrow
  \cdots
  \stackrel{\rho_{23}}\longleftarrow
  \kappa^{ij}_{\ell}
  \stackrel{\rho_{123}}\longleftarrow
  \xi_{j}.
  \]
  From the horizontal arrows we get differentials
  \[
  \xi_i 
  \stackrel{\rho_{3}}\longrightarrow
  {\lambda}^{ij}_1
  \stackrel{\rho_{23}}\longrightarrow
  \cdots
  \stackrel{\rho_{23}}\longrightarrow
  {\lambda}^{ij}_{k}
  \stackrel{\rho_{23}}\longrightarrow
  {\lambda}^{ij}_{k+1}
  \stackrel{\rho_{23}}\longrightarrow
  \cdots
  \stackrel{\rho_{23}}\longrightarrow
  {\lambda}^{ij}_\ell
  \stackrel{\rho_{2}}\longrightarrow
  \xi_{j}.
  \]
  Finally, we have the unstable chain:
  \begin{itemize}
  \item If $n<2\tau$ the unstable chain has the form
    \[
    \xi_0\stackrel{\rho_1}{\longrightarrow}\mu_1\stackrel{\rho_{23}}\longleftarrow
    \mu_2\stackrel{\rho_{23}}\longleftarrow
    \cdots
    \stackrel{\rho_{23}}\longleftarrow
    \mu_m
    \stackrel{\rho_3}\longleftarrow
    \eta_0.
    \]
  \item If $n>2\tau$ the unstable chain has the form
    \[
    \xi_0 \stackrel{\rho_{123}}{\longrightarrow} \mu_1 
  \stackrel{\rho_{23}}{\longrightarrow} \mu_2
  \dots
  \stackrel{\rho_{23}}{\longrightarrow} \mu_{m}
  \stackrel{\rho_{2}}{\longrightarrow} \eta_0,
    \]
  \item If $n=2\tau$ the unstable chain has the form
    \[
    \xi_0\stackrel{\rho_{12}}{\longrightarrow}
    \eta_0.
    \]
  \end{itemize}
\end{theorem}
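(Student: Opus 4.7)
\medskip

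\noindent\textbf{Proof proposal.}
My plan is to build an explicit bordered Heegaard diagram $\HD$ for $(Y,n)$ that is adapted to the chosen horizontally and vertically simplified basis $\{\xi_i\}$ of $\CFK^-(K)$, and then read off $\CFDa(Y)$ combinatorially from the positions of intersection points and domains in $\HD$. Start with a doubly-pointed Heegaard diagram $(\Sigma,\alphas,\betas,z,w)$ for $(S^3,K)$ which realizes $\CFK^-(K)$ in the given basis. Following the construction of Example~\ref{eg:bord-knot-compl}, remove a disk containing an arc from $z$ to $w$ and replace it with a boundary region into which two $\alpha$-arcs protrude: a meridional arc $\alpha_1^a$ and a longitudinal arc $\alpha_2^a$ winding according to framing $n$. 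The winding region will contain exactly $|2\tau(K)-n|$ intersection points between $\alpha_2^a$ and the relevant $\beta$-curve that are not already accounted for by the interior dynamics, and these will produce the unstable chain generators $\mu_j$.

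Next I would identify the generators. A generator of $\CFDa(\HD)$ is determined by selecting a point in each $\beta$-circle subject to the usual constraints. I expect the generators to split cleanly according to whether $\alpha_1^a$ is occupied (idempotent $\iota_0$, giving the basis elements corresponding to the $\xi_i$) or $\alpha_2^a$ is occupied (idempotent $\iota_1$). The $\iota_1$ generators will naturally partition into three groups coming from (i)~intermediate intersection points along each length-$\ell$ horizontal arrow in $\CFK^-(K)$, producing $\lambda^{ij}_1,\dots,\lambda^{ij}_\ell$; (ii)~intermediate intersection points along each length-$\ell$ vertical arrow, producing $\kappa^{ij}_1,\dots,\kappa^{ij}_\ell$; and (iii)~the $|2\tau(K)-n|$ generators $\mu_1,\dots,\mu_m$ coming from the winding region. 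Proving that this is exactly the generator set reduces to counting intersections in the winding region and in the neighborhoods of the bigons that witness the horizontal and vertical arrows.

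The differentials must then be computed by enumerating embedded holomorphic curves with boundary in the sense of Formula~\eqref{eq:CFD-d}. For each horizontal (respectively vertical) arrow in $\CFK^-(K)$ of length $\ell$, the corresponding bigon in the closed diagram breaks, after bordering, into a sequence of small domains running along $\bdy\Sigma$: one boundary domain covering the Reeb chord $\rho_3$ (resp.\ $\rho_1$) at one end, a chain of $\ell-1$ bigons each contributing $\rho_{23}$, and one final domain carrying $\rho_2$ (resp.\ $\rho_{123}$). These give precisely the vertical and horizontal chains claimed. The stable differentials that do not touch $\bdy\Sigma$ should vanish modulo the boundary contributions because the basis was chosen reduced. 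For the unstable chain, I would analyze the winding region directly: the three cases $n<2\tau$, $n>2\tau$, $n=2\tau$ correspond to whether the winding creates a chain of bigons anchored on $\rho_1$ at $\xi_0$ and $\rho_3$ at $\eta_0$, the opposite configuration with $\rho_{123}$ and $\rho_2$, or a single bigon giving $\rho_{12}$. Here the role of the distinguished generators $\xi_0$ (bottom of a vertically simplified tower) and $\eta_0$ (leftmost in a horizontally simplified tower) appears because these are the unique generators that survive to generate $U^{-1}H_*$ of the horizontal and vertical subcomplexes, which is exactly where the longitudinal and meridional $\beta$-curves pair with the interior data.

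The main technical obstacle will be ruling out extra holomorphic curves, especially in the winding region where one has to control families of strips that wrap around the meridian and interact with interior bigons. I expect to handle this by first treating a large framing $n\gg 0$, where the winding region is long and thin enough that disks factor through a neck-stretching argument and can be enumerated by hand; then to deduce the statement for arbitrary $n$ by tensoring with the $\DA$ bimodules $\CFDAa(\tau_\mu^{\pm 1})$ for meridional Dehn twists, in the spirit of Exercise~\ref{ex:torus-twists} and the pairing theorem (Theorem~\ref{thm:bimod-mod-pair}). A secondary subtlety is verifying that the horizontal and vertical simplifications can be realized simultaneously at the level of the bordered diagram; this will rely on the algebraic fact, established by successively changing basis, that such a simultaneous simplification always exists, so one can assume the underlying diagram has the corresponding minimal structure.
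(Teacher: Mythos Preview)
Your overall two–step strategy matches the paper's: first establish the result for an extreme framing by working with a specific bordered Heegaard diagram built from a doubly-pointed diagram for $K$ together with a winding region, and then propagate to all framings by tensoring with the Dehn-twist bimodules of Exercise~\ref{ex:torus-twists}. That skeleton is correct.

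However, your account of the first step contains a real gap. You write that ``for each horizontal (respectively vertical) arrow in $\CFK^-(K)$ of length $\ell$, the corresponding bigon in the closed diagram breaks, after bordering, into a sequence of small domains running along $\bdy\Sigma$.'' But arrows in $\CFK^-(K)$ are not bigons: they are counts of holomorphic disks whose domains can be arbitrarily complicated, and the integer $\ell$ records the multiplicity $n_w(B)$ or $n_z(B)$, not any geometric length. There is no mechanism by which bordering ``breaks'' such a disk into a chain of $\ell$ small pieces. What actually happens is subtler: for sufficiently extreme framing (the paper uses large \emph{negative} $n$, not $n\gg 0$), one shows that $\CFDa$ of the winding diagram is filtered, and that the associated graded pieces are identified with the vertical and horizontal complexes of $\CFK^-$ via a comparison of holomorphic curve counts in the bordered and doubly-pointed diagrams. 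The $\kappa^{ij}_k$ and $\lambda^{ij}_k$ arise not from subdividing a single domain but from the $\ell$ new intersection points created between $\beta$-curves and the many parallel copies of the longitude in the winding region; the arrows among them are genuine bigons in that region, while the arrows at the ends ($\rho_1$, $\rho_3$, $\rho_{123}$, $\rho_2$) come from domains that combine a bigon in the winding region with the original domain contributing to $\CFK^-$. Establishing this correspondence is the ``quite involved'' part the paper alludes to, drawing on the large-surgery techniques of~\cite{OS04:Knots,Rasmussen03:Knots,Hedden,IntSurg}; it is not a direct read-off.

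A secondary point: you say you will need to verify that a simultaneously horizontally and vertically simplified basis exists. It need not---the theorem simply \emph{assumes} one does. (The paper notes that in general one works with two bases and a transition matrix.) So this is not an obligation of the proof, but you should not claim it as a fact.
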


It is fairly straightforward to remove the condition that there be a
basis which is both horizontally and vertically simplified: one simply
works with two bases, one horizontally simplified and one vertically
simplified, and keeps track of the transition
matrix. See~\cite[Theorem~\ref*{LOT1:thm:HFKtoHFDframed}]{LOT1}. There is also a basis-free
version of Theorem~\ref{thm:CFK-to-CFD}; see~\cite[Theorem~\ref*{LOT1:thm:HFKtoHFD2}]{LOT1}.

The proof of Theorem~\ref{thm:CFK-to-CFD} has two parts. The first
part is showing that the theorem holds for large negative surgery
coefficients. The argument is somewhat similar to techniques
in~\cite{OS04:Knots,Rasmussen03:Knots,Hedden,IntSurg}, but is still
quite involved. The second part is deducing the result for general
surgery coefficients. This is done by changing the framing one step at
a time, using the bimodules from Exercise~\ref{ex:torus-twists} (or
their type \DA\ analogues).

\begin{example}
  Continuing with the trefoil example, recall that the trefoil $K$ has
  $\tau(K)=-1$. (Compare Exercise~\ref{exr:compute-tau}.) The basis $\{a,b,c\}$ is horizontally and vertically
  simplified. So, $\CFDa$ of $S^3\setminus K$ with framing $1$, say,
  is given by
  \[
  \begin{tikzpicture}
    \node at (0,0) (a) {$a$};
    \node at (0,-2) (l) {$\kappa_{ab}$};
    \node at (0,-4) (b) {$b$};
    \node at (2, -4) (k) {$\lambda_{cb}$};
    \node at (4, -4) (c) {$c$};
    \node at (3.25, -3) (mu1) {$\mu_1$};
    \node at (2.5, -2) (mu2) {$\mu_2$};
    \node at (1.25,-.75) (mu3) {$\mu_3$};
    \draw[->] (a) to node[left]{$\rho_1$} (l);
    \draw[->] (b) to node[left]{$\rho_{123}$} (l);
    \draw[->] (c) to node[below]{$\rho_3$} (k);
    \draw[->] (k) to node[below]{$\rho_{2}$} (b);
    \draw[->] (c) to node[right]{$\rho_{123}$} (mu1);
    \draw[->] (mu1) to node[above,right]{$\rho_{23}$} (mu2);
    \draw[->] (mu2) to node[above,right]{$\rho_{23}$} (mu3);
    \draw[->] (mu3) to node[above]{$\rho_{2}$} (a);
  \end{tikzpicture}
  \]
\end{example}

\section{Studying satellites}\label{sec:satellites}
Suppose that $\HD_1$ is a bordered Heegaard diagram for $S^3\setminus
\nbd(K)$ with the $0$-framing of the boundary. Let $\HD_2$ be a
bordered Heegaard diagram for $\bD^2\times S^1$ with the
$\infty$-framing.
Place an extra basepoint $w$ in $\HD_2$, and let $\HD'_2$ denote the
result. Then $\HD_1\cup_\bdy \HD_2'$ is a doubly-pointed Heegaard
diagram representing a knot $L$ in $S^3$.

\begin{construction}\label{const:diagram-satellite} 
  Fix a doubly-pointed bordered Heegaard diagram
  \[
  \HD=(\Sigma,\alphas^a,\alphas^c,\betas,z,w)
  \]
  for $\bD^2\times S^1$.
  Consider the knot $P$ in $\bD^2\times S^1$ determined as
  follows. Connect the basepoints $z$ and $w$ in $\HD$ by an arc
  $\gamma$ in $\Sigma\setminus (\alphas^a\cup\alphas^c)$ and an arc
  $\eta$ in $\Sigma\setminus \betas$. Viewing $\Sigma$ as
  $\Sigma\times\{1/2\}$ inside $\Sigma\times[0,1]\subset Y(\HD)$
  (Construction~\ref{const:HD-to-mfld}), let $\gamma'$ be the result
  of pushing the interior of $\gamma$ slightly into
  $\Sigma\times[0,1/2)$ and let $\eta'$ be the result of pushing the
  interior of $\eta$ slightly into $\Sigma\times(1/2,1]$. Then let
  $P=\gamma'\cup\eta'$. We will say that $\HD$ \emph{induces
    $(\bD^2\times S^1,P)$}.
\end{construction}

\begin{lemma}\label{lem:diagram-satellite} 
  With notation as above, suppose that $\HD'_2$ induces $(\bD^2\times
  S^1, P)$. Then $L$ is the satellite knot with companion $K\subset
  S^3$ and pattern $P\subset \bD^2\times S^1$.
\end{lemma}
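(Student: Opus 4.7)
The plan is to unravel the definitions in two steps: first identify the closed three-manifold obtained by gluing, then identify the knot encoded by the two basepoints, and finally recognize the result as the satellite construction.

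First, I would verify that the singly-pointed closed Heegaard diagram $\HD_1 \cup_\bdy \HD_2$ (i.e., ignoring $w$) represents $S^3$. By the correspondence between bordered Heegaard diagrams and bordered $3$-manifolds from Construction~\ref{const:HD-to-mfld}, it represents the glued manifold $Y(\HD_1) \cup_{T^2} Y(\HD_2)$, i.e., the $0$-framed complement of $K$ Dehn-filled by an $\infty$-framed solid torus. Under the torus framing conventions in force (compare Exercise~\ref{ex:solid-tori}), this identification sends the meridian of $\bD^2 \times S^1$ to the meridian of $K$ and the core-parallel longitude of $\bD^2 \times S^1$ to the preferred Seifert longitude of $K$. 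Hence the filling produces $S^3$, and along the way $Y(\HD_2) \cong \bD^2 \times S^1$ sits inside this $S^3$ as a standard tubular neighborhood $\nbd(K)$, with the zero-framing of $K$ realized by $\{*\} \times S^1$.

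Next, I would identify the knot $L \subset S^3$ encoded by the basepoints $(z,w)$ in $\HD_1 \cup_\bdy \HD_2'$. Both basepoints lie in the $\HD_2'$ portion of the glued surface, so the arcs $\gamma$ and $\eta$ from Construction~\ref{const:diagram-satellite} (applied to the closed diagram) may be chosen to coincide with the same-named arcs used to define $P$ in $(\bD^2\times S^1, P)$. Each $\alpha$-circle of the glued diagram meets $\Sigma_2$ either along a circle in $\alphas_2^c$ or along an arc in $\alphas_2^a$ (now closed up by an arc in $\alphas_1^a$); in either case it is disjoint from $\gamma$, and analogously $\eta$ is disjoint from every $\beta$-circle. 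Consequently, the push-offs $\gamma'$ and $\eta'$ live entirely inside $\Sigma_2 \times [0,1] \subset Y(\HD_2)$, so $L = \gamma'\cup\eta'$ sits in the embedded solid torus $Y(\HD_2) \subset S^3$, and there it coincides tautologically with $P$.

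Combining these two steps, $L \subset S^3$ lies inside a standardly embedded tubular neighborhood $\nbd(K)$, realized as $\bD^2 \times S^1$ with meridian and longitude matching those of $K$, and inside this solid torus $L$ traces out the pattern $P$. This is exactly the definition of the satellite knot with companion $K$ and pattern $P$. The main obstacle is the framing bookkeeping in the first step: one must verify that the particular combination ($0$-framing on the knot side, $\infty$-framing on the solid-torus side) produces not only the manifold $S^3$ but the standard embedding of $\nbd(K)$ in which the solid torus longitude matches a Seifert longitude. Once this convention is nailed down, the identification of $L$ with the satellite is just a direct unpacking of Construction~\ref{const:diagram-satellite}.
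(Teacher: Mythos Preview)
Your proof is correct and is the natural way to establish this lemma. The paper itself does not give a proof: it simply states ``The proof is left as Exercise~\ref{ex:prove-diagram-satellite}.'' So there is no paper argument to compare against; you have in effect solved the exercise.

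Your two-step structure---first recognizing the glued $3$-manifold as $S^3$ with $Y(\HD_2)$ sitting inside as $\nbd(K)$, then observing that the arcs $\gamma,\eta$ defining $L$ in the closed diagram can be taken to lie entirely in $\Sigma_2$ and hence coincide with the arcs defining $P$---is exactly right. The framing check you flag (that the $0$-framing on the complement side and $\infty$-framing on the solid-torus side match meridian to meridian and Seifert longitude to core-longitude) is the only point requiring care, and you identify it correctly.
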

The proof is left as Exercise~\ref{ex:prove-diagram-satellite}.

\begin{figure}
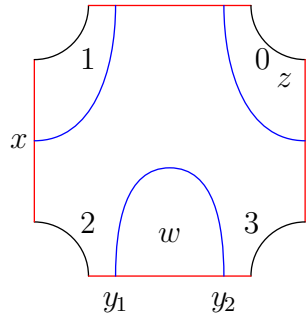

$\mfigb{torus-50}$
\caption[Diagram for the $(2,1)$ cable.]{\label{fig:Cable}
  {\bf{Heegaard diagram for the $(2,1)$-cabling operation.}}
  This is a doubly-pointed Heegaard diagram for the $(2,1)$ cable (of the unknot),
  thought of as a knot in the solid torus. 
  The basepoint $z$ lies in the region marked with a $0$. This picture
  is adapted from~\cite[Figure 11.14]{LOT1}.}
\end{figure}

\begin{example}
  Figure~\ref{fig:Cable} shows a doubly-pointed bordered Heegaard diagram
  inducing the $(2,1)$-cabling operation.
\end{example}

Given a doubly-pointed bordered Heegaard diagram $\HD$, let $\CFD^-(\HD,z,w)$
denote $\Field[U]\otimes_\Field\CFDa(\HD)$ with differential given
by 
\[
\bdy(\x)=\sum_{\y\in\Gen(\HD)}\sum_{n\geq
  0}\sum_{(\rho_1,\dots,\rho_n)}\sum_{B\mid
  \ind(B,\rho_1,\dots,\rho_n)=1}\bigl(\#\cM^B(\x,\y;\rho_1,\dots,\rho_n)\bigr)a(-\rho_1)\cdots a(-\rho_n)U^{n_w(B)}\y.
\]
That is, we count curves as before except that we weight the curves
which cross $w$ $n$ times by $U^n$.
\begin{corollary}\label{cor:satellite-Mor}
  With notation as above,
  \[
  \gCFKm(L)\cong \Mor(\CFDa(-\HD_1),\CFD^-(\HD_2,z,w)).
  \]
\end{corollary}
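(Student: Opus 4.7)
The plan is to deduce the corollary from the pairing theorem (Corollary~\ref{cor:mod-mod-hom}) by systematically tracking the extra basepoint~$w$ through the holomorphic curve count.

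First, by Lemma~\ref{lem:diagram-satellite}, the glued doubly-pointed diagram $\HD=\HD_1\cup_\bdy\HD'_2$ represents the satellite knot $L\subset S^3$. Hence $\gCFKm(L)$ is computed, up to chain homotopy equivalence, by the chain complex freely generated over $\Field[U]$ by $T_\alpha\cap T_\beta$, with differential counting homology classes $B$ with $\mu(B)=1$ and $n_z(B)=0$, each weighted by $U^{n_w(B)}$. Crucially, $w$ lies in the interior of $\HD_2$ while $z$ is the basepoint of the underlying pointed matched circle, so the usual bordered requirement that curves miss $\bdy\Sigma$ and $z$ is automatically compatible with the $n_z=0$ condition on the closed side.

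Second, I would apply Corollary~\ref{cor:mod-mod-hom} with the roles of ``$Y_1$'' and ``$Y_2$'' played by $-\HD_1$ and $\HD_2$ respectively, to obtain
\[
\CFa(\HD_1\cup_\bdy\HD_2)\simeq \Mor_{\Alg(-\PMC)}\bigl(\CFDa(-\HD_1),\,\CFDa(\HD_2)\bigr).
\]
This is the $w$-free shadow of the desired isomorphism. To reinstate $w$, I would rerun the proof of the pairing theorem outlined in Section~\ref{sec:prove-pairing} while carrying an extra factor of $U^{n_w(\cdot)}$. The key observation is that every homology class $B$ on the glued diagram decomposes as $B=B_1\ast B_2$ with $B_i$ supported in $\HD_i$, and since $w$ lies only in $\HD_2$ we have $n_w(B)=n_w(B_2)$. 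Hence in every matched (or $T$-matched) holomorphic building $(u_1,u_2)$ appearing in the argument, only $u_2$ carries a $U$-weight, and the $\HD_1$-factor is unaffected.

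Translating this through the deformed-diagonal argument of Section~\ref{sec:prove-pairing}, the $\Ainf$-module $\CFAa(\HD_1)$ remains unchanged while the type~$D$ structure $\CFDa(\HD_2)$ is replaced precisely by its $U$-weighted refinement $\CFD^-(\HD_2,z,w)$; thus $\gCFKm(L)\simeq\CFAa(\HD_1)\DT\CFD^-(\HD_2,z,w)$. Passing from the $\DT$-pairing to the $\Mor$-pairing via Theorem~\ref{thm:or-rev}, exactly as in the derivation of Corollary~\ref{cor:mod-mod-hom} from Theorem~\ref{thm:pairing1}, replaces $\CFAa(\HD_1)$ by $\CFDa(-\HD_1)$ on the dual side, yielding the stated formula. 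The main obstacle is the bookkeeping in this last step: one must check that the $U$-weighting is compatible with all of the codimension-one degenerations classified in Theorem~\ref{thm:codim-1-degens}---two-story breakings, join/split degenerations, and collapses of levels. This compatibility is essentially automatic from the multiplicativity $U^{n_w(B_1\ast B_2)}=U^{n_w(B_1)}U^{n_w(B_2)}$ together with the fact that the degenerations occurring on the $\HD_2$-side preserve $n_w$, so no new analysis is required beyond what already underlies Corollary~\ref{cor:mod-mod-hom}.
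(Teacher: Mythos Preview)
Your proposal is correct and matches the intended argument. The paper states this as a corollary without proof, treating it as an immediate consequence of the $\Mor$-pairing theorem (Corollary~\ref{cor:mod-mod-hom}) together with the observation that the extra basepoint $w$ lies entirely in $\HD_2$, so the $U^{n_w}$-weight passes through the neck-stretching argument unchanged on the $\HD_1$ side; you have spelled out exactly this reasoning.
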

By Theorem~\ref{thm:CFK-to-CFD}, $\CFDa(\HD_1)$ is determined by
$\CFK^-(K)$.  Thus, if we can compute $\CFD^-(\HD_2,z,w)$ we obtain a
formula for the knot Floer complex $\gCFKm(L)$ in terms of
$\CFK^-(K)$ (for arbitrary $K$).

\begin{example}\label{eg:cable-of-tref}
  In~\cite[Section 11.9]{LOT1} we use these techniques to compute the
  $(2,-3)$ cable of the left-handed trefoil. However, the computation
  there uses the type $A$ invariant of the pattern. In the spirit of
  continuing to avoid $\CFAa$, we give a similar computation using the
  $\Mor$ version of the pairing theorem.

  Let $\HD_2$ denote the doubly-pointed bordered Heegaard diagram shown in
  Figure~\ref{fig:Cable}. The module $\CFD^-(\HD_2,z,w)$ has
  generators $x$, $y_1$ and $y_2$ with 
  \begin{align*}
    \iota_{1}x &= x & \iota_{0}y_1&=y_1 & \iota_{0}y_2&=y_2.
  \end{align*}
  The differentials are given by
  \begin{align*}
    \bdy(x)&=U^2\rho_{23}x\\
    \bdy(y_1)&=Uy_2+\rho_1x\\
    \bdy(y_2)&=U\rho_{123}x.
  \end{align*}

  By Theorem~\ref{thm:CFK-to-CFD}, the invariant $\CFDa(Y)$ of the
  $2$-framed left-handed trefoil complement $Y$ is given by
  \[
  \CFDa(Y)=
  \mathcenter{
  \begin{tikzpicture}
    \node at (0,0) (a) {$a$};
    \node at (0,-1) (l) {$\kappa$};
    \node at (0,-2) (b) {$b$};
    \node at (1.5, -2) (k) {$\lambda$};
    \node at (3, -2) (c) {$c.$};
    \draw[->] (a) to node[left]{$\rho_1$} (l);
    \draw[->] (b) to node[left]{$\rho_{123}$} (l);
    \draw[->] (c) to node[below]{$\rho_3$} (k);
    \draw[->] (k) to node[below]{$\rho_{2}$} (b);
    \draw[->] (c) to node[above]{$\rho_{12}$} (a);
  \end{tikzpicture}}
  \]

  As in Corollary~\ref{cor:satellite-Mor},
  $\Mor(\CFDa(Y),\CFD^-(\HD_2,z,w))$ is $\gCFKm$ of some cable of the
  left-handed trefoil. Computing this morphism space, a basis over
  $\Field[U]$ is given by:
  \begin{align*}
    a&\mapsto y_1 & a&\mapsto\rho_{12}y_1 & a&\mapsto y_2 & a&\mapsto
    \rho_{12}y_2\\
    a&\mapsto \rho_1x & a&\mapsto \rho_3x & a&\mapsto\rho_{123}x \\
    b&\mapsto y_1 & b&\mapsto\rho_{12}y_1 & b&\mapsto y_2 & b&\mapsto
    \rho_{12}y_2\\
    b&\mapsto \rho_1x & b&\mapsto \rho_3x & b&\mapsto\rho_{123}x \\
    c&\mapsto y_1 & c&\mapsto\rho_{12}y_1 & c&\mapsto y_2 & c&\mapsto
    \rho_{12}y_2\\
    c&\mapsto \rho_1x & c&\mapsto \rho_3x & c&\mapsto\rho_{123}x \\
    \lambda &\mapsto x & \lambda&\mapsto \rho_{23}x & \lambda&\mapsto
    \rho_2y_1 & \lambda&\mapsto \rho_2y_2\\
    \kappa &\mapsto x & \kappa&\mapsto \rho_{23}x & \kappa&\mapsto
    \rho_2y_1 & \kappa&\mapsto \rho_2y_2.
  \end{align*}
  (Nobody said this was quick. The complex is smaller if one uses
  $\CFAa(\HD_2,z,w)$.) The differentials are shown in
  Figure~\ref{fig:sat-complex}. Cancelling as many differentials not
  involving $U$ as possible gives
  Figure~\ref{fig:sat-complex-canceled}. In particular, the homology
  $\gHFKm(K)$ is given by 
  $\Field[U]\oplus \bigl(\Field[U]/U^2\bigr)\oplus\Field$; and
  $\HFKa(K)$ is given by $\mathbb{F}_2^5$.

\definecolor{coral}{rgb}{1,0.5,.31}
\definecolor{DeepPink}{rgb}{1,0.08,.58}
\definecolor{DarkGreen}{rgb}{0,0.4,0}
\definecolor{LimeGreen}{rgb}{0.2,0.8,.2}
\definecolor{Navy}{rgb}{0,0,.5}
\definecolor{SkyBlue}{rgb}{.53,.81,.98}
  \begin{figure}
    \centering
    \begin{tikzpicture}[x=3.5cm,y=48pt]
      \node at (0,0) (ay1) {$a\mapsto y_1$};
      \node at (1,0) (ar12y1) {$a\mapsto \rho_{12}y_1$};
      \node at (2,0) (ay2) {$a\mapsto y_2$};
      \node at (3,0) (ar12y2) {$a\mapsto \rho_{12}y_2$};
      \node at (.5,-1) (ar1x) {$a\mapsto \rho_1x$};
      \node at (1.5,-1) (ar3x) {$a\mapsto \rho_3x$};
      \node at (2.5,-1) (ar123x) {$a\mapsto \rho_{123}x$};      
      \node at (0,-2) (by1) {$b\mapsto y_1$};
      \node at (1,-2) (br12y1) {$b\mapsto \rho_{12}y_1$};
      \node at (2,-2) (by2) {$b\mapsto y_2$};
      \node at (3,-2) (br12y2) {$b\mapsto \rho_{12}y_2$};
      \node at (.5,-3) (br1x) {$b\mapsto \rho_1x$};
      \node at (1.5,-3) (br3x) {$b\mapsto \rho_3x$};
      \node at (2.5,-3) (br123x) {$b\mapsto \rho_{123}x$};      
      \node at (0,-4) (cy1) {$c\mapsto y_1$};
      \node at (1,-4) (cr12y1) {$c\mapsto \rho_{12}y_1$};
      \node at (2,-4) (cy2) {$c\mapsto y_2$};
      \node at (3,-4) (cr12y2) {$c\mapsto \rho_{12}y_2$};
      \node at (.5,-5) (cr1x) {$c\mapsto \rho_1x$};
      \node at (1.5,-5) (cr3x) {$c\mapsto \rho_3x$};
      \node at (2.5,-5) (cr123x) {$c\mapsto \rho_{123}x$};      
      \node at (0,-6) (kx) {$\lambda\mapsto x$};
      \node at (1,-6) (kr23x) {$\lambda\mapsto \rho_{23}x$};
      \node at (2,-6) (kr2y1) {$\lambda\mapsto \rho_{2}y_1$};
      \node at (3,-6) (kr2y2) {$\lambda\mapsto \rho_{2}y_2$};
      \node at (0,-7) (lx) {$\kappa\mapsto x$};
      \node at (1,-7) (lr23x) {$\kappa\mapsto \rho_{23}x$};
      \node at (2,-7) (lr2y1) {$\kappa\mapsto \rho_{2}y_1$};
      \node at (3,-7) (lr2y2) {$\kappa\mapsto \rho_{2}y_2$};
      \draw[->,color=red, bend left=10] (ar1x) to node[above]{$U^2$} (ar123x);
      \draw[->,color=red, bend left=10] (br1x) to node[above]{$U^2$} (br123x);
      \draw[->,color=red, bend left=10] (cr1x) to node[above]{$U^2$} (cr123x);
      \draw[->,color=red] (kx) to node[above]{$U^2$} (kr23x);
      \draw[->,color=red] (lx) to node[above]{$U^2$} (lr23x);
      \draw[->,color=orange, bend left=10] (ay1) to node[above]{$U$} (ay2);
      \draw[->,color=orange, bend left=10] (ar12y1) to node[above]{$U$} (ar12y2);
      \draw[->,color=orange, bend left=10] (by1) to node[above]{$U$} (by2);
      \draw[->,color=orange, bend left=10] (br12y1) to node[above]{$U$} (br12y2);
      \draw[->,color=orange, bend left=10] (cy1) to node[above]{$U$} (cy2);
      \draw[->,color=orange, bend left=10] (cr12y1) to node[above]{$U$} (cr12y2);
      \draw[->,color=orange] (kr2y1) to node[above]{$U$} (kr2y2);
      \draw[->,color=orange] (lr2y1) to node[above]{$U$} (lr2y2);
      \draw[->,color=pink] (ay1) to (ar1x);
      \draw[->,color=pink] (by1) to (br1x);
      \draw[->,color=pink] (cy1) to (cr1x);
      \draw[->,color=DeepPink] (ay2) to node[above]{$U$} (ar123x);
      \draw[->,color=DeepPink] (by2) to node[above]{$U$} (br123x);
      \draw[->,color=DeepPink] (cy2) to node[above]{$U$} (cr123x);
      \draw[->, color=green] (lx) to (ar1x);
      \draw[->, color=green] (lr23x) to (ar123x);
      \draw[->, color=green] (lr2y1) to (ar12y1);
      \draw[->, color=green] (lr2y2) to (ar12y2);
      \draw[->, color=DarkGreen] (lx) to (br123x);
      \draw[->, color=LimeGreen] (by1) to (kr2y1);
      \draw[->, color=LimeGreen] (by2) to (kr2y2);
      \draw[->, color=LimeGreen] (br3x) to (kr23x);
      \draw[->, color=blue] (kx) to (cr3x);
      \draw[->, color=SkyBlue] (ay1) to (cr12y1);
      \draw[->, color=SkyBlue] (ay2) to (cr12y2);           
      \draw[->, color=SkyBlue] (ar3x) to (cr123x);            
    \end{tikzpicture}
    \caption{\textbf{The complex from Example~\ref{eg:cable-of-tref}}}
      \label{fig:sat-complex}
  \end{figure}
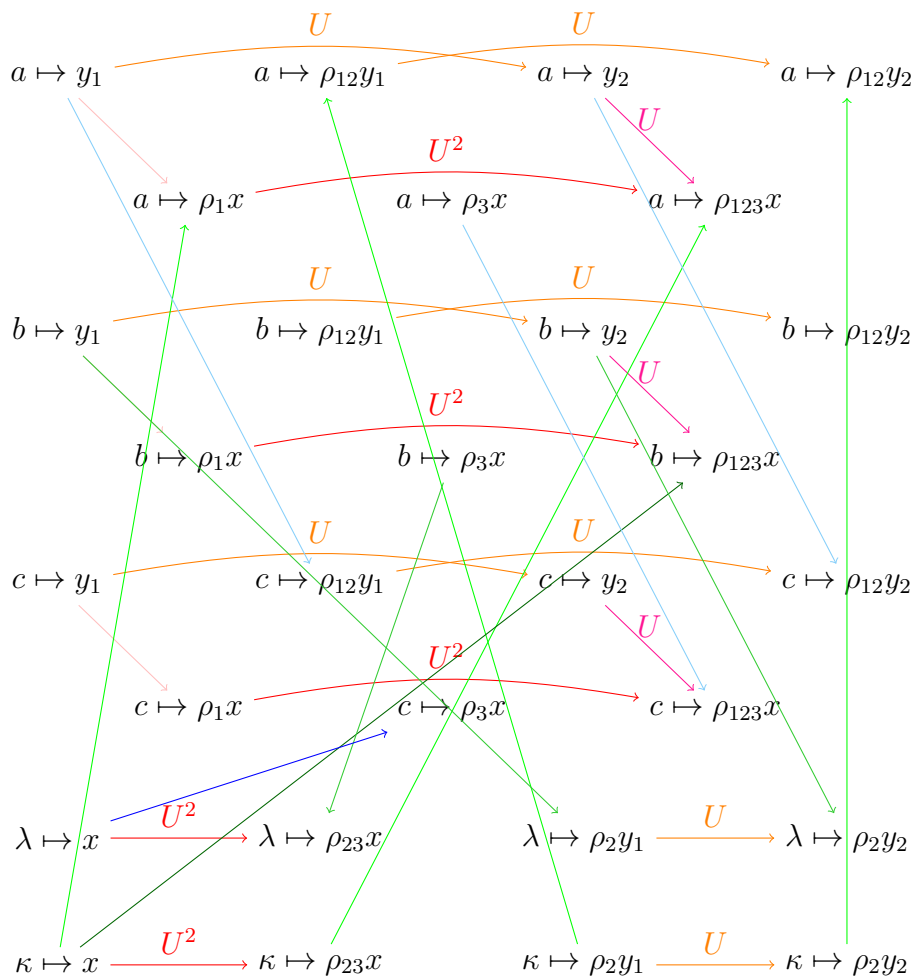

  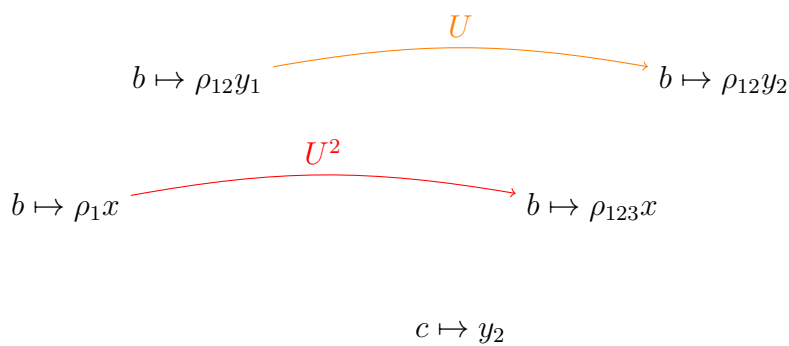
\begin{figure}
    \centering
    \begin{tikzpicture}[x=3.5cm,y=48pt]
      \node at (1,-2) (br12y1) {$b\mapsto \rho_{12}y_1$};
      \node at (3,-2) (br12y2) {$b\mapsto \rho_{12}y_2$};
      \node at (.5,-3) (br1x) {$b\mapsto \rho_1x$};
      \node at (2.5,-3) (br123x) {$b\mapsto \rho_{123}x$};      
      \node at (2,-4) (cy2) {$c\mapsto y_2$};
      \draw[->,color=red, bend left=10] (br1x) to node[above]{$U^2$} (br123x);
      \draw[->,color=orange, bend left=10] (br12y1) to node[above]{$U$} (br12y2);
    \end{tikzpicture}
    \caption{\textbf{Result of cancelling differentials in Figure~\ref{fig:sat-complex}.}}
      \label{fig:sat-complex-canceled}
  \end{figure}
\end{example}

In some sense, this strategy works in general:
\begin{lemma}\label{lem:diagram-satellite-exist}
  Given any pattern $P$ in $\bD^2\times S^1$ there is a doubly-pointed
  Heegaard diagram inducing $P$.
\end{lemma}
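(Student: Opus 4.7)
The plan is to put the pattern $P$ in \emph{bridge position} with respect to a bordered Heegaard splitting of $\bD^2\times S^1$ having a single bridge on each side. Since the construction in Construction~\ref{const:diagram-satellite} produces a knot passing through the bordered basepoint $z\in\partial\Sigma\subset T^2=\partial(\bD^2\times S^1)$, I would first isotope $P$ inside $\bD^2\times S^1$ so that it meets $T^2$ transversely in exactly one specified point, which we christen $z$. This is possible by a small perturbation (push a short arc of $P$ onto $T^2$ and back).

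Next, I would pick any bordered Heegaard diagram $\HD_0=(\Sigma,\alphas^a,\alphas^c,\betas,z)$ for $\bD^2\times S^1$ with the basepoint on $\partial\Sigma$ coinciding with the chosen $z$, giving a splitting $\bD^2\times S^1=H_\alpha\cup_\Sigma H_\beta$ into compression bodies. After an isotopy rel $z$, I may assume $P$ intersects $\Sigma$ transversely in $2n$ points (including $z$), partitioning $P$ into $n$ arcs $\gamma'_1,\dots,\gamma'_n\subset H_\alpha$ and $n$ arcs $\eta'_1,\dots,\eta'_n\subset H_\beta$.

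The next step is bridge position: arrange that each $\gamma'_i$ is boundary-parallel in $H_\alpha$ (isotopic rel endpoints into $\Sigma$, hence to an arc disjoint from $\alphas$), and similarly for each $\eta'_j$. When some arc fails to be boundary-parallel, I would stabilize the Heegaard splitting by adding a tube along an appropriate arc in the relevant handlebody. Each such stabilization adjoins a canceling pair of $\alpha$- and $\beta$-circles in the interior of $\Sigma$, preserving both the underlying $3$-manifold and the bordered structure, since nothing is done near $\partial\Sigma$. After finitely many stabilizations, $P$ is in bridge position. To reduce to $n=1$, further tubing in $H_\alpha$ along an arc joining a neighborhood of $\gamma'_i$ to a neighborhood of $\gamma'_{i+1}$ merges two adjacent bridges into one; iterating yields $n=1$.

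With $n=1$, let $\gamma'=P\cap H_\alpha$ and $\eta'=P\cap H_\beta$, and let $w$ be the unique point of $P\cap\Sigma$ distinct from $z$. Boundary-parallelness produces arcs $\gamma\subset\Sigma\setminus(\alphas^a\cup\alphas^c)$ and $\eta\subset\Sigma\setminus\betas$ from $z$ to $w$ whose pushes into $H_\alpha$ and $H_\beta$ recover $\gamma'$ and $\eta'$ up to isotopy. Then $\HD=(\Sigma,\alphas^a,\alphas^c,\betas,z,w)$ is a doubly-pointed bordered Heegaard diagram inducing $P$. The main obstacle is verifying the bridge-position/stabilization statement in the bordered setting: classically, any knot in a closed $3$-manifold admits a bridge presentation with respect to any Heegaard splitting after sufficient stabilization (Schubert, and its modern elaborations), and I expect the argument to go through here with minor modifications so long as every stabilization is performed in the interior of $\Sigma$, disjoint from $\partial\Sigma$ and the pointed matched circle structure. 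Checking this carefully is the crux of the argument.
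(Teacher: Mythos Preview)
The paper leaves this lemma as an exercise (Exercise~\ref{ex:prove-diagram-satellite-exists}), so there is no proof to compare against; your bridge-position approach is the natural one and is essentially correct.

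That said, one step is misstated. You write that you will isotope $P$ so that it ``meets $T^2$ transversely in exactly one specified point.'' A knot contained in $\bD^2\times S^1$ cannot meet the boundary torus $T^2$ \emph{transversely} at all---a transverse intersection point would force $P$ to exit the solid torus. What you actually need (and what Construction~\ref{const:diagram-satellite} produces) is that $P$ \emph{touches} $\partial(\bD^2\times S^1)$ at the single point $z\in\partial\Sigma\times\{1/2\}$: near $z$ the knot enters $H_\alpha=\Sigma\times[0,1/2]$ on one side and $H_\beta=\Sigma\times[1/2,1]$ on the other, so it is transverse to the Heegaard surface $\Sigma\times\{1/2\}$ at a boundary point of that surface. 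You should simply isotope one point of $P$ onto $z\in\partial Y$ and then perform the bridge-position argument relative to that point.

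The remaining concern---whether the classical stabilize-to-bridge-position argument goes through in the bordered setting---is genuine but mild. The key observation is that all stabilizations and isotopies can be performed in a compact region of the interior of $\Sigma$ (away from $\partial\Sigma$ and the $\alpha$-arcs), so the bordered structure is untouched. Once $P$ is in bridge position with one bridge and one endpoint at $z$, connectedness of $\Sigma\setminus\alphas$ and $\Sigma\setminus\betas$ (built into Definition~\ref{def:bordered-HD}) guarantees the required arcs $\gamma$ and $\eta$ exist. With the ``transversely'' slip corrected, your argument is complete.
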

The proof is left as Exercise~\ref{ex:prove-diagram-satellite-exists}.
 
\begin{corollary}
  Let $P$ be a knot in $\bD^2\times S^1$. Given a knot $K$ in $S^3$
  let $K^P$ denote the satellite of $K$ with pattern $P$.  Then
  $\CFK^-(K)$ determines $\gCFKm(K^P)$ in the following sense: if
  $K_1$ and $K_2$ are knots with $\CFK^-(K_1)\cong\CFK^-(K_2)$ then
  $\gCFKm(K_1^P)\cong \gCFKm(K_2^P)$.
\end{corollary}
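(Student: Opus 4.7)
The plan is a direct assembly of the machinery developed earlier in this lecture. The key structural observation is that the knot Floer complex of the satellite $K^P$ admits a bordered decomposition into a piece depending only on the pattern $P$ and a piece depending only on the companion $K$, where the latter is an algorithmic function of $\CFK^-(K)$.

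Concretely, the plan proceeds as follows. First, by Lemma~\ref{lem:diagram-satellite-exist}, fix a doubly-pointed bordered Heegaard diagram $\HD_P$ inducing $(\bD^2 \times S^1, P)$. For each knot $K_i$ ($i=1,2$), choose an admissible bordered Heegaard diagram $\HD_{1,i}$ representing $S^3 \setminus \nbd(K_i)$ with the $0$-framing. By Lemma~\ref{lem:diagram-satellite}, the closed doubly-pointed Heegaard diagram $\HD_{1,i} \cup_\bdy \HD_P$ represents the satellite $K_i^P$, and Corollary~\ref{cor:satellite-Mor} yields
\[
\gCFKm(K_i^P) \;\cong\; H_*\bigl(\Mor(\CFDa(-\HD_{1,i}),\, \CFD^-(\HD_P, z, w))\bigr).
\]
The second argument of $\Mor$ manifestly does not depend on $i$, so all dependence on $K_i$ is funneled through the type $D$ module $\CFDa(-\HD_{1,i})$.

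Next, I would observe that the quasi-isomorphism type of $\CFDa(-\HD_{1,i})$ is determined by the filtered chain homotopy type of $\CFK^-(K_i)$. This is essentially the content of Theorem~\ref{thm:CFK-to-CFD}, which reads off an explicit model for $\CFDa$ of the $0$-framed knot complement from any reduced, horizontally- and vertically-simplified basis of $\CFK^-(K_i)$. The orientation reversal is harmless: $-\HD_{1,i}$ is itself a bordered Heegaard diagram for a $0$-framed knot complement (of the suitably oriented mirror of $K_i$), whose knot Floer complex is a tautological dual of $\CFK^-(K_i)$, so Theorem~\ref{thm:CFK-to-CFD} applies to $-\HD_{1,i}$ as well and produces a model of $\CFDa(-\HD_{1,i})$ which is an algorithmic function of $\CFK^-(K_i)$.

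Combining these ingredients, the hypothesis $\CFK^-(K_1)\cong\CFK^-(K_2)$ yields a quasi-isomorphism of $\mathcal{K}$-projective modules $\CFDa(-\HD_{1,1}) \simeq \CFDa(-\HD_{1,2})$ (projectivity via Lemma~\ref{lem:admis-good} applied to admissible diagrams), whence $\Mor(-,\CFD^-(\HD_P,z,w))$ sends it to a quasi-isomorphism of chain complexes, and taking homology yields $\gCFKm(K_1^P) \cong \gCFKm(K_2^P)$. All of the genuine analytic and algebraic content is concentrated in Theorem~\ref{thm:CFK-to-CFD} and the pairing theorem of Section~\ref{sec:prove-pairing}; the remaining step is bookkeeping. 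The one aspect demanding any attention is that Corollary~\ref{cor:satellite-Mor} is phrased in terms of the $U$-weighted module $\CFD^-$, whose pairing formula is a direct adaptation of the argument in Section~\ref{sec:prove-pairing} with $U$ treated as an inert formal variable tracking the multiplicity $n_w$ of the basepoint $w$ in holomorphic curves.
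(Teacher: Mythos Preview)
Your proposal is correct and follows precisely the intended argument: the paper states this corollary without proof, as an immediate consequence of Lemma~\ref{lem:diagram-satellite-exist}, Corollary~\ref{cor:satellite-Mor}, and Theorem~\ref{thm:CFK-to-CFD}, and you have assembled these ingredients in the right way. One minor slip: Corollary~\ref{cor:satellite-Mor} gives a chain-level equivalence $\gCFKm(L)\simeq \Mor(\CFDa(-\HD_1),\CFD^-(\HD_P,z,w))$, so the $H_*$ in your displayed formula should be removed (the conclusion $\gCFKm(K_1^P)\cong\gCFKm(K_2^P)$ is at the chain level, and indeed your subsequent argument via quasi-isomorphism of the $\Mor$-complexes delivers exactly that).
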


\begin{remark}
  The diagram $\HD'_2$ specifies more than just a knot in $\bD^2\times
  S^1$; see Exercise~\ref{ex:not-knot}. Probably the best way to think
  of $\HD'_2$ is as representing a bordered-sutured manifold (in the
  sense of~\cite{Zarev09:BorSut}).
\end{remark}

\section{Exercises}

\begin{exercise}\label{exr:compute-tau}
  For $K$ the trefoil and the figure eight, compute the $\Field[U]$
  module structure on $H_*(\gCFKm(K))$, using the descriptions of the
  complexes given in 
  Examples~\ref{eg:trefoil-1} and~\ref{eg:fig8} respectively.
  Use this to compute $\tau(K)$ for these knots.
\end{exercise}

\begin{exercise}
  Find a basis for $\CFKm(K)$ when $K$ is the figure eight knot
  which is both horizontally and vertically simplified. 
\end{exercise}

\begin{exercise}
  Let $Y$ be the complement of the unknot in $S^3$. Compute $\CFDa(Y)$
  in two ways:
  \begin{enumerate}
  \item Using Theorem~\ref{thm:CFK-to-CFD}.
  \item Directly from a bordered Heegaard diagram.
  \end{enumerate}
  (This exercise is courtesy of J.~Hom.)
\end{exercise}

\begin{exercise}
  Using Theorem~\ref{thm:CFK-to-CFD}, write down $\CFDa$ of the
  trefoil complement with framings $1$ and $-2$.
\end{exercise}

\begin{exercise}
  Figure~\ref{fig:TrefoilComplement} gives a bordered Heegaard diagram for
  the trefoil complement. Compute $\CFDa$ of that diagram directly,
  and compare the answer with that given by
  Theorem~\ref{thm:CFK-to-CFD}. (This is a fairly challenging
  computation, after which you are guaranteed to appreciated
  Theorem~\ref{thm:CFK-to-CFD}.)
\end{exercise}

\begin{exercise}
  Verify that the modules $\CFDa(Y)$ given by Theorem~\ref{thm:CFK-to-CFD}
  satisfy $\bdy^2=0$.
\end{exercise}

\begin{exercise}\label{ex:prove-diagram-satellite}
  Prove Lemma~\ref{lem:diagram-satellite}.
\end{exercise}

\begin{exercise}\label{ex:prove-diagram-satellite-exists}
  Prove Lemma~\ref{lem:diagram-satellite-exist}.
\end{exercise}

\begin{exercise}
  Use the bimodules of Exercise~\ref{ex:torus-twists} to show that if
  Theorem~\ref{thm:CFK-to-CFD} holds for surgery coefficient $n$ then
  it holds for surgery coefficient $n\pm 1$. (This is somewhat messy.)
\end{exercise}

\begin{exercise}\label{ex:not-knot} Find doubly-pointed bordered Heegaard diagrams
  $\HD$, $\HD'$ for $\bD^2\times S^1$ so that:
  \begin{itemize}
  \item The singly-pointed Heegaard diagrams obtained from $\HD$,
    $\HD'$ by forgetting the $w$ basepoint both specify the same
    framing for $\bD^2\times S^1$.
  \item The diagrams $\HD$ and $\HD'$ represent the same satellite
    operation in the sense of Construction~\ref{const:diagram-satellite}.
  \item The invariants $\CFD^-(\HD,z,w)$ and $\CFD^-(\HD',z',w')$ are not
    homotopy equivalent.
  \end{itemize}
  In particular, it is not true that any two diagrams representing the
  same pattern $P$ are related by a sequence of Heegaard moves in the
  complement of the basepoints.
\end{exercise}

\begin{exercise}
  We computed $\gCFKm$ of some cable of the trefoil in
  Example~\ref{eg:cable-of-tref}. Which one?
\end{exercise}

\chapter[Factoring mapping classes]{Computing with bordered Floer homology II: factoring mapping classes}\label{lec:compute-HFa}
The goal of this lecture is to discuss an algorithm, coming from
bordered Floer homology, for computing the invariant $\HFa(Y)$ for any
closed $3$-manifold $Y$. This is not the first algorithm for computing
$\HFa(Y)$, which is due
to Sarkar-Wang~\cite{SarkarWang07:ComputingHFhat}; but it is
independent of the Sarkar-Wang algorithm and conceptually fairly
satisfying. The algorithm has been implemented using Sage; see
\url{http://math.columbia.edu/~lipshitz/research.html\#Programming}.

\section{Overview of the algorithm}
Fix a closed $3$-manifold $Y$ and a Heegaard splitting 
\[
Y=\HB_1\cup_\psi \HB_2
\]
for $Y$. That is, $\HB_1$ and $\HB_2$ are handlebodies of some genus
$k$ and $\psi\co \bdy\HB_1\to \bdy\HB_2$ is an orientation-reversing
homeomorphism.

\begin{figure}
  \centering
  \includegraphics{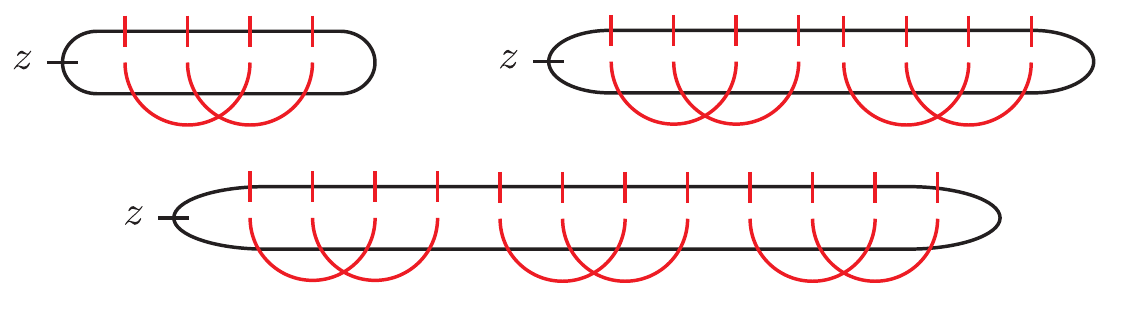}
  \caption{\textbf{The pointed matched circles $\PMC_k^0$.} The cases
    $k=1$, $k=2$ and $k=3$ are shown.}
  \label{fig:split-matching}
\end{figure}

Without loss of generality, we can assume that each $\HB_i$ is a
particular standard bordered handlebody $(\HB_k,\phi_0\co F(\PMC_k^0)\to
\bdy \HB_k)$. Here, $\PMC_k^0$ is a particular pointed matched
circle---we will take it to be the $k$-fold connect sum of the genus
$1$ pointed matched circle (i.e., the \emph{split matching}; see Figure~\ref{fig:split-matching}). Then the
map $\psi$ is specified by a map $\tpsi=\phi_0\circ \psi\circ \phi_0^{-1}\co
F(\PMC_k^0)\to F(\PMC_k^0)$. To specify $Y$ up to
homeomorphism we need only specify $\psi$ up to isotopy; so, it is
natural to view $\tpsi$ as an element of
the mapping class group of $F(\PMC_k^0)$. Up to isotopy, we can assume
that $\tpsi$ fixes the preferred disk in $F(\PMC_k^0)$, and regard
it as an element of the mapping class group of
$\PunctF(\PMC_k^0)$. (Of course, the lift to the strongly based
mapping class group depends on a choice.)

Let $M_{\tpsi}$ denote the mapping cylinder of $\tpsi$, as in
Example~\ref{eg:mapping-cyl}. Then by the relevant pairing theorems,
Corollary~\ref{cor:mod-mod-hom} and Theorem~\ref{thm:bimod-mod-hom},
we have
\[
\CFa(Y)\simeq \Mor\Bigl(\CFDa(\HB_k,\phi_0),\Mor\bigl(\CFDDa(-M_\tpsi),\CFDa(\HB_k,\phi_0)\bigr)\Bigr).
\]
So, we have ``reduced'' the problem to computing the invariants of
$(\HB_k,\phi_0)$ and $M_\tpsi$. 

This is not yet useful: there are about as many mapping classes as
$3$-manifolds. On the other hand, the mapping classes form a
group. Suppose that $\psi_1,\dots,\psi_N$ are generators for the
mapping class group of $\PunctF(\PMC_k^0)$ as a monoid---that is,
we include inverses in our list of generators. Then we can write
$\tpsi=\psi_{i_n}\circ\dots\circ \psi_{i_1}$ for some sequence of
generators
$\psi_{i_1},\dots,\psi_{i_n}\in\{\psi_1,\dots,\psi_N\}$. 
Repeatedly using Theorem~\ref{thm:bimod-mod-hom}, we have
\begin{multline*}
  \CFa(Y)\simeq
  \Mor\biggl(\CFDa(-\HB_k,\phi_0),\Mor\Bigl(\CFDDa(-M_{\psi_{i_n}}),\Mor\bigl(\cdots\\
\dots,\Mor(\CFDDa(-M_{\psi_{i_1}}),\CFDa(\HB_k,\phi_0))\dots\bigr)\Bigr)\biggr).
\end{multline*}
So, we now really have reduced the problem: we only need to compute the
invariants $\CFDa(\HB_k,\phi_0)$ and $\CFDDa(M_{\psi_i})$ for our
preferred set of generators $\psi_1,\dots,\psi_N$.

\subsection{Arc-slides as mapping class groupoid generators}
Generalizing the mapping class group to a groupoid leads to a particularly convenient set of generators.

\begin{definition}
  The \emph{genus $k$ mapping class groupoid} is the category whose
  objects are the pointed matched circles representing genus $g$
  surfaces, and with $\Hom(\PMC_1,\PMC_2)$ the set of isotopy classes
  of strongly-based homeomorphisms $F(\PMC_1)\to F(\PMC_2)$.

  In particular, $\Aut(\PMC)=\Hom(\PMC,\PMC)$ is the strongly-based
  mapping class group.
\end{definition}

\begin{figure}
\begin{center}
\input{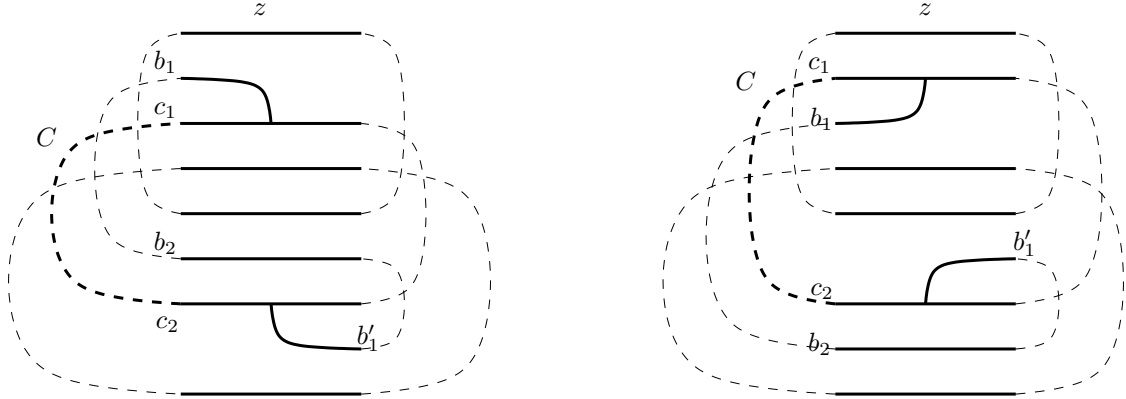}
\end{center}
\caption {{\bf Arc-slides.}
\label{fig:ArcslideMatching}
Two examples of arc-slides connecting pointed matched circles
for genus $2$ surfaces. In both cases, the foot $b_1$ is sliding
over the matched pair $C=\{c_1,c_2\}$ (indicated by the darker dotted
matching) at $c_1$. This figure is~\cite[Figure~\ref*{HFa:fig:ArcslideMatching}]{LOT4}.}
\end{figure}

\begin{definition}\label{def:arcslide}
  Let $\PMC$ be a pointed matched circle, and fix two matched pairs
$C=\{c_1,c_2\}$ and $B=\{b_1,b_2\}$ in $\PMC$. Suppose moreover that $b_1$ and
$c_1$ are adjacent, in the sense that there is an arc $\sigma$ connecting $b_1$
and $c_1$ which does not contain the basepoint $z$ or any other
point $p_i\in\mathbf{a}$.
Then we can form a new pointed matched circle $\PMC'$ which agrees
everywhere with $\PMC$, except that $b_1$ is replaced by a new
distinguished point $b_1'$, which now is adjacent to $c_2$ and $b_1'$ is positioned so that
the orientation on the arc from $b_1$ to $c_1$ is opposite to the
orientation of the arc from $b_1'$ to~$c_2$. In this case, we say that
$\PMC'$ and $\PMC$ differ by an \emph{arc-slide of $b_1$ over
  $c_1$}. (See Figure~\ref{fig:ArcslideMatching} for two
examples.)

In this situation, there is a canonical
element in $\Hom(\PMC,\PMC')$, which we refer to as the
\emph{arc-slide diffeomorphism}; see Figure~\ref{fig:ArcSlide}.
\end{definition}

\begin{figure}
\begin{center}
\input{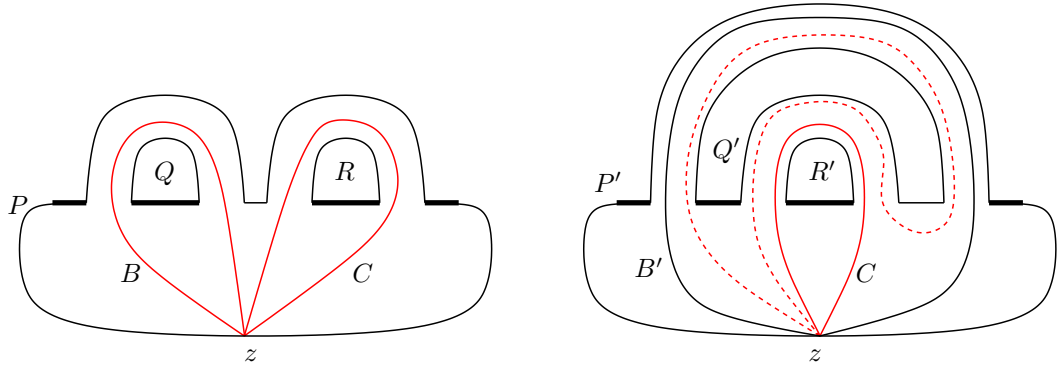}
\end{center}
\caption {{\bf The local case of an arc-slide diffeomorphism.}
\label{fig:ArcSlide}
Left: a pair of pants with boundary components labeled $P$, $Q$, and
$R$, and two distinguished curves $B$ and $C$. Right:
another pair of pants with boundary components $P'$, $Q'$, $R'$ and
distinguished curves $B'$ and $C$. The arc-slide diffeomorphism
carries $B$ to the dotted curve on the right, the curve labeled
$C$ on the left to the curve labeled $C$ on the right, and boundary
components $P$, $Q$, and $R$ to $P'$, $Q'$ and $R'$ respectively.
This diffeomorphism can be extended to a diffeomorphism between
surfaces associated to pointed matched circles: in such a surface
there are further handles attached along the four dark
intervals; however, our diffeomorphism carries the four dark intervals
on the left to the four dark intervals on the right and hence extends
to a diffeomorphism as stated. (This is only one of several possible
configurations of $B$ and $C$: they could also be nested or linked.)
This figure is~\cite[Figure~\ref*{HFa:fig:ArcSlide}]{LOT4}.}
\end{figure}

The diagrams in Figure~\ref{fig:ArcslideMatching} are shorthand for
bordered Heegaard diagrams for the mapping cylinders of the arc-slides. Such a bordered Heegaard
diagram for the second arc-slide in Figure~\ref{fig:ArcslideMatching}
is given in Figure~\ref{fig:HD-for-arcslide}.

\begin{figure}
  \centering
  \includegraphics{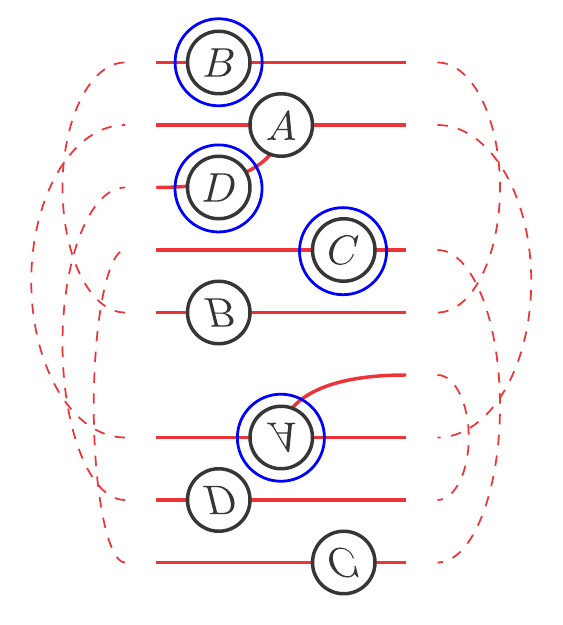}
  \caption{\textbf{Heegaard diagram for an arc-slide.} This diagram
    corresponds to the schematic on the right of Figure~\ref{fig:ArcslideMatching}.}
  \label{fig:HD-for-arcslide}
\end{figure}

\begin{lemma}\label{lem:arcslides-generate}
  The arc-slides generate the mapping class groupoid.
\end{lemma}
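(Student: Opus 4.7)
The plan is to reduce the statement to two separate claims and prove each: (1) any two pointed matched circles of genus $k$ are connected by a sequence of arc-slides, and (2) for each pointed matched circle $\PMC$, the strongly-based mapping class group $\Aut(\PMC)$ is generated by closed loops of arc-slides based at $\PMC$. Together, these imply that every morphism of the mapping class groupoid factors as a composition of arc-slides: given $\phi \in \Hom(\PMC_1, \PMC_2)$, choose by (1) a composition $\Psi$ of arc-slides from $\PMC_1$ to $\PMC_2$; then $\Psi^{-1} \circ \phi \in \Aut(\PMC_1)$ factors by (2).

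For claim (1), I would interpret a pointed matched circle as a choice of cut system for the strongly-based handlebody it naturally bounds: the $2k$ matched points specify $k$ disjoint properly embedded arcs in $\PunctF(\PMC)$ whose complement is a disk, and attaching $2$-handles produces a handlebody. Two pointed matched circles represent the same genus-$k$ surface precisely when their cut systems live in the same closed surface; the classical fact (going back to Reidemeister--Singer in the closed Heegaard setting, or equivalently provable via Cerf theory for marked arc systems on a disk) is that any two such cut systems are connected by a sequence of handleslides of one arc over another. By inspection of Definition~\ref{def:arcslide} and Figures~\ref{fig:ArcslideMatching}--\ref{fig:ArcSlide}, each handleslide of $\alpha_i^a$ over $\alpha_j^a$ is exactly realized by one arc-slide, establishing (1).

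For claim (2), fix $\PMC$ and $\phi \in \Aut(\PMC)$. By the Lickorish--Humphries theorem, $\phi$ is a product of Dehn twists (and inverses) about a finite collection of simple closed curves $\{\gamma_j\}$ in $F(\PMC)$. It therefore suffices to realize each $\tau_{\gamma_j}$ as a composition of arc-slide loops at $\PMC$. For this I would proceed in two substeps: first, use (1) to move $\PMC$ via a sequence $\Psi_j$ of arc-slides to a pointed matched circle $\PMC_j$ in which the curve $\gamma_j$ is isotopic to one of the ``obvious'' circles built from a matched pair; second, exhibit $\tau_{\gamma_j}$ itself as a specific composition of a small number of arc-slides at $\PMC_j$. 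The second substep is the explicit computation: when $\gamma$ is the circle formed by a matched pair $C = \{c_1,c_2\}$ together with one of the arcs of $Z \setminus \CircPts$ joining $c_1$ to $c_2$, one can slide a neighboring foot over $C$ and then slide it back in such a way that the resulting homeomorphism, after canceling, agrees with $\tau_\gamma$; this is verified by tracking the canonical diffeomorphism of Figure~\ref{fig:ArcSlide} through the composition. Conjugating by $\Psi_j$ then gives $\tau_{\gamma_j}$ as an arc-slide word at $\PMC$.

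The main obstacle will be the explicit verification in the second substep of claim (2): arc-slides are built from very local moves, and one must carefully check that a specific ``there and back'' pair of arc-slides realizes a Dehn twist (rather than some other element such as the identity or a different power). I would handle this by reducing to the genus-one pair-of-pants picture in Figure~\ref{fig:ArcSlide} where the computation is explicit, and then extending across the rest of the surface using the fact that arc-slides are the identity outside a neighborhood of $\sigma \cup C$. An alternative, if desired, is to appeal to the work of Andersen--Bene--Penner on Whitehead moves of fatgraphs, which gives a groupoid presentation whose generators are equivalent to arc-slides; in that framework claim (2) is automatic, and only claim (1) needs to be verified by hand.
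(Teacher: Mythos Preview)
Your approach is essentially the same as the paper's: the paper does not prove this lemma in detail but instead cites Bene~\cite{Bene08:ChordDiagrams} and remarks that one can alternatively deduce it from the Humphries generating set of Dehn twists by explicitly factoring each such twist into arc-slides (illustrated in Figure~\ref{fig:factor-Dehn}). Your plan---reduce to transitivity on objects plus generation of $\Aut(\PMC)$, then invoke Lickorish--Humphries and factor the Dehn twists---is exactly this second route, and your mention of Andersen--Bene--Penner matches the paper's primary citation.

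One caution: your ``there and back'' description of how a Dehn twist about a matched-pair curve factors into arc-slides is too optimistic. Sliding a foot $b_1$ over $C$ and then sliding it back gives a diffeomorphism that drags the single arc $B$ once around $\gamma$, but a Dehn twist $\tau_\gamma$ must act on \emph{every} arc crossing $\gamma$, not just one. The paper's Figure~\ref{fig:factor-Dehn} shows that even a simple Dehn twist requires a longer sequence (six arc-slides in that example), essentially one pass per arc that intersects $\gamma$. So the explicit verification you flag as the ``main obstacle'' is indeed where the work lies, and the correct statement is that each Humphries generator factors as a specific finite word in arc-slides, not as a single there-and-back pair.
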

A proof can be found in~\cite{Bene08:ChordDiagrams}. It is perhaps a more familiar
fact that the mapping class group is generated by some finite,
preferred set of Dehn twists; see for
example~\cite{Humphries}. Lemma~\ref{lem:arcslides-generate} can be
deduced from this more familiar fact by explicitly factoring that particular
collection of Dehn twists into arcslides (see
Example~\ref{ex:FactorDehnTwist}).

\begin{example}
  \label{ex:FactorDehnTwist}
  Figure~\ref{fig:factor-Dehn} shows a factorization of a (particular) Dehn twist as a
  product of arc-slides.
\end{example}

\begin{figure}
  \centering
  \includegraphics[width=\textwidth]{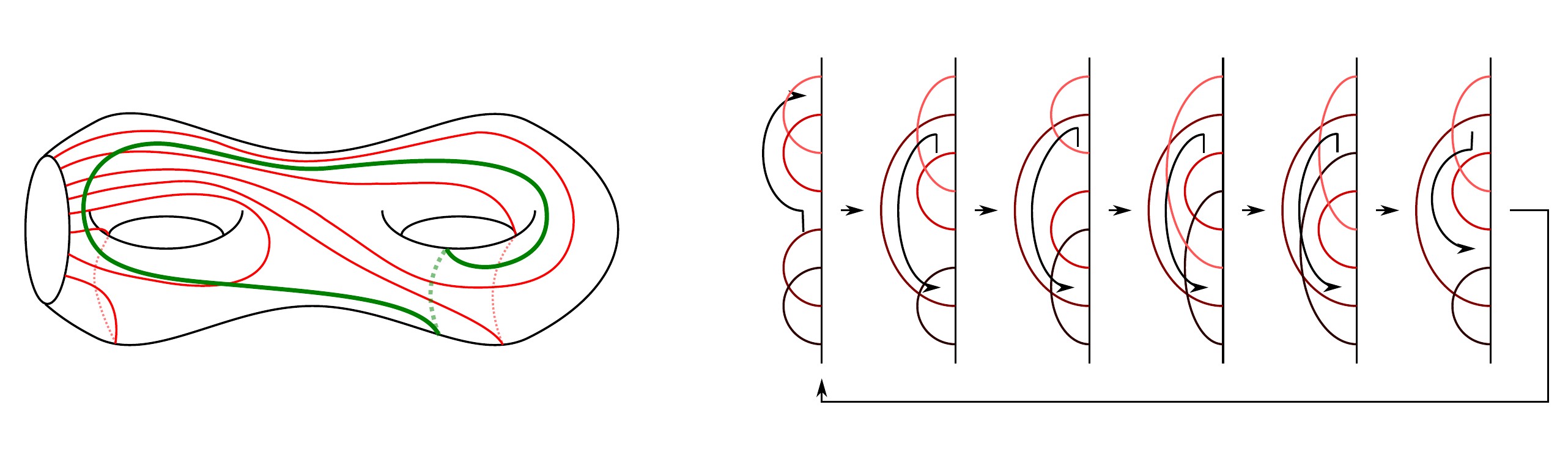}
  \caption{\textbf{Factoring a Dehn twist into arc-slides.} Left: a
    genus $2$ surface specified by a pointed matched circle, and a
    curve $\gamma$ (drawn in thick green) in it. Right: a sequence of arc-slides whose
    composition is a Dehn twist around $\gamma$. This
    is~\cite[Figure~\ref*{HFa:fig:factor-Dehn}]{LOT4}.}
  \label{fig:factor-Dehn}
\end{figure}

So, two steps remain to compute $\CFa$:
\begin{itemize}
\item Compute $\CFDa(\HD_k)$ for some Heegaard diagram $\HD_k$
  representing the genus $k$ handlebody.
\item Compute $\CFDDa(M_{\psi})$ for any arc-slide $\psi$.
\end{itemize}
We give these computations in Sections~\ref{sec:handlebody}
and~\ref{sec:underslides}, respectively. (As a warm-up before
computing the invariant of arc-slides we compute the type \DD\ module
associated to the identity cobordism.)

\begin{remark}
  The relations among arc-slides are also relatively easy to
  state; see~\cite{Bene08:ChordDiagrams}.
\end{remark}

\section{The invariant of a particular handlebody}\label{sec:handlebody}
Let $\PMC^1$ denote the (unique) pointed matched circle for the
torus, and let $\PMC^k$ denote the $k$-fold connect sum of $\PMC^1$
with itself,
i.e., the genus $k$ \emph{split pointed matched circle}. Label the
marked points in $\PMC^k$ as $a_1,\dots,a_{4k}$. So, in $\PMC^k$ the
matched pairs are $\{a_{4i-3},a_{4i-1}\}$ and $\{a_{4i},a_{4i-2}\}$.
 
The \emph{$0$-framed solid torus} $\HB^1=(H^1,\phi^1_0)$ is the solid
torus with boundary $-{F(\PMC^1)}$ in which the handle determined by
$\{a_1,a_3\}$ bounds a disk.
Let $\phi^1_0$ denote the preferred diffeomorphism $-{F(\PMC^1)}\to
\bdy H^1$.  The \emph{$0$-framed handlebody of genus $k$}
$\HB^k=(H^k,\phi^k_0)$ is a boundary connect sum of $k$ copies of
$\HB^1$. Our conventions are illustrated by the bordered Heegaard
diagram in Figure~\ref{fig:GenusTwoBorderedDiagram}.

\begin{figure}
\begin{center}
\input{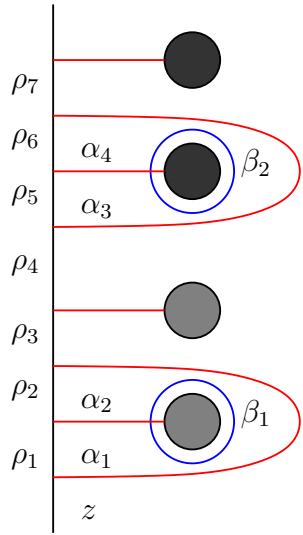}
\end{center}
\caption {{\bf Heegaard diagram for the $0$-framed genus two
    handlebody.} The lighter (respectively darker) shaded pair of
  circles indicates a handle attached to the diagram. This is~\cite[Figure~\ref*{HFa:fig:GenusTwoBorderedDiagram}]{LOT4}.
\label{fig:GenusTwoBorderedDiagram}}
\end{figure}

\begin{proposition}
  Let $\SetS=\{a_{4i-3},a_{4i-1}\}_{i=1}^k$. The module $\CFDa(\HB^k)$ is
  generated over the algebra by a single element $\x$ with $I(\SetS)\x=\x$,
  and is equipped with the differential determined by
  $$\bdy(\x)=\sum_{i=1}^k a(\xi_i)\x,$$ 
  where $\xi_i$ is the arc in $\PMC^k$ connecting $a_{4i-3}$ and
  $a_{4i-1}$.
\end{proposition}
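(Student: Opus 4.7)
The plan is to exhibit an explicit bordered Heegaard diagram $\HD^k$ for $\HB^k$---the natural boundary connected sum of $k$ copies of the genus-one analogue of Figure~\ref{fig:GenusTwoBorderedDiagram}---and to read $\CFDa(\HD^k)$ off of it directly. Concretely, $\HD^k$ has underlying surface $\Sigma^k$ a genus-$k$ surface with one boundary component, no $\alpha$-circles, $\alpha$-arcs matching those of the split pointed matched circle $\PMC^k$, and $\beta$-circles $\beta_1,\dots,\beta_k$ arranged so that $\beta_i$ meets the $\alpha$-arc joining $a_{4i-2}$ and $a_{4i}$ transversely in exactly one point $x_i$, is disjoint from all other $\alpha$-arcs and $\beta$-circles, and cobounds an embedded disk region $B_i\subset\Sigma^k$ whose $\alpha$- and $\bdy\Sigma^k$-boundary consists of the $\alpha$-arc joining $a_{4i-3}$ and $a_{4i-1}$ together with the chord $\xi_i$. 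That $\HD^k$ represents $\HB^k$ with the specified framing follows from Construction~\ref{const:HD-to-mfld}: attaching a $2$-handle along each $\beta_i$ fills in a compressing disk whose boundary curve is parallel to $\xi_i$ in $F(\PMC^k)$, which is exactly the condition that the matched pair $\{a_{4i-3},a_{4i-1}\}$ be compressible in $\HB^k$.

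Counting generators is immediate: a generator must place one point on each $\beta$-circle, forcing the unique generator $\x=\{x_1,\dots,x_k\}$. Since $\x$ occupies precisely the arcs joining $a_{4i-2}$ to $a_{4i}$, the arcs disjoint from $\x$ are exactly the ones joining $a_{4i-3}$ to $a_{4i-1}$, so $I(\x)=I(\SetS)$.

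For the differential, each region $B_i$ gives a positive homology class in $\pi_2(\x,\x)$ of index one with a single Reeb chord asymptotic to $\xi_i$; this is a topological disk, which has a unique holomorphic representative by the Riemann mapping theorem (cf.~Exercise~\ref{ex:Riemann}), and so contributes the term $a(\xi_i)\x$ to $\bdy\x$ (with the orientation conventions of Formula~\eqref{eq:CFD-d} absorbed into the notation). The main step is then to rule out any other contributions. The regions of $\Sigma^k\setminus(\alphas\cup\betas)$ are precisely the region containing $z$ and the $k$ disks $B_i$, so the positive periodic domains are exactly the non-trivial non-negative integer combinations $\sum n_iB_i$; since the $B_i$ lie in disjoint summands, the index formula (\cite[Section~\ref*{LOT1:sec:expected-dimensions}]{LOT1}) is additive across summands, and within the $i$-th summand the embedded index of $n_iB_i$ with the compatible Reeb chords is strictly greater than one whenever $n_i\geq 2$. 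Hence $\ind=1$ requires exactly one $n_i=1$ and all others zero, with Reeb chord precisely $\xi_i$, recovering the enumeration above.

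This same enumeration shows that $\HD^k$ is (provincially) admissible, since every positive periodic domain $\sum n_iB_i$ has positive multiplicity along the chord $\xi_i\subset\bdy\Sigma^k$ whenever $n_i>0$. The main obstacle will be the precise embedded-index bookkeeping in the previous paragraph: making ``$\ind(n_iB_i)>1$ for $n_i\geq 2$'' rigorous requires unpacking the formula of \cite[Section~\ref*{LOT1:sec:expected-dimensions}]{LOT1}, which is tractable here because each bigon is supported in a separate genus-one handle of the split pointed matched circle, so the multi-chord combinatorics reduce to a collection of independent one-handle computations.
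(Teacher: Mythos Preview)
Your approach is correct and aligns with the paper's: both compute $\CFDa$ from the split diagram of Figure~\ref{fig:GenusTwoBorderedDiagram} by reducing the differential to the genus-one summands. The paper phrases the reduction more concisely---rather than your index-additivity bookkeeping, it notes that domains of index-one holomorphic curves are connected (a standard fact: a rigid curve has a single nontrivial component), and since the regions $B_i$ are pairwise separated by the basepoint region, any contributing domain lies in a single genus-one piece, whereupon the computation of $\CFDa(\HD_0)$ from Section~\ref{sec:surgery} finishes the job.
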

\begin{proof}
  This is a simple computation from the definitions. Note that the
  domains of holomorphic curves contributing to the differential on
  $\CFDa(\HB^k)$ must be connected. It follows that the curves
  appearing here are simply copies of the curves occurring in the
  differential on $\CFDa(\HB^1)$. These, in turn, were already studied
  in Section~\ref{sec:surgery}.
\end{proof}

\section{The \DD\ identity}\label{sec:DD-id}
Let $\Id$ denote the identity arced cobordism of $F(\PMC)$.  As a
warm-up to computing the bimodules associated to arc-slides we compute
the bimodule $\CFDDa(\Id)$. The standard bordered Heegaard diagram
$\HD(\Id)$ for the identity cobordism (for a particular choice of
$\PMC$) is illustrated in
Figure~\ref{fig:Genus2Identity}. Inspecting the diagram, one has two
immediate observations:
\begin{enumerate}
\item Recall that indecomposable idempotents of $\Alg(\PMC)$
  correspond to subsets of the matched pairs in $\PMC$. There is an
  obvious bijection between matched pairs in $\PMC$ and matched pairs
  in $-\PMC$. With respect to this bijection, the generators of
  $\CFDDa(\Id)$ correspond one-to-one with pairs of indecomposable
  idempotents $I(\SetS)\otimes I(\SetT)\in
  \Alg(\PMC)\otimes\Alg(-\PMC)$ with $\SetS\cap \SetT=\emptyset$. We
  call such pairs \emph{complementary idempotents}.  (The set of
  complementary idempotents is also in bijection with the set of
  idempotents of $\Alg(\PMC)$, of course.)

  Given a pair of complementary idempotents $I\otimes
  I'$ let $\x_{I,I'}$ denote the corresponding generator of $\CFDDa(\Id)$.
\item\label{item:diagonal} Any domain in $\HD(\Id)$ has the same multiplicities at the two
  boundaries of $\HD(\Id)$.  Any basic element of
  $\Alg(\PMC)$ has an associated \emph{support} in $H_1(Z\setminus
  \{z\},\CircPts)$; let $\support{\xi}$ denote the support of
  $\xi$. It follows that if $(\xi\otimes \xi')\otimes \x_{J,J'}$ occurs in
  $\bdy(\x_{I,I'})$ then $\support{\xi}=\support{\xi'}$ (in the obvious
  sense).
\end{enumerate}
\begin{figure}
\begin{center}
\input{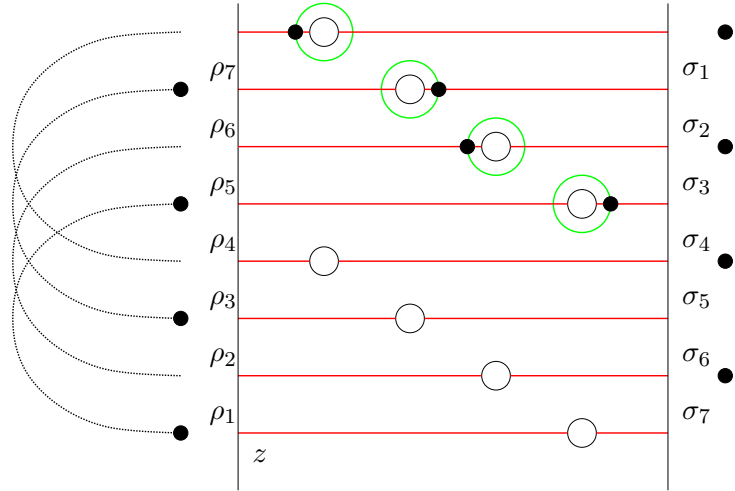}
\end{center}
\caption {{\bf Heegaard diagram for the identity map.}
\label{fig:Genus2Identity}
This is a Heegaard diagram for the identity cobordism of the genus two surface with antipodal 
matching, as indicated by the arcs to the left of the diagram. To the left and the right of the
diagram, we have also
indicated a pair of complementary idempotents, along with its unique extension into the
diagram as a generator for the complex. This figure is~\cite[Figure~\ref*{HFa:fig:Genus2Identity}]{LOT4}.}
\end{figure}

Formalizing the above, let the \emph{diagonal subalgebra} of
$\Alg(\PMC)\otimes \Alg(-\PMC)$ denote the subalgebra with basis
\[
\{(I\cdot \xi \cdot J)\otimes (I'\cdot \xi'\cdot J') \mid
\support{\xi}=\support{\xi'},\ (I,I')\text{ complementary},\ (J,J')\text{ complementary}\}.
\]

\begin{proposition}\label{prop:gr-on-diag}
  The diagonal subalgebra has a $\ZZ$-grading $\gr$ with the following
  properties:
  \begin{enumerate}
  \item\label{item:diag-dg} The grading $\gr$ respects the differential algebra structure,
    i.e., for homogeneous elements $a$ and $b$,
    $\gr(ab)=\gr(a)+\gr(b)$ and $\gr(d(a))=\gr(a)-1$.
  \item\label{item:diag-resp-DD} The differential on $\CFDDa(\Id)$ is homogeneous of degree
    $-1$ with respect to $\gr$.
  \item The standard basis elements for the diagonal subalgebra are
    homogeneous with respect to $\gr$.
  \item If $a\in \Alg$ is homogeneous then $\gr(a)\leq 0$.
  \item If $\gr(a)=0$ then $a$ is an idempotent.
  \item If $\gr(a)=-1$ then $a$ is a linear combination of
    \emph{chords}, i.e., elements of the form $a(\rho)\otimes
    a'(\rho)$ where $\rho$ is a single chord in $\PMC$. (Here,
    $a'(\rho)$ denotes the element of $\Alg(-\PMC)$ associated to the chord
    $\rho$.)
  \end{enumerate}
\end{proposition}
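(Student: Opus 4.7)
The plan is to build $\gr$ from the refined gradings already available on $\Alg(\PMC)$ and $\Alg(-\PMC)$, and then argue that the non-commutative parts of those gradings cancel exactly on the diagonal subalgebra. Recall that $\Alg(\PMC)$ is graded by a non-commutative group $G'(\PMC)$ fitting into a central extension
\[
1 \to \ZZ\langle\lambda\rangle \to G'(\PMC) \to H_1(Z\setminus z, \CircPts) \to 1,
\]
where the projection to $H_1$ records the support $\support{\xi}$ of a homogeneous element $\xi$, and the $\lambda$-exponent records the Maslov-like component. For a basic element $a = (\xi\otimes\xi')$ of the diagonal subalgebra I would define $\gr(a)$ to be the sum of the $\lambda$-exponents of $\gr'(\xi)$ and $\gr'(\xi')$. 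The support-matching condition $\support{\xi}=\support{\xi'}$ is precisely what ensures that the images in $H_1$ cancel (under the natural identification coming from orientation reversal), so that ambiguities in lifting $H_1$-values back to $G'$ contribute only to $\lambda$-exponents and the sum indeed depends only on $a$.

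Property~(\ref{item:diag-dg}) then follows at once from the fact that $\gr'$ is multiplicative on each factor, is central in $G'(\PMC)$, and that the algebra differential $d$ decreases the $\lambda$-exponent by $1$; so $\gr(ab)=\gr(a)+\gr(b)$ and $\gr(d(a))=\gr(a)-1$ on the diagonal subalgebra. Property~(\ref{item:diag-resp-DD}) is immediate once one observes (as in Observation~(\ref{item:diagonal})) that every term in $\bdy(\x_{I,I'})$ lies in the diagonal subalgebra and uses the fact that $\CFDDa(\Id)$ is graded with the differential of degree $-1$ in the natural $G'(\PMC)\times G'(-\PMC)$-set grading coming from the general theory. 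Homogeneity of the standard basis elements is automatic from the definition.

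For the final three properties I would translate $\lambda$-exponents into a concrete combinatorial count. For a basic element $\xi = (S,T,\phi)\in\Alg(\PMC)$ the $\lambda$-exponent of $\gr'(\xi)$ is the Maslov component $\iota(\xi) = \inv(\phi) - m(S,[\support{\xi}])$ (or the analogous explicit formula from Lecture~\ref{lec:1}), and similarly on the $-\PMC$ side with the appropriate sign flip coming from orientation reversal. For $\xi\otimes\xi'$ in the diagonal subalgebra, the matching of supports lets me combine these two formulas into a single non-positive quantity, proving $\gr(a)\le 0$. Equality $\gr(a)=0$ forces $\inv(\phi)=\inv(\phi')=0$ and the support to be trivial, so both $\xi$ and $\xi'$ are idempotents, proving~(6). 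For $\gr(a)=-1$, a case analysis on how the single unit of negative grading is distributed will show that one side has a single crossing contributing $-1/2$ of index and the other a single matching contribution, and the support-matching condition forces both sides to be supported on the \emph{same} single chord $\rho$, giving $a(\rho)\otimes a'(\rho)$.

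The main obstacle will be the bookkeeping in the last paragraph: getting the orientation-reversal signs right so that the $H_1$-components genuinely cancel, and then in the $\gr=-1$ case ruling out the spurious possibility of longer chords on one side paired with several shorter chords on the other whose supports happen to sum to the same class in $H_1$. The key to excluding such mixed configurations is that the support-matching is required at the level of chains in $H_1(Z\setminus z,\CircPts)$, not merely homologically, combined with the explicit formula for $\iota$ which forbids the needed arithmetic. Once these two combinatorial points are pinned down, all six properties follow uniformly from the single definition of $\gr$.
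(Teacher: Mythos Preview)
Your plan is sound and is essentially a hands-on version of the second approach the paper sketches: there one observes that to any type~\DD\ bimodule one can attach a \dg\ algebra---the \emph{coefficient algebra}---carrying a $\ZZ$-grading for which properties~(\ref{item:diag-dg}) and~(\ref{item:diag-resp-DD}) hold automatically, and that for $\CFDDa(\Id)$ this coefficient algebra is exactly the diagonal subalgebra; what you are doing by summing $\lambda$-exponents is extracting that grading concretely from the $G'(\PMC)\times G'(-\PMC)$ machinery. The paper's \emph{first} approach is different and more elementary: one proves directly that every basic element of the diagonal subalgebra factors as a product of chords $a(\rho)\otimes a'(\rho)$ and that the number of factors is independent of the factorization; setting $\gr=-(\text{length})$ then makes all six properties immediate, with no appeal to the non-commutative grading group. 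Your route has the advantage of tying into the general grading theory, but you should sharpen one point: the reason the sum of $\lambda$-exponents is \emph{multiplicative} on the diagonal subalgebra is not an ``ambiguity in lifting'' issue but rather that the $2$-cocycle $L(\alpha,\beta)=m(\beta,\partial\alpha)$ defining $G'(\PMC)$ satisfies $L(\alpha,\beta)+L(\beta,\alpha)=0$, so that passing to $\Alg(-\PMC)\cong\Alg(\PMC)^{\op}$ swaps the arguments of $L$ and the correction terms on the two tensor factors cancel. Finally, in your $\gr=-1$ case analysis, the complementary-idempotent condition on both the source and target idempotent pairs is what excludes the mixed configurations you worry about---be sure to invoke it, not just the support-matching.
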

\begin{proof}[Sketch of proof.]
  There are at least two ways to go about this proof. One is to show
  that any element of the diagonal algebra can be factored as a
  product of chords, and the length of the factorization is
  unique. (This is the approach taken in~\cite[Section~\ref*{HFa:sec:DDforIdentity}]{LOT4}.)
  Another approach is to observe that there is a \dg algebra with
  properties~(\ref{item:diag-dg}) and~(\ref{item:diag-resp-DD})
  associated to any type \DD\ bimodule
  (or type $D$ module);
  we call this the
  \emph{coefficient
    algebra}~\cite[Sections~\ref*{HFa:sec:coeff-algebra} and~\ref*{HFa:sec:coeff-bimod}]{LOT4}. In
  the case of $\CFDDa(\Id)$, the coefficient algebra is exactly the
  diagonal subalgebra. Verifying the remaining properties above is
  then a fairly simple computation. (This is the approach taken for
  arc-slide bimodules in~\cite[Section~\ref*{HFa:sec:Arc-Slides}]{LOT4}.)
\end{proof}

\begin{corollary}\label{cor:DD-diff-only-chords}
  If $(a\otimes b)\otimes \x_{J,J'}$ occurs in $\bdy(\x_{I,I'})$ then $a\otimes
  b$ is a linear combination of chords $a(\rho_i)\otimes a(\rho'_i)$.
\end{corollary}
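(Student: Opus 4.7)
The plan is to read the corollary off of Proposition~\ref{prop:gr-on-diag} once two reductions are in hand: (i) that any coefficient appearing in $\bdy(\x_{I,I'})$ actually lives in the diagonal subalgebra, and (ii) that the grading $\gr$ of this coefficient equals $-1$.

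For~(i), I would invoke observation~(\ref{item:diagonal}) from the informal discussion preceding Proposition~\ref{prop:gr-on-diag}: the Heegaard diagram $\HD(\Id)$ (Figure~\ref{fig:Genus2Identity}) has the property that every domain has equal local multiplicities along its two boundary components, so the two Reeb sequences read off at $\PMC$ and $-\PMC$ have equal support in $H_1(Z\setminus\{z\},\CircPts)$. Consequently each product $a(-\rho_1)\cdots a(-\rho_n)\otimes a'(-\rho_1')\cdots a'(-\rho_n')$ arising from a holomorphic curve is a sum of basis elements of the diagonal subalgebra, and in particular $a\otimes b\in \Alg(\PMC)\otimes \Alg(-\PMC)$ lies in this subalgebra.

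For~(ii), I would first extend $\gr$ from the diagonal subalgebra to the bimodule $\CFDDa(\Id)$ by declaring every generator $\x_{I,I'}$ to have grading~$0$. The content to verify is that this extension is consistent, which amounts exactly to part~(\ref{item:diag-resp-DD}) of Proposition~\ref{prop:gr-on-diag}: the differential is homogeneous of degree $-1$. Equivalently, and the way I would actually set it up, I would take the coefficient algebra (in the sense alluded to in the proof sketch of Proposition~\ref{prop:gr-on-diag}) associated to the bimodule $\CFDDa(\Id)$; by construction the generators of this coefficient algebra sit in a single grading and the differential has degree $-1$. In either formulation, if $(a\otimes b)\otimes \x_{J,J'}$ occurs in $\bdy(\x_{I,I'})$ then $\gr(a\otimes b) = \gr(\x_{I,I'})-\gr(\x_{J,J'})-1 = -1$.

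With~(i) and~(ii) established, the corollary is immediate: by the last item of Proposition~\ref{prop:gr-on-diag}, any element of the diagonal subalgebra in grading $-1$ is a linear combination of chords $a(\rho)\otimes a'(\rho)$, so $a\otimes b$ has the claimed form. The main obstacle in this outline is the consistency check in~(ii)—i.e., that the generators really can all be assigned the same grading without creating a contradiction when there are competing differential paths from $\x_{I,I'}$ to $\x_{J,J'}$. This is not formally hard once one has Proposition~\ref{prop:gr-on-diag} in hand, but it requires identifying the right extension of $\gr$ to the bimodule; the cleanest way is through the coefficient algebra formalism rather than a case-by-case verification.
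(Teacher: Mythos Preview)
Your proposal is correct and follows the same line the paper intends: the corollary is stated immediately after Proposition~\ref{prop:gr-on-diag} with no separate proof, precisely because it drops out of items~(\ref{item:diag-resp-DD}) and the last item of that proposition together with observation~(\ref{item:diagonal}). One small comment: you describe the consistency check in~(ii) as ``the main obstacle,'' but this is exactly what item~(\ref{item:diag-resp-DD}) of Proposition~\ref{prop:gr-on-diag} already asserts, so at the level of deducing the corollary there is nothing left to verify---you may simply cite the proposition and move on.
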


Let $\Chord(\PMC)$ denote the set of all chords for $\PMC$.

\begin{theorem}
  \label{thm:DDid} As a bimodule, $\CFDDa(\Id)$ is given by
  \[
  \CFDDa(\Id)=\bigoplus_{(I\otimes I')\text{ complementary}}(\Alg(\PMC)\cdot
  I)\otimes_\Field (\Alg(-\PMC)\cdot I') \otimes \x_{I,I'}.
  \]
  The differential of $\x_{I,I'}$ is given by
  \[
  \bdy(\x_{I,I'})=\sum_{(J,J')}\sum_{\rho\in\Chord(\PMC)}\bigl[(I\cdot
  a(\rho)\cdot J)\otimes (I'\cdot a'(\rho)\cdot J')\bigr]\otimes \x_{J,J'}.
  \]
  In other word, every term permitted by
  Corollary~\ref{cor:DD-diff-only-chords} to occur in $\bdy(\x_{I,I'})$
  does occur.
\end{theorem}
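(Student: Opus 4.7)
The plan is to combine the two observations already noted in the text (bijection of generators with complementary idempotents, and restriction of differentials to chord terms) with an explicit count of holomorphic curves in the standard diagram $\HD(\Id)$ for the identity cobordism. The description of the generating set is essentially free: in $\HD(\Id)$ the $\alpha$- and $\beta$-curves are arranged as in Figure~\ref{fig:Genus2Identity}, so the generators $\x_{I,I'}$ extend uniquely from a choice of complementary idempotent pair $(I,I')$, as illustrated in that figure. Corollary~\ref{cor:DD-diff-only-chords} already restricts each $\bdy(\x_{I,I'})$ to be a sum of terms of the form $(a(\rho)\otimes a'(\rho))\otimes \x_{J,J'}$ with $\rho\in\Chord(\PMC)$ and with $(J,J')$ determined by $(I,I')$ and $\rho$. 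So the content of the theorem is that for every such $\rho$ for which the relevant idempotent match-up is nontrivial, this term actually appears with coefficient~$1$.

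To verify the coefficient, I would first identify, for each admissible triple $(I,I',\rho)$, the unique homology class $B_\rho\in\pi_2(\x_{I,I'},\x_{J,J'})$ whose domain is the ``thin strip'' on $\HD(\Id)$ that runs from one boundary component to the other along the support of $\rho$. An explicit picture of this domain (straightforward to draw from Figure~\ref{fig:Genus2Identity}) shows it is a topological disk of Euler measure zero whose boundary traces the expected $\alpha$- and $\beta$-arcs, and whose two $\bdy\Sigma$-boundaries consist of a single Reeb chord $\rho$ on each side at the same ``$t$-height.'' A direct index calculation (in the sense of~\cite[Section~\ref*{LOT1:sec:expected-dimensions}]{LOT1}) gives $\ind(B_\rho;\rho,\rho)=1$, so these are the homology classes contributing to the differential.

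Next I would count the moduli space $\cM^{B_\rho}(\x_{I,I'},\x_{J,J'};\rho,\rho)$. For a chord $\rho$ of length~$1$, the domain $B_\rho$ is literally a rectangle in $\Sigma\times[0,1]\times\RR$, and the Riemann mapping argument of Exercise~\ref{ex:Riemann} shows the moduli space has a single element; this establishes the coefficient~$1$ for all length-$1$ chords. For a chord of length $\ell>1$, the domain is still a topologically simple strip, but now the holomorphic curve has $\ell$ Reeb-chord asymptotics at each of the two sides of $\bdy\Sigma$. Here I would use the matched-curve/deformation picture of Section~\ref{sec:prove-pairing} applied to $\HD(\Id)$ (i.e., split $\HD(\Id)$ along a circle parallel to the boundary): the resulting matched moduli space is a fiber product of elementary strip factors, each of which is a length-$1$ rectangle count, so the fiber product over the evaluation map is again a single rigid matched curve.

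The main obstacle, as always with moduli of embedded curves having Morse--Bott boundary asymptotics, is to rule out other positive domains with the same index that could contribute extra terms. For this I would invoke Proposition~\ref{prop:gr-on-diag}: any additional contribution would have to be a homogeneous element of the diagonal subalgebra of grading $-1$, and hence itself a single chord term; coupled with the support constraint in observation~(\ref{item:diagonal}) on page~\pageref{item:diagonal} and the uniqueness of $B_\rho$ in its support class, this forces the contribution to be exactly the one already counted. As a consistency check of the final formula, one can verify $\bdy^2=0$ using only the relations in $\Alg(\PMC)$ — every pair of successive applications collapses to a factor $a(\rho_1)a(\rho_2)\otimes a'(\rho_1)a'(\rho_2)$ whose two factors vanish simultaneously by the defining relations of $\Alg(\PMC)$ — and compare the tensor product $\CFDDa(\Id)\DT\CFDa(\HB^k)$ with $\CFDa(\HB^k)$ of Section~\ref{sec:handlebody} to see the stated bimodule acts as the identity via Theorem~\ref{thm:bimod-mod-hom}.
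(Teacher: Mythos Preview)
Your setup is fine, and the length-$1$ base case is essentially right (though in $\HD(\Id)$ the domain for a length-$1$ chord is a hexagon, not a rectangle: two Reeb-chord edges, two $\alpha$-arc edges, and two $\beta$-edges). The real gap is your treatment of longer chords. Splitting $\HD(\Id)$ along a circle parallel to one boundary does not decompose a length-$\ell$ chord domain into $\ell$ independent length-$1$ problems; you get two moduli spaces, one on each side, fibered over a single evaluation map, not a chain of $\ell$ elementary factors. The domain $B_\rho$ for a length-$\ell$ chord is a single $(2\ell+4)$-gon, and counting its holomorphic representatives with the prescribed Reeb asymptotics on both boundaries is a genuine computation that your sketch does not carry out. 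Your consistency check is also incorrect as stated: the products $a(\rho_1)a(\rho_2)\otimes a'(\rho_1)a'(\rho_2)$ are typically \emph{nonzero} (composable chords concatenate), so $\bdy^2=0$ does not follow from individual terms vanishing but from cancellations among different decompositions together with the differential on the algebra.

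The paper takes a different and cleaner route for the inductive step: rather than counting curves directly, it uses $\bdy^2=0$ as a \emph{tool} to force the presence of longer chord terms. Given a chord $\rho$ and an interior position $a$ occupied in $I$, write $\rho=\rho_1\cup\rho_2$ at $a$; by induction the composite $\bigl[a(\rho_2)a(\rho_1)\otimes a'(\rho)\bigr]\otimes\x_{J,J'}$ appears in $\bdy^2(\x_{I,I'})$ via two short-chord steps, and the only way to cancel it is for the differential of $\bigl[a(\rho)\otimes a'(\rho)\bigr]\otimes\x_{J,J'}$ to supply the matching term---hence the long chord must occur in $\bdy(\x_{I,I'})$. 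This reduces everything to algebra and the base case, avoiding any higher polygon count. There is one residual case the paper flags (``special length $3$ chords'' whose interior positions are both matched to the endpoints, so no such $a$ exists); your argument, as written, would also need to address these separately.
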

\begin{proof}[Sketch of proof]
  All that remains is to show that every term of the form $\bigl[(I\cdot
  a(\rho)\cdot J)\otimes (I'\cdot a'(\rho)\cdot J')\bigr]\otimes
  \x_{J,J'}$ does occur in $\bdy\x_{I,I'}$. The argument is by
  induction on the support to $\rho$. The base case is when $\rho$ has
  length $1$. In this case, the corresponding domain in $\HD(\Id)$ is
  a hexagon, so it follows from the Riemann mapping theorem that there
  is a holomorphic representative. 

  \begin{figure}
    \centering
    \includegraphics{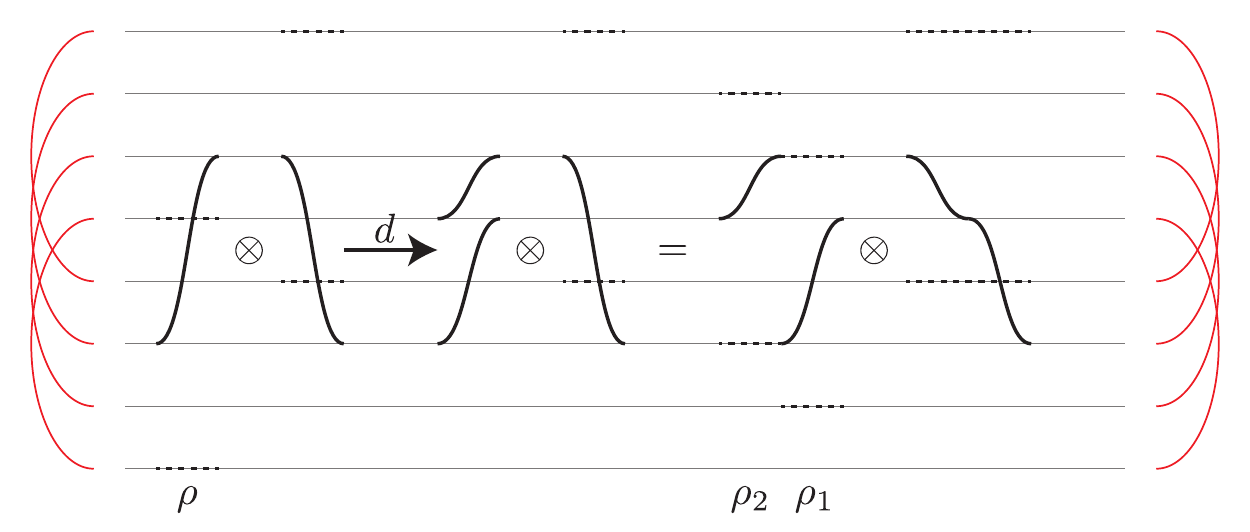}
    \caption{\textbf{Illustration of the inductive step in the proof
        of Theorem~\ref{thm:DDid}.} We want to show the term on the
      left occurs in $\bdy$ on $\CFDDa(\Id)$. The term on the far
      right occurs in $\bdy^2$, by induction on the length of the
      chords involved. The only other contribution to $\bdy^2$ which
      could cancel it is the differential of the term on the
      left. (The differential of the term on the left also has other
      terms, not shown.)}
    \label{fig:rho-exists}
  \end{figure}

  The rest of the induction argument is illustrated in
  Figure~\ref{fig:rho-exists}. In words, suppose $\rho$ has length
  bigger than $1$, and suppose there is a position $a\in\CircPts$ so
  that:
  \begin{itemize}
  \item $a$ lies in the interior of $\rho$ and
  \item the matched pair containing $a$ is in the idempotent $I$.
  \end{itemize}
  Let $\rho_1$ be the chord from the start of $\rho$ to the point $a$
  and let $\rho_2$ be the chord from $a$ to the end of $\rho$. By
  induction, $\bdy^2(\x_{I,I'})$ contains a term of the form
  $\bigl[(I\cdot a(\rho_2)a(\rho_1)J)\otimes (I'\cdot a'(\rho)\cdot
  J')\bigr]\otimes \x_{J,J'}$; this term comes from the sequence
  \begin{align*}
  \x_{I,I'}&\stackrel{\bdy}{\longrightarrow} 
  \bigl[(I\cdot a(\rho_2))\otimes (I'\cdot a'(\rho_2))\bigr]\otimes \x_{K,K'}\\
  &\stackrel{\bdy}{\longrightarrow} 
  \bigl[(I\cdot a(\rho_2)a(\rho_1))\otimes (I'\cdot
  a'(\rho_2)a'(\rho_1))\bigr]\otimes \x_{J,J'}\\
  &\qquad\qquad\qquad=
  \bigl[(I\cdot a(\rho_2)a(\rho_1))\otimes (I'\cdot a'(\rho))\bigr]\otimes \x_{J,J'}.
  \end{align*}
  The only term in $\bdy^2(\x_{I,I'})$ which could cancel this one is
  $\bigl[(I\cdot \bdy a(\rho)\cdot J)\otimes (I\cdot a'(\rho)\cdot
  J')\bigr]\otimes \x_{J,J'}$. Thus, since $\bdy^2=0$, the term
  $\bigl[(I\cdot a(\rho)\cdot J)\otimes (I\cdot a'(\rho)\cdot
  J')\bigr]\otimes \x_{J,J'}$ must occur in $\bdy(\x_{I,I'})$.

  If there is a position $a$ in the interior of $\rho$ occupied in the
  idempotent $I'$ then a similar argument, with the left and right
  sides reversed, gives the result. The only other case is that of
  length three chords in which both of the interior positions are
  matched to the endpoints. We call such chords \emph{special length
    $3$ chords} in~\cite{LOT4}. There are various ways to handle this
  case. A somewhat indirect argument is given in the proof
  of~\cite[Theorem~\ref*{HFa:thm:DDforIdentity}]{LOT4}. One can also prove the result in this
  case by a direct computation, as in the proof of~\cite[Proposition~\ref*{LOT2:prop:TypeDDTorus}]{LOT2}.
\end{proof}

\begin{remark}
  The bimodule $\CFDDa(\Id)$ exhibits a kind of duality between the
  algebras $\Alg(\PMC)$ and $\Alg(-\PMC)$, called \emph{Koszul
    duality}. See, for instance, \cite[Section~\ref*{HomPair:sec:koszul}]{LOTHomPair}.
\end{remark}

\section{Underslides}\label{sec:underslides}
To explain the bimodule $\CFDDa$ associated to an arc-slide we first
divide the arc-slides into two classes: underslides and
overslides. Specifically, with notation as in
Definition~\ref{def:arcslide}, $Z\setminus C$ has two connected
components. One of these components contains the basepoint $z$; call
that component $Z_z$. Then an arc-slide is an \emph{overslide} if
$b_1\in Z_z$, and is an \emph{underslide} if $b_1\not\in Z_z$. So, in
Figure~\ref{fig:ArcslideMatching}, the example on the left is an
overslide while the example on the right is an underslide.

It turns out that the bimodules for underslides are a little simpler,
so we will focus on this case, referring the reader to~\cite[Section~\ref*{HFa:subsec:Over-Slides}]{LOT4} for the overslide case. So, let $\psi\co \PMC\to\PMC'$ be an underslide and
$M_\psi$ the associated mapping cylinder. To describe $\CFDDa(M_\psi)$
we need two more pieces of terminology:
\begin{definition}
  There is an obvious bijection between matched pairs of $\PMC$ (i.e.,
  $1$-handles of $F(\PMC)$) and matched pairs of $\PMC'$ (i.e.,
  $1$-handles of $F(\PMC)$).  With notation as in
  Definition~\ref{def:arcslide}, a pair of indecomposable idempotents
  $I(\SetS)\otimes I(\SetS')\in\Alg(\PMC)\otimes \Alg(-\PMC')$ are called
  \emph{near-complementary} if either:
  \begin{itemize}
  \item $\SetS$ is complementary to $\SetS'$ or
  \item $\SetS\cap \SetT$ consists of the matched pair of the feet of
    $C$, while $\SetS\cup\SetT$ contains all the matched pairs except
    for the pair of feet of $B$.
  \end{itemize}
\end{definition}

\begin{figure}
    \begin{center}
      \input{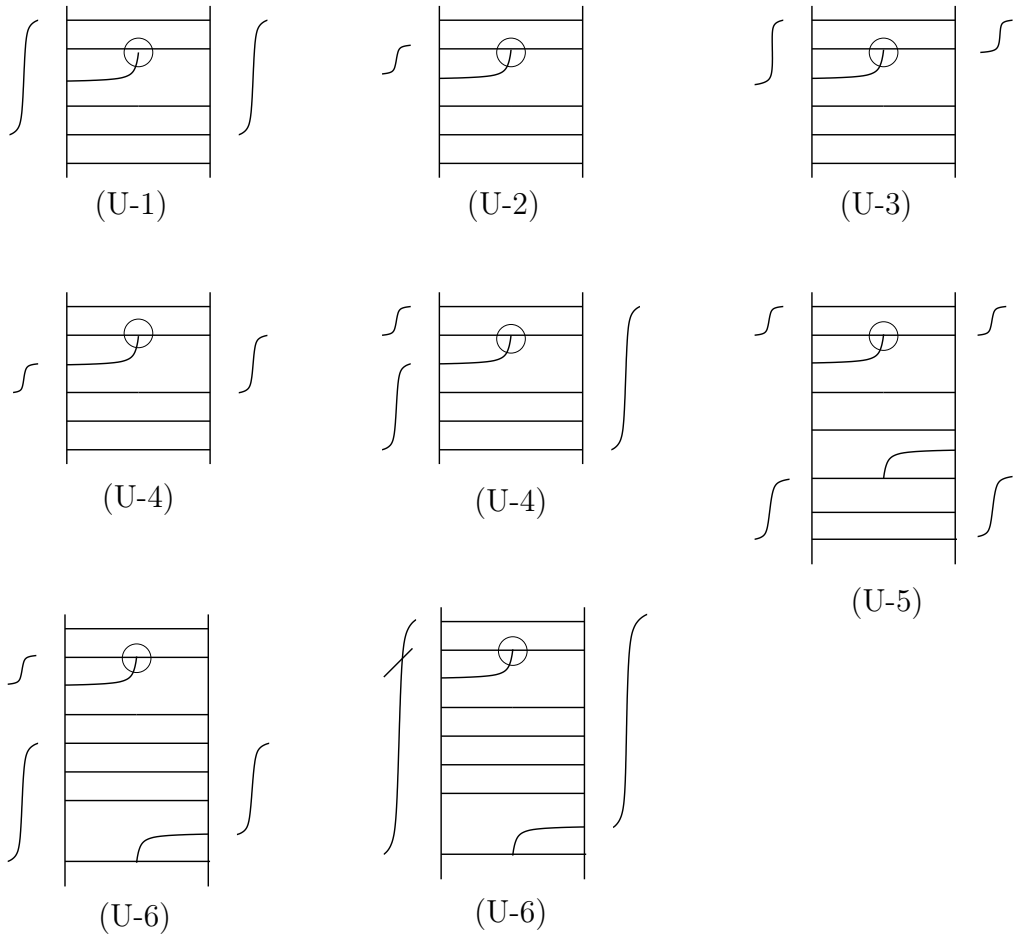}
    \end{center}
    \caption {{\bf Near-chords for under-slides.}
      \label{fig:UnderNearChords} 
  }
\end{figure}

\begin{definition}
  A \emph{near-chord} for the underslide $\psi$ is an algebra element
  of the form $a(\xi)\otimes a'(\xi')$ where $\xi$ (respectively
  $\xi'$) is a collection of chords in $\PMC$ (respectively $-\PMC'$)
  of one of the forms (U-1)--(U-6) shown in
  Figure~\ref{fig:UnderNearChords}.

  Let $\NearChord(\psi)$ denote the set of near-chords for $\psi$. 
\end{definition}
(See~\cite[Definition~\ref*{HFa:def:UnderNearChord}]{LOT4} for a more detailed description of
the types (U-1)--(U-6) of near-chords.)

\begin{theorem}\label{thm:DD-underslide}
  The bimodule $\CFDDa(M_\psi)$ has one generator $\x_{I,I'}$ for each
  near\hyp complementary pair of idempotents $I\otimes I'$; and
  $\x_{I,I'}=(I\otimes I')\cdot \x_{I,I'}$. (In other words, as a module
  $\CFDDa(M_\psi)\cong\bigoplus_{I\otimes I'\text{ near
      complementary}}\bigl(\Alg(\PMC)\cdot I\bigr) \otimes
  \bigl(\Alg(-\PMC')\cdot I'\bigr).$) The differential on
  $\CFDDa(M_\psi)$ is given by 
  \[
  \bdy(\x_{I,I'})=\sum_{\substack{(J,J')\\\text{
      near-complementary}}}\sum_{(\xi,\xi')\in\NearChord(\psi)} \bigl[(I\cdot
  a(\xi)\cdot J)\otimes (I'\cdot a'(\xi')\cdot J')\bigr]\otimes\x_{J,J'}.
  \]
\end{theorem}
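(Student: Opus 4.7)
The plan is to mirror the approach used for $\CFDDa(\Id)$ in Section~\ref{sec:DD-id}, with additional bookkeeping to accommodate the more complicated local picture near the sliding arc.

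First, I would analyze the standard arced bordered Heegaard diagram $\HD(\psi)$ for $M_\psi$, such as the one in Figure~\ref{fig:HD-for-arcslide}. A direct combinatorial count of intersection points shows that generators of $\CFDDa(M_\psi)$ correspond precisely to near-complementary pairs of idempotents: away from the local picture involving the pairs $B$ and $C$ the situation is identical to the identity case, while near $B\cup C$ the extra intersection between the slid $\alpha$-arc and its partner accounts for the second clause in the definition of near-complementary. This gives the underlying module structure.

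Next, I would establish the analogue of Corollary~\ref{cor:DD-diff-only-chords}: the differential can only involve near-chord algebra elements. This is the arc-slide version of Proposition~\ref{prop:gr-on-diag}, and the cleanest route is to introduce the coefficient algebra of $\CFDDa(M_\psi)$ as in~\cite[Sections~\ref*{HFa:sec:coeff-algebra} and~\ref*{HFa:sec:coeff-bimod}]{LOT4}, verify that its elements of grading $-1$ are precisely linear combinations of near-chords of types (U-1)--(U-6), and check that the differential is homogeneous of degree $-1$ with respect to this grading. Equivalently, one shows that any nonzero domain in $\HD(\psi)$ with Maslov index $1$ and no interior $\beta$-multiplicities has boundary a near-chord, using the constraint that the multiplicities on the two sides of $\HD(\psi)$ differ only via the local slide pattern.

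Then I would establish existence of every near-chord term in the differential, by induction on the total length of the chord collection, exactly as in the proof sketch for Theorem~\ref{thm:DDid}. For the base cases---near-chords of small support for each of the six types---holomorphic representatives of the corresponding domains (rectangles, hexagons, and one or two slightly more complicated polygons near $B\cup C$) can be produced by the Riemann mapping theorem or by a short explicit degeneration. For the inductive step, given a near-chord $(\xi,\xi')$ whose support contains an interior matched position lying in the appropriate idempotent, one factors it as a product of two shorter near-complementary algebra elements; by induction the resulting length-two composition appears in $\bdy^2(\x_{I,I'})$, and $\bdy^2=0$ together with the coefficient algebra constraint forces the desired term to appear in $\bdy(\x_{I,I'})$.

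The main obstacle will be the base cases that do not admit an inductive reduction, analogous to the special length-$3$ chords in the identity case. For underslides these are the shortest examples of types (U-4), (U-5), and (U-6), where the interior positions are all occupied by the ``wrong'' idempotent and one genuinely has to exhibit (or count) a holomorphic representative in $\HD(\psi)$. I would handle these either by an explicit model calculation in a local Heegaard diagram for the arc-slide near $B\cup C$---as is done for the torus case in~\cite[Proposition~\ref*{LOT2:prop:TypeDDTorus}]{LOT2}---or by an indirect argument using the pairing theorem: tensoring $\CFDDa(M_\psi)$ with the type $D$ modules of various small bordered handlebodies must recover known invariants of closed $3$-manifolds, and this constrains the remaining coefficients. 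Once all base cases are verified, the inductive machinery above completes the identification of $\bdy$ with the near-chord formula.
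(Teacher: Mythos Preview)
Your proposal is correct and follows essentially the same route as the paper's sketch: identify generators with near-complementary idempotents from the Heegaard diagram, use the coefficient/near-diagonal algebra to constrain the differential to near-chords, then prove every near-chord occurs by induction on support using $\bdy^2=0$, with short chords as base cases handled by the Riemann mapping theorem.

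The one point where the paper is sharper than your outline concerns the base cases. You anticipate that the shortest examples of types (U-4), (U-5), and (U-6) may fail to reduce inductively and require direct holomorphic computation or a pairing-theorem argument. In fact the paper asserts that for underslides the $\bdy^2=0$ induction is strong enough that \emph{only} near-chords of type (U-2) and minimal-length near-chords of types (U-1) and (U-4) need the Riemann mapping theorem; all remaining near-chords, including those of types (U-3), (U-5), and (U-6), are forced by $\bdy^2=0$ alone. So your proposed extra work for (U-5) and (U-6) is unnecessary, though of course not wrong---it simply does more than is needed. (This contrasts with the overslide case, where the induction genuinely does not pin everything down and additional combinatorial choices intervene.)
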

\begin{proof}[Sketch of proof.]
  The proof is similar to, though more involved than, the proof of
  Theorem~\ref{thm:DDid}. There is an analogue of the diagonal
  algebra, called the \emph{near-diagonal algebra}, admitting a
  $\ZZ$-grading satisfying analogous properties to
  Proposition~\ref{prop:gr-on-diag}. In particular, the near-chords
  are exactly the basic elements in grading $-1$. So, it only remains
  to show that every near-chord occurs in the differential. This
  follows from an inductive argument similar to the proof of
  Theorem~\ref{thm:DDid}. For short near-chords---near chords of type
  (U-2) and minimal-length near-chords of types (U-1) and (U-4)---it
  follows from the Riemann mapping theorem that the chords occur in
  the differential. The existence of other near-chords follows by a
  (somewhat complicated) induction on the support, using only the fact
  that $\bdy^2=0$.
\end{proof}

We do not discuss the case of overslides, which are more complicated
than underslides.
At the heart of the complication is the fact that, for overslides,
the coefficient algebra contains non-idempotent elements in grading
$0$ (whereas in the underslide case, all non-idempotent elements have
negative grading). While in the underslide
case, every element in grading $-1$ appears as a coefficient in the
differential, in the overslide case which grading $-1$ elements appear
depends on a choice. Nonetheless, the index
zero elements can be used to induce maps between bimodules associated
to the various choices, and a somewhat weaker analogue of
Theorem~\ref{thm:DD-underslide} holds: the
overslide bimodule can be computed explicitly after some
combinatorial choices are made, and the homotopy type of the answer is
independent of those combinatorial choices. The interested reader is
referred to~\cite[Proposition~\ref*{HFa:prop:CalculateOver-Slide}]{LOTHomPair}.

\section{Exercises}

\begin{exercise}
  Verify the type \DD\ bimodule for the identity cobordism of the
  torus given in Exercise~\ref{ex:DD-id-torus} agrees with the answer
  given by Theorem~\ref{thm:DDid}.
\end{exercise}

\begin{exercise}
  Verify that the bimodules from Exercise~\ref{ex:torus-twists} agree
  with the bimodules given by Theorem~\ref{thm:DD-underslide}. (Note
  that one can view each of these Dehn twists as an underslide.)
\end{exercise}

\begin{exercise}
  Extend the algorithm above to compute $\CFDa(Y)$ for any bordered
  $3$-manifold $Y$.
\end{exercise}

\bibliographystyle{hamsalpha}
\bibliography{heegaardfloer}

\newcommand{\etalchar}[1]{$^{#1}$}
\providecommand{\noopsort}[1]{}
\providecommand{\bysame}{\leavevmode\hbox to3em{\hrulefill}\thinspace}
\providecommand{\href}[2]{#2}
\providecommand{\eprint}{\begingroup \urlstyle{rm}\Url}
\begin{thebibliography}{MO{\relax Sz}T07}

\bibitem[Aur10]{AurouxBordered}
Denis Auroux, \emph{Fukaya categories of symmetric products and bordered
  {H}eegaard-{F}loer homology}, J. G\"okova Geom. Topol. \textbf{4} (2010),
  1--54, \eprint{arXiv:1001.4323}.

\bibitem[BEH{\etalchar{+}}03]{BEHWZ03:CompactnessInSFT}
Frédéric Bourgeois, Yakov Eliashberg, Helmut Hofer, Kris Wysocki, and Eduard
  Zehnder, \emph{Compactness results in symplectic field theory}, Geom. Topol.
  \textbf{7} (2003), 799--888, \eprint{arXiv:math.SG/0308183}.

\bibitem[Ben10]{Bene08:ChordDiagrams}
Alex~James Bene, \emph{A chord diagrammatic presentation of the mapping class
  group of a once bordered surface}, Geom. Dedicata \textbf{144} (2010),
  171--190, \eprint{arXiv:0802.2747}.

\bibitem[BG12]{BaldwinGillam}
John~A. Baldwin and William~D. Gillam, \emph{Computations of {H}eegaard-{F}loer
  knot homology}, J. Knot Theory Ramifications \textbf{21} (2012), no.~8,
  1250075, \eprint{arXiv:math/0610167}.

\bibitem[BL94]{BernsteinLunts94:EquivariantSheaves}
Joseph Bernstein and Valery Lunts, \emph{Equivariant sheaves and functors},
  Lecture Notes in Mathematics, vol. 1578, Springer-Verlag, Berlin, 1994.

\bibitem[CGH12a]{ColinGhigginiHonda11:HF-ECH-1}
Vincent Colin, Paolo Ghiggini, and Ko~Honda, \emph{\noopsort{A}{T}he
  equivalence of {H}eegaard {F}loer homology and embedded contact homology via
  open book decompositions {I}}, 2012, \eprint{arXiv:1208.1074}.

\bibitem[CGH12b]{ColinGhigginiHonda11:HF-ECH-2}
\bysame, \emph{\noopsort{B}{T}he equivalence of {H}eegaard {F}loer homology and
  embedded contact homology via open book decompositions {II}}, 2012,
  \eprint{arXiv:1208.1077}.

\bibitem[CGH12c]{ColinGhigginiHonda11:HF-ECH-3}
\bysame, \emph{\noopsort{C}{T}he equivalence of {H}eegaard {F}loer homology and
  embedded contact homology {III}: from hat to plus}, 2012,
  \eprint{arXiv:1208.1526}.

\bibitem[Ghi08]{Ghiggini08:FiberedGenusOne}
Paolo Ghiggini, \emph{Knot {F}loer homology detects genus-one fibred knots},
  Amer. J. Math. \textbf{130} (2008), no.~5, 1151--1169.

\bibitem[GMM05]{OneOne}
Hiroshi Goda, Hiroshi Matsuda, and Takayuki Morifuji, \emph{Knot {F}loer
  homology of {$(1,1)$}-knots}, Geom. Dedicata \textbf{112} (2005), 197--214.

\bibitem[Hed05]{Hedden}
Matthew Hedden, \emph{On knot {F}loer homology and cabling}, Algebr. Geom.
  Topol. \textbf{5} (2005), 1197--1222, \eprint{arXiv:math.GT/0406402}.

\bibitem[Hom12]{Hom:bord-tau}
Jennifer Hom, \emph{Bordered {H}eegaard {F}loer homology and the tau-invariant
  of cable knots}, 2012, \eprint{arXiv:1202.1463}.

\bibitem[Hum79]{Humphries}
Stephen~P. Humphries, \emph{Generators for the mapping class group}, Topology
  of low-dimensional manifolds ({P}roc. {S}econd {S}ussex {C}onf., {C}helwood
  {G}ate, 1977), Lecture Notes in Math., vol. 722, Springer, Berlin, 1979,
  pp.~44--47.

\bibitem[JM08]{JabukaMark08:product}
Stanislav Jabuka and Thomas~E. Mark, \emph{Product formulae for
  {O}zsv\'ath-{S}zab\'o 4-manifold invariants}, Geom. Topol. \textbf{12}
  (2008), no.~3, 1557--1651.

\bibitem[JT12]{JT:Naturality}
Andr{\'a}s Juh{\'a}sz and Dylan~P. Thurston, \emph{Naturality and mapping class
  groups in {H}eegaard {F}loer homology}, 2012, \eprint{arXiv:1210.4996}.

\bibitem[Juh06]{Juhasz06:Sutured}
Andr{\'a}s Juh{\'a}sz, \emph{Holomorphic discs and sutured manifolds}, Algebr.
  Geom. Topol. \textbf{6} (2006), 1429--1457, \eprint{arXiv:math/0601443}.

\bibitem[Kho01]{Khovanov01:Nilcoxeter}
Mikhail Khovanov, \emph{Nilcoxeter algebras categorify the {W}eyl algebra},
  Comm. Algebra \textbf{29} (2001), no.~11, 5033--5052,
  \eprint{arXiv:math.RT/9906166}.

\bibitem[Kho10]{Khovanov10:gl12}
\bysame, \emph{How to categorify one-half of quantum $\gl(1\mid 2)$}, 2010,
  \eprint{arXiv:1007.3517}.

\bibitem[KLT10a]{KutluhanLeeTaubes:HFHMI}
{\c{C}}a\u{g}atay Kutluhan, Yi-Jen Lee, and Clifford~Henry Taubes,
  \emph{{HF=HM} {I}: {H}eegaard {F}loer homology and {S}eiberg--{W}itten
  {F}loer homology}, 2010, \eprint{arXiv:1007.1979}.

\bibitem[KLT10b]{KutluhanLeeTaubes:HFHMII}
\bysame, \emph{{HF=HM} {II}: {R}eeb orbits and holomorphic curves for the
  ech/{H}eegaard-{F}loer correspondence}, 2010, \eprint{1008.1595}.

\bibitem[KLT10c]{KutluhanLeeTaubes:HFHMIII}
\bysame, \emph{{HF=HM} {III}: {H}olomorphic curves and the differential for the
  ech/{H}eegaard {F}loer correspondence}, 2010, \eprint{arXiv:1010.3456}.

\bibitem[KLT11]{KutluhanLeeTaubes:HFHMIV}
\bysame, \emph{{HF=HM} {IV}: The {S}eiberg-{W}itten {F}loer homology and ech
  correspondence}, 2011, \eprint{arXiv:1107.2297}.

\bibitem[KLT12]{KutluhanLeeTaubes:HFHMV}
\bysame, \emph{{HF=HM} {V}: {S}eiberg-{W}itten {F}loer homology and handle
  additions}, 2012, \eprint{arXiv:1204.0115}.

\bibitem[Lev12a]{Levine12:doubling}
Adam~Simon Levine, \emph{Knot doubling operators and bordered {H}eegaard
  {F}loer homology}, J. Topol. \textbf{5} (2012), no.~3, 651--712,
  \eprint{arXiv:1008.3349}.

\bibitem[Lev12b]{Levine12:slicingBing}
\bysame, \emph{Slicing mixed {B}ing-{W}hitehead doubles}, J. Topol. \textbf{5}
  (2012), no.~3, 713--726, \eprint{arXiv:0912.5222}.

\bibitem[Lip06]{Lipshitz06:CylindricalHF}
Robert Lipshitz, \emph{A cylindrical reformulation of {H}eegaard {F}loer
  homology}, Geom. Topol. \textbf{10} (2006), 955--1097,
  \eprint{arXiv:math.SG/0502404}.

\bibitem[LOTa]{LOT:DCov2}
Robert Lipshitz, Peter~S. Ozsv{\'a}th, and Dylan~P. Thurston, \emph{Bordered
  {F}loer homology and the spectral sequence of a branched double cover {II}:
  the spectral sequences agree}, in preparation.

\bibitem[LOTb]{LOTCobordisms}
\bysame, \emph{Computing cobordism maps with bordered {F}loer homology}, in
  preparation.

\bibitem[LOT08]{LOT1}
\bysame, \emph{Bordered {H}eegaard {F}loer homology: {I}nvariance and pairing},
  2008, \eprint{arXiv:0810.0687v4}.

\bibitem[LOT09]{LOT0}
\bysame, \emph{Slicing planar grid diagrams: a gentle introduction to bordered
  {H}eegaard {F}loer homology}, Proceedings of {G}\"okova {G}eometry-{T}opology
  {C}onference 2008, G\"okova Geometry/Topology Conference (GGT), G\"okova,
  2009, pp.~91--119, \eprint{arXiv:0810.0695}.

\bibitem[LOT10a]{LOT2}
\bysame, \emph{Bimodules in bordered {H}eegaard {F}loer homology}, 2010,
  \eprint{arXiv:1003.0598v3}.

\bibitem[LOT10b]{LOT:DCov1}
\bysame, \emph{Bordered {F}loer homology and the spectral sequence of a
  branched double cover {I}}, 2010, \eprint{arXiv:1011.0499}.

\bibitem[LOT10c]{LOT4}
\bysame, \emph{Computing {$\HFa$} by factoring mapping classes}, 2010,
  \eprint{arXiv:1010.2550v3}.

\bibitem[LOT11a]{LOTHomPair}
Robert Lipshitz, Peter~S. Ozsv\'ath, and Dylan~P. Thurston, \emph{{H}eegaard
  {F}loer homology as morphism spaces}, Quantum Topology \textbf{2} (2011),
  no.~4, 384--449, \eprint{arXiv:1005.1248}.

\bibitem[LOT11b]{LOT11:tour}
Robert Lipshitz, Peter~S. Ozsv{\'a}th, and Dylan~P. Thurston, \emph{Tour of
  bordered {F}loer theory}, Proc. Natl. Acad. Sci. USA \textbf{108} (2011),
  no.~20, 8085--8092, \eprint{arXiv:1107.5621}.

\bibitem[LOT13]{LOT13:faith}
\bysame, \emph{A faithful linear-categorical action of the mapping class group
  of a surface with boundary}, J. Eur. Math. Soc. (JEMS) \textbf{15} (2013),
  no.~4, 1279--1307, \eprint{arXiv:1012.1032}.

\bibitem[MO10]{ManolescuOzsvath:surgery}
Ciprian Manolescu and Peter Ozsv{\'a}th, \emph{Heegaard {F}loer homology and
  integer surgeries on links}, 2010, \eprint{arXiv:1011.1317}.

\bibitem[MOS09]{MOS06:CombinatorialDescrip}
Ciprian Manolescu, Peter Ozsv{\'a}th, and Sucharit Sarkar, \emph{A
  combinatorial description of knot {F}loer homology}, Ann. of Math. (2)
  \textbf{169} (2009), no.~2, 633--660, \eprint{arXiv:math.GT/0607691}.

\bibitem[MO{\relax Sz}T07]{MOST07:CombinatorialLink}
Ciprian Manolescu, Peter~S. Ozsv{\'a}th, Zolt{\'a}n {\relax Sz}ab{\'o}, and
  Dylan~P. Thurston, \emph{On combinatorial link {F}loer homology}, Geom.
  Topol. \textbf{11} (2007), 2339--2412, \eprint{arXiv:math.GT/0610559}.

\bibitem[MOT09]{MOT:grid}
Ciprian Manolescu, Peter Ozsv{\'a}th, and Dylan Thurston, \emph{Grid diagrams
  and {H}eegaard {F}loer invariants}, 2009, \eprint{arXiv:0910.0078}.

\bibitem[Ni09]{Ni09:FiberedMfld}
Yi~Ni, \emph{Heegaard {F}loer homology and fibred 3-manifolds}, Amer. J. Math.
  \textbf{131} (2009), no.~4, 1047--1063.

\bibitem[OS{\relax Sz}09]{OSS09:singular}
Peter Ozsv{\'a}th, Andr{\'a}s Stipsicz, and Zolt{\'a}n {\relax Sz}ab{\'o},
  \emph{Floer homology and singular knots}, J. Topol. \textbf{2} (2009), no.~2,
  380--404.

\bibitem[O{\relax Sz}03]{AltKnots}
Peter~S. Ozsv{\'a}th and Zolt{\'a}n {\relax Sz}ab{\'o}, \emph{Heegaard {F}loer
  homology and alternating knots}, Geom. Topol. \textbf{7} (2003), 225--254
  (electronic).

\bibitem[O{\relax Sz}04a]{OS04:ThurstonNorm}
\bysame, \emph{Holomorphic disks and genus bounds}, Geom. Topol. \textbf{8}
  (2004), 311--334.

\bibitem[O{\relax Sz}04b]{OS04:Knots}
\bysame, \emph{Holomorphic disks and knot invariants}, Adv. Math. \textbf{186}
  (2004), no.~1, 58--116, \eprint{arXiv:math.GT/0209056}.

\bibitem[O{\relax Sz}04c]{OS04:HolDiskProperties}
\bysame, \emph{Holomorphic disks and three-manifold invariants: properties and
  applications}, Ann. of Math. (2) \textbf{159} (2004), no.~3, 1159--1245,
  \eprint{arXiv:math.SG/0105202}.

\bibitem[O{\relax Sz}04d]{OS04:HolomorphicDisks}
\bysame, \emph{Holomorphic disks and topological invariants for closed
  three-manifolds}, Ann. of Math. (2) \textbf{159} (2004), no.~3, 1027--1158,
  \eprint{arXiv:math.SG/0101206}.

\bibitem[O{\relax Sz}04e]{OS04:symplectic}
\bysame, \emph{Holomorphic triangle invariants and the topology of symplectic
  four-manifolds}, Duke Math. J. \textbf{121} (2004), no.~1, 1--34.

\bibitem[O{\relax Sz}05a]{OS05:surgeries}
\bysame, \emph{On knot {F}loer homology and lens space surgeries}, Topology
  \textbf{44} (2005), no.~6, 1281--1300, \eprint{arXiv:math/0303017}.

\bibitem[O{\relax Sz}05b]{BrDCov}
\bysame, \emph{On the {H}eegaard {F}loer homology of branched double-covers},
  Adv. Math. \textbf{194} (2005), no.~1, 1--33, \eprint{arXiv:math.GT/0309170}.

\bibitem[O{\relax Sz}06]{OS06:HolDiskFour}
\bysame, \emph{Holomorphic triangles and invariants for smooth four-manifolds},
  Adv. Math. \textbf{202} (2006), no.~2, 326--400,
  \eprint{arXiv:math.SG/0110169}.

\bibitem[O{\relax Sz}08]{IntSurg}
\bysame, \emph{Knot {F}loer homology and integer surgeries}, Algebr. Geom.
  Topol. \textbf{8} (2008), no.~1, 101--153.

\bibitem[O{\relax Sz}09]{OS09:cube}
Peter Ozsv{\'a}th and Zolt{\'a}n {\relax Sz}ab{\'o}, \emph{A cube of
  resolutions for knot {F}loer homology}, J. Topol. \textbf{2} (2009), no.~4,
  865--910.

\bibitem[Per08]{Perutz07:handleslides}
Timothy Perutz, \emph{Hamiltonian handleslides for {H}eegaard {F}loer
  homology}, Proceedings of {G}\"okova {G}eometry-{T}opology {C}onference 2007,
  G\"okova Geometry/Topology Conference (GGT), G\"okova, 2008, pp.~15--35.

\bibitem[Pet09]{Petkova:cableofthin}
Ina Petkova, \emph{Cables of thin knots and bordered {H}eegaard {F}loer
  homology}, 2009, \eprint{arXiv:0911.2679}.

\bibitem[Ras03]{Rasmussen03:Knots}
Jacob Rasmussen, \emph{Floer homology and knot complements}, Ph.D. thesis,
  Harvard University, Cambridge, MA, 2003, \eprint{arXiv:math.GT/0306378}.

\bibitem[Rob08]{Roberts08:blow-downs}
Lawrence~P. Roberts, \emph{Rational blow-downs in {H}eegaard-{F}loer homology},
  Commun. Contemp. Math. \textbf{10} (2008), no.~4, 491--522.

\bibitem[SW10]{SarkarWang07:ComputingHFhat}
Sucharit Sarkar and Jiajun Wang, \emph{An algorithm for computing some
  {H}eegaard {F}loer homologies}, Ann. of Math. (2) \textbf{171} (2010), no.~2,
  1213--1236, \eprint{arXiv:math/0607777}.

\bibitem[Tau10a]{Taubes10:SW-ECH-I}
Clifford~Henry Taubes, \emph{Embedded contact homology and {S}eiberg-{W}itten
  {F}loer cohomology {I}}, Geom. Topol. \textbf{14} (2010), no.~5, 2497--2581.

\bibitem[Tau10b]{Taubes10:SW-ECH-II}
\bysame, \emph{Embedded contact homology and {S}eiberg-{W}itten {F}loer
  cohomology {II}}, Geom. Topol. \textbf{14} (2010), no.~5, 2583--2720.

\bibitem[Tau10c]{Taubes10:SW-ECH-III}
\bysame, \emph{Embedded contact homology and {S}eiberg-{W}itten {F}loer
  cohomology {III}}, Geom. Topol. \textbf{14} (2010), no.~5, 2721--2817.

\bibitem[Tau10d]{Taubes10:SW-ECH-IV}
\bysame, \emph{Embedded contact homology and {S}eiberg-{W}itten {F}loer
  cohomology {IV}}, Geom. Topol. \textbf{14} (2010), no.~5, 2819--2960.

\bibitem[Tau10e]{Taubes10:SW-ECH-V}
\bysame, \emph{Embedded contact homology and {S}eiberg-{W}itten {F}loer
  cohomology {V}}, Geom. Topol. \textbf{14} (2010), no.~5, 2961--3000.

\bibitem[Zar09]{Zarev09:BorSut}
Rumen Zarev, \emph{Bordered {F}loer homology for sutured manifolds}, 2009,
  \eprint{arXiv:0908.1106}.

\end{thebibliography}
\end{document}